\pgfplotsset{compat=1.6}
\theoremstyle{plain}%
\newtheorem{theorem}{Theorem}[section]
\newtheorem{lemma}[theorem]{Lemma}
\newtheorem{proposition}[theorem]{Proposition}
\newtheorem{corollary}[theorem]{Corollary}
\newtheorem*{conjecture*}{Conjecture} 
\newtheorem{sublemma}[theorem]{\normalfont Claim}
 \numberwithin{equation}{section}
\theoremstyle{definition}
\newtheorem{definition}[theorem]{Definition}
\theoremstyle{remark}
\newtheorem{remark}[theorem]{Remark}
\newenvironment{subproof}[1][\proofname]{%
  \proof[Proof of claim]%
}{\endproof}
 \let \leq \leqslant
 \let \geq \geqslant
 \let \ge \geqslant
\DeclareMathOperator{\sgn}{sgn}
\DeclareMathOperator{\support}{supp}
\DeclareMathOperator{\dist}{dist}
\newcommand\pig[1]{\scalerel*[5.5pt]{\Big#1}{%
  \ensurestackMath{\addstackgap[1.5pt]{\big#1}}}}
\newcommand\pigl[1]{\mathopen{\pig{#1}}}
\newcommand\pigr[1]{\mathclose{\pig{#1}}}
\title{Wilson lines in the lattice Higgs model \\ at strong coupling}
\author{Malin P. Forsstr\"om, Jonatan Lenells, Fredrik Viklund}
\address[Malin P. Forsstr\"om]{Mathematical Sciences
Chalmers University of Technology and University of Gothenburg, 412 96 G\"oteborg, Sweden.}
\email{palo@chalmers.se}
\address[Jonatan Lenells]{Department of Mathematics, KTH Royal Institute of Technology, 100 44 Stockholm, Sweden.}
\email{jlenells@kth.se}
\address[Fredrik Viklund]{Department of Mathematics, KTH Royal Institute of Technology, 100 44 Stockholm, Sweden.}
\email{frejo@kth.se}
\begin{document}

\maketitle 


\begin{abstract}
    We consider the 4D fixed length lattice Higgs model with Wilson action for the gauge field and structure group $\mathbb{Z}_n$. We study Wilson line observables in the strong coupling regime and compute their asymptotic behavior with error estimates. Our analysis is based on a high-temperature representation of the lattice Higgs measure combined with Poisson approximation. We also give a short proof of the folklore result that Wilson line (and loop) expectations exhibit perimeter law decay whenever the Higgs field coupling constant is positive.
\end{abstract}

\section{Introduction}

\subsection{Background}
One of the major open problems of mathematical physics is that of defining in a mathematically rigorous way quantum Yang-Mills theories in four dimensions. Let $G$ be a compact Lie group and let $A: \mathbb{R}^4 \ni x \mapsto (A_1(x), \dots, A_4(x))$ be a smooth map where each $A_j$ takes values in $\mathfrak{g}$, the Lie algebra of $G$.
The Yang-Mills action on $\mathbb{R}^4$ is defined by
\[
S_{YM}(A) = - \frac{1}{4} \int \sum_{i,j} \textrm{tr}(F_{ij}(x)^2)dx.
\]
Here $F_{ij} = \partial_i A_j - \partial_j A_i + [A_i,A_j]$ is the curvature 2-form (field strength) of the connection one-form $A$ (gauge potential). The Yang-Mills quantum field theory (QFT) with structure group $G$ can be thought of as being defined via a formal path integral measure $e^{-\beta S_{YM}(A)} \mathcal{D}A$, where $\mathcal{D}A$ is thought of as a uniform measure on the space of $G$-valued connection one-forms. Yang-Mills QFTs with structure group $U(1)$, $SU(2)$, and $SU(3)$ play a key part in the Standard Model of particle physics, yet, except when $G=U(1)$, it is not known how to define such theories rigorously in four dimensions~\cite{c-survey}. 

Motivated by this problem, Wilson~\cite{w1974} constructed a regularized model by discretizing space-time using a lattice. In Wilson's model, the Yang-Mills action is discretized (see below) and used to define a Gibbs measure on discrete gauge field configurations. The resulting lattice model is mathematically well-defined and can be analyzed as a statistical mechanics system. In a formal scaling limit, the continuum Yang-Mills action is recovered, and the discrete model maintains exact gauge symmetry. In fact, prior to this, in 1971, Wegner \cite{w1971} had discovered an Ising-like model which is a special case of the one given in~\cite{w1974}, when choosing $G = \mathbb{Z}_2$. In contrast to Wilson, Wegner's motivation came primarily from a statistical mechanics perspective, namely, from a desire to understand the phase structure of a model exhibiting a \emph{local} (spin-flip) symmetry (see also \cite{e1975}). Wegner identified a class of non-local observables, now known as Wilson loops, which allowed him to detect a phase transition in the Ising lattice gauge theory. More precisely, he observed that there is a high-temperature ``confining'' phase and a low temperature ``non-confining'' phase distinguished by the decay rate of the Wilson loop expectation. These phases were later interpreted by Wilson as a criterion for whether or not ``quark confinement'' occurs. (See \cite{c2021} for recent progress on this problem.) A phase transition is rigorously known to occur in several instances, e.g., in the case $G=U(1)$ as well as in the case $G = \mathbb{Z}_n$, see 
\cite{os1978, frolich-spencer82, seiler1982}, but it is an open problem to determine whether or not it occurs in general. (It is expected that there is no phase transition for, e.g., $G=SU(N)$ when $N \ge 2$, see, e.g., the recent survey \cite{c-survey} for more on this and other open problems in the area.)

The models of main relevance in physics, however, are interacting theories where there are matter fields coupled to the gauge field, and it is desirable to better understand the phase structure of such models. This question is also natural from the point of view of statistical mechanics.
In this paper, we consider Wilson's lattice gauge theory coupled to a complex scalar bosonic matter field known as the Higgs field (see below for the precise definition). We shall only consider the structure groups $G=\mathbb{Z}_n$ for $n \ge 2$. In general, the action of this model includes, in addition to the Wilson action, a Dirichlet energy term that couples the Higgs field to the gauge field, as well as a potential that is strongly localized near the unit circle. We shall work with a simplified model, known as the fixed-length limit of the Higgs model, in which the Higgs field is completely localized to the unit circle, see, e.g.,~\cite{seiler1982, s1988}. (In fact, all mathematical treatments we are aware of consider this version of the model.) Because the Higgs model has two fields, there are additional interesting gauge invariant observables besides Wilson loops; in particular Wilson lines can be included in the analysis. While little is known rigorously, the phase structure of this model has been investigated quite thoroughly in the physics literature, see for instance~\cite{fs1979, jsj1980, ks1984}. 

Starting with Chatterjee's paper~\cite{c2019}, several authors have recently rigorously analyzed the behavior of Wilson loops in pure gauge theory (i.e., only the gauge field is considered)~\cite{sc2019,flv2020} as well as Wilson loops~\cite{flv2021,a2021} and Wilson lines~\cite{f2021b} in the lattice Higgs model at very low temperature for various choices of finite gauge groups. These papers all consider a particular limit as the size of the support of the observable grows with the inverse temperature. The temperature is tuned in such a way that the limiting expectation is non-trivial.  The paper \cite{fv2023} takes a different approach and uses the cluster expansion which allows analysis of the pure Ising gauge theory at low but fixed temperature. See also \cite{gs2021} for recent progress on the $U(1)$ pure gauge theory in the low-temperature setting using yet another approach. In this paper, we continue the analysis of the lattice Higgs model started in \cite{flv2021}. Instead of studying the model at very low temperature we shall focus instead on the high-temperature regime with positive Higgs coupling. Our main result is a description of the asymptotic behavior of Wilson loop and Wilson line expectations in the limit as the inverse temperature $\beta \to 0$, with error term estimates. The starting point is a high-temperature representation combined with a detailed analysis of the leading order contributions for small $\beta$. See below for an overview of the proof strategy. Along the way, we obtain a very short proof of the folklore statement that Wilson loops exhibit perimeter law decay at all temperatures as long as the Higgs coupling constant is non-zero.

In order to describe the model and state our results, we need to introduce some notation.

\subsection{Preliminary notation}
For \( m \geq 2 \), consider the graph naturally associated to $\mathbb{Z}^m$ which has a vertex at each point \( x \in \mathbb{Z}^m \) with integer coordinates, and a non-oriented edge between nearest neighbors. We will work with oriented edges throughout this paper, and for this reason we associate to each non-oriented edge \( \bar e \)  two oriented edges \( e_1 \) and \( e_2 = -e_1 \) with the same endpoints as \( \bar e \) but with opposite orientations. 

Let \( {e}_1 ,\) \( {e}_2 , \) \dots, \( {e}_m  \) be oriented edges corresponding to the unit vectors in \( \mathbb{Z}^m \). We say that an oriented edge \( e \) is \emph{positively oriented} if it is equal to a translation of one of these unit vectors, i.e.,\ if there is a \( v \in \mathbb{Z}^m \) and a \( j \in \{ 1,2, \dots, m\} \) such that \( e = v + {{e}}_j \). 
If \( v \in \mathbb{Z}^m \) and \( j_1 <   j_2 \), then \( p = (v +  {e}_{j_1}) \land  (v+ {e}_{j_2}) \) is a positively oriented 2-cell, also known as a  \emph{positively oriented plaquette}. We let \( C_0(\mathbb{Z}^4) \), \( C_1(\mathbb{Z}^4)\), and \( C_2(\mathbb{Z}^4) \) denote the sets of oriented vertices, edges, and plaquettes.
Next, we let \( B_N \) denote the set \(   [-N,N]^m \cap \mathbb{Z}^m \), and we let \( C_0(B_N) \), \( C_1(B_N)\), and \( C_2(B_N) \) denote the sets of oriented vertices, edges, and plaquettes, respectively, whose end-points are all in \( B_N \).

Whenever we talk about a lattice gauge theory we do so with respect to some (abelian) group \( (G,+)  \), referred to as the \emph{structure group}. We also fix a unitary and faithful representation \( \rho \) of \( (G,+) \). In this paper, we will always assume that \( G = \mathbb{Z}_n \) for some \( n \geq 2 \) with the group operation \( + \) given by standard addition modulo \( n \). Also, we will assume that \( \rho \) is a one-dimensional representation of \( G \). We note that a natural such representation is given by \( j\mapsto e^{j \cdot 2 \pi i/n} \).

Now assume that a structure group \( (G,+) \), a one-dimensional unitary representation \( \rho \) of \( (G,+) \), and an integer \( N\geq 1 \) are given.
We let \( \Omega^1(B_N,G) \) denote the set of all  \( G \)-valued  1-forms \( \sigma \) on \( C_1(B_N) \), i.e., the set of all \( G \)-valued functions \(\sigma \colon  e \mapsto \sigma(e) \) on \( C_1(B_N) \) such that \( \sigma(e) =  -\sigma(-e) \) for all \( e \in C_1(B_N) \). 
%
Similarly, we let \( \Omega^0(B_N,G) \) denote the set of all \( G\)-valued functions \( \phi \colon x \mapsto \phi(x)\) on \( C_0(B_N) \) such that \( \phi(x) = - \phi(-x) \) for all \( x \in C_0(B_N). \) 
When \( \sigma \in \Omega^1(B_N,G) \) and \( p \in C_2(B_N) \), we let \( \partial p \) denote the formal sum of the four edges \( e_1,\) \( e_2,\) \( e_3,\) and \( e_4 \) in the oriented boundary of \( p \) (see Section~\ref{sec: cell boundary}), and define
\begin{equation*}
    d\sigma(p) \coloneqq \sigma(\partial p) \coloneqq \sum_{e \in \partial p} \sigma(e) \coloneqq \sigma(e_1) + \sigma(e_2) + \sigma(e_3) + \sigma(e_4).
\end{equation*} 
Similarly, when \( \phi \in \Omega^0(B_N,G) \) and \( e \in C_1(B_N) \) is an edge from \( x_1 \) to \( x_2 \), we let \( \partial e \) denote the formal sum \( x_2-x_1, \) and define \( d\phi(e) \coloneqq \phi(\partial e) \coloneqq \phi(x_2) - \phi(x_1). \)

\subsection{The fixed length lattice Higgs model}\label{sec: measure in intro}
The Wilson action for pure gauge theory is given as follows
\[
S_{N}^W(\sigma) =  - \sum_{p \in C_2(B_N)}  \rho\bigl( d\sigma(p)\bigr), \qquad \sigma \in \Omega^1(B_N,G).
\]
For each plaquette \( p \in C_2(B_N), \) we have    \( \rho\bigl(d\sigma(-p) \bigr) =  \overline{\rho\bigl(d\sigma(p) \bigr)},\) so \( S_N^W(\sigma) \) is real.
With $(\sigma, \phi) \in \Omega^1(B_N,G) \times  \Omega^0(B_N,G)$, we also define a coupling term
\[
S_N^H(\sigma, \phi) = -\sum_{\substack{e\in C_1(B_N)\mathrlap{\colon}\\ \partial e = y-x}} 
        \rho\bigl(\sigma(e)-\phi(\partial e)\bigr).
\]
For each edge \( e \in C_1(B_N), \) we have    \( \rho\bigl(\sigma(-e)-\phi(\partial (-e))\bigr) =  \overline{\rho\bigl(\sigma(e)-\phi(\partial e)\bigr)},\) so \( S_N^H(\sigma, \phi) \) is real.
The action for the fixed length Higgs model is then defined by
\[
 S_{N,\beta,\kappa}(\sigma, \phi) = \beta S_{N}^W(\sigma)  + \kappa S_N^H(\sigma, \phi).
\]
Elements \( \sigma \in \Omega^1(B_N,G) \) are referred to as \emph{gauge field configurations} and functions \(\phi  \in \Omega^0(B_N,G) \) are referred to as \emph{Higgs field configurations}.
The quantity \( \beta \) is the \emph{inverse temperature}; it is related to the gauge theory \emph{coupling constant} $g$ by the relation $\beta =1/2g^2$. In particular, \emph{strong coupling} corresponds to small values of $\beta$ (and to high temperature). 
The parameter~\( \kappa \) is known as the \emph{hopping parameter} or as the \emph{Higgs field coupling constant}.

We next consider the probability measure \(\mu_{N,\beta, \kappa, \infty}\) on \(\Omega^1(B_N,G) \times \Omega^0(B_N,G)\) given by
\begin{equation}\label{eq: London limit measure}
    \mu_{N,\beta, \kappa, \infty}(\sigma, \phi)  \coloneqq
    Z_{N,\beta,\kappa, \infty}^{-1} e^{-S_{N,\beta,\kappa}(\sigma, \phi)} , \qquad \sigma \in \Omega^1(B_N,G) ,\, \phi \in \Omega^0(B_N,G),
\end{equation}
where \( Z_{N,\beta,\kappa, \infty}\) is a normalizing constant. This is the \emph{fixed length lattice Higgs model}. (We include the subscript 
$\infty$ in the notation for $\mu_{N,\beta, \kappa, \infty}$ in order to keep our notation consistent with that of~\cite{flv2021}.) We let \( \mathbb{E}_{N,\beta,\kappa,\infty} \) denote the corresponding expectation.

Whenever \( f \colon \Omega^1(B_M,G) \times \Omega^0(B_M,G) \to \mathbb{R}\) for some \( M \geq 1,\) then, as a consequence of the Ginibre inequalities (see Section~\ref{sec: ginibre}), the following limit exists
\begin{equation}\label{infinitevolumelimit}
    \bigl\langle f(\sigma,\phi) \bigr\rangle_{\beta,\kappa,\infty} \coloneqq \lim_{N \to \infty} \mathbb{E}_{N,\beta,\kappa,\infty} \bigl[f(\sigma,\phi) \bigr]
\end{equation}
and is translation invariant. Note that we use free boundary conditions here. It is this limit that we will consider in  Theorem~\ref{theorem: first theorem Z2}. 

\subsection{Wilson loops and Wilson lines}
 
For \( k \in \{ 0,1,\dots, m\},\) a \( k \)-chain is a formal sum of positively oriented $k$-cells with integer coefficients, see Section~\ref{sec: chains} below. The support of a \(k\)-chain \(c\), written \(\support c\), is the set of positively oriented \(k \)-cells with non-zero coefficient in \( c.\) 

We say that a \(1\)-chain $\gamma$ with finite support is a \emph{closed path} if it has coefficients in \(\{-1,0,1\},\) connected support, and empty boundary (see Section~\ref{sec: cell boundary} for the precise definition of the boundary $\partial \gamma$  of $\gamma$). 
We say that a \(1\)-chain with finite connected support is an \emph{open path} from \( x_1\in C_0^+(B_N) \) to \( x_2 \in C_0^+(B_N)\) if it has coefficients in \(\{-1,0,1\}\) and boundary \( \partial \gamma \coloneqq x_2 - x_1. \) 
%
If \( \gamma \) is either an open path or a closed path, we refer to \( \gamma \) as a \emph{path}. 

Let \( R \) be an axis-parallel rectangle with corners in \( \mathbb{Z}^m.\) If all of the edges in the support of a path \( \gamma \) lie in the boundary of \( R, \) then we say that \( \gamma \) is a path along the boundary of \( R , \) or equivalently, that \( \gamma\) is a rectangular path (see Section~\ref{sec: rectangular}).

Given a path \( \gamma \), the \emph{Wilson line observable} \( L_\gamma(\sigma,\phi) \) is defined by 
\begin{equation*}
    L_\gamma(\sigma,\phi)
    \coloneqq  \rho \bigl( \sigma(\gamma) - \phi(\partial \gamma) \bigr),\qquad \sigma\in \Omega^1(B_N,G),\, \phi\in \Omega^0(B_N,G),
\end{equation*}
where \( \sigma(\gamma) \coloneqq \sum_{e \in  \gamma} \sigma(e), \) and \( \phi(\partial \gamma) = \phi(x_2) - \phi(x_1) \) if \( \gamma \) is an open path from \( x_1 \) to \( x_2 \), and \(\phi(\partial \gamma) = 0 \) if the boundary of \( \gamma \) is empty. If \( \gamma \) is a closed path, then \(  L_\gamma(\sigma,\phi)\) is referred to as a \emph{Wilson loop observable}. Both these observables are gauge invariant, see below. We let \( |\gamma| \coloneqq |\support \gamma|. \)

\subsection{Functions and parameters}\label{subsec: functions}
The following will appear in the statements of our main results. 
For \( j \in [n] \coloneqq \{ 0,1,\dots, n-1\} \) and \( a \geq 0, \)  define
\begin{equation*}
    \psi_a(j) \coloneqq \sum_{k=0}^\infty \frac{a^{j+kn} }{(j+kn)!} ,
\end{equation*} 
\begin{equation*}
    \hat \varphi_a(j)
    \coloneqq 
    \sum_{\substack{k,k' \in [n] \mathrlap{\colon}\\ k'-k =  j \mod n}}\psi_{a}(k)\psi_a(k')
    =
    \sum_{k = 0}^{n-1-j} \psi_{a}(k)\psi_a(k+j)
    +
    \sum_{k = n-j}^{n-1} \psi_{a}(k)\psi_a(k+j-n),
\end{equation*} 
and
\begin{equation*}\label{varphidef} 
    \varphi_a(j) \coloneqq \hat \varphi_a(j) / \hat \varphi_a(0).
\end{equation*} 
By identifying \(  \mathbb{Z}_n \) with $[n]$ in the natural way, we also view $\psi_a$, $\hat{\varphi}_a$, and $\varphi_a$ as functions on $\mathbb{Z}_n$.
To simplify notation, we extend the definition of  \( \hat \varphi_a \) to \( \mathbb{Z} \) by letting \( \hat \varphi_a(j) \coloneqq \hat \varphi_a(j') \) whenever \( j' \in [n] \) and \( j = j' + kn \) for some \( k \in \mathbb{Z}. \)
We also set
\begin{equation}\label{eq: def eta kappa1}
    \eta_a \coloneqq \min_{j \in [n]} \varphi_a( j+1) /\varphi_a( j) = \min_{j \in [n]} \varphi_a( j\pm 1) /\varphi_a( j),
\end{equation} 
where the last identity follows since, by symmetry, we have \( \varphi_a(j) = \varphi_a(-j) \) for all \( j \in [n] \). 

\subsection{Main results}
Our first result is certainly known in the physics community, but we are not aware of a clean and complete proof in the literature. 
\begin{theorem}[Perimeter law for $\kappa>0$]\label{theorem: perimeter law}
    Let \( m \geq 2, \) let \( G = \mathbb{Z}_n, \) suppose \( \beta,\kappa \geq 0, \) and let \( \gamma \) be a path. Then
    \begin{equation}\label{eq: perimeter law}
        \mathbb{E}_{N,\beta,\kappa,\infty} \bigl[L_\gamma(\sigma,\phi)\bigr] \geq \eta_\kappa^{| \gamma|},
    \end{equation}
    where \( \eta_\kappa \) is defined in~\eqref{eq: def eta kappa1}. In particular, if \( n = 2, \) then 
    \begin{equation}\label{eq: perimeter law 2}
        \mathbb{E}_{N,\beta,\kappa,\infty} \bigl[L_\gamma(\sigma,\phi)\bigr] \geq (\tanh 2\kappa)^{| \gamma|}.
    \end{equation}
\end{theorem}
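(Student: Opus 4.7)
The plan is to decouple the Higgs field through the gauge-like substitution $\sigma' \coloneqq \sigma - d\phi$, which is a bijection on $\Omega^1(B_N,G)\times\Omega^0(B_N,G)$. Since $d\phi(\gamma) = \phi(\partial\gamma)$ and $d\circ d = 0$, the observable simplifies to $L_\gamma = \rho(\sigma'(\gamma))$, the Wilson action is unchanged, and the Higgs action becomes $-\sum_e\rho(\sigma'(e))$. In particular $\phi$ disappears from both the Boltzmann weight and the observable, and the problem reduces to lower-bounding $\mathbb{E}[\rho(\sigma'(\gamma))]$ under the marginal measure
\[
    \nu_{N,\beta,\kappa}(\sigma') \propto \exp\biggl(\beta\sum_p \rho(d\sigma'(p)) + \kappa \sum_e \rho(\sigma'(e))\biggr)
\]
on $\Omega^1(B_N,G)$.

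I would next pair each oriented plaquette (respectively edge) with its reverse, rewriting both sums as sums of $\cos(2\pi\cdot/n)$ of integer linear combinations of the values $\sigma'(\bar e)$ on positively oriented edges, with non-negative coefficients $\beta$ and $\kappa$; by the same symmetry $\mathbb{E}_\nu[\rho(\sigma'(\gamma))]$ is real and equals $\mathbb{E}_\nu[\cos(2\pi \sigma'(\gamma)/n)]$. This places us in the ferromagnetic abelian clock-model framework, and Ginibre's second inequality (compare Section~\ref{sec: ginibre}) then gives that $\mathbb{E}_\nu[\rho(\sigma'(\gamma))]$ is non-decreasing in $\beta \geq 0$. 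Hence it suffices to prove the bound at $\beta = 0$.

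At $\beta = 0$ the $\sigma'(\bar e)$ on positively oriented edges are independent with distribution on $\mathbb{Z}_n$ proportional to $\exp(2\kappa\cos(2\pi g/n))$. Using the identity $e^{\kappa\rho(g)} = \sum_j \psi_\kappa(j)\rho(g)^j$ and its conjugate, one reads off the Fourier expansion $\exp(2\kappa\cos(2\pi g/n)) = \sum_m \hat\varphi_\kappa(m)\rho(g)^m$, and character orthogonality then yields $\mathbb{E}[\rho(\sigma'(\bar e))^{\pm 1}] = \hat\varphi_\kappa(1)/\hat\varphi_\kappa(0) = \varphi_\kappa(1)$ (the $-1$ case using the symmetry $g\mapsto -g$). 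Since $\rho(\sigma'(\gamma))$ factors as a product of $\rho(\sigma'(\bar e))^{\pm 1}$ over $e \in \gamma$, independence gives $\mathbb{E}_{\beta=0}[L_\gamma] = \varphi_\kappa(1)^{|\gamma|}$. Combined with $\eta_\kappa \leq \varphi_\kappa(1)/\varphi_\kappa(0) = \varphi_\kappa(1)$ (the $j=0$ term in~\eqref{eq: def eta kappa1}, using $\varphi_\kappa(0) = 1$) and $\eta_\kappa, \varphi_\kappa(1) \in [0,1]$, this proves~\eqref{eq: perimeter law}. For $n = 2$, explicit evaluation gives $\psi_\kappa(0)=\cosh\kappa$ and $\psi_\kappa(1)=\sinh\kappa$, so $\varphi_\kappa(1)=\tanh(2\kappa)$, which is $\eta_\kappa$ in this case, yielding~\eqref{eq: perimeter law 2}.

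The main potential obstacle is cleanly invoking Ginibre after the substitution: once opposite orientations are paired, one must verify that every interaction term really is a non-negative multiple of a cosine of an integer linear combination of the $\sigma'(\bar e)$'s, and that the observable is of the same ferromagnetic form, so that a standard $\mathbb{Z}_n$ clock-model Ginibre inequality delivers the required $\beta$-monotonicity on the nose.
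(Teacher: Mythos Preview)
Your argument is correct and takes a genuinely different route from the paper. The paper first passes to the unitary gauge (as you do), but then invokes the high-temperature representation of Proposition~\ref{proposition: high-temperature expansion ALHM 3}: it rewrites $\mathbb{E}_{N,\beta,\kappa}[L_\gamma]=\mathbb{E}_\varphi[\widehat{L_\gamma}(\omega)]$ and observes that for \emph{every} $\omega\in\Omega^2(B_N,\mathbb{Z}_n)$ and every $e\in\support\gamma$ one has $\varphi_\kappa(\delta\omega(e)+\gamma[e])/\varphi_\kappa(\delta\omega(e))\geq\eta_\kappa$ by the very definition of $\eta_\kappa$, whence $\widehat{L_\gamma}(\omega)\geq\eta_\kappa^{|\gamma|}$ pointwise. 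No monotonicity in $\beta$ is used.

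Your approach instead uses Ginibre to reduce to $\beta=0$ and then an explicit product computation. This has the pleasant feature of actually proving the sharper bound $\mathbb{E}_{N,\beta,\kappa,\infty}[L_\gamma]\geq\hat\eta_\kappa^{|\gamma|}=\varphi_\kappa(1)^{|\gamma|}$, which by Lemma~\ref{lemma: eta relationships} strictly improves the paper's bound whenever $n\geq 4$; it is also self-contained in that it avoids the high-temperature machinery. On the other hand, the paper's argument is a one-line consequence of the representation it builds anyway for the main theorem, and its pointwise nature makes the role of $\eta_\kappa$ as a worst-case ratio transparent for all $\beta$ simultaneously. Your only loose end is the one you flag: checking that after pairing opposite orientations the Hamiltonian and the observable are sums of $\cos(2\pi(\text{integer combination})/n)$ with non-negative coefficients, which is straightforward here and is the same verification underlying Proposition~\ref{proposition: limit exists}.
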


\begin{remark}
    Using Proposition~10.20 in~\cite{f2021b}, we also have the \emph{a priori} upper bound
    \begin{equation*}
        \mathbb{E}_{N,\beta,\kappa,\infty} \bigl[L_\gamma(\sigma,\phi)\bigr] \leq e^{-2 (| \gamma|-4)e^{-4(2\kappa + 12\beta)}}
    \end{equation*}
    which is valid for all \( \beta,\kappa \geq 0.\) We note that there is a substantial gap between this upper bound and the lower bound \( \tanh(2\kappa)^{| \gamma|} = e^{-| \gamma| \ln (\tanh (2\kappa)^{-1})}\) obtained in Theorem \ref{theorem: perimeter law}. Indeed, for $\beta \ge 0$ fixed, the coefficient of $-| \gamma|$ in the exponent of the upper bound is bounded as $\kappa \to 0+$, whereas the corresponding coefficient in the lower bound diverges. Moreover, since the lower bound is independent of $\beta$ it does not provide any information about the phase transition in the $\kappa=0$ pure gauge theory.
\end{remark}

When \( \beta = 0, \) one can show that, after a gauge transform, the measure \( \mu_{N,\beta,\kappa,\infty} \) is  equivalent to a product measure on \( C^1(B_N,G)^+  , \) where for each \( g \in G \) and \( e \in C_1(B_N)^+, \)  \( \sigma(e) = g \) with probability \( e^{2\kappa\Re \rho(g)}/\sum_{g \in G} e^{2\kappa\Re \rho(g)}. \) Consequently, if we let
\begin{equation}\label{hatetadef}
    \hat \eta_\kappa \coloneqq \sum_{g \in G}\rho(g) e^{2\kappa\Re \rho(g)}/\sum_{g \in G} e^{2\kappa \Re \rho(g)},
\end{equation}
then, for any fixed $N < \infty$, $\gamma$, and $\kappa$, 
\begin{equation}\label{eq: trivial limit}
        \lim_{\beta \to 0}\mathbb{E}_{N,\beta,\kappa,\infty} \bigl[L_\gamma(\sigma)\bigr]  = \mathbb{E}_{N,0,\kappa,\infty} \bigl[L_\gamma(\sigma)\bigr] = 
        \prod_{e \in \gamma} \hat \eta_\kappa
        = \hat \eta_\kappa^{| \gamma|}.
    \end{equation}
    However, it is neither clear what the rate of convergence is, what the sub-leading term of \( \mathbb{E}_{N,\beta,\kappa,\infty}  \bigl[L_\gamma(\sigma)\bigr]\) is (as a function of \( \beta \)), nor whether the convergence is uniform in \( N, \) \( \gamma, \) or \( \kappa. \) 
    Moreover, if \( n = 2, \) then \( \hat \eta_\kappa = \eta_\kappa = \tanh(2\kappa) \) for all \( \kappa \geq 0, \) but if \( n \geq 3, \)  then \( \hat \eta_\kappa  > \eta_\kappa. \) This means that for \( n \geq 3 \) the lower bound in Theorem~\ref{theorem: perimeter law} is not the limit as \( \beta \to 0. \)

    Our main results, Theorem~\ref{theorem: first theorem Z2} and the more general Theorem~\ref{theorem: first theorem}, add to~\eqref{eq: perimeter law} and~\eqref{eq: trivial limit} in several ways. First, they give estimates on the rate of convergence in~\eqref{eq: trivial limit}, and in fact, our upper bound is uniform in \( N.\) Second, they show that~\eqref{eq: trivial limit} holds also in the limit \( N \to \infty. \) Third, they improve~\eqref{eq: perimeter law 2} whenever the error term is smaller than the corresponding estimate.

We now give our main result. For simplicity, we state it here only for \( G = \mathbb{Z}_2,\) and include our more general version, which is valid for $G = \mathbb{Z}_n$ for any $n \geq 2$, as Theorem~\ref{theorem: first theorem} in Section~\ref{sec: main result proof}. To simplify the notation in the statement, we let 
\begin{equation}\label{eq: Pgamma}
	\mathcal{P}_\gamma \coloneqq \bigcup_{e \in \gamma} \bigl\{ p \in C_2(B_N) \colon e \in \partial p \bigr\}
\end{equation}   denote the set of all oriented plaquettes bordering the path $\gamma$ whose orientation is consistent with that of $\gamma$.

\begin{theorem}[Small $\beta$ asymptotics]\label{theorem: first theorem Z2}
    Let $m \geq 2$, \( G = \mathbb{Z}_2, \) \( \beta, \kappa \geq 0 \), and let  \( \gamma \) be a path along the boundary of a rectangle with side lengths \( \ell_1,\ell_2 \geq 7. \) Suppose further that \[ (16m)^2 \tanh{2\beta} < \tanh(2\kappa) \qquad \textrm{and} \qquad  \kappa + \kappa( 1
            +
            2\kappa e^\kappa  ) \bigl( 1
            +
            \frac{\kappa^2 e^\kappa}{2} \bigr)^2 \leq 1.\]
    Then
    \begin{equation}\label{eq: first theorem Z2}
        \begin{split}
            &\Bigl| \langle L_\gamma \rangle_{\beta,\kappa,\infty} - \tanh(2\kappa)^{| \gamma|} \alpha(\beta,\kappa)^{|\mathcal{P}_\gamma|} \Bigr| 
            \leq   
            C^{(0)}_{\gamma,\beta,\kappa,m}\, \tanh(2\kappa)^{| \gamma|} \alpha(\beta,\kappa)^{{|\mathcal{P}_\gamma|}}  \tanh(2\beta),
        \end{split}
    \end{equation} 
    where $\langle L_\gamma \rangle_{\beta,\kappa, \infty}$ is the  limit defined in~\eqref{infinitevolumelimit}, 
    \[ \alpha(\beta,\kappa) \coloneqq \frac{1 + \tanh(2\beta)\tanh(2\kappa)^2}{1 + \tanh(2\beta)\tanh(2\kappa)^4},\]
     and \( C^{(0)}_{\gamma,\beta,\kappa,m} \) is defined in~\eqref{eq: def last constant}.
\end{theorem}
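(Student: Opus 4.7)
The plan is to combine the $\mathbb{Z}_2$ high-temperature expansion with an independent-plaquette (Poisson) approximation, and to bound the remaining corrections geometrically.

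First I would exploit the identity $e^{c\rho(g)}=\cosh c\,(1+\tanh c\,\rho(g))$, valid because $\rho(g)\in\{\pm 1\}$ for $\mathbb{Z}_2$. Combining each positively oriented edge and plaquette with its reverse replaces $\beta,\kappa$ by $2\beta,2\kappa$ and factorises $e^{-S_{N,\beta,\kappa}(\sigma,\phi)}$ into a product of $(1+\tanh(2\beta)\rho(d\sigma(p)))$ and $(1+\tanh(2\kappa)\rho(\sigma(e)-\phi(\partial e)))$ factors over positively oriented plaquettes and edges. Expanding over subsets $P'$ of plaquettes and $E'$ of edges, inserting $L_\gamma = \rho(\sigma(\gamma)-\phi(\partial\gamma))$, and summing over $\sigma$ and $\phi$, character orthogonality on $\mathbb{Z}_2$ forces the mod-$2$ constraints $E'=\gamma\oplus\partial P'$ (so every $\rho(\sigma(e))$ factor appears an even number of times); the vertex condition $\partial E'=\partial\gamma$ is then automatic. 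Cancelling the prefactors against those of the partition function yields the exact representation
\[
    \langle L_\gamma\rangle_{N,\beta,\kappa,\infty}
    =\frac{\sum_{P'}\tanh(2\beta)^{|P'|}\,\tanh(2\kappa)^{|\gamma\oplus\partial P'|}}{\sum_{P'}\tanh(2\beta)^{|P'|}\,\tanh(2\kappa)^{|\partial P'|}},
\]
the sums being over subsets of positively oriented plaquettes in $B_N$. The infinite-volume limit~\eqref{infinitevolumelimit} then transfers the identity to $\langle L_\gamma\rangle_{\beta,\kappa,\infty}$, provided the bounds below are uniform in $N$.

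Next I would write $|\gamma\oplus\partial P'|=|\gamma|+|\partial P'|-2|\gamma\cap\partial P'|$ to pull out $\tanh(2\kappa)^{|\gamma|}$, and compare the resulting ratio to a ``decoupled'' version in which $|\partial P'|$ and $|\gamma\cap\partial P'|$ are replaced by the additive surrogates $\sum_{p\in P'}|\partial p|=4|P'|$ and $\sum_{p\in P'}|\gamma\cap\partial p|$. Under this decoupling the sums over $P'$ factorise into products $\prod_p(1+\tanh(2\beta)\tanh(2\kappa)^{4-2|\gamma\cap\partial p|})$ and $\prod_p(1+\tanh(2\beta)\tanh(2\kappa)^4)$, and plaquettes outside $\mathcal P_\gamma$ contribute identical factors to both and drop out. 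Each plaquette $p\in\mathcal P_\gamma$ with exactly one $\gamma$-edge in its boundary contributes precisely $\alpha(\beta,\kappa)$ to the ratio, producing the stated main term $\tanh(2\kappa)^{|\gamma|}\alpha(\beta,\kappa)^{|\mathcal P_\gamma|}$.

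The error term then accounts for (i) the deviation of the true sums from their decoupled counterparts, and (ii) the four corner plaquettes of $\gamma$ (those lying in the $2$-plane of the rectangle at its corners), which have two $\gamma$-edges in their boundary rather than one and therefore contribute $(1+\tanh(2\beta))/(1+\tanh(2\beta)\tanh(2\kappa)^4)$ instead of $\alpha(\beta,\kappa)$ to the decoupled ratio. The hypothesis $\ell_1,\ell_2\geq 7$ keeps the corner plaquettes well separated, so each contributes a bounded $1+O(\tanh(2\beta))$ multiplicative correction that is absorbed into the constant. For (i), every time two plaquettes in $P'$ share an edge, the additive surrogates differ from the true quantities by $2$, producing a correction factor $\tanh(2\kappa)^{-2}$ per such overlap and hence at least one extra $\tanh(2\beta)$ per correction event. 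The hypothesis $(16m)^2\tanh(2\beta)<\tanh(2\kappa)$ dominates the $2(m-1)$-type combinatorial count of plaquettes incident to any given edge of $\gamma$ and allows geometric summation of the resulting cluster series, while the second hypothesis on $\kappa$ controls the longer-cluster contributions.

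The main obstacle, I expect, is the bookkeeping needed to combine these corrections uniformly in $N$ and across the local geometry of $\gamma$: one must enumerate plaquette clusters that ``touch'' $\mathcal P_\gamma$ through shared boundaries (with other plaquettes or with $\gamma$ itself), verify that each additional plaquette in such a cluster contributes at most $O(\tanh(2\beta))$, and sum the resulting generating series to identify the explicit constant $C^{(0)}_{\gamma,\beta,\kappa,m}$ appearing in~\eqref{eq: first theorem Z2}. A Stein--Chen style Poisson-approximation argument, combined with the two smallness hypotheses, should deliver the stated multiplicative error bound.
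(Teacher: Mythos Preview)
Your high-temperature representation and the identification of $\alpha(\beta,\kappa)$ as the per-plaquette factor are both correct; this is exactly the $\mathbb{Z}_2$ specialisation of the paper's Proposition~\ref{proposition: high-temperature expansion ALHM 3}, where the ratio is written as $\mathbb{E}_\varphi[\widehat{L_\gamma}(\omega)]$ under a probability measure on $2$-forms and $\widehat{L_\gamma}(\omega)=\tanh(2\kappa)^{|\gamma|-2|\gamma\cap\partial P'|}$.

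The gap is in the decoupling step. Replacing $|\partial P'|$ by $4|P'|$ separately in numerator and denominator changes each by a factor of order $\exp(c\,|B_N|)$, because plaquette adjacencies occur throughout the box and not only near $\gamma$; you then need these bulk corrections to cancel in the ratio, and the heuristic ``each overlap carries an extra $\tanh(2\beta)$'' neither supplies that cancellation nor explains why only clusters touching $\mathcal{P}_\gamma$ survive. The paper never decouples globally. It keeps the ratio as an expectation, uses that $\widehat{L_\gamma}(\omega)$ depends only on the connected components of $\support\omega$ that meet $\gamma$ (Lemma~\ref{lemma: action factorization forms iic}), and splits on the event $\mathcal{E}$ that every such component is a single non-corner plaquette. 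On $\mathcal{E}$ a conditional-independence argument (Lemma~\ref{lemma: P tilde equation forms ii}) turns the expectation into a genuine product over isolated plaquettes, giving $\alpha^{|\mathcal{P}_\gamma|}$; off $\mathcal{E}$ the contribution is controlled via a geometric inequality (Lemma~\ref{lemma: general technical lemma forms}) of the shape $|(\support\omega)^+|+|(\support\delta\omega)^+|\geq 3|\support\delta\omega\cap\support\gamma|$ up to corner terms. This last estimate is the real reason the cluster series converges under $(16m)^2\tanh(2\beta)<\tanh(2\kappa)$: since $\widehat{L_\gamma}$ is unbounded (a cluster can hit many $\gamma$-edges and pick up large factors $\tanh(2\kappa)^{-2}$), one must show that such clusters necessarily carry enough extra plaquettes \emph{and} boundary edges to compensate, and that is not automatic from ``one extra $\tanh(2\beta)$ per overlap''. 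Your outline points toward both ingredients, but the localisation lemma and the $3{:}1$ geometric estimate are the substance that is missing.
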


\begin{remark}
    One easily verifies that the assumptions of Theorem~\ref{theorem: first theorem Z2} hold whenever \( {\kappa \leq 0.318 }\) and \( \beta \leq \tanh(\kappa)/(16m)^2\), see also Figure~\ref{figure: parameters}.
    \begin{figure}[tp]
        \centering
        \includegraphics[width=.5\textwidth]{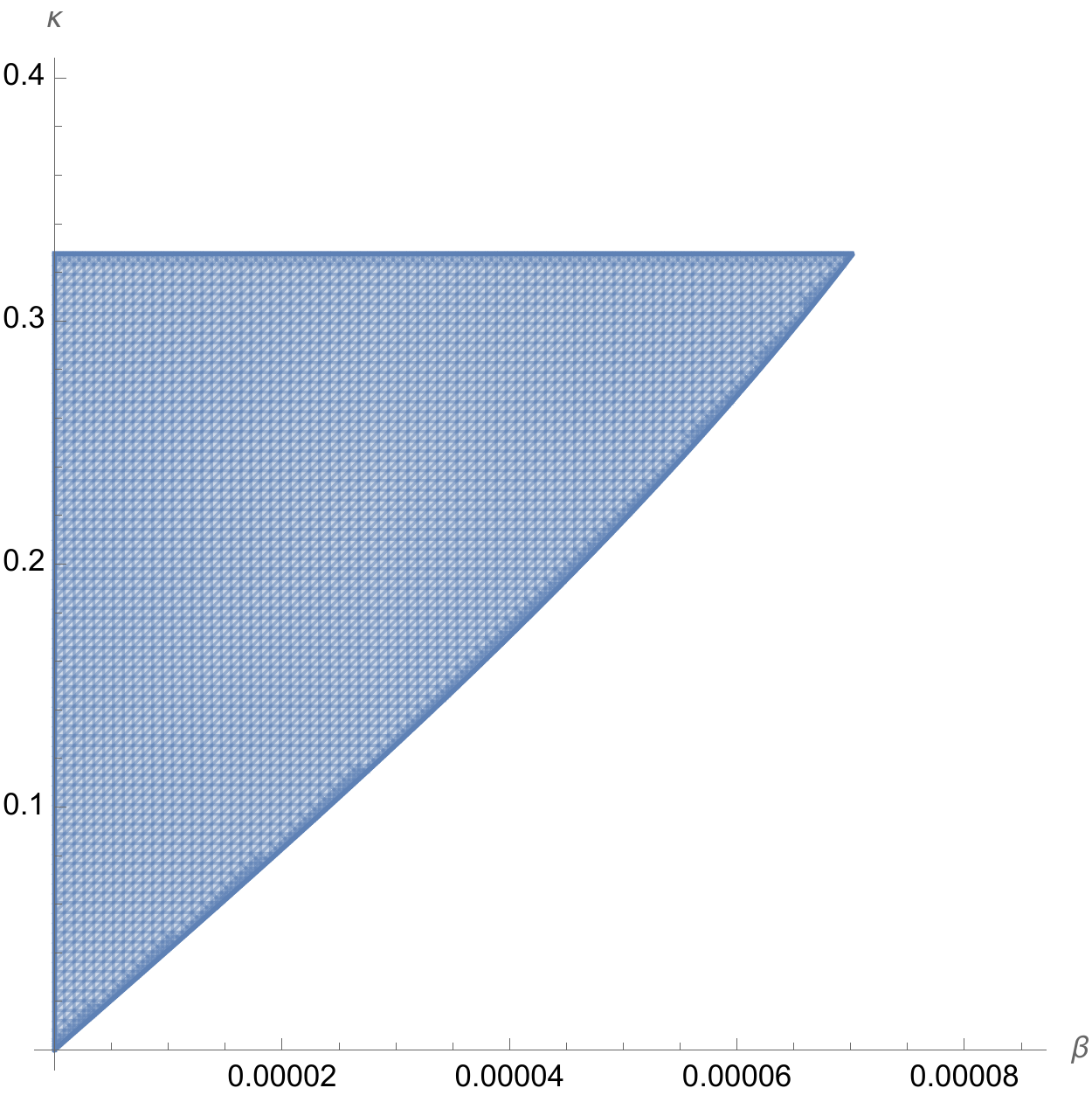}
        \caption{The shaded area represents the set of pairs \( (\beta,\kappa)\) for which the assumptions of Theorem~\ref{theorem: first theorem Z2} hold when \( m = 4.\)}
        \label{figure: parameters}
    \end{figure}
\end{remark}

\begin{remark}
    Theorem~\ref{theorem: first theorem Z2} implies that, for fixed $\gamma$ and $\kappa > 0$, as $\beta \to 0$,
    \[
    \langle L_\gamma \rangle_{\beta,\kappa,\infty} = (\tanh 2\kappa)^{| \gamma|}\left(1 + O(\beta)\right).
    \]
\end{remark}

\begin{remark}
    If \( \kappa\) is fixed, Theorem~\ref{theorem: first theorem Z2} implies that  \[ C_{\gamma,\beta,\kappa,m} = O(1) \pigl( 1 + O\bigl( \tanh 2\beta  | \gamma|\bigr) \pigr)^2 ,\] and hence the right-hand side of~\eqref{eq: first theorem Z2} is of smaller order than \( (\tanh 2\kappa)^{| \gamma|} \alpha(\beta,\kappa)^{|\mathcal{P}_\gamma|} \) when \( | \gamma|(\tanh 2\beta)^2  \) is small. Therefore, for fixed \( \gamma \) and \(\kappa,\) one can show that as \( \beta \to 0 , \) we have \[ C_{\gamma,\beta,\kappa,m} = (\tanh 2\kappa) \, |\mathcal{P}_{\gamma,c}|  \bigl( (8m)^{2}  (\tanh 2\kappa)^{3} +1\bigr) + O(\beta)\] and 
    \begin{equation*}
       ( \tanh 2\kappa)^{| \gamma|} \alpha(\beta,\kappa)^{|\mathcal{P}_\gamma|} \sim (\tanh 2\kappa)^{| \gamma|} e^{(\tanh 2\beta) (\tanh 2\kappa)^2(1 - (\tanh 2\kappa)^2 )} .
    \end{equation*}
    Hence when \( \beta>0\) we get a non-trivial multiplicative correction. 
\end{remark}

\begin{remark}
    In contrast to recent results for very large \( \beta \) (see, e.g.,~\cite{c2019, sc2019, a2021, ac2022, f2021, f2021b, flv2020, flv2021}), whose proofs only work when the dimension $m$ of the lattice $\mathbb{Z}^m$ satisfies \( m \geq 3, \) Theorem~\ref{theorem: first theorem Z2} holds for any \( m \geq 2. \)
    To understand why this is the
case, note that in an \( m \)-dimensional lattice with \( m \geq 3 \) there are non-empty sets of plaquettes \( P \) with \( \partial P = \emptyset, \) whereas no such sets of plaquettes exist in a two-dimensional lattice.  
For large \( \beta \), this geometric difference has major implications.
However, when \( \beta \) is very small, we do not see this difference because any set of plaquettes with \( \partial P = \emptyset \) contains at least six plaquettes, and is therefore given a much smaller weight than any set \( P = \{ p \} \) with \( p \in C_2(B_N)^+. \)
\end{remark}

\subsection{Outline of the proof and comparison with other work}
Let us briefly summarize the main steps in the proof of Theorem~\ref{theorem: first theorem Z2}.

The first step is to pass to what is known as the unitary gauge. 
In this (standard) step, which is implemented in Section \ref{sec: unitary gauge}, a gauge transformation is used to shift the value of the Higgs field $\phi$ to the identity element of the abelian group $G$. 
This allows us to write the expectation value of a gauge invariant observable under $\mu_{N,\beta,\kappa,\infty}$ as the expectation value of the same observable evaluated at $\phi \equiv 0$ under another measure, which we denote by $\mu_{N,\beta,\kappa}$. Whereas $\mu_{N,\beta,\kappa,\infty}$ is a measure on $\Omega^1(B_N,G) \times \Omega^0(B_N,G)$, $\mu_{N,\beta,\kappa}$ is a measure on $\Omega^1(B_N,G)$.
Since the Wilson loop and Wilson line observables are both gauge invariant, this leads to the following formula (see Corollary~\ref{corollary: unitary gauge}):
\begin{equation}\label{firststep}
        \mathbb{E}_{N,\beta,\kappa,\infty}\bigl[L_\gamma(\sigma,\phi)\bigr] =
        \mathbb{E}_{N,\beta,\kappa}\bigl[L_\gamma(\sigma,0)\bigr].
    \end{equation}

The second step is to pass to a high-temperature representation. It allows us to study an equivalent quantity to the Wilson line in a low-temperature model, which is less noisy and therefore easier to understand. This step shows that the expectation value $\mathbb{E}_{N,\beta,\kappa}[L_\gamma(\sigma,0)]$ can be expressed as an expectation value of a closely related observable, $\widehat{L_\gamma}(\omega)$, under a probability measure on the space of 2-forms $\Omega^2(B_N,\mathbb{Z}_n)$. Denoting the expectation with respect to the latter measure by $\mathbb{E}_\varphi$, the outcome of the second step is the identity (see Proposition~\ref{proposition: high-temperature expansion ALHM 3})
\begin{align}\label{secondstep}
\mathbb{E}_{N,\beta,\kappa}\bigl[ L_\gamma(\sigma,0)\bigr]
= \mathbb{E}_\varphi \bigl[ \widehat{L_\gamma}(\omega) \bigr].
\end{align}
While the first two steps are exact, the third step introduces an approximation. The basic observation is that if $(\beta, \kappa)$ belongs to the parameter range specified in the statement of Theorem~\ref{theorem: first theorem Z2} (see Figure \ref{figure: parameters}), then only 2-forms $\omega \in \Omega^2(B_N,\mathbb{Z}_n)$ that lie in a certain subset $\mathcal{E}$ of $\Omega^2(B_N,\mathbb{Z}_n)$ make a substantial contribution to the expectation value $\mathbb{E}_\varphi[\widehat{L_\gamma}(\omega)]$. 
More precisely, we will show that
\begin{align}\label{thirdstep}
\mathbb{E}_\varphi    \pigl[\widehat{L_\gamma}(\omega) \cdot \mathbb{1}( \omega \notin \mathcal{E} ) \pigr] 
            \leq     C^{(1)}_{\gamma,\beta,\kappa,m}\, \tanh(2\kappa)^{| \gamma|}    \tanh(2\beta),
\end{align}   
where $C^{(1)}_{\gamma,\beta,\kappa,m}$ can be explicitly computed, see Proposition~\ref{proposition: useful upper bound forms ii}.
In other words, with a quantified error, the 2-forms which do not lie in $\mathcal{E}$ can be ignored. 
Roughly speaking, the subset $\mathcal{E} \subset \Omega^2(B_N,\mathbb{Z}_n)$ is defined to consist of all 2-forms $\omega$ such that each connected component of the support of $\omega$ that borders $\gamma$ consists of exactly one plaquette $p$ (or two plaquettes $\pm p$ if orientation is counted); a more technical condition relevant at the corners of the rectangular path $\gamma$ is also required, see \eqref{eq: def E2 forms ii} for the exact definition.

The fourth and final step of the proof is to use a Poisson approximation to show that the contribution from configurations $\omega \in \Omega^2(B_N,\mathbb{Z}_n)$ in the set $\mathcal{E}$ can be approximated by $\tanh(2\kappa)^{| \gamma|} \alpha(\beta,\kappa)^{|\mathcal{P}_\gamma|}$. This leads to the following estimate established in Proposition~\ref{proposition: last resampling lemma forms ii}:
\begin{align*}
	&\bigl| \mathbb{E}_\varphi    \pigl[\widehat{L_\gamma}(\omega)\cdot \mathbb{1}(  \mathcal{E} ) \pigr] - \tanh(2\kappa)^{| \gamma|} \alpha(\beta,\kappa)^{|\mathcal{P}_\gamma|}\bigr|
            \leq
     C^{(2)}_{\gamma,\beta,\kappa,m} \, \tanh(2\kappa)^{| \gamma|}
     \alpha(\beta,\kappa)^{|\mathcal{P}_\gamma |} \tanh(2\beta).
\end{align*}
Theorem \ref{theorem: first theorem Z2} is obtained by combining this estimate with~(\ref{firststep})--(\ref{thirdstep}).

Finally, let us comment briefly on how the work here differs from that in the recent papers~\cite{a2021, sc2019, c2019, f2021b, flv2021}.
\begin{enumerate}
    \item In the bulk of the paper, we work with the high-temperature representation of the lattice Higgs model. While the resulting model in a certain sense can be viewed as dual to the low-temperature theories studied in~\cite{a2021, sc2019, c2019, f2021b, flv2021}, it is substantially different. As a result, the analysis is different too, and several of the basic facts need to be established here.  
    
    \item In contrast to the Wilson line observable in the usual lattice Higgs model, which takes values on the unit circle, the high-temperature representation of the Wilson line observable \( \widehat{L_\gamma} \) that we work with here is unbounded. This fact makes the analysis technically more challenging and, for instance, the events that contribute to the leading order behavior need to be determined.
\end{enumerate}

\subsection{A simulation}

As explained in the previous subsection, a key step in the proof of our main result is the introduction of an event $\mathcal{E}$ such that all main contributions to the Wilson loop/line expectation value in the high temperature regime originate from 2-forms in $\mathcal{E}$.
To understand the definition of $\mathcal{E}$ and why 2-forms $\omega$ that do not lie in $\mathcal{E}$ can be ignored to a good approximation when evaluating $\mathbb{E}_\varphi[\widehat{L_\gamma}(\omega)]$, it is useful to consider the simulation, obtained by using Gibbs sampling, presented in Figure \ref{figure: simulations}. Figure~\ref{figure: simulations} displays the result of simulations on a subset of the \( \mathbb{Z}^2 \)-lattice for different choices of the parameters \( \beta \) and \( \kappa, \) and for a $U$-shaped path $\gamma$. The path $\gamma$ consists of the bottom, left, and right edges of a rectangular-shaped path, and a plaquette is drawn black or white depending on whether or not it lies in the support of the $\mathbb{Z}_2$-valued 2-form $\omega$. The expectation value $\mathbb{E}_\varphi[\widehat{L_\gamma}(\omega)]$ is defined by
$$\mathbb{E}_\varphi[\widehat{L_\gamma}(\omega)] = \frac{\sum_{\omega \in \Omega^2(B_N,\mathbb{Z}_n)} \widehat{L_\gamma}(\omega) \varphi(\omega)}{\sum_{\omega \in \Omega^2(B_N,\mathbb{Z}_n)} \varphi(\omega)},$$ 
where $\varphi$ is a function defined in Section \ref{section: high-temperature expansion}. In Figure \ref{figure: simulations}, the 2-form $\omega$ is sampled according to the weight $\widehat{L_\gamma}(\omega) \varphi(\omega)$. In other words, Figure \ref{figure: simulations} shows typical examples of 2-forms that make large contributions to $\mathbb{E}_\varphi[\widehat{L_\gamma}(\omega)]$ for various values of $\beta$ and $\kappa$. The simulations show that for large values of $\beta$, the main contributions stem from 2-forms $\omega$ whose support fill the rectangle spanned by $\gamma$, and that larger values of $\kappa$ are associated with more noise. In the asymptotic regime considered in~Theorem~\ref{theorem: first theorem Z2} (i.e., for small values of $\beta$ and $\kappa$ such that $\kappa \gtrsim \beta$, see Figure \ref{figure: parameters}), the simulations indicate that the main contributions come from 2-forms $\omega$ whose support consists of a small number of plaquettes bordering $\gamma$, together with small scattered islands of plaquettes not bordering $\gamma$ (the number of such islands increases with $\kappa$). This observation motivates our definition of $\mathcal{E}$ as, roughly speaking, the set of 2-forms $\omega$ whose support is a union of isolated plaquettes bordering $\gamma$, together with islands of plaquettes not bordering $\gamma$. 

\begin{figure}[tp]
	\vspace{8ex}
    \renewcommand{\tabcolsep}{1pt}
    \centering
    \begin{tabular}{
    >{\centering\arraybackslash}m{2.8cm} >{\centering\arraybackslash}m{2.8cm} >{\centering\arraybackslash}m{2.8cm} >{\centering\arraybackslash}m{2.8cm} >{\centering\arraybackslash}m{2.8cm}} 
        \includegraphics[width=.2\textwidth, trim=120 40 105 40, clip]{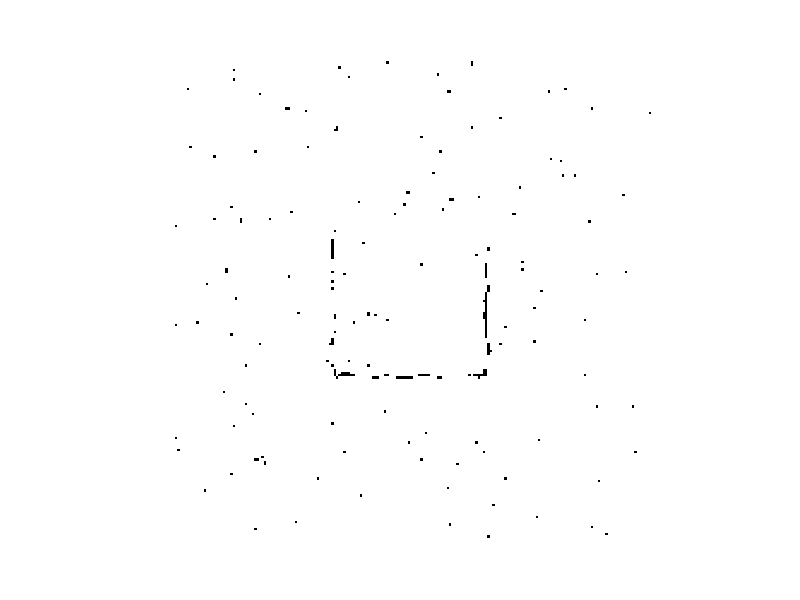}   & \includegraphics[width=.2\textwidth, trim=120 40 105 40, clip]{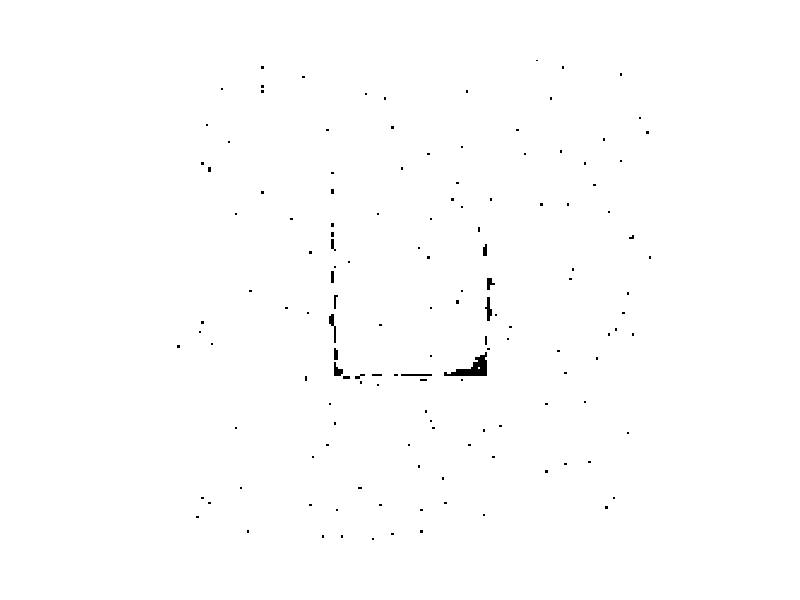}   & \includegraphics[width=.2\textwidth, trim=120 40 105 40, clip]{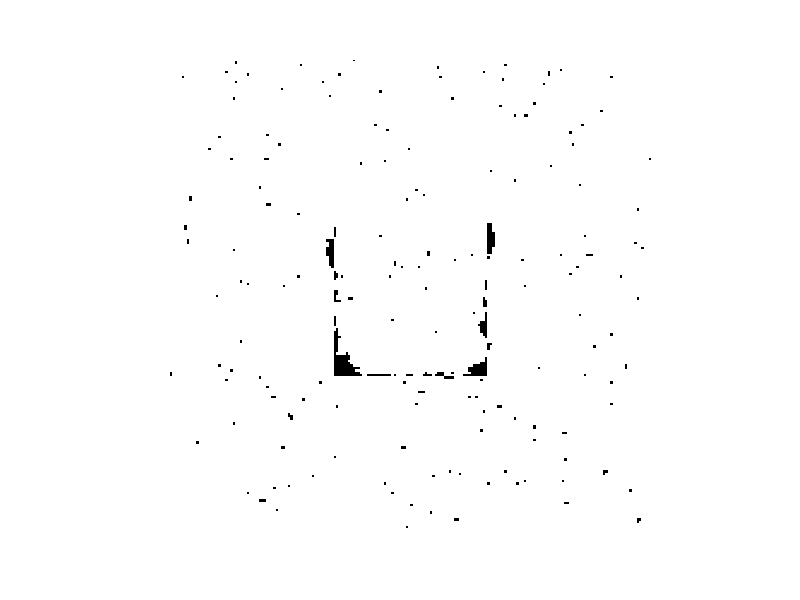}   & \includegraphics[width=.2\textwidth, trim=120 40 105 40, clip]{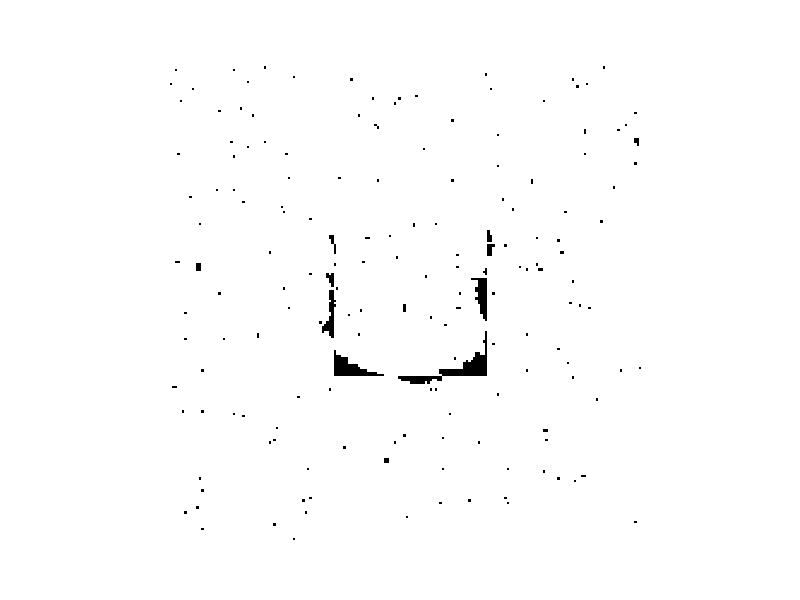}   & \includegraphics[width=.2\textwidth, trim=120 40 105 40, clip]{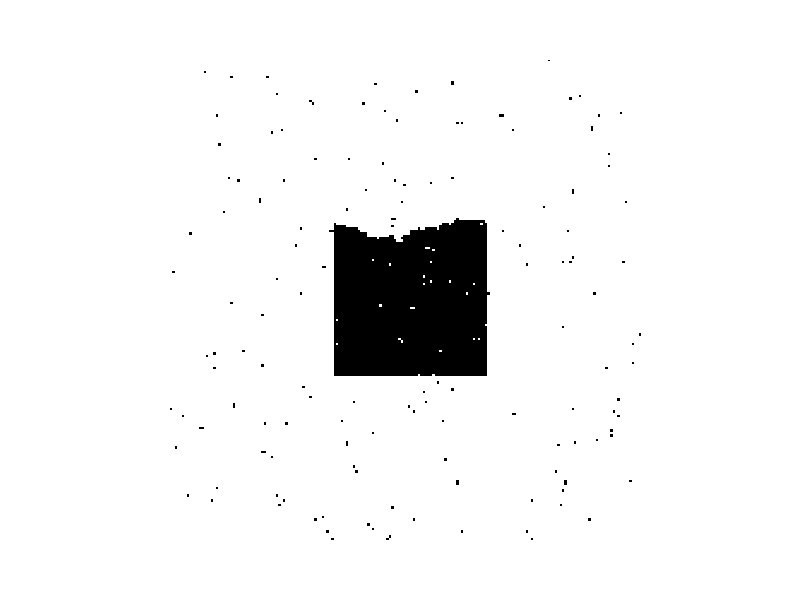} \\[-1ex]
        \includegraphics[width=.2\textwidth, trim=120 40 105 40, clip]{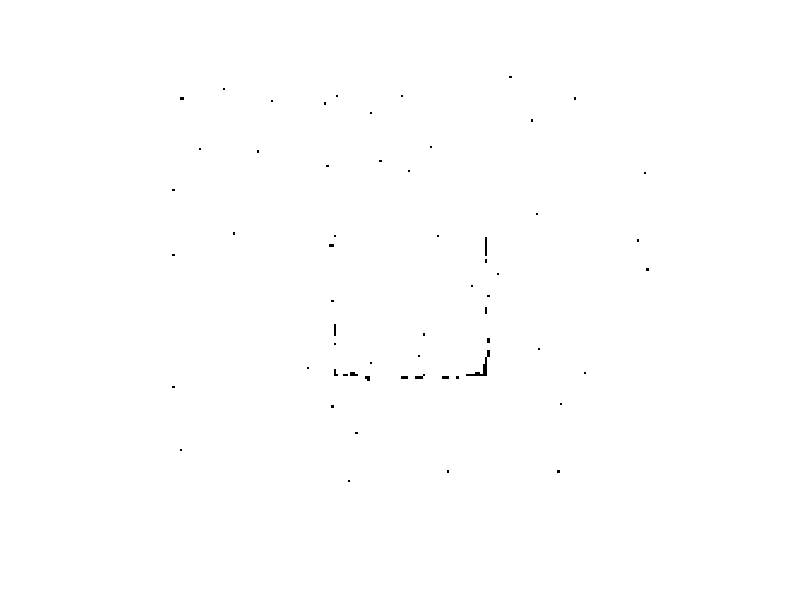}   & \includegraphics[width=.2\textwidth, trim=120 40 105 40, clip]{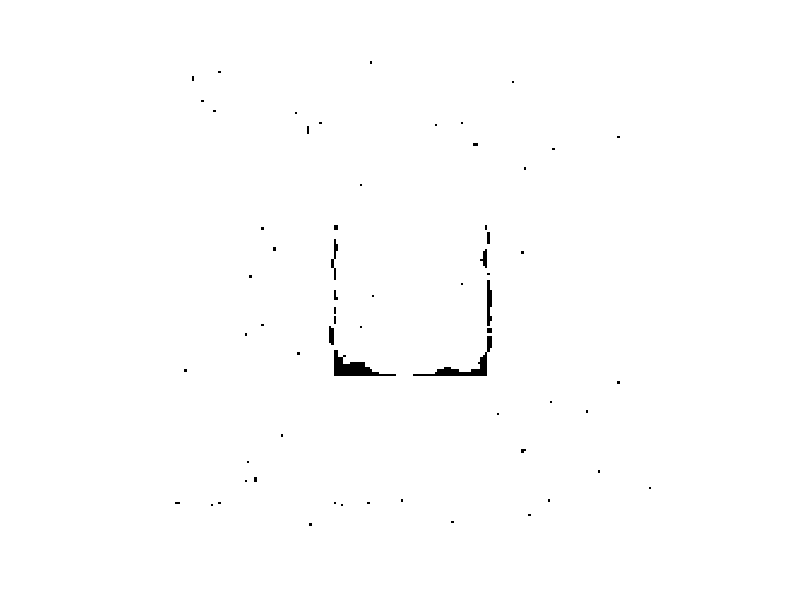}   & \includegraphics[width=.2\textwidth, trim=120 40 105 40, clip]{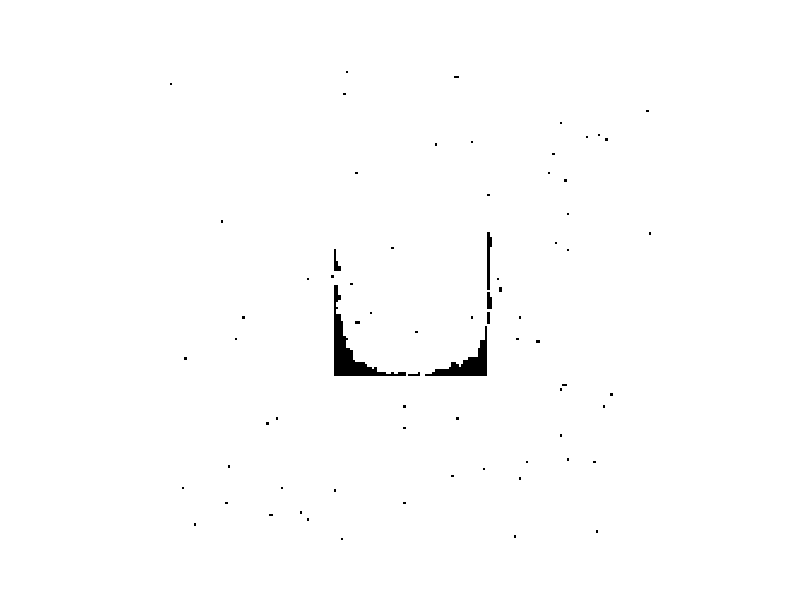}   & \includegraphics[width=.2\textwidth, trim=120 40 105 40, clip]{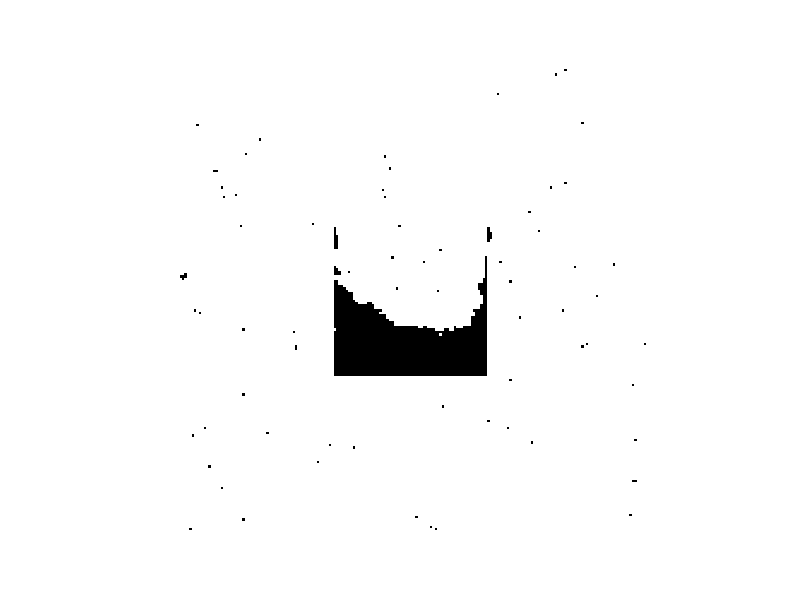}   & \includegraphics[width=.2\textwidth, trim=120 40 105 40, clip]{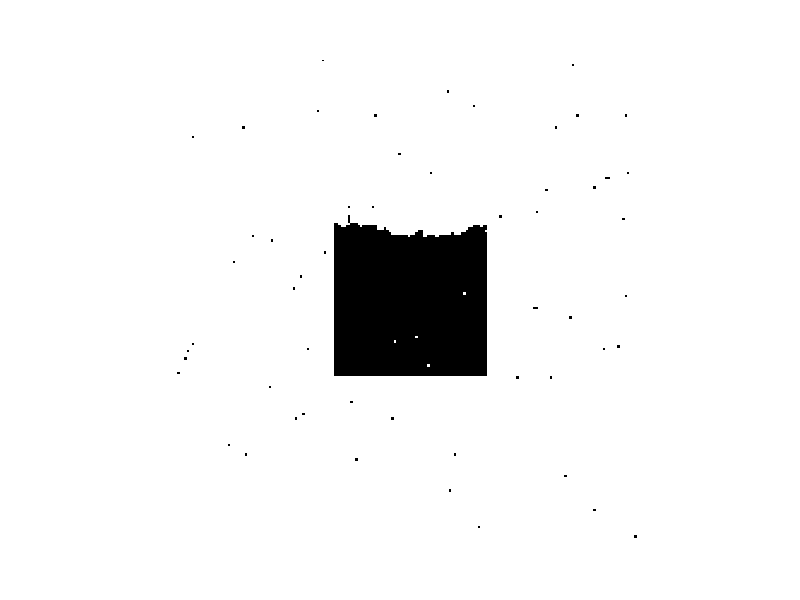}  \\[-1ex]
        \includegraphics[width=.2\textwidth, trim=120 40 105 40, clip]{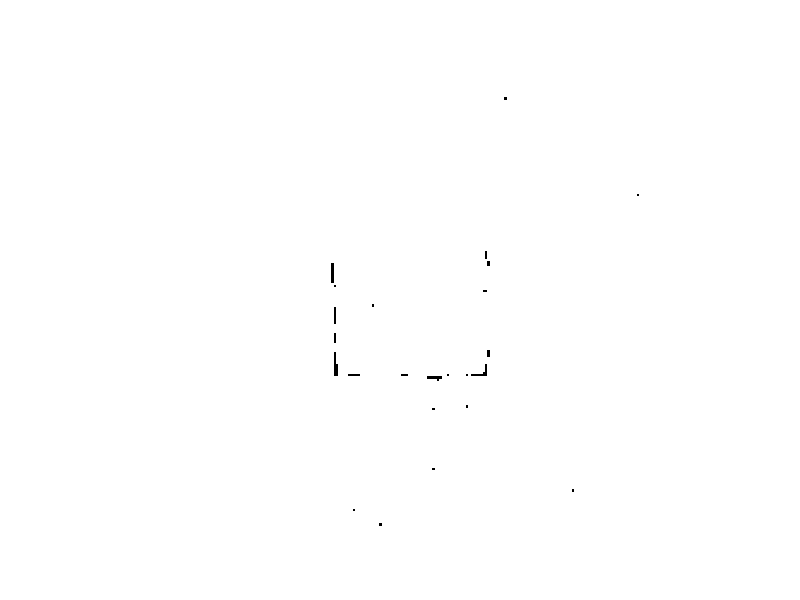}  & \includegraphics[width=.2\textwidth, trim=120 40 105 40, clip]{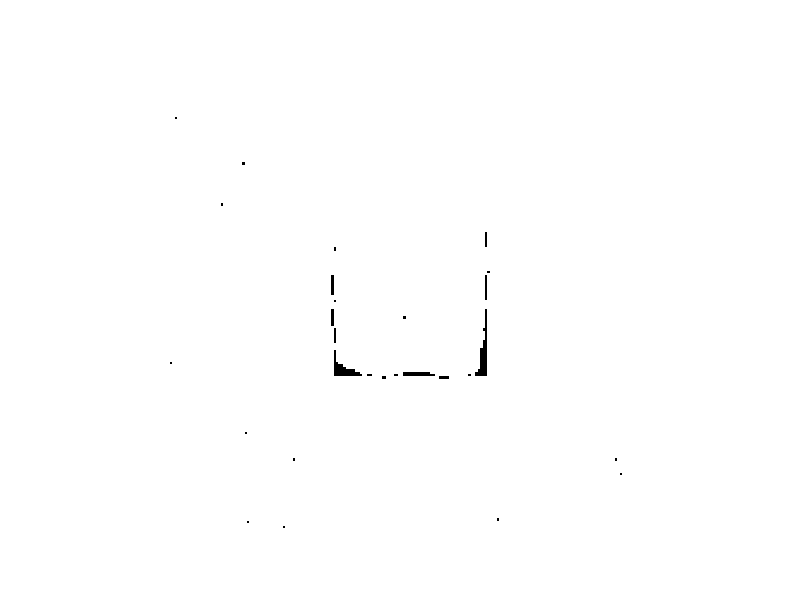}  & \includegraphics[width=.2\textwidth, trim=120 40 105 40, clip]{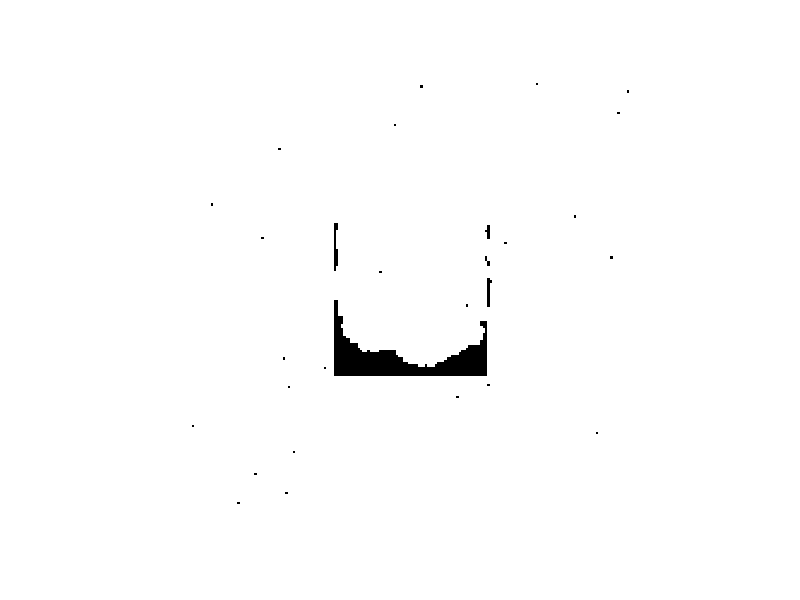}   & \includegraphics[width=.2\textwidth, trim=120 40 105 40, clip]{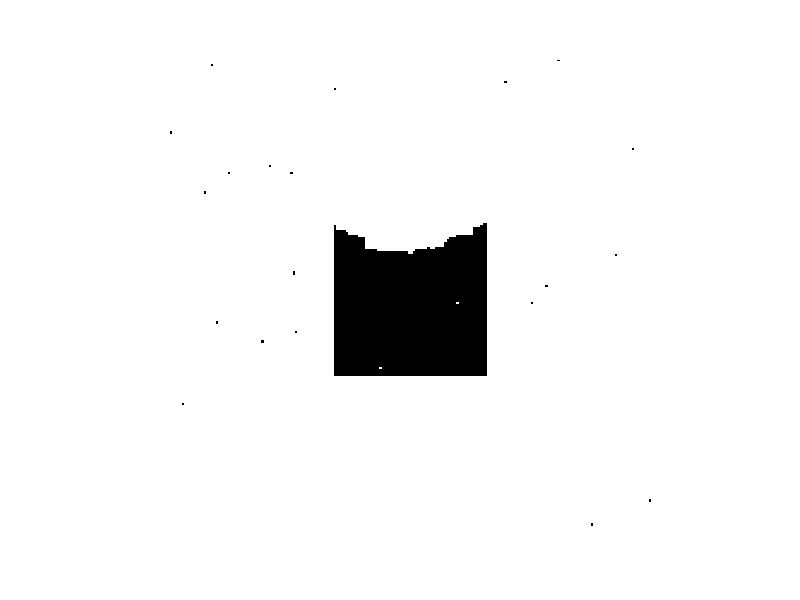}   & \includegraphics[width=.2\textwidth, trim=120 40 105 40, clip]{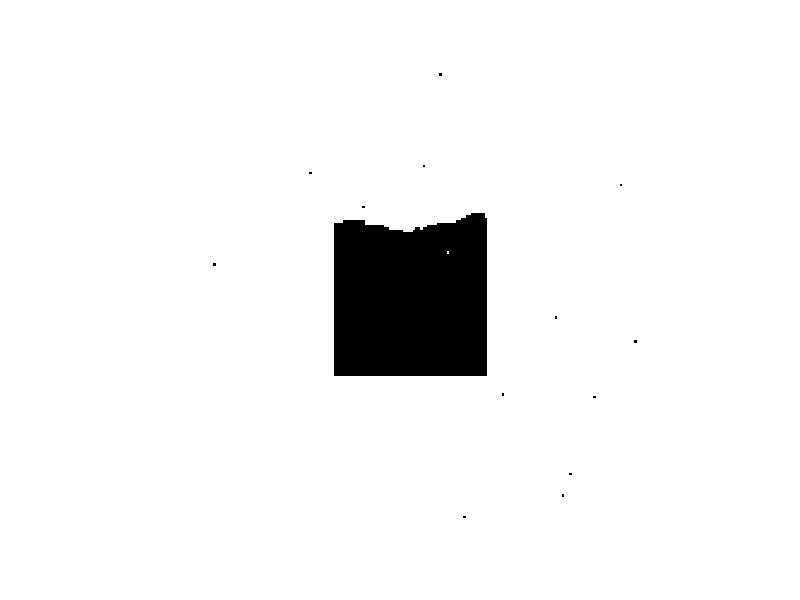}  \\[-1ex]
        \includegraphics[width=.2\textwidth, trim=120 40 105 40, clip]{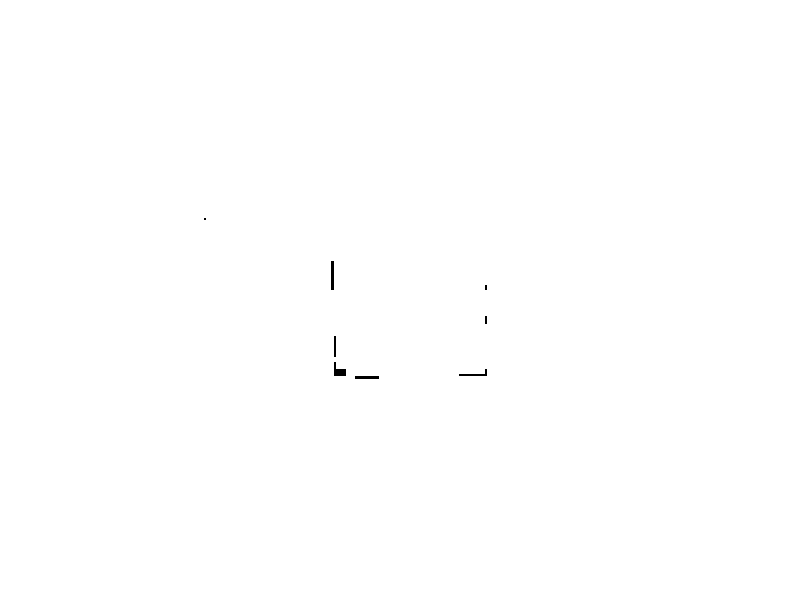} & \includegraphics[width=.2\textwidth, trim=120 40 105 40, clip]{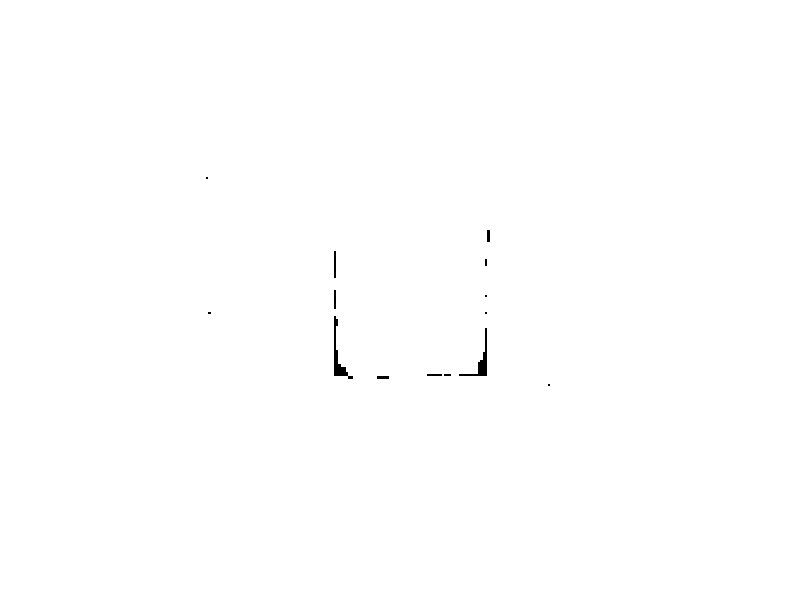} & \includegraphics[width=.2\textwidth, trim=120 40 105 40, clip]{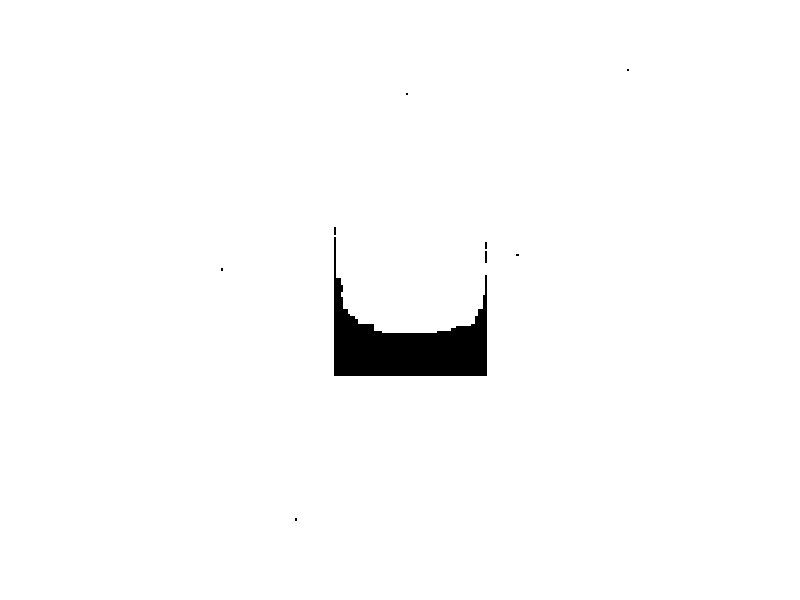}   & \includegraphics[width=.2\textwidth, trim=120 40 105 40, clip]{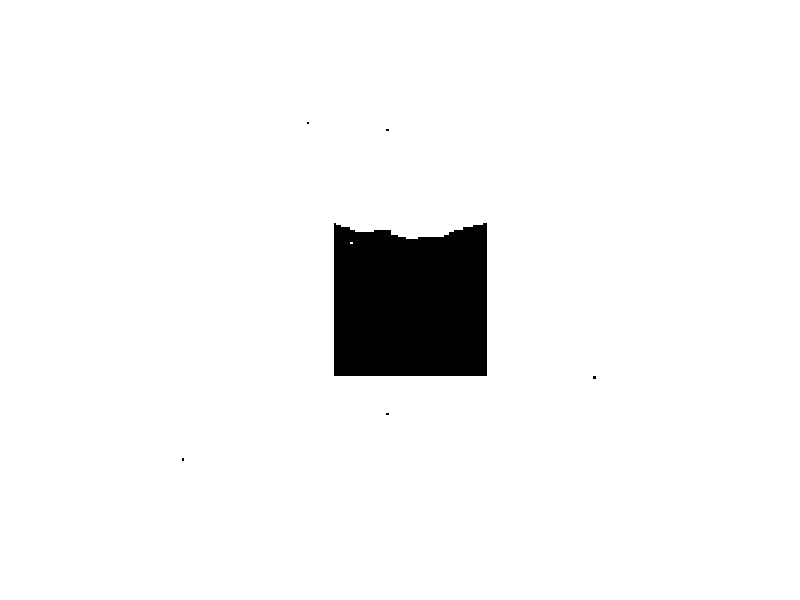} & \includegraphics[width=.2\textwidth, trim=120 40 105 40, clip]{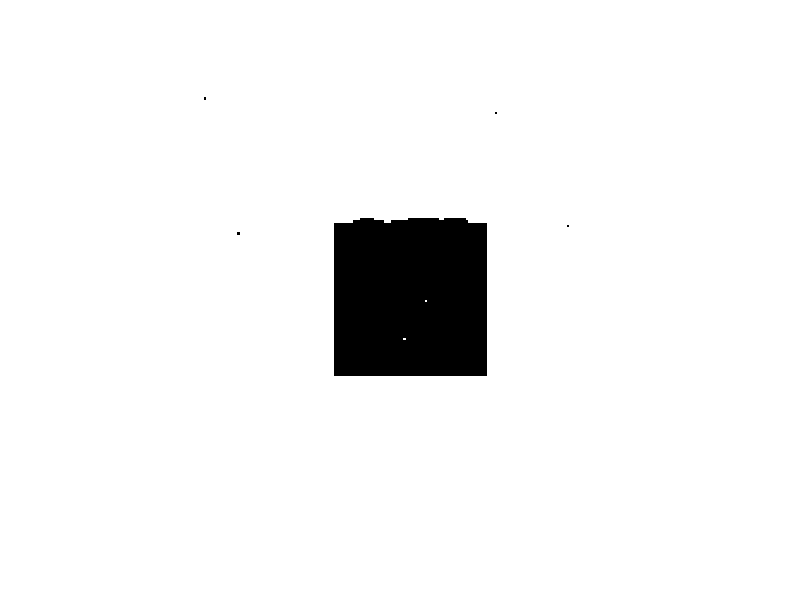} \\[-1ex]
        \includegraphics[width=.2\textwidth, trim=120 40 105 40, clip]{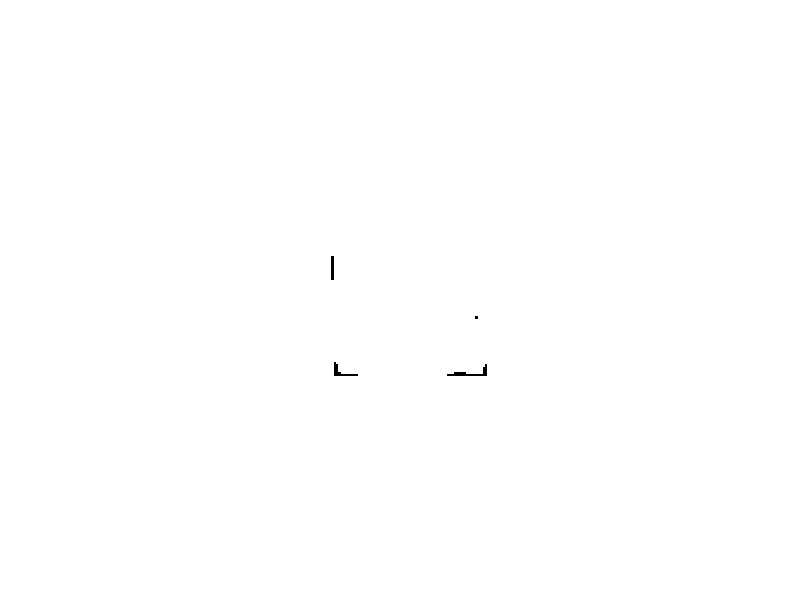} & \includegraphics[width=.2\textwidth, trim=120 40 105 40, clip]{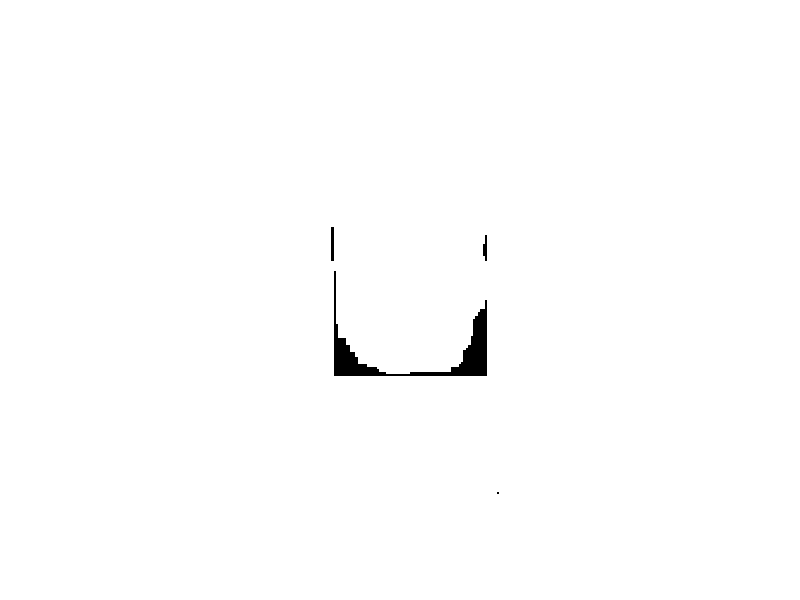} & \includegraphics[width=.2\textwidth, trim=120 40 105 40, clip]{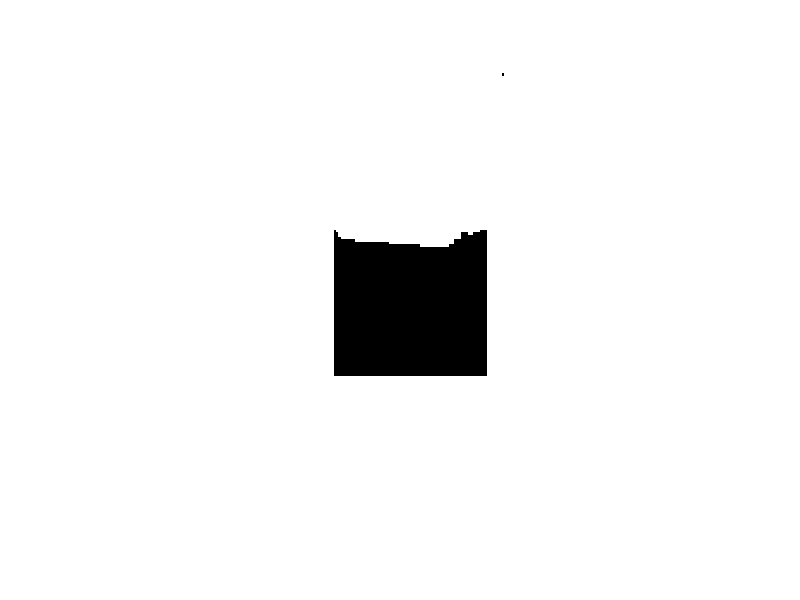}   & \includegraphics[width=.2\textwidth, trim=120 40 105 40, clip]{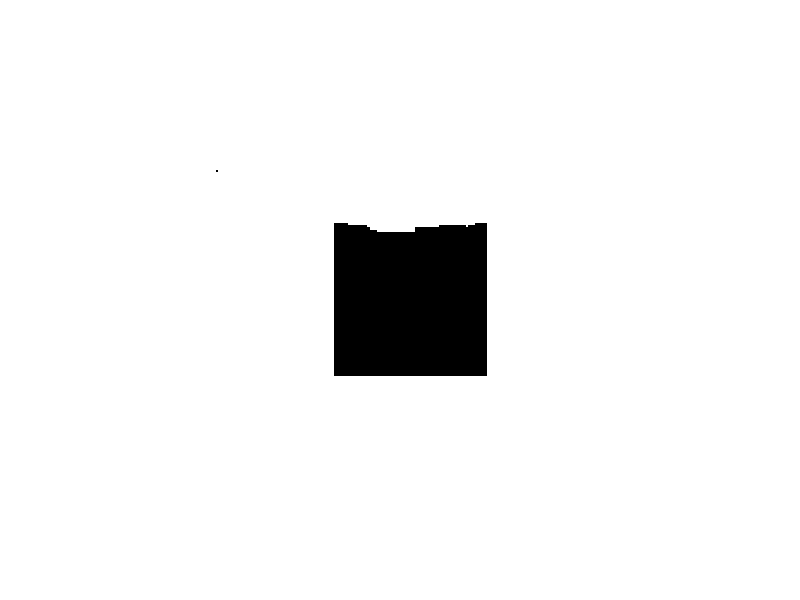} & \includegraphics[width=.2\textwidth, trim=120 40 105 40, clip]{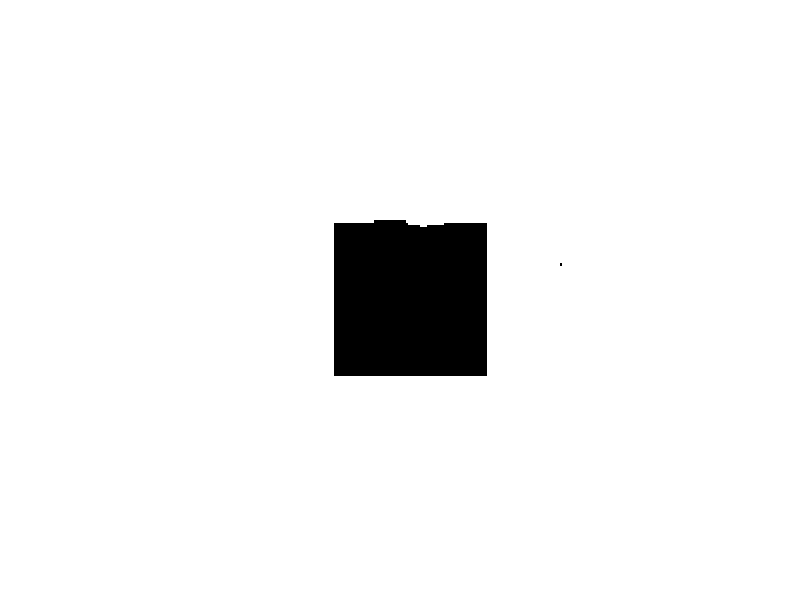} \\[-1ex]
    \end{tabular}
    
    \begin{tikzpicture}[overlay]\footnotesize
    \draw[black!90,->] (-7.7,0) -- (7,0) node[right] {$\beta$};
    \draw[black!90,->] (-7.5,-0.2) -- (-7.5,15) node[above] {$\kappa$};
    \draw (-5.5,0.15) -- (-5.5,-0.15) node[below] {$0.5$};
    \draw (-2.65,0.15) -- (-2.65,-0.15) node[below] {$0.75$};
    \draw (0.2,0.15) -- (0.2,-0.15) node[below] {$0.875$};
    \draw (3.05,0.15) -- (3.05,-0.15) node[below] {$1.0$};
    \draw (5.9,0.15) -- (5.9,-0.15) node[below] {$1.25$};
    
    \draw (-7.35,1.6) -- (-7.65,1.6) node[left] {$0.025$};
    \draw (-7.35,4.8) -- (-7.65,4.8) node[left] {$0.050$};
    \draw (-7.35,8) -- (-7.65,8) node[left] {$0.075$};
    \draw (-7.35,11.2) -- (-7.65,11.2) node[left] {$0.100$};
    \draw (-7.35,14.4) -- (-7.65,14.4) node[left] {$0.125$};
    \end{tikzpicture}
    \vspace{1cm}
    \caption{Illustrations of typical examples of 2-forms $\omega$ that make the largest contributions to the Wilson line expectation value  $\mathbb{E}_\varphi[\widehat{L_\gamma}(\omega)]$ for various values of $\beta$ and $\kappa$, in the case where $\gamma$ is a U-shaped loop and $G = \mathbb{Z}_2$. The simulations are performed on a subset of the $\mathbb{Z}^2$-lattice and a plaquette is drawn black if it lies in the support of the $\mathbb{Z}_2$-valued 2-form $\omega$.} 
    \label{figure: simulations}
\end{figure}

\subsection{Structure of the paper}
In Section~\ref{sec: preliminaries}, we give the necessary background on discrete exterior calculus needed for the rest of the paper.
In Sections~\ref{section: useful functions}, we introduce a few useful functions and discuss their properties.
Next, in Section~\ref{section: high-temperature expansion}, we use the functions of the previous section to describe the high-temperature representation that will be used throughout the rest of the paper. In Section~\ref{section: perimeter law}, we use the high-temperature representation to give a short proof of Theorem~\ref{theorem: perimeter law}.
Next, in Section~\ref{section: activity}, we define a notion of {activity} for the high-temperature representation of our model, and in~Section~\ref{section: useful upper bounds}, we use this concept to give natural upper bounds for certain local events.
In~Section~\ref{section: properties}, we state and prove an inequality concerning geometric properties of \( 2\)-forms. In~Section~\ref{section: bad events}, we use this inequality to give an upper bound on certain ``bad'' events. This bound is then used in~Section~\ref{section: poisson} to compare our model with a related Poisson process.
Finally, in~Section~\ref{sec: main result proof}, we complete the proof of our main result, Theorem~\ref{theorem: first theorem Z2}. Section~\ref{sec: main result proof} also contains a more general version of this result, Theorem~\ref{theorem: first theorem}, which is valid for $G = \mathbb{Z}_n$ for any $n \geq 2$.

\section{Preliminaries}\label{sec: preliminaries}

\subsection{The cell complex}
In this section, we introduce notation for the cell complexes of the lattices \( \mathbb{Z}^m \) and \( B_N \coloneqq [-N,N]^m \cap  \mathbb{Z}^m \) for \( m,N \geq 1 \). This section  closely follows the corresponding section in~\cite{flv2020}, to which we refer the reader for further details.

To simplify notation, we define \( e_1 \coloneqq (1,0,\dots,0) \), \( e_2 \coloneqq (0,1,0,\dots,0) \), \dots, \( e_m \coloneqq (0,\dots,0,1) \).

\subsubsection{Non-oriented cells}

If \( a \in \mathbb{Z}^m \), \( k \in \{ 0,1, \dots, m \} \), and \( \{ j_1,\dots, j_k \} \) is a subset of \( \{ 1,2, \dots, m \} \), we say that the set
\begin{equation*}
    (a; e_{j_1}, \dots, e_{j_k}) \coloneqq \bigl\{ x \in \mathbb{R}^m \colon \exists b_1, \dots, b_k \in [0,1] \text{ such that } x = a + \sum_{i=1}^k b_i e_{j_i}  \bigr\}
\end{equation*}
is a \emph{non-oriented \( k \)-cell}. Note that if \( \sigma \) is a permutation, then both \( (a; e_{j_1}, \dots, e_{j_k}) \) and \( (a; \sigma(e_{j_1}, \dots, e_{j_k})) \) represent the same non-oriented \( k \)-cell.

\subsubsection{Oriented cells}\label{sec: oriented cells}
To each non-oriented $k$-cell \( (a; e_{j_1}, \dots, e_{j_k}) \) with \( a \in \mathbb{Z}^m \), \( k \geq 1 \), and \( 1\leq j_1 < \dots < j_k\leq m \), we associate two \emph{oriented \( k \)-cells}, denoted \(  \frac{\partial}{\partial x^{j_1}}\big|_a \wedge \dots \wedge \frac{\partial}{\partial x^{j_k}}\big|_a\) and \( -\frac{\partial}{\partial x^{j_1}}\big|_a \wedge \dots \wedge \frac{\partial}{\partial x^{j_k}}\big|_a \), with opposite orientation.  
When \( a \in \mathbb{Z}^m \), \( 1\leq j_1 < \dots < j_k\leq m \), and \( \sigma  \) is a permutation of \( \{ 1,2, \dots, k \} \), we define
\begin{equation*}
    \frac{\partial}{\partial x^{j_{\sigma(1)}}}\bigg|_a \wedge \dots \wedge \frac{\partial}{\partial x^{j_{\sigma(k)}}}\bigg|_a 
    \coloneqq 
    \sgn(\sigma) \, 
    \frac{\partial}{\partial x^{j_1}}\bigg|_a \wedge \dots \wedge \frac{\partial}{\partial x^{j_k}}\bigg|_a.
\end{equation*}
If \( \sgn(\sigma)=1 \), then  \( \frac{\partial}{\partial x^{j_{\sigma(1)}}}\big|_a \wedge \dots \wedge \frac{\partial}{\partial x^{j_{\sigma(k)}}}\big|_a \) is said to be \emph{positively oriented}, and if \( \sgn(\sigma)=-1 \), then \( \frac{\partial}{\partial x^{j_{\sigma(1)}}}\big|_a \wedge \dots \wedge \frac{\partial}{\partial x^{j_{\sigma(k)}}}\big|_a  \) is said to be \emph{negatively oriented}. 
Analogously, we define \begin{equation*}
    -\frac{\partial}{\partial x^{j_{\sigma(1)}}}\bigg|_a \wedge \dots \wedge \frac{\partial}{\partial x^{j_{\sigma(k)}}}\bigg|_a 
    \coloneqq 
    -\sgn(\sigma) \, 
    \frac{\partial}{\partial x^{j_1}}\bigg|_a \wedge \dots \wedge \frac{\partial}{\partial x^{j_k}}\bigg|_a,
\end{equation*}
and say that \( -\frac{\partial}{\partial x^{j_{\sigma(1)}}}\bigg|_a \wedge \dots \wedge \frac{\partial}{\partial x^{j_{\sigma(k)}}}\bigg|_a  \) is positively oriented if \( -\sgn(\sigma) = 1 \), and negatively oriented if \( -\sgn(\sigma) = -1. \)

Let $\mathcal{L} = \mathbb{Z}^m$ or $\mathcal{L} = B_N \subseteq \mathbb{Z}^m$. An oriented cell \( \frac{\partial}{\partial x^{j_1}}\big|_a \wedge \dots \wedge \frac{\partial}{\partial x^{j_k}}\big|_a \) is said to be in \( \mathcal{L} \) if all corners of \( (a;e_{j_1},\dots, e_{j_k})\) belong to \( \mathcal{L} \); otherwise it is said to be {\it outside} $\mathcal{L}$. 
The set of all oriented \( k \)-cells in \( \mathcal{L} \) will be denoted by \( C_k(\mathcal{L}). \) The set of all positively and negatively oriented cells in \( C_k(\mathcal{L}) \) will be denoted by \( C_k(\mathcal{L})^+\) and \( C_k(\mathcal{L})^-\), respectively.

A non-oriented 0-cell \( a\in \mathbb{Z}^m \) is simply a point, and to each point we associate two oriented \(0\)-cells \( a^+ \) and \( a^-  \) with opposite orientation. We let \( C_0(\mathcal{L}) \) denote the set of all oriented \( 0 \)-cells.

When \( c \in C_k(\mathcal{L}), \) we let 
\begin{equation}
c^+ \coloneqq \begin{cases}
	c &\text{if } c \in C_k(\mathcal{L})^+ \cr 
	-c &\text{if } c \in C_k(\mathcal{L})^-.
\end{cases}
\end{equation}
In words, for \( c \in C_k(\mathcal{L}), \) \( c^+ \) is the positively oriented cell at the same position as \( c. \) 

Oriented 1-cells will be referred to as~\emph{edges}, and oriented 2-cells will be referred to as~\emph{plaquettes}.

\subsubsection{\( k \)-chains}\label{sec: chains}

The space of finite formal sums of positively oriented \( k \)-cells with integer coefficients will be denoted by \( C_k(\mathcal{L},\mathbb{Z}) \). 
Elements of \( C_k(\mathcal{L},\mathbb{Z}) \) will be referred to as \emph{\( k \)-chains}. 
If \( q \in C_k(\mathcal{L},\mathbb{Z}) \) and \( c \in C_k(\mathcal{L})^+ \), we let \( q[c] \) denote the coefficient of \( c \) in \( q \).
If \( c \in C_k(\mathcal{L})^- \), we let \( q[c]\coloneqq -q[-c]. \)
For \(q,q' \in  C_k(\mathcal{L},\mathbb{Z}) \), we define
\begin{equation*}
    q+q' \coloneqq 
    \sum_{c\, \in C_k(\mathcal{L})^+} \bigl(q[c] + q'[c] \bigr) c.
\end{equation*}
Using this operation, \( C_k(\mathcal{L},\mathbb{Z}) \) becomes a group. We define the \emph{support} of \( q \in C_k(\mathcal{L},G) \) by
\begin{equation*}
    \support q \coloneqq \bigl\{ c \in C_k(\mathcal{L})^+ \colon q[c] \neq 0 \bigr\}.
\end{equation*}
To simplify notation, for \( q \in C_k(\mathcal{L},G) \) and \( c\in C_k(\mathcal{L}) \), we write \( c \in q \) if either
\begin{enumerate}
    \item \( c \in C_k(\mathcal{L})^+ \) and \( q[c]>0\), or
    \item \( c \in C_k(\mathcal{L})^- \) and \( q[-c]<0. \)
\end{enumerate}

\subsubsection{The boundary of a cell}\label{sec: cell boundary}

When \( k \geq 2 \), we define the \emph{boundary} \(\partial c \in C_{k-1}(\mathcal{L}, \mathbb{Z})\) of  \( c = \frac{\partial}{\partial x^{j_1}}\big|_a \wedge \dots \wedge \frac{\partial}{\partial x^{j_k}}\big|_a \in C_k(\mathcal{L})\) by
\begin{align}
    \label{eq: boundary chain}
        &\partial c \coloneqq \sum_{k' \in \{ 1,\dots, k \}}  \biggl(
        (-1)^{k'}  \frac{\partial}{\partial x^{j_1}}\bigg|_a \wedge \dots \wedge \frac{\partial}{\partial x^{j_{k'-1}}}\bigg|_a \wedge  \frac{\partial}{\partial x^{j_{k'+1}}}\bigg|_a \wedge \dots \wedge \frac{\partial}{\partial x^{j_k}}\bigg|_a
        \\\nonumber
        &\qquad + (-1)^{k'+1}   
        \frac{\partial}{\partial x^{j_1}}\bigg|_{a + e_{j_{k'}}} \wedge \dots \wedge \frac{\partial}{\partial x^{j_{k'-1}}}\bigg|_{a + e_{j_{k'}}} \wedge  \frac{\partial}{\partial x^{j_{k'+1}}}\bigg|_{a + e_{j_{k'}}} \wedge \dots \wedge \frac{\partial}{\partial x^{j_k}}\bigg|_{a + e_{j_{k'}}} 
        \biggr). 
\end{align}
When \( c \coloneqq \frac{\partial}{\partial x^{j_1}}\big|_a \in C_1(\mathcal{L})\) we define the boundary \( \partial c \in C_0(\mathcal{L},\mathbb{Z}) \) by
\begin{equation*}
    \partial c = (-1)^1 a^+ + (-1)^{1+1} 
    (a+e_{j_1})^+ = (a+e_{j_1})^+ - a^+.
\end{equation*}
We extend the definition of \( \partial \) to \( k \)-chains \( q \in C_k(\mathcal{L},\mathbb{Z}) \) by linearity.
One verifies, as an immediate consequence of this definition, that if \( k \in \{ 2,3, \dots, m \} \), then \( \partial \partial c = 0 \) for any \( c \in \Omega_k(\mathcal{L}). \)

\subsubsection{The coboundary of an oriented cell}\label{sec: coboundary}

If \( k \in \{ 0,1, \dots, n-1 \} \) and \( c \in C_k(\mathcal{L})\) is an oriented \( k \)-cell, we define the \emph{coboundary} \( \hat \partial c \in C_{k+1}(\mathcal{L})\) of \( c \) as the \( (k+1) \)-chain 
\begin{equation*}
	\hat \partial c \coloneqq \sum_{c' \in C_{k+1}(\mathcal{L})} \bigl(\partial c'[c] \bigr) c'.
\end{equation*}
Note in particular that if \( c' \in C_{k+1}(\mathcal{L}),\) then \( \hat \partial c[c'] = \partial c'[c]. \) We extend the definition of \( \hat{\partial} \) to \( k \)-chains \( q \in C_k(\mathcal{L},\mathbb{Z}) \) by linearity.

\subsubsection{The boundary of a box}

An oriented \( k \)-cell \( c = \frac{\partial}{\partial x^{j_1}}\big|_a \wedge \dots \wedge \frac{\partial}{\partial x^{j_k}}\big|_a \in C_k(B_N)\) is said to be a \emph{boundary cell} of a box \( B = \bigl( [a_1,b_1]\times \dots \times [a_m,b_m] \bigr) \cap \mathbb{Z}^m \subseteq B_N\), or equivalently to be in \emph{the boundary} of \( B \), if the non-oriented cell \( (a;e_{j_1}, \dots, e_{j_k}) \) is a subset of the boundary of 
\(  [a_1,b_1]\times \dots \times [a_m,b_m]. \)

\subsection{Discrete exterior calculus}
In what follows, we give a brief overview of discrete exterior calculus on the cell complexes of \( \mathbb{Z}^m \) and \( B = [a_1,b_1] \times \dots \times [a_m,b_m]  \cap  \mathbb{Z}^m \) for \( m \geq 1 \). As with the previous section, this section will closely follow the corresponding section in~\cite{flv2020}, where we refer the reader for further details and proofs.

All of the results in this subsection are obtained under the assumption that an abelian group \( G \), which is not necessarily finite, has been given. In particular, they all hold for both \( G=\mathbb{Z}_n \) and \( G=\mathbb{Z} \).

\subsubsection{Discrete differential forms}\label{sec: ddf}

A homomorphism from the group \( C_k(\mathcal{L},\mathbb{Z}) \) to the group \( G \) is called a \emph{\( k \)-form}. The set of all such \( k\)-forms will be denoted by \( \Omega^k(\mathcal{L},G) \). This set becomes an abelian group if we add two homomorphisms by adding their values in \( G \).

The set $C_k(\mathcal{L})^+$ of positively oriented $k$-cells is naturally embedded in  $C_k(\mathcal{L},\mathbb{Z})$ via the map  $c \mapsto 1\cdot c$, where \( 1 \cdot c \) refers to the \( k\)-chain \( q \) with \(q[c] = 1 \) and \( q[c']=0\) for all \( c' \in C_k(\mathcal{L})^+\smallsetminus \{ c \}.\)  We will frequently identify $c \in C_k(\mathcal{L})^+$ with the $k$-chain $1\cdot c$ using this embedding. Similarly, we will identify a negatively oriented $k$-cell $c \in C_k(\mathcal{L})^-$ with the $k$-chain $(-1) \cdot (-c)$. 
In this way, a $k$-form $\omega$ can be viewed as a \( G \)-valued function on \( C_k(\mathcal{L}) \) with the property that \( \omega(c) = -\omega(-c) \) for all \( c \in C_k(\mathcal{L}) \). Indeed, if \( \omega \in \Omega^k(\mathcal{L},G) \) and \( q = \sum a_i c_i \in C_k(\mathcal{L},\mathbb{Z}) \), we have
\begin{equation*}
    \omega(q) = \omega \Bigl(\sum a_i c_i \Bigr) = \sum a_i \omega(c_i),
\end{equation*}
and hence a \( k \)-form is uniquely determined by its values on positively oriented \( k \)-cells.

If \( \omega \) is a \( k \)-form, it is useful to represent it by the formal expression 
\begin{equation*}
    \sum_{1 \leq j_1 < \dots < j_k \leq m} \omega_{j_1\dots j_k} dx^{j_1} \wedge \cdots \wedge dx^{j_k}.
\end{equation*}
where \( \omega_{j_1\dots j_k} \) is a \( G \)-valued function on the set of all \( a \in \mathbb{Z}^m \) such that \( \frac{\partial}{\partial x^{j_1}} \big|_a \wedge \dots \wedge \frac{\partial}{\partial x^{j_k}}\big|_a \in C_k(\mathcal{L})\), defined by
\begin{equation*}
    \omega_{j_1 \dots j_k}(a) = \omega \biggl( \frac{\partial}{\partial x^{j_1}} \bigg|_a \wedge \dots \wedge \frac{\partial}{\partial x^{j_k}} \bigg|_a \biggr).
\end{equation*} 

If \( 1\leq j_1 < \dots < j_k\leq m \) and \( \sigma  \) is a permutation of \( \{ 1,2, \dots, k \} \), we define
\begin{equation*}
    dx^{j_{\sigma(1)}}  \wedge \dots \wedge d x^{j_{\sigma(k)}}
    \coloneqq 
    \sgn(\sigma) \, 
    d  x^{j_1} \wedge \dots \wedge d x^{j_k},
\end{equation*} 
and if \( 1 \leq j_1,\dots, j_k \leq n \) are such that \( j_i = j_{i'} \) for some \( 1 \leq i < i' \leq k \), then we let
\begin{equation*}
    d  x^{j_1} \wedge \dots \wedge d x^{j_k} \coloneqq 0.
\end{equation*}

Given a \( k \)-form \( \omega \), we let \( \support \omega \) denote the support of \( \omega \), i.e., the set of all oriented \( k \)-cells \( c \) such that \( \omega(c) \neq 0 \). Note that $\support \omega$ always contains an even number of elements. 

\subsubsection{The exterior derivative}\label{sec: derivative}
Given \( h \colon \mathbb{Z}^m \to G \), \( a \in \mathbb{Z}^m \), and \( i \in \{1,2, \dots, m \} \), we let 
\begin{equation*}
    \partial_i h(a) \coloneqq h(a+e_i) - h(a) .
\end{equation*}
If \( k \in \{ 0,1,2, \dots, m \} \) and \( \omega \in \Omega^k(\mathcal{L},G) \), we define the \( (k+1) \)-form \( d\omega \in \Omega^{k+1}(\mathcal{L},G) \) by
\begin{equation*}
    d\omega = \sum_{1 \leq j_1 < \dots < j_k \leq m} \sum_{i=1}^m \partial_i \omega_{j_1,\dots,j_k} \,  dx^i \wedge (dx^{j_1} \wedge \dots \wedge dx^{j_k}).
\end{equation*} 
The operator \( d \) is called the \emph{exterior derivative.} 
Using~\eqref{eq: boundary chain}, one can show that for \( \omega\in \Omega^k(\mathcal{L},G) \) and \( c \in  C_{k+1}(\mathcal{L},\mathbb{Z}) \), we have \( d\omega(c) = \omega(\partial c). \) This equality is a discrete version of Stokes' theorem. 
Recalling that if \( k \in \{ 2,3,\dots, m-2\}  \) and \( c \in C_{k+2}(\mathcal{L}) \), then \( \partial \partial c = 0, \) it follows from the discrete Stokes theorem that \( dd\omega = 0 \) for any \( \omega \in \Omega^{k}(\mathcal{L},G). \)

\subsubsection{The coderivative}\label{sec: coderivative}

Given \( h \colon \mathbb{Z}^m \to G \), \( a \in \mathbb{Z}^m \), and \( i \in \{ 1,2, \dots, m \} \), we let 
\begin{equation*}
    \bar \partial_i h(a) \coloneqq h(a) - h(a-e_i).
\end{equation*}
When \( k \in \{ 1,2, \dots, m \} \) and \( \omega \in \Omega^k(\mathcal{L},G) \), we define the \( (k-1) \)-form \( \delta \omega \in \Omega^{k-1}(\mathcal{L},G) \)  by
\begin{equation*}
    \delta \omega \coloneqq 
    \sum_{1 \leq j_1 < \dots < j_k \leq m}
    \sum_{i = 1}^k  
    (-1)^{i} \, \bar \partial_{j_i} \omega 
    \, dx^{j_1} \wedge \cdots \wedge dx^{j_{i-1}}\wedge dx^{j_{i+1}} \wedge \cdots \wedge  dx^{j_{k}}.
\end{equation*}
The operator \( \delta \) is called the \emph{coderivative}. 
Analogously as for the discrete derivative, one can show that for any \( \omega\in \Omega^k(\mathcal{L},G) \) and \( c \in C_{k-1} (\mathcal{L},\mathbb{Z}) \), we have
    \begin{equation*}
	    \delta\omega(c) = \omega(\hat \partial c).
    \end{equation*}
In particular, if \( \omega \in \Omega^2(\mathcal{L},G) \) and \( e \in C_1 (\mathcal{L},\mathbb{Z}) \), then
\begin{align}\label{deltaomegae}
    &\delta \omega(e) = \omega(\hat \partial e) = \omega\Bigl( \sum_{p \in \hat \partial e} p \Bigr) = \sum_{p \in \hat \partial e} \omega(p) = \sum_{p \in \support \hat \partial e} \partial p[e] \omega(p).
\end{align}

\subsubsection{Restrictions of forms}

If \( \omega \in \Omega^k(\mathcal{L},G) \), \( C \subseteq C_k(\mathcal{L}) \) and \( c \in C \), we define 
\begin{equation*}
    \omega|_C(c) \coloneqq 
    \begin{cases}
        \omega(c) &\text{if } c \in \pm C, \cr 
        0 &\text{else.}
    \end{cases}
\end{equation*}

 \subsection{Unitary gauge}\label{sec: unitary gauge}
In this section, we introduce gauge transformations, and describe how these can be used to rewrite the Wilson line expectation as an expectation with respect to a slightly simpler probability measure.
 
Before we can state the main results of this section, we need to briefly discuss gauge transformations.  
To this end, for \( \eta \in \Omega^0(B_N,G) \), consider the bijection \( \tau \coloneqq \tau_\eta \coloneqq \tau_\eta^{(1)} \times \tau_\eta^{(2)} \colon \Omega^1(B_N,G)  \times \Omega^0(B_N,G)  \to  \Omega^1(B_N,G) \times \Omega^0(B_N,G) \), defined by
\begin{equation}\label{eq: gauge transform}
    \begin{cases}
     \sigma(e) \mapsto -\eta(x) +\sigma(e) + \eta(y), & e=(x,y)\in C_1(B_N), \cr
     \phi(x) \mapsto  \phi(x) + \eta(x), & x \in C_0(B_N).
    \end{cases}
\end{equation}
Any mapping \( \tau \) of this form is called a \emph{gauge transformation}, and functions \( f: \Omega^1(B_N,G) \times \Omega^0(B_N,G)  \to \mathbb{C} \) which are invariant under such mappings in the sense that \(f= f \circ \tau\) are said to be \emph{gauge invariant}.
For \( \beta, \kappa \geq   0 \) and  \( \sigma \in \Omega^1(B_N,G) \), we define 
\begin{equation}\label{eq: fixed length unitary measure}
    \mu_{N,\beta,\kappa}(\sigma) 
    \coloneqq
    Z_{N,\beta,\kappa}^{-1}\exp\pigl(\beta \sum_{p \in C_2(B_N)}      \rho\bigl(d  \sigma(p)\bigr) + \kappa \sum_{e \in C_1(B_N)}   \rho \bigl( \sigma(e)\bigr)\pigr)  ,
\end{equation} 
where \( Z_{N,\beta,\kappa}^{-1} \) is a normalizing constant which ensures that \( \mu_{N,\beta,\kappa} \) is a probability measure. We let \( \mathbb{E}_{N,\beta,\kappa} \) denote the corresponding expectation. Note that as \( \rho(d\sigma(-p)) = \overline{\rho(d\sigma(p))} \) and \( \rho(\sigma(-e)) = \overline{\rho(\sigma(e))} \), the measure \( \mu_{N,\beta,\kappa} \) is real. 

The main reason that gauge transformations are useful to us is the following result.

\begin{proposition}[Proposition~2.21 in~\cite{flv2021}]\label{proposition: unitary gauge one dim}
Let \( \beta,\kappa \geq 0 \) and assume that the function \( f \colon \Omega^1(B_N,G) \times \Omega^0(B_N,G) \to \mathbb{C} \) is gauge invariant. Then 
\begin{equation*}
    \mathbb{E}_{N,\beta,\kappa,\infty}\bigl[f(\sigma,\phi)\bigr] = 
    \mathbb{E}_{N,\beta,\kappa}\bigl[f(\sigma,0)\bigr],
\end{equation*} 
where \( \mathbb{E}_{N,\beta,\kappa,\infty} \) is the expectation corresponding to the measure \( \mu_{N,\beta,\kappa,\infty} \) introduced in~\eqref{eq: London limit measure}.
\end{proposition}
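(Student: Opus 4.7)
The plan is to use the gauge transformation $\tau_\eta$ with the specific choice $\eta = -\phi$ to eliminate the Higgs field, and then exploit gauge invariance of the action together with a change of variables. Observe that for $\eta = -\phi$ the second component of $\tau_{-\phi}$ sends $\phi(x) \mapsto \phi(x) + \eta(x) = 0$, while the first component sends $\sigma(e) \mapsto \sigma(e) - \phi(\partial e)$ for every edge $e = (x,y)$. Since $f$ is assumed gauge invariant, $f(\sigma,\phi) = f(\tau_{-\phi}(\sigma,\phi)) = f(\sigma - d\phi,\, 0)$, where $d\phi$ is the 1-form $e \mapsto \phi(\partial e)$.

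The next step is to verify that the action $S_{N,\beta,\kappa}$ is itself gauge invariant, so that the Boltzmann weight transforms correctly. For the Wilson part, gauge invariance follows from the discrete Stokes theorem: for any plaquette $p$,
\[
d(\sigma - d\phi)(p) = d\sigma(p) - \sum_{e \in \partial p} \phi(\partial e) = d\sigma(p) - \phi(\partial \partial p) = d\sigma(p),
\]
because $\partial \partial p = 0$. For the coupling term, the definition is designed so that each summand $\rho(\sigma(e) - \phi(\partial e))$ is manifestly invariant: replacing $(\sigma,\phi)$ by $\tau_{-\phi}(\sigma,\phi) = (\sigma - d\phi, 0)$ leaves the argument of $\rho$ unchanged. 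Hence $S_{N,\beta,\kappa}(\sigma,\phi) = S_{N,\beta,\kappa}(\sigma - d\phi,\, 0)$.

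Combining these two invariances,
\[
\mathbb{E}_{N,\beta,\kappa,\infty}\bigl[f(\sigma,\phi)\bigr] = Z_{N,\beta,\kappa,\infty}^{-1} \sum_{\phi \in \Omega^0(B_N,G)} \sum_{\sigma \in \Omega^1(B_N,G)} f(\sigma - d\phi,\, 0)\, e^{-S_{N,\beta,\kappa}(\sigma - d\phi,\, 0)}.
\]
For each fixed $\phi$, the map $\sigma \mapsto \sigma - d\phi$ is a bijection of $\Omega^1(B_N,G)$, so substituting $\sigma' = \sigma - d\phi$ makes the inner sum independent of $\phi$. The outer sum over $\phi$ then contributes only the factor $|\Omega^0(B_N,G)|$. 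Comparing with the definition \eqref{eq: fixed length unitary measure}, the surviving $\sigma'$-sum is exactly $Z_{N,\beta,\kappa} \cdot \mathbb{E}_{N,\beta,\kappa}[f(\sigma,0)]$. Applying the same identity with $f \equiv 1$ gives $Z_{N,\beta,\kappa,\infty} = |\Omega^0(B_N,G)| \, Z_{N,\beta,\kappa}$, and dividing yields the claimed equality.

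There is essentially no hard step: the content is the interplay between $dd=0$ and the form of the coupling, both of which are built into the definitions. The only mild subtlety to check is that the bijection $\sigma \mapsto \sigma - d\phi$ preserves the antisymmetry constraint $\sigma(-e) = -\sigma(e)$ defining $\Omega^1(B_N,G)$, which is immediate since $d\phi$ is itself a 1-form, and similarly that $\Omega^0(B_N,G)$ contains $-\phi$ whenever it contains $\phi$. Given this, the proof reduces to a two-line change of variables followed by cancellation of partition functions.
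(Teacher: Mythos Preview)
Your proof is correct and follows exactly the approach sketched in the paper: apply the gauge transformation $\tau_{-\phi}$ to send $\phi$ to $0$, use gauge invariance of both $f$ and the action, and then perform the change of variables $\sigma' = \sigma - d\phi$. The paper does not spell out a proof here (it cites the result from~\cite{flv2021} and only records the key idea), but your write-up is precisely the standard execution of that idea.
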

The main idea of the proof of Proposition~\ref{proposition: unitary gauge one dim} is to perform a change of variables, where we for each pair \( (\sigma,\phi) \) apply the gauge transformation \( \tau_{-\phi}, \) thus mapping \( \phi \) to \( 0 \). After having applied this gauge transformation, we say that we are working in the \emph{unitary gauge}.

Since the function \( (\sigma,\phi) \mapsto  L_\gamma(\sigma,\phi) \) is gauge invariant for any path \( \gamma \), one obtains the following result as an immediate corollary of Proposition~\ref{proposition: unitary gauge one dim}.

\begin{corollary}[Corollary 2.17 in~\cite{f2021b}]\label{corollary: unitary gauge}
    Let \( \beta \in [0,\infty],\) \( \kappa \geq 0 \), and let \( \gamma \) be a path in \( C_1(B_N) \). Then
    \begin{equation*}
        \mathbb{E}_{N,\beta,\kappa,\infty}\bigl[L_\gamma(\sigma,\phi)\bigr] =
        \mathbb{E}_{N,\beta,\kappa}\bigl[L_\gamma(\sigma,0)\bigr] =
        \mathbb{E}_{N,\beta,\kappa}\pigl[\rho\bigl(\sigma(\gamma)\bigr)\pigr]. 
    \end{equation*} 
\end{corollary}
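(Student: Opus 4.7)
The plan is to verify that $L_\gamma$ is gauge invariant, then invoke Proposition~\ref{proposition: unitary gauge one dim} and finally compute $L_\gamma(\sigma, 0)$ explicitly; this is purely a bookkeeping exercise.

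First I would check gauge invariance of $(\sigma, \phi) \mapsto L_\gamma(\sigma, \phi) = \rho(\sigma(\gamma) - \phi(\partial \gamma))$. Fix $\eta \in \Omega^0(B_N, G)$ and consider the transformation $\tau_\eta$ from~\eqref{eq: gauge transform}. For an edge $e = (x,y) \in C_1(B_N)$, $\sigma(e) \mapsto \sigma(e) + \eta(y) - \eta(x) = \sigma(e) + \eta(\partial e)$. Summing over the edges of $\gamma$ and telescoping (all interior vertices cancel since each appears once as a head and once as a tail, because $\gamma$ has coefficients in $\{-1,0,1\}$ with connected support and prescribed boundary), one gets $\sigma(\gamma) \mapsto \sigma(\gamma) + \eta(\partial \gamma)$. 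On the other hand, $\phi(\partial \gamma) \mapsto \phi(\partial \gamma) + \eta(\partial \gamma)$ directly from~\eqref{eq: gauge transform}. Therefore $\sigma(\gamma) - \phi(\partial \gamma)$ is invariant under $\tau_\eta$, and so is $L_\gamma$. Note that this argument covers both the open-path case (where $\partial \gamma = x_2 - x_1 \neq 0$) and the closed-path case (where $\partial \gamma = 0$ and both sides of the telescoping identity vanish automatically).

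Having established gauge invariance, I would apply Proposition~\ref{proposition: unitary gauge one dim} to the function $f(\sigma, \phi) \coloneqq L_\gamma(\sigma, \phi)$ to conclude
\begin{equation*}
    \mathbb{E}_{N,\beta,\kappa,\infty}\bigl[L_\gamma(\sigma,\phi)\bigr] = \mathbb{E}_{N,\beta,\kappa}\bigl[L_\gamma(\sigma,0)\bigr].
\end{equation*}
For the second equality, I would simply evaluate $L_\gamma$ at $\phi \equiv 0$: since $\phi(\partial \gamma) = 0$ regardless of whether $\partial \gamma$ is zero or not, we have $L_\gamma(\sigma, 0) = \rho(\sigma(\gamma))$, which gives the claim.

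There is no substantive obstacle here; the only point requiring a moment of care is the telescoping step for $\sigma(\gamma)$ under the gauge transformation, which relies on the combinatorial fact that along a path $\gamma$ with boundary $x_2 - x_1$ (or empty boundary), the signed multiplicities of intermediate vertices in the formal sum $\sum_{e \in \gamma} \partial e$ cancel exactly. This is immediate from the definition of $\partial$ on $1$-chains and the assumption that $\gamma$ is a path in the sense of Section~1.4. With that in hand, the corollary follows in one line from Proposition~\ref{proposition: unitary gauge one dim}.
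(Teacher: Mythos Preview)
Your proposal is correct and follows exactly the paper's approach: the paper simply notes that $L_\gamma$ is gauge invariant and invokes Proposition~\ref{proposition: unitary gauge one dim}, while you additionally spell out the telescoping argument for gauge invariance and the evaluation at $\phi\equiv 0$. There is no substantive difference.
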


Results analogous to Proposition~\ref{proposition: unitary gauge one dim} are considered well-known in the physics literature.

With the current section in mind, we will work with the measure \( \sigma \sim \mu_{N,\beta, \kappa} \) rather than \( (\sigma,\phi) \sim \mu_{N,\beta,\kappa,\infty}\) throughout the rest of this paper, together with the observable
\begin{equation*}
    L_\gamma(\sigma) \coloneqq L_\gamma(\sigma,0) = \prod_{e \in \gamma} \rho\bigl(\sigma(e)\bigr) = \rho(\sigma(\gamma)).
\end{equation*}

\subsection{Existence of the limiting expectation value}\label{sec: ginibre}

In this section, we recall a result which shows existence and translation invariance of the limit \( \langle L_\gamma(\sigma) \rangle_{\beta,\kappa,\infty} \) defined in the introduction. This result is well-known, and is often mentioned in the literature as a direct consequence of the Ginibre inequalities. A full proof in the special case \( \kappa = 0 \) was included in~\cite{flv2020}, and the general case can be proven completely analogously, hence we omit the proof here. 

\begin{proposition}\label{proposition: limit exists}
    Let \( G = \mathbb{Z}_n \), \( M \geq 1 \), and let \( f \colon \Omega^1(B_M,G)  \to \mathbb{R}\).
    For \( M' \geq M \), we abuse notation and let \( f \) denote the natural extension of \( f \) to \( C_1(B_{M'}) \), i.e., the unique function such that \( f(\sigma) = f(\sigma|_{C_1(B_M)}) \) for all \( \sigma \in \Omega^1(B_{M'},G) \).
    Further, let \( \beta \in [0, \infty] \) and \( \kappa \geq 0 \). Then the following hold.
    \begin{enumerate}[label=\textnormal{(\roman*)}]
        \item The limit \( \lim_{N \to \infty} \mathbb{E}_{N,\beta,\kappa} \bigl[ f(\sigma) \bigr] \) exists.
        \item For any translation \( \tau \) of \( \mathbb{Z}^m \), we have \[ \lim_{N \to \infty} \mathbb{E}_{N,\beta,\kappa}  \bigl[f \circ \tau(\sigma)\bigr] =  \lim_{N \to \infty} \mathbb{E}_{N,\beta,\kappa}\bigl[ f(\sigma) \bigr] .\]
    \end{enumerate} 
\end{proposition}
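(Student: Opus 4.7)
The plan is to follow the approach of~\cite{flv2020} used to handle the pure-gauge case $\kappa = 0$, observing that the edge-coupling term has the same character-positivity structure as the plaquette term, so the argument extends essentially verbatim.

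\emph{Reduction to characters.} Since $G = \mathbb{Z}_n$ and $\rho$ generates its character group, any function $f \colon \Omega^1(B_M, G) \to \mathbb{R}$ can be expanded as a finite linear combination
\begin{equation*}
f(\sigma) = \sum_{\mathbf{k}} c_{\mathbf{k}} \prod_{e \in C_1(B_M)^+} \rho\bigl(k_e \sigma(e)\bigr),
\end{equation*}
where $\mathbf{k} = (k_e)$ ranges over $\{0,1,\ldots,n-1\}^{C_1(B_M)^+}$ and $c_{\mathbf{k}} \in \mathbb{C}$. By linearity it suffices to prove~(i) and~(ii) when $f$ is itself a single product of characters of this form.

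\emph{Proof of (i).} I would first perform a character expansion of the Boltzmann weight: pairing each oriented cell with its opposite rewrites $\beta \sum_p \rho(d\sigma(p)) + \kappa \sum_e \rho(\sigma(e))$ as a sum of real quantities $2\beta \Re \rho(d\sigma(p))$ and $2\kappa \Re \rho(\sigma(e))$ over positively oriented cells. Taylor expanding the exponential and using $\Re \rho = (\rho + \bar\rho)/2$ produces a formal series with non-negative coefficients in products of characters of the form $\prod_e \rho(j_e \sigma(e))$. After performing the sum over $\sigma$ via orthogonality of characters of $\mathbb{Z}_n$, the expectation $\mathbb{E}_{N,\beta,\kappa}\bigl[\prod_e \rho(k_e \sigma(e))\bigr]$ becomes a ratio of two non-negative sums indexed by certain integer-valued configurations on $B_N$. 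The standard Ginibre argument (carried out in detail for $\kappa = 0$ in~\cite{flv2020}) then shows that this ratio is monotone non-decreasing in $N$; together with the trivial bound $|\mathbb{E}_{N,\beta,\kappa}[\,\cdot\,]| \leq 1$, monotonicity gives convergence and establishes~(i).

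\emph{Proof of (ii).} By translation invariance of the action $S_{N,\beta,\kappa}$, the change of variables $\sigma \mapsto \sigma \circ \tau^{-1}$ shows that $\mathbb{E}_{N,\beta,\kappa}[f \circ \tau] = \mathbb{E}_{\tau(B_N),\beta,\kappa}[f]$, where the right-hand side is defined analogously on the translated box $\tau(B_N)$. Both $\{B_N\}$ and $\{\tau(B_N)\}$ exhaust $\mathbb{Z}^m$ as $N \to \infty$, and the monotonicity argument of~(i) applies to any such exhausting sequence, so both sides converge to the same limit, giving~(ii). The main technical obstacle is the volume-monotonicity step in~(i): one must verify that adding new cells when passing from $B_N$ to $B_{N'}$ does not decrease the normalized character expectation, which rests on the sign structure of the character expansion and is precisely the content of the Ginibre inequality in the present setting. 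Since the edge term $\kappa \sum_e \rho(\sigma(e))$ contributes to the expansion with the same positive sign as the plaquette term, the computation is essentially identical to~\cite{flv2020} and goes through with only notational changes.
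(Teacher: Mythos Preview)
Your proposal is correct and takes essentially the same approach as the paper: the paper omits the proof entirely, stating only that it follows from the Ginibre inequalities and that the general case is proven ``completely analogously'' to the $\kappa = 0$ case in~\cite{flv2020}. Your sketch---reducing to character products, invoking the Ginibre/character-expansion monotonicity argument of~\cite{flv2020}, and noting that the edge term $\kappa\sum_e \rho(\sigma(e))$ has the same positivity structure as the plaquette term---is exactly the argument the paper points to, just made more explicit.
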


\subsection{Connected sets of plaquettes}\label{connectedsubsec}

For \( p,p' \in C_2(B_N)^+ ,\) we define the relation \( \sim \) by letting \( p \sim p' \) if and only if \( (\partial p)^+ \cap (\partial p')^+ \neq \emptyset \), i.e., $p \sim p'$ if and only if $p$ and $p'$ are adjacent to each other. If \( (\partial p)^+ \cap (\partial p')^+ = \emptyset, \) we instead write \( p \nsim p'. \)
Given a set \( P \subseteq C_2(B_N)^+, \) we say that two plaquettes \( p,p' \subseteq C_2(B_N)^+ \) are \emph{connected in \( P \)} if there is \( k \geq 1 \) and a sequence \( p_1, p_2, \dots , p_k \in P \) such that \( p = p_1 \sim p_2 \sim \dots \sim p_k = p'. \)
A set \( P \subseteq C_2(B_N)^+ \) is \emph{connected} if all \( p,p' \in P \) are connected in \( P. \)
%
%
A set \( P' \subseteq P \subseteq C_2(B_N)^+ \) is said to be a connected component of \( P \) if it is a maximal connected subset of \( P . \) We let \( \| P \|\) denote the number of connected components of \( P .\)
Similarly, when \( P,P' \subset C_2(B_N)^+, \) we let \( P^{P'} \) denote the union of the connected components of \( P \) which intersect \( P'. \)
We extend these definitions to forms as follows. We say that \( \omega \in \Omega^2(B_N,\mathbb{Z}_n) \) has \emph{connected support} if \( (\support \omega)^+ \) is connected, 
and let
\begin{equation*}
    \| \omega\| \coloneqq \bigl\| (\support \omega)^+\bigr\|.
\end{equation*}

If \( \omega \in \Omega^2(B_N,\mathbb{Z}_n) ,\) then there is a unique decomposition \( \omega = \omega_1 + \dots + \omega_k, \) where for each \( j \in \{1,2, \dots , k \} \) we have  \( \omega_j \coloneqq \omega|_{P_j} ,\) where \( P_1, P_2, \dots, P_m \) are the connected components of \( (\support \omega)^+. \)
Given a set \( E \subseteq C_1(B_N)^+ ,\) we let 
\begin{equation}\label{eq: omegaE def}
    \omega^E \coloneqq \sum_{j \colon \exists e \in E \colon \support \hat \partial e \cap \support \omega_j \neq \emptyset} \omega_j.
\end{equation}
When \( \gamma \) is a path, we let \( \omega^{\gamma,2} \coloneqq \omega^{\support \gamma} ,\) and define
\begin{equation}\label{eq: omegagammadef}
     \omega^\gamma \coloneqq \omega^{\support \delta \omega \cap \support \gamma} = \sum_{j \colon \support \delta \omega_j \cap \support \gamma \neq \emptyset} \omega_j .
\end{equation}

\subsection{A partial ordering of \(\Omega^k(\mathcal{L},G)\)}\label{sec: the partial ordering}

The following definition will be natural to use to compare 2-forms when we work with high-temperature expansions.
\begin{definition}
    Let \( k \geq 1 \) and \( \omega,\omega' \in \Omega^k(\mathcal{L},G). \) If \( \omega|_{\support \omega'} = \omega' \),  and the sets \( \support \delta \omega' \) and \( \support \delta (\omega-\omega') \) are disjoint, then we write \( \omega' \lhd \omega. \)
\end{definition}
Intuitively, if \( \omega' \lhd \omega, \) then \( \omega = \omega' + (\omega-\omega') ,\) and the supports of \( \omega' \) and \( \omega-\omega' \) are well separated, and we later show that the above characterisation is exactly what is required to guarantee that a certain function, which will be important later, factorizes under this assumption (see Lemma~\ref{lemma: action factorization forms}).

\begin{remark}
    We mention that \( \omega \lhd \omega' \) if and only if \( *\omega \leq *\omega', \) where \( \leq \) is the partial order on \( k \)-forms introduced in~\cite[Definition 2.6]{flv2021} and \(* \) is the Hodge operator (see, e.g.,~\cite[Definition 2.1.11]{flv2021}). Using~\cite[Lemma~2.7]{flv2021}, this shows that \( \lhd \) is a partial order on \( \Omega^k(B_N,G). \)
\end{remark}

\begin{lemma}\label{lemma: connected components and lhd}
    Let \( \omega \in \Omega^2(\mathcal{L},G), \)  let \( P_0,P_1,\dots, P_{k-1} \) be the connected components of \( (\support \omega)^+,\) and let  \( J \subseteq J' \subseteq [k].\)
    Define \(\omega^J \coloneqq \omega|_{ \bigsqcup_{j \in J} P_j}\) and \(\omega^{J'} \coloneqq \omega|_{ \bigsqcup_{j \in J'} P_j}.\)
    Then,  \(  \omega^J \lhd \omega^{J'}. \)
\end{lemma}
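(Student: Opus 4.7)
My plan is to unpack the definition of $\lhd$ into its two defining conditions and verify each directly, leveraging the fact that distinct connected components of $(\support \omega)^+$ are, by definition, separated with respect to the adjacency relation $\sim$.

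First I would dispose of the agreement condition $\omega^{J'}|_{\support \omega^J} = \omega^J$. Since $J \subseteq J'$, we have $\support \omega^J = \bigsqcup_{j \in J} P_j \subseteq \bigsqcup_{j \in J'} P_j = \support \omega^{J'}$, and on this smaller set both restrictions of $\omega$ coincide with $\omega$ itself. This is essentially a tautology once the definitions of $\omega^J$ and $\omega^{J'}$ are expanded.

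The substantive part is to check that $\support \delta \omega^J$ and $\support \delta(\omega^{J'} - \omega^J)$ are disjoint. Writing $\omega^{J'} - \omega^J = \omega|_{\bigsqcup_{j \in J' \setminus J} P_j}$, I would argue by contradiction: suppose $e \in C_1(\mathcal{L})$ lies in both supports. Using the identity $\delta \omega(e) = \sum_{p \in \support \hat{\partial} e} \partial p[e]\, \omega(p)$ from \eqref{deltaomegae}, there must exist plaquettes $p \in \support \hat{\partial} e$ with $p \in \bigsqcup_{j \in J} P_j$ and $p' \in \support \hat{\partial} e$ with $p' \in \bigsqcup_{j \in J' \setminus J} P_j$. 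Because $J \cap (J' \setminus J) = \emptyset$ and the $P_j$ are pairwise disjoint, $p \neq p'$. On the other hand, both $p$ and $p'$ have the non-oriented edge corresponding to $e$ in their (positively-oriented) boundaries, so by definition of the adjacency relation in Section \ref{connectedsubsec} we have $p \sim p'$, forcing $p$ and $p'$ to lie in the same connected component of $(\support \omega)^+$, contradicting their membership in disjoint components $P_j$ and $P_{j'}$ with $j \neq j'$.

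The only mild obstacle is bookkeeping between oriented and non-oriented notions: $\hat{\partial} e$ is a chain of positively oriented plaquettes while $\partial p$ may contain $e$ with either sign, so a careful reader needs to observe that $p \in \support \hat{\partial} e$ forces the unoriented edge underlying $e$ to lie in $(\partial p)^+$. Once that identification is made explicit, the adjacency $p \sim p'$ follows directly from the definition $(\partial p)^+ \cap (\partial p')^+ \neq \emptyset$, and the proof is complete. No estimates or computations are required, only the separation built into the notion of a connected component.
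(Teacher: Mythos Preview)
Your proof is correct and follows essentially the same route as the paper: verify the two conditions in the definition of $\lhd$ directly, using that distinct connected components $P_j$ cannot be adjacent and hence the sets $\support \delta(\omega|_{P_j})$ are pairwise disjoint. The paper phrases the second condition via the decomposition $\delta \omega^J = \sum_{j\in J}\delta(\omega|_{P_j})$ and asserts pairwise disjointness of the summands' supports, whereas you unpack that assertion as an explicit contradiction argument through the adjacency relation $\sim$; the content is the same.
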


\begin{proof}
    Since the sets \( P_j \) are disjoint subsets of \( (\support \omega)^+, \)  we have \( (\support \omega^J)^+ = \bigsqcup_{j \in J} P_j \) and \( (\support \omega^{j'})^+ = \bigsqcup_{j \in J'} P_j. \)
    Since \( J \subseteq J', \) it follows that
    \begin{equation}\label{eq: first step of lhd}
        \omega^{J'} = \omega|_{\bigsqcup_{j \in J'} P_j} 
        =
        \bigl(\omega|_{\bigsqcup_{j \in J} P_j}\bigr)\pigr|_{\bigsqcup_{j \in J'} P_j}
        =
        \omega^J|_{\bigsqcup_{j \in J'} P_j}
        =
        \omega^J|_{\support \omega^{J'}}.
    \end{equation} 
    
    Since the sets \( P_j \) are disjoint subsets of \( (\support \omega)^+, \)  for any \( j,j' \in [k] \) the sets \( \support \omega|_{ P_j} \) and \( \support \omega|_{ P_{j'}}  \) are disjoint. Consequently, we have
    \begin{equation}\label{eq: delta decomposed}
        \delta \omega^J
        = 
        \delta \omega|_{ \sqcup_{j \in J} P_j}
        =
        \delta \sum_{j \in J} \omega|_{ P_j}
        =
        \sum_{j \in J}  \delta \omega|_{ P_j}
    \end{equation}
    and, using~\eqref{eq: delta decomposed}, we obtain
    \begin{equation}\label{eq: delta decomposed ii}
        \delta (\omega^{J'}-\omega^J)
        =
        \delta \omega^J-\delta \omega^J
        =
        \sum_{j \in J'}  \delta \omega|_{ P_j}-\sum_{j \in J}  \delta \omega|_{ P_j}
        =
        \sum_{j \in J'\smallsetminus J}  \delta \omega|_{ P_j}.
    \end{equation}
    Since the sets \( P_j \) are the connected components of \( (\support \omega)^+, \)  for any \( j,j' \in [k] \) the sets \( \support \delta \omega|_{ P_j} \) and \( \support \delta \omega|_{ P_{j'}}  \) are disjoint. 
    Consequently, it follows from~\eqref{eq: delta decomposed ii} that \( \support \delta \omega^J \) and \( \support \delta(\omega^{J'}-\omega^J) \) are disjoint.
    Since, by~\eqref{eq: first step of lhd}, we also have \( \omega^{J'} = \omega^J|_{\support \omega^{J'}}, \) it follows that \( \omega^J \lhd \omega^{J'},\) which is the desired conclusion.
\end{proof} 

Recalling the definition of \( \omega^E \) from~\eqref{eq: omegaE def}, we have the following lemma.
\begin{lemma}\label{lemma: action factorization forms iib}
    Let \( E_1 \subseteq E_2 \subseteq C_1(B_N)^+ \) and let \( \omega \in \Omega^2(\mathcal{L},G). \) Then \( \omega^{E_1} \lhd \omega^{E_2}. \)
\end{lemma}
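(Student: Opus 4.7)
The plan is to recognize that this lemma is an immediate corollary of Lemma~\ref{lemma: connected components and lhd}, obtained by setting up the right index sets and noting a monotonicity.

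First I would unpack the definition of $\omega^E$ from~\eqref{eq: omegaE def}. Let $P_0, P_1, \dots, P_{k-1}$ denote the connected components of $(\support \omega)^+$, and write $\omega_j \coloneqq \omega|_{P_j}$. For each $E \subseteq C_1(B_N)^+$ define the index set
\begin{equation*}
    J(E) \coloneqq \bigl\{ j \in [k] \colon \exists\, e \in E \text{ with } \support \hat\partial e \cap \support \omega_j \neq \emptyset \bigr\},
\end{equation*}
so that $\omega^E = \sum_{j \in J(E)} \omega_j = \omega|_{\sqcup_{j \in J(E)} P_j}$.

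Next I would make the elementary observation that $E \mapsto J(E)$ is monotone with respect to inclusion: if $E_1 \subseteq E_2$ and $j \in J(E_1)$, then the witnessing edge $e \in E_1$ also lies in $E_2$, hence $j \in J(E_2)$. Thus $J(E_1) \subseteq J(E_2)$.

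With $J \coloneqq J(E_1)$ and $J' \coloneqq J(E_2)$, the hypotheses of Lemma~\ref{lemma: connected components and lhd} are satisfied (we have $J \subseteq J' \subseteq [k]$), and that lemma yields $\omega^{J} \lhd \omega^{J'}$. Since $\omega^{J} = \omega^{E_1}$ and $\omega^{J'} = \omega^{E_2}$ by construction, the desired conclusion $\omega^{E_1} \lhd \omega^{E_2}$ follows. There is no genuine obstacle here; the only thing to be careful about is matching the notation of the two lemmas, i.e., verifying that the restriction $\omega|_{\sqcup_{j \in J(E)} P_j}$ really coincides with the sum $\sum_{j \in J(E)} \omega_j$ appearing in~\eqref{eq: omegaE def}, which is immediate from the disjointness of the components $P_j$.
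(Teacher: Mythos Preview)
Your proposal is correct and follows essentially the same approach as the paper: define the index set $J_E$ of connected components touching $E$, observe $J_{E_1}\subseteq J_{E_2}$ from $E_1\subseteq E_2$, and apply Lemma~\ref{lemma: connected components and lhd}. The only difference is that you spell out the monotonicity witness and the identification $\omega|_{\sqcup_j P_j}=\sum_j\omega_j$ in slightly more detail than the paper does.
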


\begin{proof} 
    Let \( P_0, P_1, \dots, P_{k-1} \) be the connected components of \( \omega. \) For \( E \subseteq C_1(B_N)^+, \) define
    \begin{equation*}
        J_E \coloneqq \bigl\{ j \in [k] \colon \exists e \in E \text{ such that } \support  \omega|_{ P_j} \cap \support \hat \partial e \neq \emptyset \bigr\}.
    \end{equation*}
    Since \( E_1 \subseteq E_2, \) we have \( J_{E_1} \subseteq J_{E_2}. \) Applying Lemma~\ref{lemma: connected components and lhd} with \( \omega, \) \( J_{E_1} \) and \( J_{E_2},\) we thus obtain the desired conclusion.
\end{proof}

\subsection{Rectangular paths}\label{sec: rectangular}

If \( \gamma \) is a closed path in a 2-dimensional hyperplane in \( \mathbb{Z}^m \) with 
\begin{equation*}
    \pigl|\bigl\{ e \in \support \gamma \colon \exists p \in \support \hat \partial e \colon |\support \partial p \cap \support \gamma| \geq 2 \bigr\} \pigr| = 4, 
\end{equation*}
then \( \gamma \) is said to be a \emph{rectangular loop}. (Recall that the support of a $1$-chain is defined as the set of {\it positively} oriented $1$-cells with non-zero coefficient, so that both $\support \gamma$ and $\support \partial p$ consist of only positively oriented edges.) 

If \( \gamma \) is a path and there is a rectangular loop \( \gamma_R \) with \( \support \gamma \subseteq \support \gamma_R \) and \( \gamma_R[e] = \gamma[e] \) for all \( e \in \support \gamma, \) then \( \gamma \) is said to be a \emph{rectangular path}. We note that given a rectangular path \( \gamma, \) the path \( \gamma_R \) is not necessarily unique.

\subsection{Corner plaquettes}

Assume that a path \( \gamma \) is given.  We let
\begin{equation}\label{eq: Pgammac}
    \mathcal{P}_{\gamma,c} \coloneqq \pigl\{ p \in C_2(\mathcal{L})^+ \colon \bigl|\support \partial p \cap \support \gamma \bigr| \geq 2 \pigr\}.
\end{equation}
The plaquettes in \( \mathcal{P}_{\gamma,c} \) will be called the \emph{corner plaquettes} of \( \gamma. \)
Given \( \omega \in \Omega^2(\mathcal{L},G),\) we define
\begin{equation}\label{Pomegagammacdef}
    P_{\omega,\gamma,c} \coloneqq \bigl\{ p \in (\support \omega)^+ \colon |\support \partial p  \cap \support \gamma \cap \support \delta \omega| =2 \bigr\}.
\end{equation} 

We note that we always have \( |P_{\omega,\gamma,c}| \leq |\mathcal{P}_{\gamma,c}|, \) and that if \( \gamma \) is a path along the boundary of some rectangle \( R \), then \( |\mathcal{P}_{\gamma,c}| \leq 4. \)

\begin{figure}[tp]
        \centering 
        \begin{subfigure}[b]{0.3\textwidth}\centering
    \begin{tikzpicture}[scale=0.6]
        
        \draw[line width=0.1mm,ForestGreen!60!black] (0,4) -- (0,0) -- (7,0);

        \foreach \x in {0}
            \foreach \y in {0}
                \fill[BlueViolet] (\x+0.1,\y+0.1) rectangle ++(0.3,0.3);
         
    \end{tikzpicture}
    \caption{}
    \end{subfigure}
        \begin{subfigure}[b]{0.3\textwidth}\centering
    \begin{tikzpicture}[scale=0.6]
        
        \draw[line width=0.1mm,ForestGreen!60!black] (0,4) -- (0,0) -- (7,0) -- (7,4);

        \foreach \x in {0}
            \foreach \y in {0}
                \fill[BlueViolet] (\x+0.1,\y+0.1) rectangle ++(0.3,0.3);
        
        \foreach \x in {6.5}
            \foreach \y in {0}
                \fill[BlueViolet] (\x+0.1,\y+0.1) rectangle ++(0.3,0.3); 
    \end{tikzpicture}
    \caption{}
    \end{subfigure}
        \begin{subfigure}[b]{0.3\textwidth}\centering
    \begin{tikzpicture}[scale=0.6]
        
        \draw[line width=0.1mm,ForestGreen!60!black] (0,4) -- (0,0) -- (7,0) -- (7,4) -- cycle;

        \foreach \x in {0,6.5}
            \foreach \y in {0,3.5}
                \fill[BlueViolet] (\x+0.1,\y+0.1) rectangle ++(0.3,0.3);
         
    \end{tikzpicture}
    \caption{}
    \end{subfigure}
    \caption{In the three figures above, we draw the set \( \mathcal{P}_{\gamma,c} \) for three different rectangular paths~\( \gamma \).}
    \label{figure: corner plaquettes}
\end{figure}
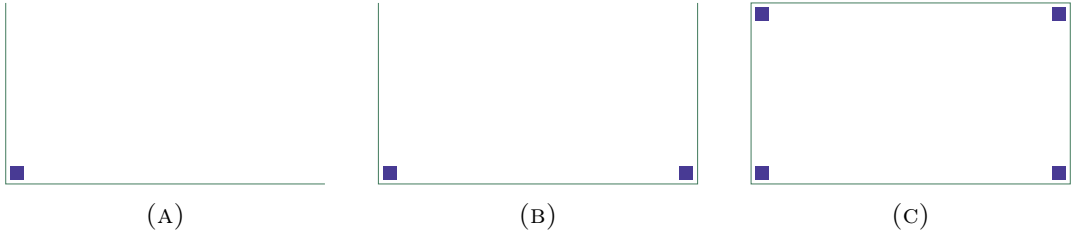

\subsection{Distance between cells}

When \( k,k' \in \{ 0,1,\dots, m \} ,\) \( c \in C_k(\mathcal{L}) \) and \( c' \in C_{k'}(\mathcal{L}) \) we define the distance between \( c \) and \( c' \) to be the smallest euclidean distance between any pair of points \( x \in \hat c \) and \( x' \in \hat c', \) where \( \hat c \) and \( \hat c' \) are non-oriented cells corresponding to \( c \) and \( c' .\)

\subsection{Notation and standing assumptions}
\label{section: notation}

We assume that \( m,n \geq 2 \) and \( N \geq 1 \) are given. 
Further, we assume that \( \beta,\kappa \geq 0 \) are given, and that \( G = \mathbb{Z}_n. \)

For any \( k \in \mathbb{N}, \) we define \( [k] \coloneqq \{ 0,1, \dots, k-1 \}. \)

We assume that arbitrary orderings of the vertices in \( C_0(B_N), \) the edges in \( C_1(B_N), \) and the plaquettes in \( C_2(B_N) \) have been fixed.

\section{Some useful functions and their properties}\label{section: useful functions}

Recall the definitions of \( \psi_a,\) \( \hat \varphi_a,\) and \( \varphi_a \) from~Section~\ref{subsec: functions}.
To simplify notation, we extend the definition of  \( \hat \varphi_a \) to \( \mathbb{Z} \) by letting \( \hat \varphi_a(j) \coloneqq \hat \varphi_a(j') \) whenever \( j' \in [n] \) and \( j = j' + kn \) for some \( k \in \mathbb{Z}. \)
We use these definitions also for \(  \mathbb{Z}_n \) by identifying \( \mathbb{Z}_n\) with \( [n]\) in the natural way.
Next, define 
\begin{equation}\label{eq: def eta kappa}
    \eta_a \coloneqq \min_{j \in [n]} \varphi_a( j+1) /\varphi_a( j),
\end{equation}
\begin{equation}\label{eq: zeta def} 
    \zeta_a \coloneqq \sum_{j \in \mathbb{Z}_n \smallsetminus \{ 0 \}}  \varphi_a(j) \quad \text{and} \quad  \xi_a  \coloneqq \max_{j \in \mathbb{Z}_n \smallsetminus \{ 0 \}}  \varphi_a(j),
\end{equation} 
and let
\begin{equation}\label{eq: def alpha}
    \alpha(\beta,\kappa) \coloneqq
        \frac{\sum_{j \in [n]} \varphi_\beta(j)\varphi_\kappa(j)^4  \cdot \frac{\varphi_\kappa(  j +1)}{ \varphi_\kappa(j)\varphi_\kappa(1)}}{ \sum_{j \in [n]} \varphi_\beta(j)\varphi_\kappa(j)^4 } .
\end{equation}

\begin{remark}\label{remark: Z2}
    When \( G = \mathbb{Z}_2, \) we have \( \psi_a(0) = \cosh(a), \) \( \psi_a(1) = \sinh(a), \) \( \hat \varphi_a(0) = \cosh(2a), \) \( \hat\varphi_a(1) = \sinh(2a), \) \( \varphi_a(0)=1, \) and \( \varphi_a(1) = \tanh(2a), \) and consequently, \( \eta_a = \zeta_a = \xi_a =  \tanh(2a)\) and \( \alpha(\beta,\kappa) = (1 + \tanh(2\beta)\tanh(2\kappa)^2)/(1 + \tanh(2\beta)\tanh(2\kappa)^4). \)
    
    We remark that the assumption, \( \kappa\bigl( 1+\varepsilon_\kappa \bigr) \leq 1, \) which is made in some of the lemmas of this section, can easily be shown to not be needed when \( G = \mathbb{Z}_2. \)  
\end{remark}

We now state and prove a few lemmas that describe properties of the above functions.

\begin{lemma}\label{lemma: alpha when beta is zero}
    For any \( a \geq 0, \) we have  \( \alpha(0,a) = 1. \)
\end{lemma}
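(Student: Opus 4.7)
The plan is to directly evaluate both the numerator and denominator in the definition of $\alpha(\beta,\kappa)$ at $\beta = 0$ by tracking how $\varphi_\beta$ degenerates.

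First, I would observe from the defining power series $\psi_a(j) = \sum_{k=0}^\infty a^{j+kn}/(j+kn)!$ that when $a = 0$ only the $k=0$, $j=0$ term survives, giving $\psi_0(0) = 1$ and $\psi_0(j) = 0$ for $j \in \{1,\dots,n-1\}$. Substituting this into
\[
\hat\varphi_0(j) = \sum_{k=0}^{n-1-j}\psi_0(k)\psi_0(k+j) + \sum_{k=n-j}^{n-1}\psi_0(k)\psi_0(k+j-n),
\]
I would see that every product $\psi_0(k)\psi_0(k')$ vanishes unless $k = k' = 0$, which forces $j \equiv 0 \pmod n$. Hence $\hat\varphi_0(0) = 1$ and $\hat\varphi_0(j) = 0$ for $j \in [n]\setminus\{0\}$, and consequently
\[
\varphi_0(j) = \hat\varphi_0(j)/\hat\varphi_0(0) = \mathbb{1}(j = 0), \qquad j \in [n].
\]

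With this identity in hand, plugging $\beta = 0$ into \eqref{eq: def alpha} collapses each sum to its $j = 0$ term. In the denominator, $\sum_{j \in [n]} \varphi_0(j)\varphi_a(j)^4 = \varphi_a(0)^4 = 1$. In the numerator, the $j=0$ contribution is $\varphi_a(0)^4 \cdot \varphi_a(1)/(\varphi_a(0)\varphi_a(1)) = 1$. Taking the ratio yields $\alpha(0,a) = 1$, completing the proof.

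There is no real obstacle here: the whole argument is a direct verification once one notes that $\varphi_0$ is the indicator of $\{0\}$, and both sums reduce to a single trivial term.
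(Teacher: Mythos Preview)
Your proof is correct and follows essentially the same approach as the paper's: both compute $\psi_0(j) = \mathbb{1}(j=0)$, deduce $\varphi_0(j) = \mathbb{1}(j=0)$, and observe that this collapses both sums in the definition of $\alpha(0,a)$ to their $j=0$ terms, giving the value $1$.
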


\begin{proof}
    For any \(j \in [n] \) we have 
    \( \psi_0(j) = \mathbb{1}(j=0), \) and hence \( \hat \varphi_0(j) = \varphi_0(j) = \mathbb{1}(j=0). \) This implies in particular that 
    \begin{equation*}
        \alpha(0,a) = 
        \frac{\sum_{j \in [n]} \mathbb{1}(j=0) \cdot \varphi_a(j)^4  \cdot \frac{\varphi_a(  j +1)}{ \varphi_a(j)\varphi_a(1)}}{ \sum_{j \in [n]} \mathbb{1}(j=0) \cdot \varphi_a(j)^4 }
        =
        \frac{\varphi_a(0)^4 \cdot \frac{\varphi_a(1)}{\varphi_a(0) \varphi_a(1)}}{\varphi_a(0)^4}
        =
        1
    \end{equation*} 
    as desired. 
\end{proof} 

The next expresses an exponential function as a sum, and using this lemma will later be the first step of the high-temperature expansion.
\begin{lemma}\label{lemma: first part of expansion}
    Let \( a \geq 0 \) and \( g \in \mathbb{Z}_n. \) Then 
    \begin{equation*}
        e^{2a \Re \rho(g) } = \sum_{j \in [n]} \rho(g)^{j} \hat \varphi_a(j).
    \end{equation*}
\end{lemma}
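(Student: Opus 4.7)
The plan is to rewrite $e^{2a\Re\rho(g)}$ as a product of two ordinary exponentials of commuting scalars, expand each as a power series, and then group terms of the resulting double sum by the residue of the exponent of $\rho(g)$ modulo $n$.

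Since $\rho$ is a one-dimensional unitary representation of $\mathbb{Z}_n$, we have $|\rho(g)|=1$, so $\overline{\rho(g)} = \rho(g)^{-1}$ and hence $2\Re\rho(g) = \rho(g) + \rho(g)^{-1}$. Moreover $\rho(g)^n = \rho(ng) = \rho(0) = 1$, so integer powers of $\rho(g)$ depend only on the exponent modulo $n$. Using commutativity of scalar multiplication,
\[
    e^{2a\Re\rho(g)} \;=\; e^{a\rho(g)}\, e^{a\rho(g)^{-1}} \;=\; \sum_{k\ge 0}\sum_{k'\ge 0} \frac{a^{k+k'}}{k!\,k'!}\, \rho(g)^{\,k-k'}.
\]

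Next, I would decompose each nonnegative integer uniquely as $k = r + \ell n$ with $r \in [n]$ and $\ell \ge 0$, and similarly $k' = r' + \ell' n$, and group the sum by $j \in [n]$ according to $r - r' \equiv j \pmod{n}$. Since $\rho(g)^{k-k'} = \rho(g)^j$ under this grouping, the double series becomes
\[
    e^{2a\Re\rho(g)} \;=\; \sum_{j \in [n]} \rho(g)^{j} \sum_{\substack{r, r' \in [n] \\ r - r' \equiv j \pmod{n}}} \Biggl(\sum_{\ell \ge 0} \frac{a^{r + \ell n}}{(r+\ell n)!}\Biggr) \Biggl(\sum_{\ell' \ge 0} \frac{a^{r' + \ell' n}}{(r'+\ell' n)!}\Biggr).
\]
By the definition of $\psi_a$, the two inner sums equal $\psi_a(r)$ and $\psi_a(r')$ respectively, so the coefficient of $\rho(g)^j$ equals $\sum_{r, r' \in [n],\, r - r' \equiv j \pmod{n}} \psi_a(r)\psi_a(r')$.

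Finally, relabeling $(r, r') \mapsto (r', r)$ turns the constraint $r - r' \equiv j \pmod{n}$ into $r' - r \equiv j \pmod{n}$ without changing the summand, and comparing with the paper's definition of $\hat\varphi_a(j)$ identifies this coefficient with $\hat\varphi_a(j)$. This yields $e^{2a\Re\rho(g)} = \sum_{j \in [n]} \rho(g)^j \hat\varphi_a(j)$, as desired. There is no real obstacle; the only care required is the bookkeeping of residues modulo $n$ and recognizing the inner double series as the product $\psi_a(r)\psi_a(r')$ via the definition of $\psi_a$.
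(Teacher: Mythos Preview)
Your proof is correct and follows essentially the same route as the paper: write $e^{2a\Re\rho(g)}=e^{a\rho(g)}e^{a\rho(g)^{-1}}$, expand in power series, and group by residues modulo $n$ to recover $\psi_a$ and then $\hat\varphi_a$. The only cosmetic difference is that the paper first isolates the identity $e^{a\rho(g)}=\sum_{k\in[n]}\rho(g)^k\psi_a(k)$ and then multiplies by its complex conjugate, whereas you expand the product directly; the bookkeeping and the relabeling step you note at the end are the same in both.
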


\begin{proof}
    We have
    \begin{equation*}
        e^{a \rho(g)} 
        = \sum_{k = 0}^\infty  \frac{a^k \rho(g)^k}{k!} 
        = \sum_{k' = 0}^{n-1}\sum_{k'' = 0}^\infty  \frac{a^{k'+k''n} \rho(g)^{k'+k''n}}{(k'+k'' n)!}.
    \end{equation*}
    Since \( \rho(g)^n = 1, \) we have
    \begin{equation*}
        \sum_{k' = 0}^{n-1}\sum_{k'' = 0}^\infty  \frac{a^{k'+k''n} \rho(g)^{k'+k''n}}{(k'+k'' n)!}
        = \sum_{k' = 0}^{n-1} \rho(g)^{k'}\sum_{k'' = 0}^\infty  \frac{a^{k'+k''n} }{(k'+k'' n)!}
        = \sum_{k' = 0}^{n-1} \rho(g)^{k'}\psi_a(k'),
    \end{equation*}
    and hence
    \begin{equation*}
        e^{a \rho(g)} = \sum_{k' = 0}^{n-1} \rho(g)^{k'}\psi_a(k').
    \end{equation*}
    Since \( \psi_a \) is a real-valued function and \( \rho \) is unitary, it follows that
    \begin{align*}
        &e^{2a \Re \rho(g)} 
        =
        \bigl|e^{a  \rho(g)} \bigr|^2
        = 
        \Bigl|\sum_{k' = 0}^{n-1} \rho(g)^{k'}\psi_a(k')\Bigr|^2
        =
        \sum_{k = 0}^{n-1}\rho(g)^k\psi_a(k)
        \sum_{k' = 0}^{n-1}\rho(g)^{-k'}\psi_a(k')
        \\&\qquad=
        \sum_{k = 0}^{n-1}\sum_{k' = 0}^{n-1} \rho(g)^{k-k'}\psi_a(k)\psi_a(k')
        =
        \sum_{j \in [n]} \rho(g)^{j} \sum_{\substack{k,k' \in [n] \mathrlap{\colon}\\ k-k' =  j \mod n}} \psi_a(k)\psi_a(k')
        \\&\qquad=
        \sum_{j \in [n]} \rho(g)^j\hat \varphi_a(j),
    \end{align*}
    which is the desired conclusion.    
\end{proof}

The next lemma shows that \( \hat \varphi \) is symmetric in a certain sense. 
\begin{lemma}\label{lemma: symmetry}
    Let \( a \geq 0 \) and \( j \in [n]. \) Then
    \(  \hat \varphi_a(n-j) = \hat \varphi_a(j). \)
\end{lemma}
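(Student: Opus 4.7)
The plan is to unfold the defining sum of $\hat{\varphi}_a$ and exploit the obvious symmetry $\psi_a(k)\psi_a(k') = \psi_a(k')\psi_a(k)$ via a relabelling of the summation indices. Concretely, starting from
\[
\hat{\varphi}_a(n-j) = \sum_{\substack{k,k' \in [n] \colon\\ k' - k \,=\, n - j \bmod n}} \psi_a(k)\psi_a(k'),
\]
I would note that $n-j \equiv -j \pmod{n}$, so the summation constraint becomes $k' - k \equiv -j \pmod{n}$, equivalently $k - k' \equiv j \pmod{n}$.

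Next I would perform the change of variables $(k,k') \mapsto (k',k)$, which is a bijection of $[n]\times[n]$ onto itself. Under this swap, the constraint $k - k' \equiv j \pmod{n}$ becomes exactly $k' - k \equiv j \pmod{n}$ (the defining constraint for $\hat{\varphi}_a(j)$), while the summand $\psi_a(k)\psi_a(k')$ is unchanged because multiplication in $\mathbb{R}$ is commutative. Reading off the resulting expression yields $\hat{\varphi}_a(j)$, completing the proof.

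There is no real obstacle here; the result is a direct consequence of the symmetry of the summand and the relabelling of indices. The only minor point worth being careful about is the identification $n-j \equiv -j \pmod{n}$, which is needed so that the sets of pairs $(k,k')$ under the two constraints match up under the swap bijection. One could alternatively give the proof by using the second (split) expression for $\hat{\varphi}_a$ in the definition, but the index-swap approach above is shorter and does not require a separate treatment of the case $j = 0$.
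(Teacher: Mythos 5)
Your proof is correct and is essentially the paper's argument: the paper likewise reduces the claim to the equivalence $k-k' \equiv n-j \equiv -j \pmod n \Leftrightarrow k'-k \equiv j \pmod n$ together with the symmetry of the summand $\psi_a(k)\psi_a(k')$ under swapping $k$ and $k'$. Your version just makes the index-swap bijection explicit, which is fine.
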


\begin{proof}
    For any \( k,k' \in [n], \) we have
    \begin{equation*}
        k-k' = n-j \mod n\Leftrightarrow k'-k = j-n \mod n\Leftrightarrow k'-k = j \mod n.
    \end{equation*}
    Using this observation, it follows that
    \begin{equation*}
        \hat \varphi_a(n-j) 
        = \sum_{k,k' \in [n] \colon k-k' = n-j \mod n} \psi_a(k)\psi_a(k') 
        = \sum_{k,k' \in \mathbb{Z}_n \colon k'-k = j} \psi_a(k)\psi_a(k') = \hat \varphi_a(j)
    \end{equation*}
    as desired.
\end{proof}

\begin{lemma}\label{lemma: 0 first}
    Let \( a \geq 0 \) and  \( j \in [n] \smallsetminus \{ 0 \}.\)  Then  \( \hat \varphi_a(j) < \hat \varphi_a(0), \) and hence \( \varphi_a(j) < \varphi_a(0) = 1. \)
\end{lemma}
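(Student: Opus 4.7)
The approach is to bound $\hat \varphi_a(0) - \hat \varphi_a(j)$ from below by a sum of squares and then exhibit one strictly positive summand. First I would verify the identity
\[
    2\bigl(\hat \varphi_a(0) - \hat \varphi_a(j)\bigr) = \sum_{k \in [n]} \bigl(\psi_a(k) - \psi_a(k+j \bmod n)\bigr)^2,
\]
which follows by writing $\hat \varphi_a(j) = \sum_{k \in [n]} \psi_a(k)\psi_a(k+j \bmod n)$ after identifying $[n]$ with $\mathbb{Z}_n$ in the definition, and using the re-indexing $\sum_k \psi_a(k+j \bmod n)^2 = \sum_k \psi_a(k)^2 = \hat \varphi_a(0)$. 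The right-hand side is manifestly nonnegative, so the weak inequality $\hat \varphi_a(j) \leq \hat \varphi_a(0)$ comes for free, and strict inequality reduces to producing a single $k \in [n]$ with $\psi_a(k) \neq \psi_a(k+j \bmod n)$.

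The case $a = 0$ is settled at once since $\psi_0(0) = 1$ and $\psi_0(j) = 0$ (take $k = 0$). For $a > 0$ I would pair $\psi_a$ against the primitive character $\omega \coloneqq e^{2\pi i/n}$. Using $\omega^n = 1$ to collapse the double series, a direct computation gives
\[
    S \coloneqq \sum_{k \in [n]} \omega^{-k} \psi_a(k) = \sum_{\ell \geq 0} \omega^{-\ell} \frac{a^\ell}{\ell!} = e^{\omega^{-1} a},
\]
so $|S| = e^{a\cos(2\pi/n)} > 0$. On the other hand, if $\psi_a$ were invariant under the shift $k \mapsto k+j \bmod n$, the same sum would re-index to $S = \omega^{j} S$, forcing $(1 - \omega^j) S = 0$. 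Since $j \in [n] \setminus \{0\}$ we have $\omega^j \neq 1$, contradicting $S \neq 0$. Hence at least one summand in the sum-of-squares identity is strictly positive, giving $\hat \varphi_a(j) < \hat \varphi_a(0)$. The second assertion $\varphi_a(j) < \varphi_a(0) = 1$ then follows by dividing by $\hat \varphi_a(0) \geq \psi_a(0)^2 \geq 1 > 0$.

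The main temptation to resist is trying to prove the pointwise dominance $\psi_a(0) > \psi_a(k)$ for every $k \neq 0$ and every $a \geq 0$. That stronger statement is actually false: for $n = 4$ one can compute $\psi_a(0) = (\cosh a + \cos a)/2$ and $\psi_a(1) = (\sinh a + \sin a)/2$, and evaluating at $a = \pi$ gives $\psi_a(0) < \psi_a(1)$. Individual entries of $\bigl(\psi_a(k)\bigr)_{k \in [n]}$ can cross as $a$ varies, and the point of the sum-of-squares identity is precisely that such pointwise behavior is irrelevant — only the global failure of periodicity matters, and the one-line character argument makes that failure effortless.
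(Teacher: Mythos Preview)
Your proof is correct and structurally parallel to the paper's: the paper writes $\hat\varphi_a(j) = \langle \mathbf{v}^0, \mathbf{v}^j \rangle$ with $\mathbf{v}^m$ the $m$-th cyclic shift of $(\psi_a(0),\dots,\psi_a(n-1))$ and applies Cauchy--Schwarz, while your sum-of-squares identity is exactly $\|\mathbf{v}^0 - \mathbf{v}^j\|^2 \geq 0$, i.e.\ the equal-norm case of Cauchy--Schwarz unfolded. The substantive difference is in the handling of strictness: the paper simply asserts the strict form of Cauchy--Schwarz without verifying that $\mathbf{v}^0$ and $\mathbf{v}^j$ are linearly independent (equivalently, that $\psi_a$ is not $j$-periodic on $\mathbb{Z}_n$), whereas you supply an explicit argument via the character sum $\sum_{k} \omega^{-k}\psi_a(k) = e^{a\omega^{-1}} \neq 0$. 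Your route is a shade longer but actually closes that small gap; the character trick is a clean way to do it, and your closing caveat about the failure of pointwise dominance $\psi_a(0) > \psi_a(k)$ is a useful warning against the obvious wrong shortcut.
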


\begin{proof}
    For \( m \in [n], \) define
    \begin{equation*}
        \mathbf{v}^m \coloneqq \bigl(\psi_a(m), \psi_a(m+1), \dots, \psi_a(n-1), \psi_a(0), \dots, \psi_a(m-1) \bigr),
    \end{equation*} 
    and note that for any \( m,m' \in [n] \) with \( m \geq m', \) we have \( \langle \mathbf{v}^m,\mathbf{v}^{m'} \rangle= \hat \varphi_a(m-m'). \) 
    Using the Cauchy-Schwarz inequality, we deduce that
    \begin{equation*}
        \hat \varphi_a(j)^2 = \langle \mathbf{v}^0, \mathbf{v}^j \rangle^2 <  \langle \mathbf{v}^0, \mathbf{v}^0 \rangle\langle \mathbf{v}^j, \mathbf{v}^j \rangle
        =
        \langle \mathbf{v}^0, \mathbf{v}^0 \rangle^2 = \hat \varphi_a(0)^2.
    \end{equation*}
    Since \( \hat \varphi_a(j) \geq 0 \) and \( \hat \varphi_a(0) \geq 0, \)  it follows that \( \hat \varphi_a(j) < \hat \varphi_a(0). \)
    Using the definition of \( \varphi_a , \) it immediately follows that  \(  \varphi_a(j) <  \varphi_a(0) = 1. \) This concludes the proof.
\end{proof}

The next lemma gives the leading order term of \( \hat \varphi_a(j) \) when \( a \) is small.
\begin{lemma}\label{lemma: upper and lower bounds}
    Let  \( a \in [0,1] \) and \( j \in [n]\) be such that \( j \leq n/2. \) Then
    \begin{equation*}
        \begin{split}
            &0<  \hat \varphi_a(j) - \bigl(1 + \mathbb{1}(j = n/2 )\bigr) a^{j}/j!  
            \leq a^{j}/j! \cdot \varepsilon_a,
        \end{split}
    \end{equation*}
    where
    \begin{equation}\label{eq: def epsilon}
        \varepsilon_a \coloneqq ( 1
            +
            2ae^a  ) \Bigl( 1
            +
            \frac{a^{n}e^a}{n!} \Bigr)^2-1.
    \end{equation}
\end{lemma}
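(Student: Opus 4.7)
The plan is to expand $\psi_a(k) = a^k/k! + R_k$ with $R_k \coloneqq \sum_{\ell \geq 1} a^{k+\ell n}/(k+\ell n)!$, substitute into the defining sum for $\hat\varphi_a(j)$, and identify which pairs $(k,k') \in [n]^2$ with $k' \equiv k + j \bmod n$ contribute at leading order in $a$. The estimate $k!/(k+\ell n)! \leq 1/(\ell n)!$ combined with $\sum_{\ell \geq 1} a^{\ell n}/(\ell n)! \leq (a^n/n!)e^a$ yields $0 \leq R_k \leq (a^k/k!)B$ where $B \coloneqq a^n e^a/n!$, and hence $\psi_a(k)\psi_a(k') \leq (1+B)^2\, a^{k+k'}/(k!\,k'!)$. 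The minimal value of $k+k'$ over admissible pairs is $j$: for $j < n/2$ it is attained uniquely at $(0,j)$, while for $j = n/2$ both $(0,n/2)$ and $(n/2,0)$ attain it and the two halves of the defining sum for $\hat\varphi_a$ coincide, accounting for the factor $1 + \mathbb{1}(j = n/2)$ in the leading term. Strict positivity of the remainder (for $a > 0$) is immediate from positivity of each summand.

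For the upper bound when $j < n/2$, I would decompose
\[
\hat\varphi_a(j) - \frac{a^j}{j!} = \Bigl[\psi_a(0)\psi_a(j) - \tfrac{a^j}{j!}\Bigr] + \sum_{k=1}^{n-j-1}\psi_a(k)\psi_a(k+j) + \sum_{k'=0}^{j-1}\psi_a(k'+n-j)\psi_a(k')
\]
and bound the three pieces separately. The first is $\leq (a^j/j!)\bigl((1+B)^2 - 1\bigr) = (a^j/j!)B(2+B)$, from the remainder estimate on $\psi_a(0)\psi_a(j)$. The middle sum, using $(k+j)! \geq j!$, is $\leq (1+B)^2(a^j/j!)(e^{a^2} - 1)$. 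For the last sum, the hypothesis $j < n/2$ gives $n - j \geq j + 1$, hence $a^{n-j}/(n-j)! \leq a\cdot a^j/j!$, which yields the bound $(1+B)^2(a^j/j!)\,a\,e^{a^2}$. The elementary inequality $e^{a^2} - 1 \leq a e^a$ for $a \in [0,1]$ (from $e^x - 1 \leq x e^x$ combined with $a^2 \leq a \leq ae^a$) then combines the three pieces into $(a^j/j!)\bigl[B(2+B) + 2(1+B)^2 a e^a\bigr] = (a^j/j!)\,\varepsilon_a$.

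The main obstacle is the case $j = n/2$, where the leading pair contributes twice and the naive argument above produces roughly $2\varepsilon_a$ instead of $\varepsilon_a$. The remedy is to sharpen the off-leading-pair bound using $(k+n/2)! \geq (n/2)!(n/2+1)^k$ for $k \geq 1$, which gives
\[
\sum_{k=1}^{n/2-1}\frac{a^{2k+n/2}}{k!(k+n/2)!} \leq \frac{a^{n/2}}{(n/2)!}\bigl(e^{a^2/(n/2+1)} - 1\bigr),
\]
substantially smaller than the analogous estimate used for $j < n/2$. When $n = 2$ this sum is empty and the claim reduces to the elementary inequality $B(2+B) \leq 2 a e^a (1+B)^2$. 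For $n \geq 4$, using $e^{a^2/(n/2+1)} - 1 \leq a^2/2$ for $a \in [0,1]$ together with the bound $a^{n-1}/n! \leq 1/24$, one verifies by a short calculation that $2B(2+B) + 2(1+B)^2\bigl(e^{a^2/(n/2+1)} - 1\bigr) \leq \varepsilon_a$, completing the proof.
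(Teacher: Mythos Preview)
Your proof is correct and shares the paper's opening move: both bound $\psi_a(k)\psi_a(k') \leq (1+B)^2\,a^{k+k'}/(k!\,k'!)$ with $B = a^n e^a/n!$ and then control the sum over admissible pairs. The difference is in that second step. The paper bounds each non-leading term \emph{individually} by $a^m/m!$ for some $m \geq j+1$ (via a three-case analysis on the index $k$), summing to $2a^{j+1}e^a/(j+1)!$; this treats all $j \leq n/2$ uniformly and sidesteps the need for your separate $j = n/2$ argument. Your route---using $(k+j)! \geq j!$ and $(k'+n-j)! \geq (n-j)!$ to reduce to $e^{a^2}-1$ and $ae^{a^2}$, then invoking $e^{a^2}-1 + ae^{a^2} \leq 2ae^a$---is more hands-on but equally valid, at the cost of the extra work at $j = n/2$. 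It is worth noting that your explicit treatment of $j = n/2$ is actually more careful than the paper's: the paper's final displayed simplification $(1+\mathbb{1}(j=n/2))\,(a^j/j!)\,(1+2ae^a)(1+B)^2$ overshoots the claimed upper bound by a factor of $2$ when $j = n/2$; the paper's \emph{penultimate} estimate does imply the lemma, but this requires a short verification (essentially $B(2+B) \leq ae^a(1+B)^2$) that the paper omits and that your argument supplies directly.
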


\begin{proof}
    By definition, we have 
    \begin{equation*}
        \psi_a(j) = \sum_{k=0}^\infty \frac{a^{j+kn}}{(j+kn)!} > \frac{a^j}{j!}.
    \end{equation*}
    Consequently, for $0 \leq j \leq n/2$,
    \begin{equation*}
        \begin{split}
            &\hat \varphi_a(j) 
            = \sum_{\substack{k,k' \in [n] \colon\\ k-k' = j \mod n}} \psi_a(k)\psi_a(k') 
            > 
            \sum_{\substack{k,k' \in [n] \colon\\ k-k' = j \mod n}} \frac{a^{k+k'}}{k!k'!}
            \geq
            \frac{a^{j+0}}{j!0!}
        + \mathbb{1}(j > 0) \frac{a^{0+(n-j)}}{0!(n-j)!}
            \\&\qquad \geq
            \frac{a^{j}}{j!} \cdot \bigl( 1 
            + \mathbb{1}(j = n/2) \bigr).
        \end{split}
    \end{equation*}
    This completes the proof of the lower bound.
    For the upper bound, note that
    \begin{equation*}
        \begin{split}
            &\psi_a(j) = \sum_{k=0}^\infty \frac{a^{j+kn}}{(j+kn)!}
            = 
            \frac{a^j}{j!} 
            +
            \sum_{k=1}^\infty \frac{a^{j+kn}}{(j+kn)!}
            \\&\qquad= 
            \frac{a^j}{j!} 
            +
            \frac{a^{j+n}}{(j+n)!}\sum_{k=1}^\infty \frac{a^{(k-1)n}}{(j+n + (k-1)n)!/(j+n)!}
            \\&\qquad\leq
            \frac{a^j}{j!} 
            +
            \frac{a^{j+n}}{(j+n)!}\sum_{k=1}^\infty \frac{a^{(k-1)n}}{( (k-1)n)!}
            \leq
            \frac{a^j}{j!} 
            +
            \frac{a^{j+n}}{(j+n)!}\sum_{k'=0}^\infty \frac{a^{k'}}{k'!}
            =
            \frac{a^j}{j!} 
            +
            \frac{a^{j+n}}{(j+n)!} \cdot e^a.
        \end{split}
    \end{equation*}
    As a consequence,
    \begin{equation*}
        \begin{split}
            &\hat \varphi_a(j) 
            = 
            \sum_{\substack{k,k' \in [n] \colon\\ k-k' = j \mod n}} \psi_a(k)\psi_a(k') 
            \leq
            \sum_{\substack{k,k' \in [n] \colon\\ k-k' = j \mod n}}
            \Bigl( \frac{a^k}{k!} 
            +
            \frac{a^{k+n}}{(k+n)!} \cdot e^a \Bigr) \Bigl(\frac{a^{k'}}{k'!}
            +
            \frac{a^{k'+n}}{(k'+n)!} \cdot e^a \Bigr)
            \\&\qquad = 
            \sum_{\substack{k,k' \in [n] \colon\\ k-k' = j \mod n}}
            \frac{a^{k+k'}}{k!k'!}\Bigl( 1
            +
            \frac{a^{n}k!}{(k+n)!} \cdot e^a \Bigr) \Bigl(1
            +
            \frac{a^{n} k'!}{(k'+n)!} \cdot e^a \Bigr)
            \leq
            \sum_{\substack{k,k' \in [n] \colon\\ k-k' = j \mod n}}
            \frac{a^{k+k'}}{k!k'!}\Bigl( 1
            +
            \frac{a^{n}e^a}{n!} \Bigr)^2.
        \end{split}
    \end{equation*}
    Now fix any \( j' \geq 0. \) Then
    \begin{equation}\label{eq: first three equalities}
        \begin{split}
            &\sum_{\substack{k,k' \in [n] \colon\\ k-k' = j \mod n}}
            \frac{a^{k+k'}}{k!k'!}
            =
            \sum_{k = 0}^{j-1}
            \frac{a^{k+(k-j+n)}}{k!(k-j+n)!}
            +
            \sum_{k = j}^{n-1}
            \frac{a^{k+(k-j)}}{k!(k-j)!}
            \\&\qquad
            = 
            \bigl( 1 + \mathbb{1}(j = n/2) \bigr) \frac{a^j}{j!} 
            +
            \sum_{k = 0+ \mathbb{1}(j=n/2)}^{j-1}
            \frac{a^{k+(k-j+n)}}{k!(k-j+n)!}
            +
            \sum_{k = j+1}^{n-1}
            \frac{a^{k+(k-j)}}{k!(k-j)!}.
        \end{split}
    \end{equation}
   Moreover, if \( k \in \{ j+1, \dots, n-1 \}, \) then 
    \begin{enumerate}[label=(\roman*)]
        \item \( k + (k-j)
        \geq k =  (j+1) + (k-j-1), \) and hence, since \( a \leq 1, \) we have \( a^{k+(k-j)} \leq  a^{(j+1) + (k-j-1)},\) and
        \item \( k!(k-j)! \geq k! = ((j+1) + (k-j-1))!. \)
    \end{enumerate}
    Next, note that if  \( 0 \leq j < n/2 \) and 
    \( k \in \{ 0,\dots, j-1 \}, \) then 
    \begin{enumerate}[label=(\roman*)]
        \item \( k + (k-j+n) = (n-j)+2k \geq j+1+ 2 k  \geq (j+1) + k, \) and hence, since \( a \leq 1 \) we have \( a^{k + (k-j+n)} \leq a^{(j+1) + k}, \) and
        \item \( k! (k-j+n)! \geq  \bigl(k +(n-j)\bigr)! \geq \bigl(k+(j+1)\bigr)! = \bigl((j+1)+k \bigr). \)
    \end{enumerate} 
    Finally, note that if \( j = n/2 \) and \( k \in \{ 1,2, \dots, j-1 \} \), then 
    \begin{enumerate}[label=(\roman*)]
        \item \( k + (k-j+n) = 2k + (n-j) = 2k + j = (j+1) + (2k-1) \geq (j+1) + (k-1)  \) and hence, since \( a \leq 1 \) we have \( a^{k + (k-j+n)} \leq a^{(j+1)+(k-1)}, \) and
        \item \( k! (k-j+n)! \geq  (k-j+n)! = \bigl( k + (n-j)\bigr)! = (k+j)! = \bigl( (j+1) + (k-1) \bigr)! . \)
    \end{enumerate}
    Using these observations, we can bound the right-hand side of~\eqref{eq: first three equalities} from above by
    \begin{equation*}
        \begin{split} 
            \bigl( 1 + \mathbb{1}(j = n/2) \bigr) \frac{a^j}{j!} 
            +
            2\sum_{k = j+1}^{\infty}
            \frac{a^{k}}{k!}
            \leq 
            \bigl( 1 + \mathbb{1}(j = n/2) \bigr) \frac{a^j}{j!} 
            +
            \frac{2a^{j+1}e^a}{(j+1)!}.
        \end{split}
    \end{equation*}
    Together with the previous equation, we thus obtain
    \begin{equation*}
        \begin{split}
            &\hat \varphi_a(j) 
            \leq 
            \biggl( \bigl( 1 + \mathbb{1}(j = n/2) \bigr)  \frac{a^{j}}{j!}
            +
            \frac{2a^{j+1}e^a}{(j+1)!} \biggr) \Bigl( 1
            +
            \frac{a^{n}e^a}{n!} \Bigr)^2
            \\&\qquad 
            \leq 
            \bigl( 1 + \mathbb{1}(j = n/2) \bigr)   \frac{a^{j}}{j!} \cdot ( 1
            +
            2ae^a  ) \Bigl( 1
            +
            \frac{a^{n}e^a}{n!} \Bigr)^2.
        \end{split}
    \end{equation*}
    This concludes the proof of the upper bound.
\end{proof}

\begin{lemma}\label{lemma: order}
    Let \( a \geq 0 \) be such that \( a( 1 +\varepsilon_a )  \leq 1. \)
    Then \( \hat\varphi_a(1) \geq \hat \varphi_a(2) \geq \dots \geq \hat \varphi_a\bigl(\lfloor n/2 \rfloor \bigr). \)
\end{lemma}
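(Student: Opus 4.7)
My plan is to control the ratio $\hat\varphi_a(j+1)/\hat\varphi_a(j)$ for each $j \in \{1, 2, \dots, \lfloor n/2 \rfloor - 1\}$ by combining the lower bound on the denominator with the upper bound on the numerator supplied by Lemma~\ref{lemma: upper and lower bounds}, and then to verify that the resulting ratio is at most $1$ under the hypothesis $a(1+\varepsilon_a) \le 1$. Chaining these inequalities across consecutive $j$ immediately yields the claimed monotonicity.

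For the key estimate, observe that since $1 \le j < j+1 \le \lfloor n/2 \rfloor \le n/2$, we have $j < n/2$, so the indicator $\mathbb{1}(j=n/2)$ in the lower bound of Lemma~\ref{lemma: upper and lower bounds} is zero and hence $\hat\varphi_a(j) > a^j/j!$. The upper bound of the same lemma gives $\hat\varphi_a(j+1) \le a^{j+1}/(j+1)!\cdot\bigl(1 + \mathbb{1}(j+1 = n/2) + \varepsilon_a\bigr)$. Dividing, I obtain
\[
\frac{\hat\varphi_a(j+1)}{\hat\varphi_a(j)} < \frac{a\,\bigl(1 + \mathbb{1}(j+1=n/2) + \varepsilon_a\bigr)}{j+1}.
\]
If $j+1 < n/2$, the indicator vanishes and the right-hand side is at most $a(1+\varepsilon_a)/(j+1) \le a(1+\varepsilon_a)/2 \le 1/2$ by hypothesis. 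If instead $j+1 = n/2$, then $n$ is even and, since $j \ge 1$, we also have $j+1 \ge 2$, so the right-hand side is at most $a(2+\varepsilon_a)/2 = a(1 + \varepsilon_a/2) \le a(1+\varepsilon_a) \le 1$. Either way, $\hat\varphi_a(j+1) \le \hat\varphi_a(j)$, and iterating over $j = 1, 2, \dots, \lfloor n/2 \rfloor -1$ produces the full chain.

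The one delicate point is the extra factor of two that Lemma~\ref{lemma: upper and lower bounds} carries when the index equals $n/2$, reflecting the symmetry $\hat\varphi_a(n-j) = \hat\varphi_a(j)$ from Lemma~\ref{lemma: symmetry}: the ``middle'' term is genuinely enhanced. This is the only place where the hypothesis $a(1+\varepsilon_a) \le 1$ is used to its full strength rather than with slack, and the enhancement is exactly absorbed by the factor $1/(j+1)$ with $j+1 \ge 2$. I therefore expect no additional obstacles.
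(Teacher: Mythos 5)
Your proof is correct and follows essentially the same route as the paper's: both arguments combine the lower bound on $\hat\varphi_a(j)$ and the upper bound on $\hat\varphi_a(j+1)$ from Lemma~\ref{lemma: upper and lower bounds} to bound the ratio of consecutive terms by $a\bigl(1+\mathbb{1}(\cdot=n/2)+\varepsilon_a\bigr)/(j+1)$, and then absorb the possible factor $2$ at the midpoint using $j+1\geq 2$ together with the hypothesis $a(1+\varepsilon_a)\leq 1$. The only differences are a shift of index and your explicit case split, which the paper compresses into a single inequality.
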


\begin{proof}
    Fix any integer \( 2 \leq j \leq n/2.\) Then, by Lemma~\ref{lemma: upper and lower bounds}, we have
    \begin{equation*}
        \begin{split}
            & \bigl(1 + \mathbb{1}(j = n/2 )\bigr) a^{j}/j! 
            \leq  
            \hat \varphi_a(j)   
            \leq 
            \bigl(1 + \mathbb{1}(j = n/2 )\bigr) a^{j}/j!  + a^{j}/j! \cdot \varepsilon_a.
        \end{split}
    \end{equation*}
    Consequently, if 
    \begin{equation*}
        \bigl( (1 + \mathbb{1}(j = n/2 ))+\varepsilon_a \bigr) \cdot a/j \leq 1,
    \end{equation*}
    then  \( \hat \varphi_a(j) \leq \hat \varphi_a(j-1). \) Since \( a( 1 + \varepsilon_a ) \leq 1 \) by assumption, and 
    \begin{equation*}
        \pigl( \bigl(1 + \mathbb{1}(j = n/2)\bigr)+\varepsilon_a \pigr) \cdot a/j \leq a( 1 + \varepsilon_a )
    \end{equation*}
    when \( j \in \{ 2,3, \dots, \lfloor n/2 \rfloor\}, \) it follows that \( \hat \varphi_a(1) \geq \hat \varphi_a(2) \geq \dots \geq \hat \varphi_a(\lfloor n/2 \rfloor).  \) This concludes the proof.
\end{proof}

\begin{lemma}\label{lemma: convexity type inequality}
    Let  \( a \geq 0 \) be such that \( a  (1+\varepsilon_a) \leq 1, \) 
    and let \( j \in [n].\)  
    Then
    \begin{equation}\label{eq: convexity type inequality}
        \hat\varphi_a(  j +1) \hat\varphi_a(0)+\hat\varphi_a(  j -1) \hat\varphi_a(0) \geq 2\hat \varphi_a(j) \hat \varphi_a(1).
    \end{equation}
\end{lemma}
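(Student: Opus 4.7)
The plan is to pass through a Fourier/covariance reformulation on $\mathbb{Z}_n$. Applying Lemma~\ref{lemma: first part of expansion} together with the orthogonality relation $\sum_{g \in [n]} \rho(g)^k = n\,\mathbb{1}(n \mid k)$, I first invert the formula to obtain
\begin{equation*}
    \hat\varphi_a(j) = \frac{1}{n}\sum_{g \in [n]} c_g\,\rho(g)^{-j}, \qquad c_g := e^{2a\Re\rho(g)}.
\end{equation*}
Combining this with the elementary identity $\rho(g)+\rho(g)^{-1} = 2\Re\rho(g)$ and the symmetry $c_g = c_{-g}$ (in the spirit of Lemma~\ref{lemma: symmetry}), a direct manipulation followed by symmetrization in $g, g'$ shows that the difference between the left- and right-hand sides of \eqref{eq: convexity type inequality} equals
\begin{equation*}
    \frac{1}{n^2}\sum_{g, g' \in [n]} c_g c_{g'}\pigl(\cos(2\pi g/n) - \cos(2\pi g'/n)\pigr)\pigl(\cos(2\pi j g/n) - \cos(2\pi j g'/n)\pigr),
\end{equation*}
which, up to the positive factor $2\hat\varphi_a(0)^2$, is a weighted covariance under the probability measure $p(g) \propto c_g$.

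To show this covariance is non-negative, I would use a duplication/series-expansion argument. Applying the sum-to-product identity $\cos\alpha - \cos\beta = -2\sin\frac{\alpha+\beta}{2}\sin\frac{\alpha-\beta}{2}$ (once with $\alpha,\beta = 2\pi g/n, 2\pi g'/n$ and once with the corresponding $j$-multiples), and noting that $c_g c_{g'} = e^{4a\cos u\cos v}$ where $u := \pi(g+g')/n$ and $v := \pi(g-g')/n$, the double sum becomes $4\sum_{g,g'} \sin(u)\sin(v)\sin(j u)\sin(j v)\, e^{4a\cos u\cos v}$. Expanding the exponential as a power series in $a$, the coefficient of $(4a)^k/k!$ is
\begin{equation*}
    4\sum_{g, g'}\pigl(\cos^k(u)\sin(u)\sin(j u)\pigr)\pigl(\cos^k(v)\sin(v)\sin(j v)\pigr).
\end{equation*}
Under the change of variables $(g, g') \mapsto (s, d) := (g+g', g-g')$, which is a bijection on $\mathbb{Z}_n^2$ when $n$ is odd, the inner sum factors as the square of a single real sum over $s$, and is therefore non-negative; summing over $k$ then yields the inequality.

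The main obstacle is the even-$n$ case, where this change of variables is two-to-one onto the sublattice $\{(s,d) : s+d \text{ even}\}$ rather than a bijection on $\mathbb{Z}_n^2$. To handle this, I would decompose the sum over $(g, g')$ according to the parity of $g+g'$, obtaining two sub-sums each of which factors as a square via the analogous bijection onto the corresponding parity class. I would also remark that the hypothesis $a(1+\varepsilon_a)\leq 1$, inherited from Lemma~\ref{lemma: order}, does not appear to be essential for the covariance inequality itself but is retained for consistency with how the lemma is applied elsewhere in the paper.
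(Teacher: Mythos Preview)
Your approach via Fourier inversion and the covariance identity is genuinely different from the paper's proof, which proceeds by a direct case analysis on $j$ (reducing to $0\le j\le\lfloor n/2\rfloor$ via Lemma~\ref{lemma: symmetry}, handling the endpoints with Lemma~\ref{lemma: order} and Lemma~\ref{lemma: 0 first}, and treating the interior range $1\le j\le\lfloor n/2\rfloor-1$ with the quantitative bounds of Lemma~\ref{lemma: upper and lower bounds}). Your rewriting of the difference in~\eqref{eq: convexity type inequality} as a symmetrized double sum is correct, as is the sum-to-product step and the observation that $c_gc_{g'}=e^{4a\cos u\cos v}$. If your argument goes through, it establishes~\eqref{eq: convexity type inequality} for \emph{all} $a\ge 0$, whereas the paper's argument genuinely uses the hypothesis $a(1+\varepsilon_a)\le 1$ in the interior range.

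There is, however, a parity subtlety in the change of variables that you gloss over and that is needed to make the factorization valid. The function $f_m(u)\coloneqq\cos^m u\,\sin u\,\sin(ju)$ satisfies $f_m(u+\pi)=(-1)^{m+j+1}f_m(u)$, so $f_m(\pi(g+g')/n)$ is \emph{not} in general a function of $s\coloneqq(g+g')\bmod n$ alone. For odd $n$ one checks that the overflow parities from reducing $g+g'$ and $g-g'$ modulo $n$ sum to $s+d\pmod 2$; this inserts a factor $(-1)^{(s+d)(m+j+1)}$ into the summand, which still factors in $s$ and $d$ and still yields a perfect square. For even $n$, the two preimages of each $(s,d)$ under the $2$--to--$1$ map contribute with a relative sign $(-1)^{m+j+1}$: when $m+j$ is even the coefficient of $(4a)^m/m!$ therefore vanishes identically, while when $m+j$ is odd the function $f_m$ is $\pi$-periodic and your parity-of-$(g+g')$ decomposition does produce two squares. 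These corrections are routine once noticed, but without them the claim ``the inner sum factors as the square of a single real sum over $s$'' is not literally true.
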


\begin{proof}
    We will prove that~\eqref{eq: convexity type inequality} holds by considering different cases.
    
    If \( j = 0, \) then~\eqref{eq: convexity type inequality} reduces to
    \begin{equation*} 
        \hat\varphi_a( 1) \hat\varphi_a(0)+\hat\varphi_a(  -1) \hat\varphi_a(0) \geq 2\hat \varphi_a(0) \hat \varphi_a(1).
    \end{equation*}
    Since, by Lemma~\ref{lemma: symmetry}, we have \( \hat \varphi_a(1) = \hat \varphi_a(-1) \), the desired conclusion follows.
    
    If \( j = n/2, \) then, by Lemma~\ref{lemma: symmetry}, the left hand side of~\eqref{eq: convexity type inequality} is equal to \( 2 \hat \varphi_a(j-1) \hat \varphi(0), \) and hence in this case the desired conclusion follows by combining Lemma~\ref{lemma: order} and Lemma~\ref{lemma: 0 first}.
    
    If \( j = \lfloor n/2 \rfloor < n/2, \) then, by Lemma~\ref{lemma: symmetry}, the left hand side of~\eqref{eq: convexity type inequality} is equal to \( \hat \varphi_a(j) \hat \varphi(0)+\hat \varphi_a(j-1) \hat \varphi(0), \) and hence in this case the desired conclusion follows by combining Lemma~\ref{lemma: order} and Lemma~\ref{lemma: 0 first}.

    If \( j \in \bigl\{ \lceil n/2 \rceil, \dots, n-1 \bigr\}, \) then \( n-j \in \bigl\{ 1,2, \dots, \lfloor n/2 \rfloor \bigr\}. \) Using Lemma~\ref{lemma: symmetry}, the desired conclusion will thus follow in this case if we can show that~\eqref{eq: convexity type inequality} holds for \( j \in \bigl\{ 1,2,\dots, \lfloor n/2 \rfloor \bigr\}. \) %
    
    Assume that \( j \in \bigl\{ 1,2,\dots, ,\lfloor n/2 \rfloor-1 \bigr\}. \) This implies that \( n \geq 4, \) which in turn implies that \( 1 \neq n/2 \) and \( j \neq n/2. \)
    Thus, using Lemma~\ref{lemma: upper and lower bounds}, we have
    \begin{equation*}
        \hat \varphi_a(j)\hat \varphi_a(1) \leq a^{j+1}/j! \bigl( 1  +   \varepsilon_a \bigr)^2
    \end{equation*}
    and
    \begin{equation*}
        \hat\varphi_a(  j +1) \hat\varphi_a(0)+\hat\varphi_a(  j -1) \hat\varphi_a(0)
        \geq\hat\varphi_a(  j -1) \hat\varphi_a(0)
        \geq 
        a^{j-1}/(j-1)! .
    \end{equation*}
    Consequently, if \( a^{j-1}/(j-1)! \geq a^{j+1}/j! \bigl(1+\varepsilon_a\bigr)^2 \) then~\eqref{eq: convexity type inequality} holds. But this is an immediate consequence of the assumption \( a(1 + \varepsilon_a) \leq 1. \) This concludes the proof.
\end{proof}

The next lemma gives an upper bound on \( \alpha(\beta,\kappa) \) in terms of \( \zeta_\beta \) and \( \xi_\kappa, \) and will be used later when giving upper bounds on the error terms we will get from approximations. 
\begin{lemma}\label{lemma: Zn properties of alpha}
    If \( \kappa( 1+\varepsilon_\kappa ) \leq 1, \)
    then 
    \(
        1 \leq \alpha(\beta,\kappa) \leq  {1}/({1 - \zeta_\beta\xi_\kappa^2}).
    \)
\end{lemma}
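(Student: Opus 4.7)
The plan is to prove the two inequalities separately, using Lemma~\ref{lemma: symmetry} (the symmetry $\varphi_a(n-j)=\varphi_a(j)$), Lemma~\ref{lemma: 0 first} ($\varphi_a(j)\le 1$ with equality iff $j=0$), Lemma~\ref{lemma: order} (monotonicity, which gives $\varphi_\kappa(1)=\xi_\kappa$), and Lemma~\ref{lemma: convexity type inequality} (the convexity-type inequality). All four lemmas require only the standing hypothesis $\kappa(1+\varepsilon_\kappa)\le 1$, so no additional assumptions are needed.

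For the lower bound $\alpha(\beta,\kappa)\ge 1$, the idea is a symmetrization trick. Setting $b_j := \varphi_\beta(j)\varphi_\kappa(j)^3/\varphi_\kappa(1)$, the numerator of $\alpha$ equals $\sum_{j\in[n]} b_j\,\varphi_\kappa(j+1)$. Since $\varphi_\beta$ and $\varphi_\kappa$ are both symmetric under $j\mapsto n-j$ by Lemma~\ref{lemma: symmetry}, we have $b_j=b_{n-j}$, and reindexing yields
\begin{equation*}
\sum_{j\in[n]} b_j\,\varphi_\kappa(j+1) \;=\; \sum_{j\in[n]} b_j\,\varphi_\kappa(j-1) \;=\; \tfrac{1}{2}\sum_{j\in[n]} b_j\bigl(\varphi_\kappa(j+1)+\varphi_\kappa(j-1)\bigr).
\end{equation*}
Applying Lemma~\ref{lemma: convexity type inequality}, i.e.\ $\varphi_\kappa(j+1)+\varphi_\kappa(j-1)\ge 2\varphi_\kappa(j)\varphi_\kappa(1)$, gives the numerator $\ge \sum_j \varphi_\beta(j)\varphi_\kappa(j)^4$, which is exactly the denominator. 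Hence $\alpha\ge 1$.

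For the upper bound, the key observation is that the denominator is at least $1$ (its $j=0$ term alone contributes $\varphi_\beta(0)\varphi_\kappa(0)^4=1$), so $\alpha(\beta,\kappa)$ is bounded above by the numerator. In the numerator the $j=0$ term again equals $1$. For each $j\in[n]\setminus\{0\}$ I will show that
\begin{equation*}
\varphi_\kappa(j)^3\,\varphi_\kappa(j+1)/\varphi_\kappa(1) \;\le\; \xi_\kappa^2.
\end{equation*}
Using $\varphi_\kappa(1)=\xi_\kappa$ (which follows by combining Lemmas~\ref{lemma: symmetry} and~\ref{lemma: order}), together with $\varphi_\kappa(k)\le\xi_\kappa$ for $k\ne 0$ and $\varphi_\kappa(0)=1$, this reduces to two easy cases: if $j\ne n-1$ the bound becomes $\xi_\kappa^3\le\xi_\kappa^2$ (valid since $\xi_\kappa\le 1$), while if $j=n-1$ one has $\varphi_\kappa(j+1)=1$ and $\varphi_\kappa(j)=\xi_\kappa$, giving exactly $\xi_\kappa^2$. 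Summing over $j\ne 0$ produces $\alpha\le 1+\zeta_\beta\xi_\kappa^2$, and the stated bound then follows from the elementary inequality $1+x\le 1/(1-x)$ for $x\in[0,1)$.

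The main (minor) obstacle is the lower bound: one must spot the symmetrization so that Lemma~\ref{lemma: convexity type inequality} can be applied in the right form. The upper bound is essentially a direct case analysis, and the hypothesis $\kappa(1+\varepsilon_\kappa)\le 1$ is needed only to invoke the convexity and monotonicity lemmas.
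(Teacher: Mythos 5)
Your proof is correct. For the lower bound you follow essentially the same path as the paper: symmetrize the numerator of $\alpha(\beta,\kappa)$ using $\hat\varphi_a(j)=\hat\varphi_a(n-j)$ (Lemma~\ref{lemma: symmetry}) and then apply the convexity-type inequality of Lemma~\ref{lemma: convexity type inequality}; the paper phrases this as $\alpha = 1 + (\text{nonnegative quantity})$ rather than "numerator $\ge$ denominator", but the computation is identical. For the upper bound your route is genuinely different and somewhat more direct. The paper manipulates $1-\alpha(\beta,\kappa)^{-1}=(\alpha-1)/\alpha$ as an explicit ratio, bounds its numerator using $\varphi_\kappa(j+1)\varphi_\kappa(0)-\varphi_\kappa(j)\varphi_\kappa(1)\le 1$ and its denominator below by $\varphi_\kappa(1)=\xi_\kappa$, obtaining $1-\alpha^{-1}\le\zeta_\beta\xi_\kappa^2$ and then inverting. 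You instead bound the denominator of $\alpha$ below by its $j=0$ term (which equals $1$) and estimate each $j\ne 0$ term of the numerator by $\varphi_\beta(j)\xi_\kappa^2$ via the case split $j\ne n-1$ versus $j=n-1$; this yields the intermediate bound $\alpha\le 1+\zeta_\beta\xi_\kappa^2$, which is in fact marginally sharper than $1/(1-\zeta_\beta\xi_\kappa^2)$ and implies it via $1+x\le 1/(1-x)$. Both arguments use the same ingredients ($\varphi_\kappa(1)=\xi_\kappa$ from Lemmas~\ref{lemma: symmetry} and~\ref{lemma: order}, and $\varphi_\kappa(j)\le\xi_\kappa<1$ for $j\ne 0$ from Lemma~\ref{lemma: 0 first}), and both share the same implicit caveat that the final statement is only meaningful when $\zeta_\beta\xi_\kappa^2<1$, so there is no gap to repair.
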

\begin{proof}
    Assume that \( \kappa( 1+\varepsilon_\kappa ) \leq 1. \)
    By definition, we have
    \begin{equation*}
        \begin{split}
            &\alpha(\beta,\kappa)
            =
            \frac{\sum_{j \in [n]} \hat\varphi_\beta(j) \hat\varphi_\kappa(j)^3 \hat\varphi_\kappa(  j +1) \hat\varphi_\kappa(0) }{ \sum_{j \in [n]} \hat\varphi_\beta(j) \hat\varphi_\kappa(j)^4 \hat\varphi_\kappa(1)}
            \\&\qquad=
            1 + \frac{\sum_{j \in [n]\smallsetminus \{ 0 \}} \hat\varphi_\beta(j) \hat\varphi_\kappa(j)^3 \bigl( \hat\varphi_\kappa(  j +1) \hat\varphi_\kappa(0)  - \hat \varphi_\kappa(j) \hat \varphi_\kappa(1)  \bigr)}{ \sum_{j \in [n]} \hat\varphi_\beta(j) \hat\varphi_\kappa(j)^4\hat\varphi_\kappa(1) }.
        \end{split}
    \end{equation*}
    Since, by Lemma~\ref{lemma: symmetry}, we have \( \hat \varphi_a(j) = \hat \varphi_a(n-j)\) for every \( a \geq 0 \) and \( j \in [n], \) we have
    \begin{align*}
        &\sum_{j \in [n]\smallsetminus \{ 0 \}} \hat\varphi_\beta(j) \hat\varphi_\kappa(j)^3 \bigl( \hat\varphi_\kappa(  j +1) \hat\varphi_\kappa(0)  - \hat \varphi_\kappa(j) \hat \varphi_\kappa(1)  \bigr)
        \\&\qquad=
        \sum_{j \in [n]\smallsetminus \{ 0 \}} \hat\varphi_\beta(n-j) \hat\varphi_\kappa(n-j)^3 \bigl( \hat\varphi_\kappa(  (n-j) +1) \hat\varphi_\kappa(0)  - \hat \varphi_\kappa(n-j) \hat \varphi_\kappa(1)  \bigr)
        \\&\qquad=
        \sum_{j \in [n]\smallsetminus \{ 0 \}} \hat\varphi_\beta(j) \hat\varphi_\kappa(j)^3 \bigl( \hat\varphi_\kappa( j-1) \hat\varphi_\kappa(0)  - \hat \varphi_\kappa(j) \hat \varphi_\kappa(1)  \bigr)
    \end{align*}
    and hence it follows that
    \begin{equation}\label{eq: alpha equality}
        \begin{split}
            &\alpha(\beta,\kappa)
            =
            1 + \frac{\sum_{j \in [n] \smallsetminus \{ 0 \}} \hat\varphi_\beta(j) \hat\varphi_\kappa(j)^3 \bigl( \hat\varphi_\kappa(  j +1) \hat\varphi_\kappa(0)+\hat\varphi_\kappa(  j -1) \hat\varphi_\kappa(0)  - 2\hat \varphi_\kappa(j) \hat \varphi_\kappa(1)  \bigr) }{ 2\sum_{j \in [n]} \hat\varphi_\beta(j) \hat\varphi_\kappa(j)^4\hat\varphi_\kappa(1) }.
        \end{split}
    \end{equation}
    Applying Lemma~\ref{lemma: convexity type inequality}, we obtain \( \alpha(\beta,\kappa) \geq 1 \) as desired.
    
    Next, note that
    \begin{equation}\label{eq: second step}
        1 - \alpha(\beta,\kappa)^{-1}
        =
        \frac{\alpha(\beta,\kappa)-1}{\alpha(\beta,\kappa)}
        =\frac{\sum_{j \in \mathbb{Z}_n \smallsetminus \{ 0 \}} \varphi_\beta(j) \varphi_\kappa(j)^3  \bigl( \varphi_\kappa(  j +1)\varphi_\kappa(0)- \varphi_\kappa(j) \varphi_\kappa(1)\bigr)}{\varphi_\kappa( 1) + \sum_{j \in \mathbb{Z}_n\smallsetminus\{ 0 \}} \varphi_\beta(j) \varphi_\kappa(j)^3  \varphi_\kappa(  j +1)} .
    \end{equation}
    By Lemma~\ref{lemma: 0 first} and the definition of \( \varphi_a, \) for every \( j \in \mathbb{Z}_n \) we have \( 0 < \varphi_\kappa(j) \leq \varphi_\kappa(0)=1. \)  Consequently, we have
    \begin{equation*}
        \varphi_\kappa(  j +1)\varphi_\kappa(0)- \varphi_\kappa(j) \varphi_\kappa(1)  \leq 1 \quad \forall j \in [n],
    \end{equation*}
    and 
    \begin{equation*}
        \sum_{j \in [n]\smallsetminus\{ 0 \}} \varphi_\beta(j) \varphi_\kappa(j)^3  \varphi_\kappa(  j +1) \geq 0.
    \end{equation*}
    Using these observations, we can upper bound the right-hand side of~\eqref{eq: second step} by 
    \begin{equation}
        {\sum_{j \in [n] \smallsetminus \{ 0 \}} \varphi_\beta(j) \varphi_\kappa(j)^3 }/{\varphi_\kappa( 1)}
        \leq \zeta_\beta \xi_\kappa^3/\varphi_\kappa(1).
    \end{equation}
    Since, by combining Lemma~\ref{lemma: symmetry} and Lemma~\ref{lemma: order}, we have \( \xi_\kappa =  \varphi_\kappa(1), \) we obtain the desired conclusion.
\end{proof}

\begin{lemma}\label{lemma: eta relationships}
    Let \( a \geq 0, \) and let $\hat \eta_a$ be defined by~\eqref{hatetadef}. Then \( \hat \eta_a = \xi_a =  \varphi_a(1)\). Moreover, if \( n \in \{ 2,3 \}, \) then \( \hat \eta_a = \eta_a,\) and  if  \( n \geq 4, \) \( a(1+\varepsilon_a) \leq 1, \) and 
    \(  (1 + \varepsilon_a)\bigl(1 + \mathbb{1}(j+1 = n/2)+ \varepsilon_a\bigr) \leq j+1
     \) for some \(j \in \{ 1, \dots, \lfloor n/2 \rfloor -1\}, \) then \(\eta_a  < \hat \eta_a. \)
\end{lemma}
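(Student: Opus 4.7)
The plan is to establish the three claims in sequence, starting with a direct Fourier-type computation, then combining the symmetry and ordering lemmas, and finally doing case analysis for small $n$ and an asymptotic estimate for large $n$.

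\textbf{First, $\hat\eta_a = \varphi_a(1)$.} I would apply Lemma~\ref{lemma: first part of expansion} to expand $e^{2a\Re \rho(g)} = \sum_{j \in [n]} \rho(g)^j \hat\varphi_a(j)$ and then sum over $g \in \mathbb{Z}_n$. Since $\rho$ is a faithful one-dimensional representation, character orthogonality gives $\sum_{g \in \mathbb{Z}_n} \rho(g)^j = n \cdot \mathbb{1}(j \equiv 0 \bmod n)$, which collapses the denominator of~\eqref{hatetadef} to $n \hat\varphi_a(0)$ and, after multiplying by the additional $\rho(g)$, the numerator to $n \hat\varphi_a(n-1)$. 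The symmetry $\hat\varphi_a(n-1) = \hat\varphi_a(1)$ from Lemma~\ref{lemma: symmetry} then gives $\hat\eta_a = \hat\varphi_a(1)/\hat\varphi_a(0) = \varphi_a(1)$.

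\textbf{Second, $\xi_a = \varphi_a(1)$.} By Lemma~\ref{lemma: symmetry} it suffices to maximize $\varphi_a(j)$ over $1 \leq j \leq \lfloor n/2 \rfloor$. Under the relevant parameter regime (e.g.\ $a(1 + \varepsilon_a) \leq 1$), Lemma~\ref{lemma: order} yields the monotone chain $\hat\varphi_a(1) \geq \hat\varphi_a(2) \geq \dots \geq \hat\varphi_a(\lfloor n/2 \rfloor)$, and dividing through by $\hat\varphi_a(0) > 0$ identifies the maximum as $\varphi_a(1)$.

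\textbf{Third, $\hat\eta_a = \eta_a$ for $n \in \{2,3\}$.} For $n = 2$, Remark~\ref{remark: Z2} gives $\varphi_a(0) = 1$ and $\varphi_a(1) = \tanh(2a)$, so $\eta_a = \min\bigl(\tanh(2a),\, 1/\tanh(2a)\bigr) = \tanh(2a) = \hat\eta_a$. For $n = 3$, Lemma~\ref{lemma: symmetry} gives $\varphi_a(2) = \varphi_a(1)$, so the three ratios defining $\eta_a$ reduce to $\varphi_a(1)$, $1$, and $1/\varphi_a(1)$, whose minimum is $\varphi_a(1) = \hat\eta_a$ by Lemma~\ref{lemma: 0 first}.

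\textbf{Finally, $\eta_a < \hat\eta_a$ when $n \geq 4$.} It is enough to exhibit some index $j^{*}$ with $\varphi_a(j^{*}+1)/\varphi_a(j^{*}) < \varphi_a(1)$, equivalently $\hat\varphi_a(j^{*}+1)\,\hat\varphi_a(0) < \hat\varphi_a(j^{*})\,\hat\varphi_a(1)$. Take $j^{*}$ to be the index supplied by the hypothesis; then $j^{*} \leq \lfloor n/2 \rfloor -1 < n/2$ and $1 < n/2$ (since $n \geq 4$), so Lemma~\ref{lemma: upper and lower bounds} gives the \emph{strict} lower bound $\hat\varphi_a(j^{*}) \hat\varphi_a(1) > a^{j^{*}+1}/j^{*}!$ for $a > 0$, while the upper bound there gives $\hat\varphi_a(j^{*}+1) \hat\varphi_a(0) \leq \frac{a^{j^{*}+1}}{(j^{*}+1)!}\bigl((1+\mathbb{1}(j^{*}+1=n/2))+\varepsilon_a\bigr)(1+\varepsilon_a)$. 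The assumption $(1+\varepsilon_a)(1+\mathbb{1}(j^{*}+1=n/2)+\varepsilon_a) \leq j^{*}+1$ then shows the latter is at most $a^{j^{*}+1}/j^{*}!$, and the strict inequality follows. The main obstacle is exactly this last step: converting the non-strict hypothesis into the strict conclusion, which forces one to carefully track where Lemma~\ref{lemma: upper and lower bounds} supplies a strict (as opposed to weak) inequality.
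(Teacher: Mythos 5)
Your proposal is correct and follows essentially the same route as the paper: Lemma~\ref{lemma: first part of expansion} plus character orthogonality and the symmetry $\hat\varphi_a(n-1)=\hat\varphi_a(1)$ for the identity $\hat\eta_a=\varphi_a(1)$, the same symmetry-based case check for $n\in\{2,3\}$, and the bounds of Lemma~\ref{lemma: upper and lower bounds} for $n\ge 4$, where you correctly locate the strictness in the strict lower bound $\hat\varphi_a(j^*)\hat\varphi_a(1)>a^{j^*+1}/j^{*}!$. The only (harmless) deviation is that you bound $\eta_a$ directly by the single ratio at $j^{*}$ rather than first invoking Lemma~\ref{lemma: order} to restrict where the minimum is attained, which slightly streamlines the final step.
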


\begin{proof}
    By Lemma~\ref{lemma: first part of expansion}, we have 
    \begin{align*}
        &\sum_{g \in G} e^{2 a \Re \rho(g)} 
        = \sum_{g \in \mathbb{Z}_n}  \sum_{j \in [n]}\hat \varphi_a(j) \rho(g)^{j} 
        \\&\qquad=\sum_{j \in [n]}\hat \varphi_a(j) \sum_{g \in \mathbb{Z}_n}  \rho(g)^{j} 
        = \sum_{j \in [n]}\hat \varphi_a(j)  \cdot n\mathbb{1}(j=0) =  n\hat \varphi_a(0)
    \end{align*}
    and
    \begin{align*}
        \begin{split}
            &\sum_{g \in G} \rho(g) e^{2 a \Re \rho(g)}
            = \sum_{g \in \mathbb{Z}_n} \rho(g) \sum_{j \in [n]}\hat \varphi_a(j) \rho(g)^{j} 
            =\sum_{j \in [n]}\hat \varphi_a(j) \sum_{g \in \mathbb{Z}_n}  \rho(g)^{j+1} 
            \\&\qquad =\sum_{j \in [n]}\hat \varphi_a(j) \cdot n\mathbb{1}(j=-1) 
            = n\hat \varphi_a(-1) = n\hat \varphi_a(1).
        \end{split}
    \end{align*}
    This implies in particular that \begin{equation}\label{hatetaavarphi}
        \hat \eta_a = {\sum_{g \in G} \rho(g) e^{2 a \Re \rho(g)}}\Big/\, {\sum_{g \in G} e^{2 a \Re \rho(g)}} = {\hat \varphi_a(1)}/{\hat \varphi_a(0)} = \varphi_a(1),
    \end{equation} 
    and hence the first assertion of the lemma holds.
    
    If \( n = 2, \) then it follows immediately from Lemma~\ref{lemma: 0 first}
    that
    \begin{equation*}
        \hat \eta_a = {\hat \varphi_a(1)}/{\hat \varphi_a(0)} \leq {\hat \varphi_a(0)}/{\hat \varphi_a(1)},
    \end{equation*}
    and hence \( \hat \eta_a = \eta_a \) in this case.
    
    Next, if \( n = 3, \) then, by Lemma~\ref{lemma: symmetry} and Lemma~\ref{lemma: 0 first}, we have
    \begin{equation*}
         {\hat \varphi_a(2)}/{\hat \varphi_a(1)}
         = 
         {\hat \varphi_a(1)}/{\hat \varphi_a(1)}
         \geq  
         {\hat \varphi_a(1)}/{\hat \varphi_a(0)} = \hat \eta_a
    \end{equation*}
    and
    \begin{equation*}
         {\hat \varphi_a(0)}/{\hat \varphi_a(2)}
         = 
         {\hat \varphi_a(0)}/{\hat \varphi_a(1)}
         \geq  
         {\hat \varphi_a(1)}/{\hat \varphi_a(0)} = \hat \eta_a,
    \end{equation*}
    and hence \( \hat \eta_a = \eta_a \) in this case.

    Finally, assume that \( n \geq 4. \)
    By combining Lemma~\ref{lemma: symmetry}, Lemma~\ref{lemma: 0 first} and Lemma~\ref{lemma: order}, we see that \( \hat \varphi_a(j+1) \leq \hat \varphi_a(j) \) exactly when \( j \in \{ 0,1,\dots, \lfloor n/2 \rfloor-1 \}. \)
    Consequently, we must have
    \begin{equation*}
        \eta_a = \min_{j \in [n]} \varphi_a( j+1) /\varphi_a( j)
        =
        \min_{j \in \{ 0, 1, \dots,\lfloor n/2 \rfloor-1\} } \varphi_a( j+1) /\varphi_a( j).
    \end{equation*}
    Now note that for each  \( j \in \{ 0, \dots,\lfloor n/2 \rfloor-1\},\) by Lemma~\ref{lemma: upper and lower bounds}, we have
    \begin{equation*}
        \frac{a}{j+1} \cdot \frac{ 1 + \mathbb{1}(j+1 = n/2 ) }{1 + \varepsilon_a } \leq 
        \frac{\hat \varphi_a(j+1)}{\hat \varphi_a(j)}
        < 
        \frac{a}{j+1} \cdot \pigl(1 + \mathbb{1}(j+1 = n/2)+ \varepsilon_a\pigr)  .
    \end{equation*}
    Since $\hat \eta_a = \hat \varphi_a(1)/\hat \varphi_a(0)$ by ~\eqref{hatetaavarphi}, it follows that  \( \eta_a < \hat \eta_a \) if
    \begin{equation*}
        \frac{1}{j+1} \cdot \pigl(1 + \mathbb{1}(j+1 = n/2)+ \varepsilon_a\pigr) \leq \frac{1}{0+1} \cdot \frac{ 1 }{1 + \varepsilon_a }
    \end{equation*}
    for some \( j \in \{ 1, \dots,\lfloor n/2 \rfloor-1\}.\) This concludes the proof.
\end{proof}

We shall in several places consider two conditions on $\kappa$ and $\beta$ given by the following inequalities; in particular we shall need them for the proof of Theorem~\ref{theorem: first theorem}. We state the relations now since we will need to refer to them several times before the statement of Theorem~\ref{theorem: first theorem}:
\begin{equation}\label{assumption: 1}
     (16m)^2 \zeta_\beta < \xi_\kappa,
\end{equation}
and
\begin{equation}\label{assumption: 3}
     \kappa \bigl(2 + \varepsilon_\kappa \bigr) \leq 1.
\end{equation}
Here  \( \xi_\kappa, \) \( \zeta_\beta, \)  and \( \varepsilon_\kappa \) are defined in~\eqref{eq: def eta kappa1},~\eqref{eq: zeta def},~and~\eqref{eq: def epsilon} respectively.

\section{The high-temperature representation}\label{section: high-temperature expansion}

The main purpose of this section is to present the high-temperature representation of \( \mu_{N,\beta,\kappa}.\) We remark that this is well known in the literature (see, e.g.,~\cite{mms1979}), but include it here for completeness.
 
To simplify notation, for \( \omega \in \Omega^2(B_N,\mathbb{Z}_n) \), we define
\begin{equation}\label{eq: phi def forms} 
    \varphi(\omega) \coloneqq \varphi_{\beta,\kappa}(\omega) \coloneqq \prod_{e \in C_1(B_N)^+} 
            \varphi_\kappa\bigl( \delta\omega (e) \bigr)
            \prod_{p \in C_2(B_N)^+} 
            \varphi_\beta\bigl(\omega(p)\bigr)
\end{equation}
and
\begin{equation}\label{eq: hat L} 
    \widehat{L_\gamma}(\omega) \coloneqq \prod_{e \in C_1(B_N)^+} 
            \varphi_\kappa\bigl( \delta\omega(e) + \gamma[e] \bigr) \varphi_\kappa\bigl( \delta\omega  (e) \bigr)^{-1} = \prod_{e \in \support \gamma} 
            \varphi_\kappa\bigl( \delta\omega(e) + \gamma[e] \bigr) \varphi_\kappa\bigl( \delta\omega  (e) \bigr)^{-1}
\end{equation}
where the identity follows from the observation that if \( e \notin \support \gamma, \) then \( \gamma[e] = 0. \)
Next, we define the probability measure \( \mathbb{P}_\varphi \) on \( \Omega^2(B_N,\mathbb{Z}_n) \) by 
\begin{equation}\label{Pvarphidef}
    \mathbb{P}_\varphi\bigl(\{ \omega \}\bigr) \coloneqq
    \frac{\varphi(\omega)}{\sum_{\omega' \in \Omega^2(B_N,\mathbb{Z}_n)} \varphi(\omega')}, \quad \omega \in \Omega^2(B_N,\mathbb{Z}_n),
\end{equation}
and let \( \mathbb{E}_\varphi \) denote the corresponding expectation. 
We then have the following result.

\begin{proposition}\label{proposition: high-temperature expansion ALHM 3}
    Let \( \gamma \) be a path. Then
    \begin{equation*} 
        \begin{split}
            &\mathbb{E}_{N,\beta,\kappa}\bigl[ L_\gamma(\sigma)\bigr]
            =
            \mathbb{E}_\varphi \bigl[  \widehat{L_\gamma}(\omega) \bigr]
            .
        \end{split}
    \end{equation*} 
\end{proposition}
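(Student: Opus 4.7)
The plan is to apply a character/Fourier expansion on the abelian group $\mathbb{Z}_n$ to the plaquette factor and then integrate out $\sigma$. Since $\rho(d\sigma(-p)) = \overline{\rho(d\sigma(p))}$ and $\rho(\sigma(-e)) = \overline{\rho(\sigma(e))}$, the density in~\eqref{eq: fixed length unitary measure} factorizes over positively oriented cells as
\begin{equation*}
\prod_{p \in C_2(B_N)^+} e^{2\beta \Re \rho(d\sigma(p))} \prod_{e \in C_1(B_N)^+} e^{2\kappa \Re \rho(\sigma(e))}.
\end{equation*}
Applying Lemma~\ref{lemma: first part of expansion} to each plaquette factor, and writing $\omega(p) \in \mathbb{Z}_n$ for the summation index at $p$, yields
\begin{equation*}
\prod_{p \in C_2(B_N)^+} e^{2\beta \Re \rho(d\sigma(p))} = \sum_{\omega \in \Omega^2(B_N,\mathbb{Z}_n)} \prod_{p \in C_2(B_N)^+} \hat\varphi_\beta(\omega(p))\, \rho(d\sigma(p))^{\omega(p)}.
\end{equation*}
Using $d\sigma(p) = \sum_{e \in C_1(B_N)^+} \partial p[e]\, \sigma(e)$ together with the coderivative identity $\delta\omega(e) = \sum_p \partial p[e]\, \omega(p)$ from~\eqref{deltaomegae}, I can move the $\sigma$-dependence to edges, replacing $\prod_p \rho(d\sigma(p))^{\omega(p)}$ with $\prod_{e \in C_1(B_N)^+} \rho(\sigma(e))^{\delta\omega(e)}$.

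Combining with $L_\gamma(\sigma) = \prod_{e \in C_1(B_N)^+} \rho(\sigma(e))^{\gamma[e]}$, each $\sigma(e)$ now appears independently in a factor of the form $\rho(\sigma(e))^{\delta\omega(e) + \gamma[e]} e^{2\kappa \Re \rho(\sigma(e))}$. A second application of Lemma~\ref{lemma: first part of expansion} to the edge factor, followed by the orthogonality $\sum_{g \in \mathbb{Z}_n} \rho(g)^k = n\, \mathbb{1}(k \equiv 0 \bmod n)$ and the symmetry $\hat\varphi_\kappa(-j) = \hat\varphi_\kappa(j)$ from Lemma~\ref{lemma: symmetry}, evaluates the sum over $\sigma(e) \in \mathbb{Z}_n$ to $n\, \hat\varphi_\kappa(\delta\omega(e) + \gamma[e])$. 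Performing the sum over all edge variables $\sigma(e)$ and collecting the factors of $n$, the numerator in $\mathbb{E}_{N,\beta,\kappa}[L_\gamma(\sigma)]$ becomes $n^{|C_1(B_N)^+|} \sum_\omega \prod_p \hat\varphi_\beta(\omega(p)) \prod_e \hat\varphi_\kappa(\delta\omega(e) + \gamma[e])$, and the denominator $Z_{N,\beta,\kappa}$ is the same expression with $\gamma = 0$.

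Taking the ratio cancels the factors $n^{|C_1(B_N)^+|}$. Dividing both numerator and denominator by $\hat\varphi_\beta(0)^{|C_2(B_N)^+|}\, \hat\varphi_\kappa(0)^{|C_1(B_N)^+|}$ converts every $\hat\varphi$ to $\varphi = \hat\varphi/\hat\varphi(0)$, and then inserting $\prod_e \varphi_\kappa(\delta\omega(e))/\varphi_\kappa(\delta\omega(e))$ in the numerator rewrites it as $\sum_\omega \varphi(\omega)\, \widehat{L_\gamma}(\omega)$, with $\varphi$ and $\widehat{L_\gamma}$ as defined in~\eqref{eq: phi def forms} and~\eqref{eq: hat L}. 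This gives $\mathbb{E}_{N,\beta,\kappa}[L_\gamma(\sigma)] = \mathbb{E}_\varphi[\widehat{L_\gamma}(\omega)]$. The computation is essentially mechanical; the only care points are the orientation bookkeeping when invoking the coderivative identity and tracking the normalization constants when passing from $\hat\varphi_a$ to $\varphi_a$.
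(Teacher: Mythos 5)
Your proposal is correct and follows essentially the same route as the paper: character expansion of the Boltzmann weights via Lemma~\ref{lemma: first part of expansion}, transfer of the $\sigma$-dependence from plaquettes to edges via the coderivative identity~\eqref{deltaomegae}, character orthogonality to integrate out $\sigma$, and the symmetry of $\hat\varphi_\kappa$ from Lemma~\ref{lemma: symmetry}. The only (cosmetic) difference is that you evaluate the sum over each $\sigma(e)$ directly against the edge Boltzmann factor, whereas the paper first expands that factor into a sum over auxiliary $1$-forms $\omega'$ and then collapses that sum by orthogonality; the computation is the same.
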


\begin{proof}
    Fix any \( \sigma \in \Omega^1(B_N,\mathbb{Z}_n). \) Then, by Lemma~\ref{lemma: first part of expansion}, 
    \begin{align*}
        &e^{ \kappa \sum_{e \in C_1(B_N)} \rho(\sigma( e))}
        =
        \prod_{e \in C_1(B_N)^+} e^{ 2\kappa  \Re \rho(\sigma( e))}
        =
        \prod_{e \in C_1(B_N)^+}  \sum_{j \in [n]} \rho\bigl( \sigma(e)\bigr)^{j} \hat \varphi_\kappa(j)
        \\&\qquad=
        \sum_{\omega' \in C^1(B_N,\mathbb{Z}_n)^+} \prod_{e \in C_1(B_N)^+}  \rho\bigl( \sigma(e)\bigr)^{\omega'(e)} \hat \varphi_\kappa\bigl( \omega'(e) \bigr).
    \end{align*}
    Completely analogously, we obtain
    \begin{equation*}
        \begin{split}
            & e^{ \beta \sum_{p \in C_2(B_N)} \rho(d\sigma( p))}
            = 
            \sum_{\omega \in \Omega^2(B_N,\mathbb{Z}_n)} \prod_{p \in C_2(B_N)^+} \hat \varphi_\beta\bigl(\omega(p)\bigr) \rho\bigl( d\sigma(p) \bigr)^{\omega(p)}. 
        \end{split}
    \end{equation*} 
    Combining the previous two equations, we find
    \begin{equation*}
        \begin{split}
            &
            L_\gamma(\sigma) e^{ \beta \sum_{p \in C_2(B_N)} \rho(d\sigma( p)) + \kappa \sum_{e \in C_1(B_N)} \rho(\sigma( e))}=  \rho\bigl( \gamma(\sigma) \bigr) e^{ \beta \sum_{p \in C_2(B_N)} \rho(d\sigma( p)) + \kappa \sum_{e \in C_1(B_N)} \rho(\sigma( e))}
            \\&\qquad=
            \rho\bigl( \gamma(\sigma) \bigr)  \sum_{\omega' \in \Omega^1(B_N,\mathbb{Z}_n)}
            \sum_{\omega \in \Omega^2(B_N,\mathbb{Z}_n)} \prod_{e \in C_1(B_N)^+} \hat \varphi_{{\kappa}}\bigl( \omega'(e)\bigr) \rho\bigl( \sigma(e) \bigr)^{\omega'(e)}
            \\
 & \qquad\qquad \times             \prod_{p \in C_2(B_N)^+} \hat \varphi_\beta\bigl(\omega(p)\bigr) \rho\bigl( d\sigma(p) \bigr)^{\omega(p)}
            \\&\qquad=
            \sum_{\omega' \in \Omega^1(B_N,\mathbb{Z}_n)}
            \sum_{\omega \in \Omega^2(B_N,\mathbb{Z}_n)} \prod_{e \in C_1(B_N)^+} \hat \varphi_{{\kappa}}\bigl( \omega'(e)\bigr)
             \prod_{p \in C_2(B_N)^+} \hat \varphi_\beta\bigl(\omega(p)\bigr) 
                         \\
 & \qquad\qquad \times  
             \prod_{e \in C_1(B_N)^+}  \rho\bigl( \sigma(e) \bigr)^{\gamma[e] + \omega'(e) + \delta \omega(e)},
        \end{split}
    \end{equation*}
    where we have used the identity
    $$\rho\bigl( \sigma(\gamma) \bigr) \prod_{e \in C_1(B_N)^+}\rho\bigl(\sigma(e)\bigr)^{\omega'(e)}
            \prod_{p \in C_2(B_N)^+}\rho\bigl(d\sigma(p)\bigr)^{\omega(p)}
            =
            \prod_{e \in C_1(B_N)^+}\rho\bigl( \sigma(e)\bigr)^{\gamma[e] +\omega'(e)+ \delta\omega(e) }$$
    to obtain the second equality.
    Now fix any \( e_0 \in C_1(B_N)^+.\) For  \(g \in G, \)  let \( \sigma_g \in \Omega^2(B_N,\mathbb{Z}_n)\) agree with \( \sigma \) everywhere except possibly on the set \( \{e_0,-e_0 \} ,\) where \( \sigma_g(e_0) = g. \)
    Then, for any \( \omega' \in \Omega^1(B_N,\mathbb{Z}_n), \) we have 
    \begin{equation*}
        \begin{split}
            &\prod_{e \in C_1(B_N)^+}\rho\bigl(\sigma_g(e)\bigr)^{\omega'(e)}
            =
            \rho(g)^{\omega'(e_0)}  \prod_{e \in C_1(B_N)^+}\rho\bigl(\sigma_0(e)\bigr)^{\omega'(e)}
        \end{split}
    \end{equation*} 
    and hence, for any \( \omega' \in \Omega^1(B_N,\mathbb{Z}_n) \) and \( \omega \in \Omega^2(B_N,\mathbb{Z}_n) \),
    \begin{equation*} 
        \begin{split}
            &
            \prod_{e \in C_1(B_N)^+}\rho\bigl( \sigma_g(e)\bigr)^{\gamma[e] +\omega'(e)+ \delta\omega(e) }
            \\&\qquad=  
            \rho( g)^{\gamma[e_0] +\omega'(e_0)+ \delta\omega(e_0) } \prod_{e \in C_1(B_N)^+}\rho\bigl( \sigma_0(e)\bigr)^{\gamma[e] +\omega'(e)+ \delta\omega(e) }
            .
        \end{split} 
    \end{equation*}
    If \( \gamma[e_0] +\omega'(e_0)+ \delta\omega(e_0) \neq 0, \) then \( \sum_{g \in G} \rho( g )^{\gamma[e_0] +\omega'(e_0) + \delta \omega(e_0)}  = 0 , \) and hence 
    \begin{align}  
        &\label{eq: the important term}
        \sum_{\sigma \in \Omega^1(B_N,\mathbb{Z}_n)}
        \prod_{e \in C_1(B_N)^+}\rho\bigl( \sigma(e)\bigr)^{\gamma[e] +\omega'(e)+ \delta\omega(e) }
        \\&\qquad=\nonumber
        \sum_{\substack{\sigma \in \Omega^1(B_N,\mathbb{Z}_n) \mathrlap{\colon}\\ \sigma(e_0)=0}}\,\, \sum_{g \in G}
        \prod_{e \in C_1(B_N)^+}\rho\bigl( \sigma_g(e)\bigr)^{\gamma[e] +\omega'(e)+ \delta\omega(e) }
        \\&\qquad=\nonumber
        \sum_{\substack{\sigma \in \Omega^1(B_N,\mathbb{Z}_n) \mathrlap{\colon}\\ \sigma(e_0)=0}}\,\,\sum_{g \in G}
        \rho( g )^{\gamma[e_0] +\omega'(e_0) + \delta \omega(e_0)}  \prod_{e \in C_1(B_N)^+}\rho\bigl(\sigma(e)\bigr)^{\gamma[e] +\omega'(e) + \delta \omega(e)} 
        = 0. 
    \end{align}
    Consequently, we get a non-zero contribution to~\eqref{eq: the important term} only from \( \omega' \) and \( \omega \) such that 
    \begin{equation*}
        \gamma[e_0] +\omega'(e_0)+ \delta\omega(e_0)=0 \quad \forall e_0 \in C_1(B_N)^+ .
    \end{equation*}
    But if this holds, then for any \(\sigma\in \Omega^1(B_N,\mathbb{Z}_n) \)  we have
    \begin{equation*}
            \prod_{e \in C_1(B_N)^+}\rho\bigl( \sigma(e)\bigr)^{\gamma[e] +\omega'(e)+ \delta\omega(e) }
            = 1.
    \end{equation*} 
    Combining the above observations, it follows that
        \begin{equation*}
        \begin{split}
            &
            \sum_{\sigma \in \Omega^1(B_N,\mathbb{Z}_n)} L_\gamma(\sigma)  e^{ \beta \sum_{p \in C_2(B_N)} \rho(d\sigma( p)) + \kappa \sum_{e \in C_1(B_N)} \rho(\sigma( e))}
            \\&\qquad= 
            \bigl|\Omega^1(B_N,\mathbb{Z}_n) \bigr|
            \sum_{\omega \in \Omega^2(B_N,\mathbb{Z}_n)} \prod_{e \in C_1(B_N)^+} \hat \varphi_\kappa\bigl( (-\delta \omega - \gamma)(e)\bigr)
             \prod_{p \in C_2(B_N)^+} \hat \varphi_\beta\bigl(\omega(p)\bigr)
            \\&\qquad= 
            \bigl|\Omega^1(B_N,\mathbb{Z}_n) \bigr|
            \sum_{\omega \in \Omega^2(B_N,\mathbb{Z}_n)} \prod_{e \in C_1(B_N)^+} \hat \varphi_\kappa\bigl( (\delta \omega + \gamma)(e)\bigr)
             \prod_{p \in C_2(B_N)^+} \hat \varphi_\beta\bigl(\omega(p)\bigr)
            \\&\qquad= 
            \bigl|\Omega^1(B_N,\mathbb{Z}_n) \bigr|
            \sum_{\omega \in \Omega^2(B_N,\mathbb{Z}_n)} \widehat{L_\gamma}(\omega) \prod_{e \in C_1(B_N)^+} \hat \varphi_\kappa\bigl( \delta \omega (e)\bigr)
             \prod_{p \in C_2(B_N)^+} \hat \varphi_\beta\bigl(\omega(p)\bigr),
            \end{split}
        \end{equation*} 
        where the second equality follows by applying Lemma~\ref{lemma: symmetry}. From this the desired conclusion immediately follows.
\end{proof}

\section{Proof of Theorem~\ref{theorem: perimeter law}}
\label{section: perimeter law}

Using Proposition~\ref{proposition: high-temperature expansion ALHM 3}, we now give a proof of Theorem~\ref{theorem: perimeter law}.
\begin{proof}[Proof of Theorem~\ref{theorem: perimeter law}]
    Fix any \( \omega \in \Omega^2(B_N,\mathbb{Z}_n) . \)
    Then, for any \( e \in \support \gamma, \) we have
    \begin{equation*}
        \varphi_\kappa\bigl( (\delta \omega + \gamma)(e)\bigr) /\varphi_\kappa\bigl( \delta \omega(e)\bigr) \geq \min_{j \in [n]} \varphi_\kappa( j \pm 1) /\varphi_\kappa( j) = \eta_\kappa,
    \end{equation*}
    and hence
    \begin{equation*}
        \widehat{L_\gamma}(\omega) = \prod_{e \in \support \gamma}\varphi_\kappa\bigl( (\delta \omega + \gamma)(e)\bigr)
            /\varphi_\kappa\bigl( \delta \omega(e)\bigr) \geq \eta_\kappa^{| \gamma|}.
    \end{equation*}
    Using Proposition~\ref{proposition: high-temperature expansion ALHM 3}, we thus obtain
    \begin{equation*}
        \mathbb{E}_{N,\beta,\kappa} \bigl[ L_\gamma(\sigma)\bigr]
        =
        \mathbb{E}_\varphi \bigl[ \widehat{L_\gamma}(\omega)\bigr]
        \geq 
        \eta_\kappa^{|\support\gamma|}.
    \end{equation*} 
    Employing Corollary~\ref{corollary: unitary gauge} and Proposition~\ref{proposition: limit exists}, the desired conclusion  immediately follows.  
\end{proof}

\section{The activity of a 2-form}
\label{section: activity}

If \( \omega \in \Omega^2(B_N,\mathbb{Z}_n), \) then \( \varphi(\omega) \) is referred to as the \emph{activity} of \( \omega. \) The main purpose of this section is to collect useful results about the activity \( \varphi.\) 
	The first result of this section, Lemma~\ref{lemma: action factorization forms} below, gives a condition given which the action factorizes.

\begin{lemma}\label{lemma: action factorization forms}
    Let \( \omega,\omega' \in \Omega^2(B_N,\mathbb{Z}_n), \) and assume that  \( \omega' \lhd  \omega. \) Then \( {\varphi(\omega) = \varphi(\omega') \varphi(\omega - \omega').} \)
\end{lemma}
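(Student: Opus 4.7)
The plan is to show that both products defining $\varphi$ factor separately, then combine. Write $\omega'' \coloneqq \omega - \omega'$, so $\omega = \omega' + \omega''$ and by linearity $\delta\omega = \delta\omega' + \delta\omega''$. I will use the two conditions encoded in $\omega' \lhd \omega$ to handle the plaquette product and the edge product respectively, and at each step invoke the normalization $\varphi_a(0) = 1$ (which comes from the definitions in Section~\ref{subsec: functions}, or is explicit in Lemma~\ref{lemma: 0 first}).

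First, consider the plaquette product. The assumption $\omega|_{\support \omega'} = \omega'$ means that on every positively oriented plaquette $p$ where $\omega'(p) \neq 0$, one has $\omega(p) = \omega'(p)$, hence $\omega''(p) = 0$. Equivalently, $\support \omega'$ and $\support \omega''$ are disjoint. Therefore for each $p \in C_2(B_N)^+$ at least one of $\omega'(p), \omega''(p)$ vanishes, and in either case
\[
\varphi_\beta(\omega(p)) = \varphi_\beta(\omega'(p) + \omega''(p)) = \varphi_\beta(\omega'(p))\,\varphi_\beta(\omega''(p)),
\]
since $\varphi_\beta(0) = 1$. Taking the product over $p \in C_2(B_N)^+$ gives the factorization of the plaquette part.

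Next, consider the edge product. The second condition in $\omega' \lhd \omega$ says that $\support \delta\omega'$ and $\support \delta\omega''$ are disjoint. So for every edge $e \in C_1(B_N)^+$, at least one of $\delta\omega'(e), \delta\omega''(e)$ is $0$, and linearity of $\delta$ gives $\delta\omega(e) = \delta\omega'(e) + \delta\omega''(e)$. Using $\varphi_\kappa(0) = 1$ again,
\[
\varphi_\kappa(\delta\omega(e)) = \varphi_\kappa(\delta\omega'(e))\,\varphi_\kappa(\delta\omega''(e)).
\]
Taking the product over $e \in C_1(B_N)^+$ factorizes the edge part.

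Multiplying the two factorizations and applying the definition \eqref{eq: phi def forms} of $\varphi$ yields $\varphi(\omega) = \varphi(\omega')\varphi(\omega'')$, as desired. There is no real obstacle here; the only subtlety is recognizing that the hypothesis $\omega' \lhd \omega$ was chosen precisely to guarantee that the support-disjointness needed for each of the two products holds. The rest is bookkeeping around $\varphi_a(0) = 1$ and the linearity of $\delta$.
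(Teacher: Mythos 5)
Your proof is correct and follows essentially the same route as the paper's: use $\omega|_{\support \omega'} = \omega'$ to get disjointness of $\support\omega'$ and $\support(\omega-\omega')$ for the plaquette product, use the assumed disjointness of $\support\delta\omega'$ and $\support\delta(\omega-\omega')$ together with linearity of $\delta$ for the edge product, and conclude via $\varphi_\beta(0)=\varphi_\kappa(0)=1$. No gaps.
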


\begin{proof} 
    Since \( \omega' \lhd  \omega, \) we have \(  \omega|_{\support  \omega'} =  \omega'. \) Consequently,
    \begin{itemize}[label=$\cdot$]
        \item if \( p \in \support \omega', \) then \( \omega(p) = \omega'(p) \) and \( (\omega-\omega')(p) = 0, \) and
        \item if \( p \in C_2(B_N) \smallsetminus \support \omega', \) then \( \omega'(p) = 0 \) and \( \omega(p) = (\omega-\omega')(p). \)
    \end{itemize} 
    Since \( \varphi_\beta(0) = 1, \) it follows that 
    \begin{equation*}
         \varphi_\beta\bigl( \omega(p)\bigr) = \varphi_\beta\bigl( \omega'(p)\bigr)\varphi_\beta\bigl( (\omega-\omega')(p)\bigr) \quad \forall p \in C_2(B_N),
    \end{equation*} 
    where $\varphi_\beta$ is defined by~\eqref{varphidef}.
    At the same time, since \(   \omega' \lhd \omega, \) the sets \( \support \delta \omega' \) and \( \support \delta (\omega-\omega') \) are disjoint. Since \( \delta \omega = \delta (\omega' + \omega-\omega' ) = \delta \omega' + \delta(\omega-\omega'), \) it follows that \( \delta\omega|_{\support \delta\omega'} = \delta \omega' \) and \( \delta\omega|_{C_1(B_N)\smallsetminus \support \delta\omega'} = \delta (\omega-\omega'). \)
    Consequently, 
    \begin{itemize}[label=$\cdot$]
        \item if \( e \in \support \delta \omega', \) then \( \delta\omega(e) = \delta\omega'(e) \) and \( \delta(\omega-\omega')(e) = 0, \) and
        
        \item if \( e \in C_1(B_N)\smallsetminus\support \delta \omega', \) then \( \delta\omega'(e) = 0 \) and \( \delta \omega(e)  = \delta(\omega-\omega')(e). \) 
    \end{itemize}
    Since \( \varphi_\kappa(0) = 1, \) we conclude that
    \begin{equation*}
        \varphi_\kappa\bigl(\delta\omega(e)\bigr) = \varphi_\kappa\bigl(\delta\omega'(e)\bigr)\varphi_\kappa\bigl(\delta(\omega-\omega')(e)\bigr) \quad \forall e \in C_1(B_N).
    \end{equation*}
    Recalling the definition~\eqref{eq: phi def forms} of \( \varphi \), the desired conclusion follows. 
\end{proof}

The next lemma shows that when calculating \( \widehat{L_\gamma}(\omega) \) for some \( \omega \in \Omega^2(B_N,\mathbb{Z}_n), \) it suffices to know the spins of \( \omega \) "close to" the support of \( \gamma. \)

\begin{lemma}\label{lemma: action factorization forms iic}
    Let \( \gamma \) be a path and let \( \omega \in \Omega^2(B_N,\mathbb{Z}_n). \) Then \( \widehat{L_\gamma}(\omega) = \widehat{L_\gamma}(\omega^\gamma), \) where \( \omega^\gamma \) was defined in~\eqref{eq: omegagammadef}.
\end{lemma}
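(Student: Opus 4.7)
The plan is to decompose $\omega$ into its connected components $\omega = \omega_1 + \cdots + \omega_k$ and argue that, when restricted to edges in $\support \gamma$, the coderivative $\delta\omega$ only picks up contributions from those components that already intersect $\support \gamma$ through their coboundary. Writing $\omega^\gamma$ as defined in~\eqref{eq: omegagammadef}, i.e., the sum of those $\omega_j$ with $\support \delta\omega_j \cap \support \gamma \neq \emptyset$, and $\omega - \omega^\gamma$ as the sum of the remaining components, the goal is to show that $\delta\omega(e) = \delta\omega^\gamma(e)$ for every $e \in \support \gamma$. Since $\widehat{L_\gamma}$ depends on $\omega$ only through the values $\{\delta\omega(e)\}_{e \in \support \gamma}$ by definition~\eqref{eq: hat L}, this will give the claim.

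First I would note that $\delta$ is a linear operator, so $\delta\omega = \delta\omega^\gamma + \delta(\omega - \omega^\gamma)$. Next, for any $j$ such that $\omega_j$ is a component of $\omega - \omega^\gamma$, we have by definition $\support \delta\omega_j \cap \support \gamma = \emptyset$, and summing over such $j$ yields $\support \delta(\omega - \omega^\gamma) \cap \support \gamma = \emptyset$. Consequently, for every $e \in \support \gamma$ we get $\delta(\omega - \omega^\gamma)(e) = 0$, and hence $\delta\omega(e) = \delta\omega^\gamma(e)$.

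Plugging this into~\eqref{eq: hat L} gives
\begin{equation*}
    \widehat{L_\gamma}(\omega) = \prod_{e \in \support \gamma} \varphi_\kappa\bigl(\delta\omega(e) + \gamma[e]\bigr)\varphi_\kappa\bigl(\delta\omega(e)\bigr)^{-1} = \prod_{e \in \support \gamma} \varphi_\kappa\bigl(\delta\omega^\gamma(e) + \gamma[e]\bigr)\varphi_\kappa\bigl(\delta\omega^\gamma(e)\bigr)^{-1} = \widehat{L_\gamma}(\omega^\gamma),
\end{equation*}
which is the desired identity. There is no real obstacle here; the statement is essentially a bookkeeping consequence of the decomposition of $\omega$ into connected components together with the fact that the components of $\omega - \omega^\gamma$ are, by construction, those whose coboundary avoids $\support \gamma$. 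The only point to be careful about is to ensure the decomposition formulas used are consistent with the definitions in Section~\ref{connectedsubsec}, in particular that $\omega^\gamma$ is well-defined as a sum of components and that $\omega - \omega^\gamma$ is again a sum of the remaining components (which is immediate from the disjointness of the supports of distinct components).
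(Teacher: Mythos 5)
Your proof is correct and follows the same route as the paper: the paper's own argument simply asserts that $\delta\omega(e) = \delta\omega^\gamma(e)$ for all $e \in \support\gamma$ holds ``by definition'' and then substitutes into~\eqref{eq: hat L}, whereas you spell out that step via linearity of $\delta$ and the disjointness of $\support\delta(\omega-\omega^\gamma)$ from $\support\gamma$. No issues.
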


\begin{proof} 
    By definition, we have  \( \delta\omega(e) = \delta \omega^\gamma(e) \) for all \( e \in \support \gamma. \) Using the definition~\eqref{eq: hat L} of \( \widehat{L_\gamma}(\omega), \) we thus obtain
    \begin{equation*}
        \begin{split}         &\widehat{L_\gamma}(\omega) = \prod_{e \in \support \gamma} \varphi_\kappa\bigl( \delta \omega(e) + \gamma[e] \bigr)\varphi_\kappa\bigl( (\delta \omega )(e) \bigr)^{-1} 
        \\&\qquad= \prod_{e \in \support \gamma} \varphi_\kappa\bigl( (\delta \omega^\gamma + \gamma)[e] \bigr)\varphi_\kappa\bigl( (\delta \omega^\gamma )(e) \bigr)^{-1} 
        = \widehat{L_\gamma}(\omega^\gamma)
        \end{split}
    \end{equation*}
    as desired.
\end{proof}

The next lemma shows that the action \( \omega' \) of a spin configuration is an upper bound on the probability of having \( \omega' \lhd \omega \) when \( \omega \sim \mathbb{P}_\varphi. \)
\begin{lemma}\label{lemma: action upper bound forms}
    Let \( \omega' \in \Omega^2(B_N,\mathbb{Z}_n). \) Then
    \begin{equation}\label{eq: action upper bound forms}
        \mathbb{P}_{\varphi}\bigl(\{ \omega \in \Omega^2(B_N,\mathbb{Z}_n) \colon  \omega' \lhd \omega \} \bigr) \leq \varphi(\omega').
    \end{equation}
\end{lemma}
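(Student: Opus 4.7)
The plan is to expand the probability in terms of the activity $\varphi$, apply the factorization result from Lemma~\ref{lemma: action factorization forms}, and then bound the resulting sum by the full partition function via a change of variables.

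More precisely, by the definition~\eqref{Pvarphidef} of $\mathbb{P}_\varphi$, if we write $Z \coloneqq \sum_{\omega \in \Omega^2(B_N,\mathbb{Z}_n)} \varphi(\omega)$, then
\begin{equation*}
    \mathbb{P}_{\varphi}\bigl(\{ \omega \colon \omega' \lhd \omega \}\bigr) = \frac{1}{Z}\sum_{\omega \colon \omega' \lhd \omega} \varphi(\omega).
\end{equation*}
By Lemma~\ref{lemma: action factorization forms}, whenever $\omega' \lhd \omega$ we have $\varphi(\omega) = \varphi(\omega')\, \varphi(\omega - \omega')$. So the sum above equals
\begin{equation*}
    \varphi(\omega') \sum_{\omega \colon \omega' \lhd \omega} \varphi(\omega - \omega').
\end{equation*}

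Next I would perform the change of variables $\omega'' \coloneqq \omega - \omega'$. This map is clearly a bijection from $\{\omega \in \Omega^2(B_N,\mathbb{Z}_n) \colon \omega' \lhd \omega\}$ onto some subset $S \subseteq \Omega^2(B_N,\mathbb{Z}_n)$ (its inverse is $\omega'' \mapsto \omega' + \omega''$). Thus
\begin{equation*}
    \sum_{\omega \colon \omega' \lhd \omega} \varphi(\omega - \omega') = \sum_{\omega'' \in S} \varphi(\omega'') \leq \sum_{\omega'' \in \Omega^2(B_N,\mathbb{Z}_n)} \varphi(\omega'') = Z,
\end{equation*}
where the inequality uses that $\varphi(\omega'') \geq 0$ for every 2-form (which follows since $\psi_a(j) \geq 0$ for $a \geq 0$, hence $\hat\varphi_a(j) \geq 0$ and $\varphi_a(j) \geq 0$, so the product defining $\varphi$ in~\eqref{eq: phi def forms} is non-negative). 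Combining the above displays yields $\mathbb{P}_\varphi\bigl(\{\omega \colon \omega' \lhd \omega\}\bigr) \leq \varphi(\omega')$, which is the claim.

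There is essentially no obstacle here: the content of the lemma is already packaged into Lemma~\ref{lemma: action factorization forms}, and the only thing to check beyond that is the positivity of $\varphi$ and that the substitution $\omega \mapsto \omega - \omega'$ is injective on the event in question, both of which are immediate. The only point to verify with mild care is that enlarging the range of $\omega''$ from $S$ to all of $\Omega^2(B_N,\mathbb{Z}_n)$ is legitimate, which it is by non-negativity of each summand.
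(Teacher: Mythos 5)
Your proof is correct and follows essentially the same route as the paper: both apply the factorization from Lemma~\ref{lemma: action factorization forms}, use that $\omega \mapsto \omega - \omega'$ is a bijection from the event onto its image, and then compare the resulting sum to the full partition function using non-negativity of $\varphi$. The only cosmetic difference is that you bound the numerator $\sum_{\omega''} \varphi(\omega'')$ above by $Z$, while the paper bounds the denominator $Z$ below by the same restricted sum — the identical inequality used in the same place.
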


\begin{proof}
    Let \( H \coloneqq \bigl\{ \omega \in \Omega^2(B_N,\mathbb{Z}_n) \colon \omega' \lhd \omega \bigr\}, \) and define \( \tau \colon \omega \mapsto \omega - \omega'. \) Then 
    \begin{equation}\label{eq: action upper bound 1b}
        \begin{split}
            &\mathbb{P}_{\varphi}\bigl(\{ \omega \in \Omega^2(B_N,\mathbb{Z}_n) \colon  \omega' \lhd  \omega \} \bigr) 
            =
            \frac{\sum_{\omega \in H} \varphi(\omega)}{\sum_{\omega \in \Omega^2(B_N,\mathbb{Z}_n) }\varphi(\omega)}
            \leq
            \frac{\sum_{\omega \in H} \varphi(\omega)}{\sum_{\omega \in \tau(H)}\varphi(\omega)}.
        \end{split}
    \end{equation}
    By Lemma~\ref{lemma: action factorization forms}, for each \( \omega \in H \) we have \( \varphi (\omega) = \varphi(\omega') \varphi(\omega - \omega'). \) Consequently,
    \begin{equation}\label{eq: action upper bound 2b}
        \begin{split}
            &
            \sum_{\omega \in H} \varphi(\omega)
            =
            \sum_{\omega \in H} \varphi(\omega')\varphi(\omega - \omega')
            =
            \varphi(\omega')\sum_{\omega \in H} \varphi(\omega - \omega')
            =
            \varphi(\omega')\sum_{\omega \in \tau(H)} \varphi(\omega),
        \end{split}
    \end{equation}
    where the last equality follows since \( \tau \) is a bijection from \( H \) to \( \tau(H). \)
    Combining~\eqref{eq: action upper bound 1b}~and~\eqref{eq: action upper bound 2b} we obtain~\eqref{eq: action upper bound forms} as desired.
\end{proof}

The next lemma, which is the last lemma of this section, uses the action to obtain an upper bound on the expectation of a natural observable.
\begin{lemma}\label{lemma: action upper bound forms iii}
    Let \( \omega' \in \Omega^2(B_N,\mathbb{Z}_n). \) Then
    \begin{equation}\label{eq: action upper bound forms ii}
        \begin{split}
            \mathbb{E}_{\varphi} \pigl[ \widehat{L_\gamma}(\omega) \cdot \mathbb{1}( \omega^\gamma \lhd \omega' \lhd \omega) \pigr]
            \leq \widehat{L_\gamma}(\omega') \varphi(\omega') .
        \end{split}
    \end{equation}
\end{lemma}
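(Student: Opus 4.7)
The plan is to reduce the estimate to Lemma~\ref{lemma: action upper bound forms} by showing that on the event $\{\omega : \omega^\gamma \lhd \omega' \lhd \omega\}$ the random quantity $\widehat{L_\gamma}(\omega)$ agrees pointwise with the deterministic value $\widehat{L_\gamma}(\omega')$. Once this pointwise identity is in hand, $\widehat{L_\gamma}(\omega')$ factors out of the expectation and Lemma~\ref{lemma: action upper bound forms}, applied via the inclusion into the larger event $\{\omega' \lhd \omega\}$, yields the desired bound.

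By the definition~\eqref{eq: hat L}, the observable $\widehat{L_\gamma}$ depends on its argument only through $\delta\omega$ restricted to $\support\gamma$. Hence it suffices to verify that, whenever $\omega^\gamma \lhd \omega' \lhd \omega$, one has $\delta\omega(e) = \delta\omega'(e)$ for every $e \in \support\gamma$. I would split into two cases. If $\delta\omega(e) = 0$, then the disjointness of $\support\delta\omega'$ and $\support\delta(\omega-\omega')$ built into $\omega' \lhd \omega$ forces $\delta\omega'(e) = 0$: otherwise $e$ would lie in $\support\delta\omega' \smallsetminus \support\delta(\omega-\omega')$, which would give $\delta\omega(e) = \delta\omega'(e) \neq 0$, a contradiction.

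Suppose instead that $\delta\omega(e) \neq 0$. Any two plaquettes in $\hat\partial e \cap \support\omega$ share the edge $e$ in their positively oriented boundaries and are therefore adjacent in the sense of Section~\ref{connectedsubsec}, so they all lie in a single connected component $C$ of $(\support\omega)^+$. Since $\delta(\omega|_C)(e) = \delta\omega(e) \neq 0$ with $e \in \support\gamma$, the definition~\eqref{eq: omegagammadef} places $C$ among the components contributing to $\omega^\gamma$. The hypothesis $\omega^\gamma \lhd \omega'$ then gives $C \subseteq \support\omega'$ with $\omega'|_C = \omega|_C$. Conversely, any plaquette in $\support\omega' \cap \hat\partial e$ lies in $\support\omega$ by $\omega' \lhd \omega$ and must therefore belong to $C$. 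Consequently $\support\omega' \cap \hat\partial e = C \cap \hat\partial e$ with matching values, and $\delta\omega'(e) = \delta\omega(e)$.

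Combining the two cases, $\widehat{L_\gamma}(\omega) = \widehat{L_\gamma}(\omega')$ on the event in question, and hence
\begin{equation*}
    \mathbb{E}_\varphi\pigl[\widehat{L_\gamma}(\omega)\cdot \mathbb{1}(\omega^\gamma \lhd \omega' \lhd \omega)\pigr] = \widehat{L_\gamma}(\omega')\, \mathbb{P}_\varphi\bigl(\{\omega : \omega^\gamma \lhd \omega' \lhd \omega\}\bigr) \leq \widehat{L_\gamma}(\omega')\,\varphi(\omega'),
\end{equation*}
where the final inequality uses Lemma~\ref{lemma: action upper bound forms} together with the obvious inclusion $\{\omega^\gamma \lhd \omega' \lhd \omega\} \subseteq \{\omega' \lhd \omega\}$. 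The only genuinely delicate point is the second case above; its core is the uniqueness of the connected component of $\omega$ adjacent to a given edge, which is a direct consequence of the adjacency relation introduced in Section~\ref{connectedsubsec}.
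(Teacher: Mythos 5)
Your proposal is correct and follows essentially the same strategy as the paper: establish that $\widehat{L_\gamma}(\omega)=\widehat{L_\gamma}(\omega')$ pointwise on the event $\{\omega^\gamma\lhd\omega'\lhd\omega\}$, then bound the probability of that event by $\varphi(\omega')$. The only cosmetic differences are that the paper derives the pointwise identity by citing Lemma~\ref{lemma: action factorization forms iic} (via $(\omega')^\gamma=\omega^\gamma$) where you verify $\delta\omega|_{\support\gamma}=\delta\omega'|_{\support\gamma}$ directly, and the paper re-runs the change-of-variables argument rather than invoking Lemma~\ref{lemma: action upper bound forms} through event monotonicity as you do.
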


\begin{proof}
    Let \( H \coloneqq \bigl\{ \omega \in \Omega^2(B_N,\mathbb{Z}_n) \colon \omega^\gamma \lhd \omega' \lhd \omega \bigr\}, \) 
    and define
     \( \tau \colon \omega \mapsto \omega - \omega' . \) 
     By definition, we then have
    \begin{equation}\label{eq: action upper bound 1b ii}
        \begin{split}
            &\mathbb{E}_{\varphi} \pigl[ \widehat{L_\gamma}(\omega) \cdot \mathbb{1}\bigl( \omega^\gamma \lhd \omega' \lhd \omega  )\pigr]
            =
            \frac{\sum_{\omega \in H} \widehat{L_\gamma}(\omega) \varphi(\omega)}{\sum_{\omega \in \Omega^2(B_N,\mathbb{Z}_n) }\varphi(\omega)}
            \leq
            \frac{\sum_{\omega \in H} \widehat{L_\gamma}(\omega) \varphi(\omega)}{\sum_{\omega \in \tau(H)}\varphi(\omega)}.
        \end{split}
    \end{equation}

    Now fix any \( \omega \in H. \)
    Since \( \omega \in H \) we have \( \omega^\gamma \lhd \omega', \) and by Lemma~\ref{lemma: action factorization forms iib} (applied with \( E_1 = \support \delta \omega \cap \support \gamma \) and \( E_2 = C_1(B_N)^+ \)), we also have \( \omega^\gamma \lhd \omega. \)
    Moreover, since \( \omega^\gamma \lhd \omega' \lhd \omega, \) we have \( (\omega')^\gamma = \omega^\gamma. \)
    Consequently, by Lemma~\ref{lemma: action factorization forms iic}, \( \widehat{L_\gamma}(\omega) = \widehat{L_\gamma}(\omega^\gamma) = \widehat{L_\gamma}(\omega'). \) 
    At the same time, since \( \omega \in H,\) we have \( \omega' \lhd \omega \) and hence, by Lemma~\ref{lemma: action factorization forms}, \( \varphi(\omega) = \varphi(\omega')\varphi(\omega-\omega') . \)
    Summing over all \( \omega \in H, \) we thus obtain
    \begin{equation}\label{eq: action upper bound 2b ii}
        \begin{split}
            &
            \sum_{\omega \in H} \widehat{L_\gamma}(\omega)\varphi(\omega)
            =
            \sum_{\omega \in H} \widehat{L_\gamma}(\omega')
            \varphi(\omega')\varphi(\omega - \omega')
            =
            \widehat{L_\gamma}(\omega') \varphi(\omega')\sum_{\omega \in \tau(H)} \varphi(\omega),
        \end{split}
    \end{equation} 
    where the last equality follows from the fact that \( \tau \) is a bijection from \( H \) to \( \tau(H). \) 
    Combining~\eqref{eq: action upper bound 1b ii}~and~\eqref{eq: action upper bound 2b ii}, we obtain~\eqref{eq: action upper bound forms ii} as desired.    
\end{proof}

\section{Useful upper bounds}
\label{section: useful upper bounds}

The purpose of this section is to state and prove two lemmas, which will be crucial in the proofs of subsequent results. Recall from Section~\ref{connectedsubsec} that for $P,P_0 \subset C_2(B_N)^+$, $P^{P_0}$ denotes the union of the connected components of $P$ that intersect $P_0$.

\begin{lemma}\label{lemma: flip a set forms}
    Let \( P_0 \subseteq C_2(B_N)^+  \) and let \( k,k' \geq 1. \) Assume that~\eqref{assumption: 1} holds, and define
    \begin{equation*}
        \begin{split}
            &H \coloneqq \pigl\{ \omega \in \Omega^2(B_N,\mathbb{Z}_n) \colon P_0 \subseteq (\support \omega)^+
            =
            \bigl((\support \omega)^+\bigr)^{P_0},\,
            \\
        & \hspace{4cm}
            |\support \omega| \geq 2k,\, |\support \omega| + |\support \delta \omega| \geq 2k' \pigr\}.
        \end{split}
    \end{equation*} 
    Then
    \begin{equation}\label{eq: flip a set forms}
    \begin{split}
        \mathbb{P}_{\varphi} \pigl(\bigl\{\omega \in \Omega^2(B_N,\mathbb{Z}_n) \colon  \exists \omega' \in H \colon    \omega' \lhd \omega  \bigr\}\pigr)
        \leq   
        \frac{(8m)^{2k} \zeta_\beta^k \xi_\kappa^{k'-k} }{1-(8m)^{2} \zeta_\beta \xi_\kappa^{-1} }. 
    \end{split}
    \end{equation} 
\end{lemma}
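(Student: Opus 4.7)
The plan is to combine the union bound with the single-configuration estimate of Lemma~\ref{lemma: action upper bound forms} and then to control the resulting sum over \( \omega' \in H \) by explicit enumeration together with a geometric series.

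First, I would apply a union bound followed by Lemma~\ref{lemma: action upper bound forms}:
\[ \mathbb{P}_\varphi\pigl(\bigl\{\omega \colon \exists\, \omega' \in H,\ \omega' \lhd \omega \bigr\}\pigr) \leq \sum_{\omega' \in H} \mathbb{P}_\varphi\bigl(\{\omega \colon \omega' \lhd \omega\}\bigr) \leq \sum_{\omega' \in H} \varphi(\omega'), \]
so it suffices to bound \( \sum_{\omega' \in H} \varphi(\omega'). \) For fixed \( \omega' \in H, \) writing \( \ell \coloneqq |(\support \omega')^+| \) and invoking Lemma~\ref{lemma: 0 first} (which gives \( \varphi_\kappa(0) = \varphi_\beta(0) = 1 \) and \( \varphi_\kappa, \varphi_\beta \leq 1 \)), we have
\[ \varphi(\omega') = \prod_{p \in (\support \omega')^+} \varphi_\beta\bigl(\omega'(p)\bigr) \cdot \prod_{e \in (\support \delta \omega')^+} \varphi_\kappa\bigl(\delta \omega'(e)\bigr). \]
Bounding each edge factor by \( \xi_\kappa \) and using that \( |\support \omega'| + |\support \delta \omega'| \geq 2k' \) forces \( |(\support \delta \omega')^+| \geq \max(0, k'-\ell) \), this gives
\[ \varphi(\omega') \leq \xi_\kappa^{\max(0,\, k'-\ell)} \prod_{p \in (\support \omega')^+} \varphi_\beta\bigl(\omega'(p)\bigr). \]
Enumerating \( \omega' \) by its positively-oriented support \( P \coloneqq (\support \omega')^+ \) and summing over the non-zero values assigned to each \( p \in P \) produces a factor \( \zeta_\beta \) per plaquette, yielding
\[ \sum_{\omega' \in H} \varphi(\omega') \leq \sum_{\ell \geq k} M(\ell)\, \zeta_\beta^{\ell}\, \xi_\kappa^{\max(0,\, k'-\ell)}, \]
where \( M(\ell) \) counts subsets \( P \subseteq C_2(B_N)^+ \) of size \( \ell \) with \( P \supseteq P_0 \) and every connected component of \( P \) intersecting \( P_0 \).

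The critical combinatorial input is the polymer estimate \( M(\ell) \leq (8m)^{2\ell} \). I would establish this by noting that each positively-oriented plaquette in \( \mathbb{Z}^m \) is adjacent, through its four boundary edges, to at most \( 8m \) other positively-oriented plaquettes; a standard depth-first walk on a spanning tree then bounds the number of connected plaquette sets of size \( c \) containing a fixed plaquette by \( (8m)^{2(c-1)} \). Grouping the connected components of \( P \) by which plaquettes of \( P_0 \) they contain and summing over component sizes delivers the claimed bound on \( M(\ell). \)

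Finally, substituting \( M(\ell) \leq (8m)^{2\ell} \) and splitting the sum into the ranges \( k \leq \ell < k' \) and \( \ell \geq k' \), I would estimate both ranges by geometric series with common ratio \( r \coloneqq (8m)^{2} \zeta_\beta / \xi_\kappa \). Since \( \xi_\kappa \leq 1, \) the \( \ell \geq k' \) tail is dominated term-by-term by the geometric extrapolation of the first range (because the inequality \( 1 \leq \xi_\kappa^{k'-\ell} \) holds for \( \ell \geq k' \)), and assumption~\eqref{assumption: 1} gives \( r < 1/4, \) so the combined series converges to the desired bound \( (8m)^{2k} \zeta_\beta^k \xi_\kappa^{k'-k}/(1-r), \) matching~\eqref{eq: flip a set forms}. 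The main obstacle is the polymer count: any looseness in \( M(\ell) \leq (8m)^{2\ell} \) would degrade either the prefactor \( (8m)^{2k} \) or the ratio appearing in the denominator of the final estimate.
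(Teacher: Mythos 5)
Your overall strategy coincides with the paper's: the union bound plus Lemma~\ref{lemma: action upper bound forms} reduces the claim to bounding \( \sum_{\omega' \in H}\varphi(\omega') \); the activity of a fixed \( \omega' \) with \( |(\support\omega')^+| = \ell \) is bounded by \( \zeta_\beta^{\ell}\xi_\kappa^{\max(0,k'-\ell)} \) exactly as in the paper; and your treatment of the final geometric series (dominating the \( \ell \ge k' \) tail by the extrapolation of the first range using \( \xi_\kappa \le 1 \) and~\eqref{assumption: 1}) is a slightly cleaner version of the paper's computation. The one step where you diverge is the support count \( M(\ell) \le (8m)^{2\ell} \), and this is precisely where your argument, as written, does not close.

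The difficulty is that the admissible supports \( P \) need not be connected: they are only required to contain \( P_0 \) and to have every connected component meet \( P_0 \). The paper handles this by augmenting the plaquette-adjacency graph with extra edges joining consecutive elements of \( P_0 \) in the fixed ordering of \( C_2(B_N)^+ \); in the augmented graph every admissible \( P \) \emph{is} connected, the maximum degree is \( 4\bigl(2(m-2)+1\bigr)+2 \le 8m \), and a single spanning walk of length \( 2\ell \) started at \( \min P_0 \) enumerates all of them, giving \( (8m)^{2\ell} \) in one stroke. Your component-by-component count needs more care than you give it: if, as you state, the adjacency degree is taken to be \( 8m \) and each component of size \( c_i \) is bounded by \( (8m)^{2(c_i-1)} \), then summing over the compositions \( c_1+\dots+c_r=\ell \) yields \( \sum_{r}\binom{\ell-1}{r-1}(8m)^{2(\ell-r)} = \bigl((8m)^2+1\bigr)^{\ell-1} \), which exceeds \( (8m)^{2\ell} \) once \( \ell \gtrsim (8m)^2\ln (8m) \); there is also a potential extra factor \( 2^{|P_0|} \) if the assignment of the plaquettes of \( P_0 \) to components is guessed rather than determined canonically. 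Both defects are repairable --- use the sharp adjacency degree \( 4(2m-3)=8m-12 \), root each component at its minimal element of \( P_0 \) (processing \( P_0 \) in order, so that the decomposition is determined by the previous choices), and verify \( \bigl((8m-12)^2+1\bigr)^{\ell-1}\le(8m)^{2\ell} \) --- but as it stands the assertion that the grouping ``delivers the claimed bound'' is the missing piece of the proof.
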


\begin{proof}
    Let \( \mathcal{G} \) be the graph with vertex set \( C_2(B_N)^+ \) and an edge between two distinct vertices \( p_1,p_2 \in C_2(B_N)^+\) if either
    \begin{enumerate}[label=(\roman*)]
        \item \( p_1 \sim p_2 \) or
        \item \( p_1,p_2\in P_0 \) and \( \bigl|\{p\in P_0 \colon p < p_1\} \Delta \{p\in P_0 \colon p < p_2\}\bigr|= 1, \)
    \end{enumerate} 
    where $A \Delta B$ denotes the symmetric set difference of two sets $A$ and $B$. Given $p_1 \in C_2(B_N)^+$, there are at most $4\bigl(2(m-2)+1\bigr)$ different plaquettes $p_2 \in C_2(B_N)^+$ such that $p_1 \sim p_2$.
    Hence each vertex of \( \mathcal{G} \) has degree at most \( 4\bigl(2(m-2)+1\bigr)+2 \leq 8m. \)

    For each \( j \geq 1, \) let \( \mathcal{T}_{P_0,j} \) be the set of all walks $T$ of length \( 2j \) in \( \mathcal{G}, \) such that $T$ starts at \( \min P_0 \) and $T$ is a spanning walk of some \( P \subseteq C_2(B_N)^+ \) with \( P^{P_0} = P \) and \( |P| = j. \) 
    Since for any \( \omega \in H  \) the set \( (\support \omega)^+ \)  is a connected set in \( \mathcal{G}, \)  
    the set \( (\support \omega)^+ \) must have a spanning walk of length \( |\support \omega|  \) which starts at \( \min P_0. \)
    Since \( |\support \omega| \geq 2k \) for all \( \omega \in H, \) it follows that
    \begin{equation*}
        \{ (\support \omega)^+\colon  \omega \in H \} \subseteq  \bigcup_{j \geq k} \bigl\{ \support T \colon T \in  \mathcal{T}_{P_0,j} \bigr\}.
    \end{equation*}
    Applying Lemma~\ref{lemma: action upper bound forms},  we thus obtain 
    \begin{equation}\label{eq: flip a set ii 1 forms}
    \begin{split}
        &\mathbb{P}_{\varphi} \pigl(\bigl\{\omega \in \Omega^2(B_N,\mathbb{Z}_n) \colon  \exists \omega' \in H \colon \omega' \lhd \omega \}\bigr)
        \leq
        \sum_{\omega' \in H} \mathbb{P}_{\varphi}\pigl(\bigl\{\omega \in \Omega^2(B_N,\mathbb{Z}_n) \colon \omega' \lhd \omega \bigr\}\pigr) 
        \\&\qquad 
        \leq
        \sum_{\omega' \in H} \varphi(\omega') 
        \leq
        \sum_{j = k}^\infty \sum_{\substack{T \in \mathcal{T}_{P_0,j} }} \sum_{\substack{\omega' \in H \mathrlap{\colon}\\ (\support \omega')^+ = T}}\varphi(\omega') .
    \end{split}
    \end{equation} 
    
    Since each vertex of \( \mathcal{G} \) has degree at most \( 8m, \) for each \( j \geq 1 \) we have  \( |\mathcal{T}_{P_0,j}| \leq (8m)^{2j}.\)
    At the same time, if \( j \geq 1 \) and \( T \in \mathcal{T}_{P_0,j}, \) then (using Lemma~\ref{lemma: 0 first}), we have
    \begin{equation*}
        \sum_{\substack{\omega' \in H \mathrlap{\colon}\\ (\support \omega')^+ = T}}\varphi(\omega')  
        \leq 
        \bigg(\sum_{g \in \mathbb{Z}_n\smallsetminus \{ 0 \}} \varphi_\beta(g) \bigg)^j \max_{g \in \mathbb{Z}_n \smallsetminus \{ 0 \} }\varphi_\kappa(g)^{\max(k'-j,0)}
        =
        \zeta_\beta^j \xi_\kappa^{\max(k'-j,0)}.
    \end{equation*} 
    Combining these observations, we arrive at
    \begin{equation}\label{eq: flip a set ii 2 forms}
        \sum_{j = k}^\infty \sum_{\substack{T \in \mathcal{T}_{P_0,j} }} \sum_{\substack{\omega' \in H \mathrlap{\colon}\\ (\support \omega')^+ = T}}\varphi(\omega')
        \leq  
        \sum_{j = k}^\infty   
        (8m)^{2j} 
        \zeta_\beta^j \xi_\kappa^{\max(k'-j,0)}.
    \end{equation}
    Finally, note that
    \begin{equation}\label{eq: flip a set ii 3 forms}
        \begin{split}
        &\sum_{j = k}^\infty   
        (8m)^{2j} \zeta_\beta^j \xi_\kappa^{\max(k'-j,0)}
        =
        \sum_{j = k}^{k'-1}   
        (8m)^{2j} \zeta_\beta^j \xi_\kappa^{k'-j}
        +
        \sum_{j = k'}^\infty   
        (8m)^{2j} \zeta_\beta^j
        \\&\qquad= 
        \frac{(8m)^{2k} \zeta_\beta^k \xi_\kappa^{k'-k} -(8m)^{2k'} \zeta_\beta^{k'}  }{1-(8m)^{2} \zeta_\beta \xi_\kappa^{-1} } 
        +
        \frac{(8m)^{2k'} \zeta_\beta^{k'} }{1-(8m)^{2} \zeta_\beta} 
        \leq
        \frac{(8m)^{2k} \zeta_\beta^k \xi_\kappa^{k'-k}  }{1-(8m)^{2} \zeta_\beta \xi_\kappa^{-1} } 
        ,
        \end{split}
    \end{equation}
    where the last step uses that $\xi_\kappa \leq 1$ as a consequence of Lemma~\ref{lemma: 0 first}, and that \( (8m)^{2} \zeta_\beta < \xi_\kappa\) as a consequence of the assumption~\eqref{assumption: 1}.
    Combining~\eqref{eq: flip a set ii 1 forms},~\eqref{eq: flip a set ii 2 forms}, and~\eqref{eq: flip a set ii 3 forms}, we obtain~\eqref{eq: flip a set forms} as desired. 
\end{proof}

\begin{lemma}\label{lemma: flip a set forms iii}
    Let \( E \subseteq \support \gamma, \) and let $k,k',k''$ be non-negative integers such that \( k \geq 1 .\)  
    Assume that~\eqref{assumption: 3} holds.
    Let \( H' \) be the set of all \( \omega \in \Omega^2(B_N,\mathbb{Z}_n)  \) such that
    \begin{enumerate}[label=\upshape(\roman*)]
        \item \( \support \delta \omega \cap \support \gamma \subseteq E, \)
        \item \( |\support \omega| = 2k, \)
        \item \(  |\support \delta \omega| = 2k', \) and
        \item \(  |\support \delta \omega \cap \support \gamma | = k''. \)
    \end{enumerate}
    Then
    \begin{equation}\label{eq: flip a set forms iii}
    \begin{split}
        &\mathbb{E}_{\varphi} \bigl[ \widehat{L_\gamma}(\omega) \cdot \mathbb{1}(\omega^E \in H'  )\bigr]
        \leq (16m)^{2k} \zeta_\beta^k \xi_\kappa^{| \gamma| + k'-2k''}.
    \end{split}
    \end{equation} 
\end{lemma}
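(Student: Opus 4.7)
My plan is to mimic the proof scheme of Lemma~\ref{lemma: flip a set forms} with the modifications required to accommodate the Wilson line weights and the edge anchor set $E$. The first step is to reduce the expectation to a sum over $\omega' \in H'$ by invoking Lemma~\ref{lemma: action upper bound forms iii}. To apply it, I would check that $\omega^\gamma \lhd \omega' \lhd \omega$ whenever $\omega^E = \omega' \in H'$: condition~(i) of $H'$ gives $\support\delta\omega \cap \support\gamma \subseteq E$, so Lemma~\ref{lemma: action factorization forms iib} (with $E_1 = \support\delta\omega\cap\support\gamma$ and $E_2 = E$) yields $\omega^\gamma \lhd \omega^E = \omega'$, while a second application (with $E_1 = E$ and $E_2 = C_1(B_N)^+$) yields $\omega^E \lhd \omega$. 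Summing the resulting bound $\mathbb{E}_\varphi[\widehat{L_\gamma}(\omega)\mathbb{1}(\omega^E=\omega')]\leq \widehat{L_\gamma}(\omega')\varphi(\omega')$ over the disjoint events $\{\omega^E=\omega'\}_{\omega'\in H'}$ gives
\[
\mathbb{E}_\varphi\bigl[\widehat{L_\gamma}(\omega)\mathbb{1}(\omega^E\in H')\bigr] \leq \sum_{\omega'\in H'}\widehat{L_\gamma}(\omega')\varphi(\omega').
\]

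Next, for each $\omega' \in H'$ I would expand $\widehat{L_\gamma}(\omega')\varphi(\omega')$ via~\eqref{eq: phi def forms} and~\eqref{eq: hat L}, canceling the common $\varphi_\kappa(\delta\omega'(e))$ factors on $\support\gamma$, to obtain
\[
\widehat{L_\gamma}(\omega')\varphi(\omega') = \prod_p \varphi_\beta\bigl(\omega'(p)\bigr)\prod_{e\notin\support\gamma}\varphi_\kappa\bigl(\delta\omega'(e)\bigr)\prod_{e\in\support\gamma}\varphi_\kappa\bigl((\delta\omega'+\gamma)(e)\bigr).
\]
Combining Lemmas~\ref{lemma: symmetry}, \ref{lemma: 0 first}, \ref{lemma: order} and~\ref{lemma: eta relationships} (the latter giving $\varphi_\kappa(\pm 1) = \xi_\kappa$ under~\eqref{assumption: 3}, since \eqref{assumption: 3} forces $\kappa(1+\varepsilon_\kappa)\leq 1$), the last two products are bounded by $\xi_\kappa^{k'-k''}$ and $\xi_\kappa^{|\gamma|-k''}$ respectively (the $k''$ ratios for $e\in\support\gamma\cap\support\delta\omega'$ being each at most $\varphi_\kappa(\cdot)\leq 1$), so
\[
\widehat{L_\gamma}(\omega')\varphi(\omega') \leq \xi_\kappa^{|\gamma|+k'-2k''}\prod_p\varphi_\beta\bigl(\omega'(p)\bigr).
\]
Enumerating $\omega'\in H'$ by support $S=(\support\omega')^+$ followed by a value assignment $\omega'|_S\colon S\to\mathbb{Z}_n\smallsetminus\{0\}$, the value sum contributes at most $\zeta_\beta^{|S|}=\zeta_\beta^k$, reducing the task to bounding the number of admissible supports.

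Finally, I would count supports by a walk argument analogous to that in Lemma~\ref{lemma: flip a set forms}, but in an augmented graph $\mathcal{G}^\star$ on $C_2(B_N)^+$ whose edges comprise the standard plaquette adjacency $p\sim p'$ together with auxiliary edges joining any two distinct plaquettes that share some edge of $E$. A direct count, using that each plaquette has at most four edges and each edge belongs to at most $2(m-1)$ plaquettes, shows that the maximum degree of $\mathcal{G}^\star$ is at most $16m$. Since, by definition of $\omega^E$, every connected component of $(\support\omega^E)^+$ contains a plaquette in $\mathcal{P}_E$, the set $S$ is connected in $\mathcal{G}^\star$ and admits a spanning walk of length $2k$ starting from a canonical root vertex, yielding $\#\{S\}\leq(16m)^{2k}$. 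The principal technical obstacle is exactly this graph construction: ensuring that, even when $(\support\omega^E)^+$ decomposes into several connected components each touching $E$ possibly via different edges, the auxiliary adjacencies suffice to link all components into a single connected set in $\mathcal{G}^\star$ admitting a length-$2k$ spanning walk, while keeping the maximum degree bounded by $16m$. The factor-of-two increase from $8m$ in Lemma~\ref{lemma: flip a set forms} to $16m$ here is precisely the cost of bridging components of $\omega^E$ through $E$-adjacent plaquettes.
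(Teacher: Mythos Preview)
Your overall strategy matches the paper's proof exactly: reduce to $\sum_{\omega'\in H'}\widehat{L_\gamma}(\omega')\varphi(\omega')$ via Lemma~\ref{lemma: action upper bound forms iii} and Lemma~\ref{lemma: action factorization forms iib}, bound each summand by $\xi_\kappa^{|\gamma|+k'-2k''}\prod_p\varphi_\beta(\omega'(p))$, absorb the value sum into $\zeta_\beta^k$, and count supports by a spanning-walk argument in an augmented plaquette graph. All of this is correct.

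The one genuine gap is the graph construction. Your auxiliary edges join ``any two distinct plaquettes that share some edge of $E$''. Read literally (a common boundary edge lying in $E$), such plaquettes are already $\sim$-adjacent, so the augmentation is vacuous and does nothing to link components of $(\support\omega')^+$ that touch \emph{different}, possibly far-apart, edges of $E$. Read as ``both border some edge of $E$'' (i.e.\ both lie in $\mathcal{P}_E$), the degree becomes $|\mathcal{P}_E|-1$, which is unbounded. Neither reading yields a graph that is both connected on $(\support\omega')^+$ and of bounded degree; your own degree count $4\cdot 2(m-1)$ is in fact the count for ordinary adjacency. The paper resolves exactly the obstacle you flag by fixing an ordering on $E$ and adding an edge between $p_1$ and $p_2$ whenever there exist $e_1\in(\partial p_1)^+\cap E$ and $e_2\in(\partial p_2)^+\cap E$ that are \emph{consecutive} in this ordering. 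Since $\gamma$ is rectangular with side lengths $\geq 2$, each plaquette has at most two boundary edges in $E$; each such edge has at most two ordering-neighbours; and each neighbour bounds at most $2(m-1)$ plaquettes, giving at most $2\cdot 2\cdot 2(m-1)$ auxiliary neighbours on top of the $4(2(m-2)+1)$ from $\sim$, hence degree $\leq 16m$. The walk is then rooted at any plaquette in $(\hat\partial\min E)^+$. Connectivity of $(\support\omega')^+$ in this graph uses that every $e\in E$ has $\hat\partial e\cap\support\omega'\neq\emptyset$, so the plaquettes witnessing this form a chain through the consecutive-$E$ edges that meets every connected component of $(\support\omega')^+$.
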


\begin{proof}
    Let \( \mathcal{G} \) be the graph with vertex set \( C_2(B_N)^+ \) and an edge between two distinct vertices \( p_1,p_2 \in C_2(B_N)^+\) if either
    \begin{itemize}[label=$\cdot$]
        \item \( p_1 \sim p_2, \) or
        \item there are two edges \(  e_1 \in (\partial p_1)^+ \cap E \) and \(  e_2 \in (\partial p_2)^+ \cap E \) such that
        \( \bigl|\{e\in E \colon e < e_1\} \Delta \{e \in E \colon e < e_2\}\bigr|= 1. \)
    \end{itemize} 
    Given $p_1 \in C_2(B_N)^+$, there are at most $4\bigl(2(m-2)+1\bigr)$ choices of $p_2 \in C_2(B_N)^+$ such that $p_1 \sim p_2$.
    Furthermore, since $\gamma$ is a path along the boundary of a rectangle with side lengths $\geq 2$, there are at most $2$ choices of $e_1$ such that $e_1 \in (\partial p_1)^+ \cap E$. For each such $e_1$, there are at most $2$ choices of $e_2$ such that $|\{e\in E \colon e < e_1\} \Delta \{e \in E \colon e < e_2\}|= 1$, and for each such $e_2$, there are at most $2(m-1)$ choices of $p_2 \in C_2(B_N)^+$ such that $e_2 \in (\partial p_2)^+$. Hence each vertex of \( \mathcal{G} \) has degree at most \( 4\bigl(2(m-2)+1\bigr)+2 \cdot 2 \cdot 2(m-1) \leq 16m. \) 
    
    For \( j \geq 1, \) let \( \mathcal{T}_{E,j} \) be the set of all walks $T$ of length \( 2j \) in \( \mathcal{G}, \) such that $T$ starts at some plaquette in \( (\hat \partial \min \support E)^+ \) and is a spanning walk of some set \( P \subseteq C_2(B_N)^+ \) with \( |P|=j. \) %
    Since for any \( \omega \in H' \) the set \( (\support \omega)^+ \)  is a connected set in \( \mathcal{G}, \)  
    the set \( (\support \omega)^+ \) must have a spanning walk of length \( \bigl|\support \omega \bigr| = 2j \) which starts at some plaquette in \( (\hat \partial \min \support E)^+ \).
    Consequently, we have 
    \begin{equation*}
        \{ (\support \omega)^+ \colon  \omega \in H' \} \subseteq  \bigl\{ \support T \colon T \in  \mathcal{T}_{E,k} \bigr\},
    \end{equation*}
    and hence
    \begin{equation}\label{eq: flip a set ii 1 forms iiia}
        \begin{split}
            &\mathbb{E}_{\varphi} \pigl[\widehat{L_\gamma}(\omega) \cdot \mathbb{1}\bigl(\omega^E \in H'  )\pigr]
            \leq
            \sum_{\omega' \in H'} \mathbb{E}_{\varphi} \pigl[\widehat{L_\gamma}(\omega) \cdot \mathbb{1}\bigl(\omega^E = \omega'  )\pigr]
            \\&\qquad \leq 
            \sum_{\substack{T \in \mathcal{T}_{P_0,k} }} \sum_{\substack{\omega' \in H' \mathrlap{\colon}\\ (\support \omega')^+ = T}} \mathbb{E}_{\varphi} \pigl[\widehat{L_\gamma}(\omega) \cdot \mathbb{1}\bigl(\omega^E = \omega'  )\pigr].
        \end{split}
    \end{equation}
    Now note that if \( \omega' \in H' \) and \( \omega \in \Omega^2(B_N,\mathbb{Z}_n) \) are such that \( \omega^E = \omega' \) then, by Lemma~\ref{lemma: action factorization forms iic}, we have \( \omega^\gamma \lhd \omega' \lhd \omega, \)
    and hence we can apply Lemma~\ref{lemma: action upper bound forms iii} to obtain  
    \begin{equation}\label{eq: flip a set ii 1 forms iiib}
        \begin{split}
            &\mathbb{E}_{\varphi} \pigl[\widehat{L_\gamma}(\omega) \cdot \mathbb{1}\bigl(\omega^{E} = \omega'  )\pigr]
            \leq
            \mathbb{E}_{\varphi} \pigl[\widehat{L_\gamma}(\omega) \cdot \mathbb{1}\bigl(\omega^\gamma \lhd  \omega' \lhd \omega )\pigr]
            \leq 
            \widehat{L_\gamma}(\omega')\varphi(\omega')
            .
        \end{split}
    \end{equation}
    Since each vertex of \( \mathcal{G} \) has degree at most \( 16m, \) we have  \( |\mathcal{T}_{P_0,k}| \leq (16m)^{2k} \).
    At the same time, if  \( T \in \mathcal{T}_{P_0,k}, \) then
    \begin{equation*}
        \begin{split}
            &\sum_{\substack{\omega' \in H' \mathrlap{\colon}\\ (\support \omega')^+ = T}} \widehat{L_\gamma}(\omega')\varphi(\omega')
            =
            \sum_{\substack{\omega' \in H' \mathrlap{\colon}\\ (\support \omega')^+ = T}} 
            \prod_{p \in C_2(B_N)^+} 
            \varphi_\beta\bigl(\omega'(p)\bigr)     
            \prod_{e \in C_1(B_N)^+} 
            \varphi_\kappa\bigl( (\delta\omega' + \gamma)(e) \bigr)
             \\
    &   \qquad     \leq 
            \bigg(\sum_{g \in \mathbb{Z}_n\smallsetminus \{ 0 \}} \varphi_\beta(g) \bigg)^k
            \varphi_\kappa(1)^{| \gamma|-k''}
             \max_{g \in \mathbb{Z}_n \smallsetminus \{ 0 \} }\varphi_\kappa(g)^{k'-k''}
            = \zeta_\beta^m \xi_\kappa^{| \gamma| + k'-2k''}.
        \end{split}
    \end{equation*} 
    Combining these observations, and recalling that by Lemma~\ref{lemma: order} (using~\eqref{assumption: 3}), we have \( \varphi_\kappa(1) = \xi_\kappa, \) we obtain
    \begin{equation}\label{eq: flip a set ii 2 forms iii}
        \sum_{\substack{T \in \mathcal{T}_{P_0,k} }} \sum_{\substack{\omega' \in H' \mathrlap{\colon}\\ (\support \omega')^+ = T}}\widehat{L_\gamma}(\omega')\varphi(\omega')
        \leq   
        (16m)^{2k} 
        \zeta_\beta^k \xi_\kappa^{| \gamma| + k'-2k''}.
    \end{equation}
    Combining~\eqref{eq: flip a set ii 1 forms iiia},~\eqref{eq: flip a set ii 1 forms iiib}, and~\eqref{eq: flip a set ii 2 forms iii}, we obtain~\eqref{eq: flip a set forms iii} as desired.
\end{proof}

\section{Properties of the support of spin configurations}\label{section: properties}
{\mbox{}

In this section, we state and prove a number of lemmas that gives properties of spin configurations that will need in the proofs of the main result. In these lemmas, whenever \( S \) is a set, \( |S| \) will denote the cardinality of \( S. \)}

For a path \( \gamma \) along the boundary of a rectangle $R$ and a 2-form \( \omega \in \Omega^2(B_N,\mathbb{Z}_n), \) we let
\begin{equation*}
    V^{\gamma,\omega} \coloneqq \bigl\{  v \in C_0(B_N)^+ \colon
    \delta (\delta \omega^\gamma|_{ \support \gamma_R})(v) \neq 0
    \bigr\},
\end{equation*}
where $\omega^\gamma$ is the $2$-form defined in~\eqref{eq: omegagammadef}. 
We note that if \( \omega^\gamma \neq 0 \) and  \( \support \gamma_R \nsubseteq \support \delta \omega^\gamma, \) then \( |V^{\gamma,\omega}| \geq 2. \) 

The first lemma of this section, Lemma~\ref{lemma: split into components}, relates the cardinality of the supports of a spin configuration to the cardinalities of the supports of its connected components.

\begin{lemma}\label{lemma: split into components}
    Let \( \omega \in \Omega^2(B_N,\mathbb{Z}_n) ,\) let \( P_0, P_1, \dots, P_{k-1} \) be the connected components of \( (\support \omega)^+,  \) and for \( j \in [k], \) let \( \omega_j \coloneqq \omega|_{ P_j }. \) Then the following hold.
    \begin{enumerate}[label=\upshape(\roman*)]
        \item \( \bigl|(\support \omega)^+ \bigr| = \sum_{j \in [k]} \bigl|(\support \omega_j)^+ \bigr| \)
        \label{item: split into components i}
        
        \item \( \bigl|(\support \delta \omega)^+ \bigr| = \sum_{j \in [k]} \bigl|(\support \delta \omega_j)^+ \bigr| \)
        \label{item: split into components ii}
        
        \item \( |\support \delta \omega \cap \support \gamma | = \sum_{j \in [k]} |\support \delta \omega_j \cap \support \gamma | \)
        \label{item: split into components iii}
        
        \item \( |P_{\omega,\gamma,c}| = \sum_{j \in [k]} |P_{\omega_j,\gamma,c}| \)
        \label{item: split into components iv}
        
        \item \( |V^{\gamma,\omega}| \leq \sum_{j \in [k] } |V^{\gamma,\omega_j}|\)
        \label{item: split into components v}
        
    \end{enumerate}
\end{lemma}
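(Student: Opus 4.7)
The plan is to derive items (i)--(iv) from a single disjointness property and handle (v) separately via linearity.

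Item (i) is immediate from the definition of connected components: the sets $P_j = (\support \omega_j)^+$ are pairwise disjoint with union $(\support \omega)^+$. The key observation underlying (ii)--(iv) that I would establish first is the following localization statement: if an edge $e \in C_1(B_N)^+$ lies in $(\support \partial p)^+$ for some $p \in P_j$, then every plaquette $p' \in (\support \omega)^+$ with $e \in (\support \partial p')^+$ satisfies $p \sim p'$ and hence $p' \in P_j$. Using the identity $\delta \omega(e) = \sum_{p' \in \support \hat \partial e} \partial p'[e] \, \omega(p')$ from~\eqref{deltaomegae}, this means that on such an $e$ only plaquettes from $P_j$ contribute, so $\delta \omega(e) = \delta \omega_j(e)$.

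From this localization, the supports $\support \delta \omega_j$ must be pairwise disjoint: if some $e$ lay in two of them, then adjacent plaquettes from two different components $P_j$ and $P_{j'}$ would witness that these components are connected, a contradiction. Together with $\delta \omega = \sum_j \delta \omega_j$, this disjointness yields (ii), and (iii) follows by intersecting with $\support \gamma$. For (iv), the localization gives $\support \partial p \cap \support \delta \omega = \support \partial p \cap \support \delta \omega_j$ for any $p \in P_j$, so the defining condition $|\support \partial p \cap \support \gamma \cap \support \delta \omega| = 2$ in~\eqref{Pomegagammacdef} is equivalent to $|\support \partial p \cap \support \gamma \cap \support \delta \omega_j| = 2$; summing over $j$ (and using that the $P_j$ are disjoint and contain $P_{\omega_j,\gamma,c}$) gives the stated equality.

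For (v), I would use that each $\omega_j$ has a single connected component, so by the definition~\eqref{eq: omegagammadef} of $\omega^\gamma$, letting $J \coloneqq \{j \in [k] \colon \support \delta \omega_j \cap \support \gamma \neq \emptyset\}$, one has $\omega_j^\gamma = \omega_j$ when $j \in J$ and $\omega_j^\gamma = 0$ otherwise, while $\omega^\gamma = \sum_{j \in J} \omega_j$. By linearity of $\delta$ and of restriction,
\[
\delta\bigl(\delta \omega^\gamma|_{\support \gamma_R}\bigr)(v) \;=\; \sum_{j \in J} \delta\bigl(\delta \omega_j^\gamma|_{\support \gamma_R}\bigr)(v),
\]
so if the left-hand side is nonzero then at least one summand on the right is nonzero. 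Hence $V^{\gamma,\omega} \subseteq \bigcup_{j \in J} V^{\gamma,\omega_j}$, and a union bound yields (v).

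The only real subtlety is the localization $\delta \omega(e) = \delta \omega_j(e)$ and the ensuing disjointness of the $\support \delta \omega_j$; once this is in place, (i)--(iv) are pure bookkeeping and (v) is a one-line application of linearity plus a union bound, so I do not anticipate any serious obstacle.
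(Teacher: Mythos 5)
Your proof is correct and follows essentially the same route as the paper: both rest on the disjoint decompositions \( (\support \omega)^+ = \bigsqcup_{j} (\support \omega_j)^+ \) and \( (\support \delta \omega)^+ = \bigsqcup_{j} (\support \delta \omega_j)^+ \) together with the local identity \( \delta\omega(e) = \delta\omega_j(e) \) for edges \( e \) bordering \( P_j \). Your treatment of (v) via linearity of \( \delta \) and of restriction is a slightly cleaner packaging of the paper's two-edge case analysis at each vertex, but it establishes the same containment \( V^{\gamma,\omega} \subseteq \bigcup_{j} V^{\gamma,\omega_j} \).
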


\begin{proof}
    By definition, we have \( (\support \omega)^+ = \bigsqcup_{j \in [k]} (\support \omega_j)^+ \) and \( (\support \delta \omega)^+ = \bigsqcup_{j \in [k]} (\support \delta \omega_j)^+,\) and hence~\ref{item: split into components i}--\ref{item: split into components iii} hold.
    
    We now show that~\ref{item: split into components iv} holds. To this end, assume first that \( p \in P_{\omega,\gamma,c}. \) Then there is \( j \in [k] \) such that \( p \in P_j, \) and hence \( \omega(p) = \omega_j(p) \) and \( \delta\omega(e) = \delta \omega_j(e) \) for all \( e \in \partial p. \) Consequently, we have \( p \in P_{\omega_j,\gamma,c}. \) At the same time, if \( p \in P_{\omega_j,\gamma,c}, \) for some \( j \in [k], \) then \( p \in P_j , \) and hence \( \omega(p) = \omega_j(p) \) and \( \delta\omega(e) = \delta\omega_j(e) \) for all \( e \in \support \partial p. \) Consequently, we have \( p \in P_{\omega,\gamma,c}. \) Since \( \omega_0, \dots, \omega_{k-1} \) have disjoint supports, it follows that \( P_{\omega,\gamma,c} = \bigsqcup_{j\in [k]} P_{\omega_j,\gamma,c}, \) and hence~\ref{item: split into components iv} holds.
    
    We now show that~\ref{item: split into components v} holds. To this end, assume that \( v \in V^{\gamma,\omega}. \) Then there are two edges \( \{ e,e' \} \subseteq \support \hat \partial v \cap \support \gamma_R, \) 
    such that \( \delta(\delta \omega|_{ \{ e,e' \}})(v) \neq 0 \) and \( \delta \omega(e) \neq 0.
    \) Since \( \delta \omega(e) \neq 0, \) there is \( j \in [k] \) such that \( e \in \support \delta \omega_j. \) For this \( j, \) we must have either \( e' \in \support \delta \omega_j, \) in which case \( \delta(\delta \omega_j|_{ \{ e,e' \}})(v) = \delta(\delta \omega|_{ \{ e,e' \}})(v)  \neq 0, \) or \( e' \notin \support \delta \omega_j, \) in which case
    \( \delta(\delta \omega_j|_{ \{ e,e' \}})(v) = \delta(\delta \omega|_{ \{ e\}})(v)  \neq 0, \) and hence \( v \in V^{\omega_j,\gamma}. \) Summing over all \( v \in V^{\omega,\gamma},\) we obtain~\ref{item: split into components v} as desired.
\end{proof}

The next three lemmas of this section provide inequalities that compare the cardinalities of sets naturally associated with a spin configuration. These inequalities will be important in later sections where we use them to give upper bounds on certain useful events.

\begin{lemma}\label{lemma: basic inequality 1 forms}
    Let \( \gamma \) be a path along the boundary of a rectangle with side lengths \( \ell_2 \geq \ell_1 \geq 2, \) and let \( \omega \in \Omega^2(B_N,\mathbb{Z}_n). \) Then
    \begin{equation}\label{eq: basic inequality 1}
        \bigl|(\support \omega)^+\bigr| + |P_{\omega,\gamma,c}| \geq  \bigl|\support \delta \omega \cap \support \gamma  \bigr|.
    \end{equation}
\end{lemma}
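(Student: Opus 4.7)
The plan is to prove the inequality by constructing an explicit map
\[
f \colon \support \delta\omega \cap \support \gamma \longrightarrow (\support \omega)^+
\]
and analyzing its fibers. For any edge $e \in \support \delta\omega \cap \support \gamma$, the identity $\delta\omega(e) = \sum_{p \in \support \hat\partial e} \partial p[e]\,\omega(p)$ (cf.~\eqref{deltaomegae}) together with $\delta\omega(e) \neq 0$ guarantees the existence of at least one positively oriented plaquette $p$ with $\omega(p) \neq 0$ and $e \in \support \partial p$. I would define $f(e)$ to be any such plaquette (e.g., the minimal one in the fixed ordering on $C_2(B_N)^+$). Then the key identity
\[
|\support \delta\omega \cap \support \gamma| = \sum_{p \in (\support \omega)^+} |f^{-1}(p)|
\]
reduces the problem to bounding the fiber sizes.

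The next step is to observe that whenever $|f^{-1}(p)| \geq 2$, one automatically has $p \in P_{\omega,\gamma,c}$. Indeed, if $e \neq e'$ both map to the same plaquette $p$, then both lie in $\support \partial p \cap \support \gamma \cap \support \delta\omega$, so $|\support \partial p \cap \support \gamma \cap \support \delta\omega| \geq 2$, matching the definition~\eqref{Pomegagammacdef} (we only need ``$\geq 2$'' since the opposite inequality will come from the geometric bound below).

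The main geometric input, and arguably the only nontrivial step, is to show that $|\support \partial p \cap \support \gamma| \leq 2$ for every plaquette $p \in C_2(B_N)^+$. Assuming $\gamma$ lies in the $(e_i, e_j)$-plane of the ambient rectangle, I would split into cases based on the plane of $p$: if $p$ is not contained in this plane, then at most one of the four edges of $\partial p$ has the required direction to lie in $\support \gamma$ (the two parallel edges along a direction in common with $\gamma$ are separated out of the plane of $\gamma$, so only one can be on $\gamma$); if $p$ lies in the plane of the rectangle, then its two $e_i$-directed edges are separated by $e_j$ and its two $e_j$-directed edges are separated by $e_i$, so by the side-length assumption $\ell_1 \geq 2$ no two parallel edges of $\partial p$ can simultaneously lie on opposite sides of the rectangle; thus the only way to have two edges of $\partial p$ on $\support \gamma$ is for them to be adjacent at a corner, giving at most 2. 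This bound together with the previous step yields $|f^{-1}(p)| \leq 1$ for $p \in (\support \omega)^+ \setminus P_{\omega,\gamma,c}$ and $|f^{-1}(p)| \leq 2$ for $p \in P_{\omega,\gamma,c}$.

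Combining,
\[
|\support \delta\omega \cap \support \gamma| \;\leq\; |(\support \omega)^+ \setminus P_{\omega,\gamma,c}| + 2\,|P_{\omega,\gamma,c}| \;=\; |(\support \omega)^+| + |P_{\omega,\gamma,c}|,
\]
which is precisely~\eqref{eq: basic inequality 1}. The main obstacle is really bookkeeping: making the geometric case analysis in step~3 fully rigorous, in particular explicitly identifying when the plane of $p$ agrees with that of $\gamma$ and carefully invoking $\ell_1, \ell_2 \geq 2$ to rule out the case of two opposite edges of a plaquette landing on two opposite sides of the rectangle.
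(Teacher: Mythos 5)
Your proposal is correct and follows essentially the same route as the paper's proof: both arguments assign to each edge $e \in \support \delta\omega \cap \support\gamma$ a bordering plaquette in $(\support\omega)^+$ and then use the fact that, since $\ell_1 \geq 2$, two distinct such edges can share an assigned plaquette only if that plaquette lies in $P_{\omega,\gamma,c}$, with at most one extra edge per such corner plaquette. Your version merely makes the counting explicit via the map $f$ and its fiber sizes, together with the (correct) geometric observation that $|\support\partial p \cap \support\gamma| \leq 2$ for every plaquette $p$.
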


\begin{proof} 
    By definition, for any \( e \in \support \delta \omega \cap \support \gamma \) the set \( \support \hat \partial e \cap \support \omega \) must be non-empty.
    Moreover, since \( \ell_1 \geq 2, \) for any distinct \( e,e' \in \support \delta \omega \cap \support \gamma , \) the intersection of the sets \( \support \hat \partial e \cap \support \omega \) and \( \support \hat \partial e' \cap \support \omega \) can be non-empty only if their intersection is equal to \( \{ p \} \) for some corner plaquette \( p \in \mathcal{P}_{\gamma,c}. \)
    At the same time, since \( \gamma \) is a  rectangular path, there can be at most \( |P_{\omega,\gamma,c}|\) pairs of distinct edges  \( e,e' \in \support \delta \omega \cap \support \gamma \) such that \( (\support \hat \partial e \cap \support \omega) \cap (\support \hat \partial e' \cap \support \omega) = \{ p \}\) for some \( p \in \mathcal{P}_{\gamma,c}. \)
    From this the desired conclusion  follows.
\end{proof}

\begin{lemma}\label{lemma: general technical lemma forms}
    Let \( \gamma \) be a path along the boundary of a rectangle \( R \) with side lengths \( \ell_2 \geq \ell_1 \geq 8. \)  Let \( \omega \in \Omega^2(B_N,\mathbb{Z}_n). \) Then 
    \begin{equation}\label{eq: general technical lemma forms}
        \begin{split}
            &\bigl| (\support \omega)^+\bigr| + \bigl|(\support \delta \omega)^+ \bigr|
            \geq 3\bigl| \support \delta \omega  \cap \support \gamma\bigr| +  |V^{\gamma,\omega}| - 3|P_{\omega,\gamma,c}| .
        \end{split} 
    \end{equation}
\end{lemma}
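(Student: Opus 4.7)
The plan is to first reduce to the case of connected support, handle a trivial sub-case, and then prove the inequality in the main remaining case by combining several bounds.

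First, I would use Lemma~\ref{lemma: split into components} to reduce to the connected case. Decomposing $\omega = \omega_1 + \dots + \omega_r$ into its connected components, items (i)--(iv) show that $|(\support\omega)^+|$, $|(\support\delta\omega)^+|$, $|\support\delta\omega \cap \support\gamma|$, and $|P_{\omega,\gamma,c}|$ are all additive over components, while item (v) gives the subadditive bound $|V^{\gamma,\omega}| \leq \sum_{j=1}^r |V^{\gamma,\omega_j}|$. Since $|V^{\gamma,\omega}|$ appears on the right-hand side of~\eqref{eq: general technical lemma forms} with a positive coefficient, this subadditivity is in the favorable direction, and summing the componentwise inequalities implies the global one. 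It therefore suffices to prove~\eqref{eq: general technical lemma forms} when $(\support\omega)^+$ is connected. If such a connected $\omega$ does not border $\gamma$, i.e., $\support\delta\omega \cap \support\gamma = \emptyset$, then $\omega^\gamma = 0$, so $V^{\gamma,\omega} = \emptyset$ and $|P_{\omega,\gamma,c}| = 0$; the right-hand side of~\eqref{eq: general technical lemma forms} vanishes and the inequality is trivial.

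The main case is when $\omega = \omega^\gamma$ is connected. Setting $k = |\support\delta\omega \cap \support\gamma|$, $C = |P_{\omega,\gamma,c}|$, and $V = |V^{\gamma,\omega}|$, the goal becomes to prove $|(\support\omega)^+| + |(\support\delta\omega)^+| \geq 3(k - C) + V$. I plan to combine three ingredients. (a)~Lemma~\ref{lemma: basic inequality 1 forms} gives immediately $|(\support\omega)^+| \geq k - C$. (b)~Since $\delta\delta\omega = 0$ globally, each vertex $v \in V^{\gamma,\omega}$ must be incident to at least one edge of $\support\delta\omega$ not lying on $\gamma_R$; and because $\gamma_R$ lies in a $2$-dimensional hyperplane while all edges are axis-aligned, any edge off $\gamma_R$ has at most one endpoint on $\gamma_R$, so distinct vertices in $V^{\gamma,\omega}$ force distinct off-$\gamma_R$ edges in $\support\delta\omega$, yielding at least $V$ off-$\gamma_R$ edges. (c)~Each plaquette $p \in (\support\omega)^+$ bordering $\gamma$ has either three (non-corner case) or two (corner case) edges of $\partial p$ lying off $\support\gamma$, and each such off-$\gamma$ edge either belongs to $\support\delta\omega$ (contributing to $|(\support\delta\omega)^+|$) or cancels, forcing an adjacent plaquette into $(\support\omega)^+$ (contributing to $|(\support\omega)^+|$). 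Careful aggregation of these contributions, together with (a) and (b), supplies the missing $2(k-C)$ and closes the estimate.

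The main obstacle is step (c): the bookkeeping must avoid double-counting and correctly handle the cancellations of $\delta\omega$ on edges shared between plaquettes of $(\support\omega)^+$. In ``bulky'' configurations such as a filled rectangle, where $|(\support\delta\omega)^+|$ is much smaller than $2(k - C) + V$, the required additional contribution must come from the interior plaquettes of $(\support\omega)^+$ that are forced into the support via these cancellations; tracking this coupling between $|(\support\omega)^+|$ and the off-$\gamma$ edges of the plaquettes bordering $\gamma$ is the crux of the argument. The hypothesis $\ell_1, \ell_2 \geq 8$ will enter precisely here, to ensure that plaquettes bordering opposite sides of $\gamma$ cannot share edges or interact and that the contributions from the four sides and corners of $\gamma$ can be tallied independently.
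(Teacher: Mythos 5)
Your reduction to the connected case via Lemma~\ref{lemma: split into components}, the disposal of the case $\omega^\gamma = 0$, and your ingredients (a) and (b) all match the opening of the paper's proof: (a) is Lemma~\ref{lemma: basic inequality 1 forms} (the paper's bound $|P_1|\geq |E_0|-|P_{\omega,\gamma,c}|$), and (b) is the paper's bound $|E_1|\geq |V^{\gamma,\omega}|$ obtained from $\delta\delta\omega=0$. The problem is that everything after that --- your step (c) --- is the actual content of the lemma, and you have not proved it; you have only asserted that ``careful aggregation'' will supply the remaining $2(k-C)$ units, while yourself conceding that this aggregation is ``the crux'' and ``the main obstacle.'' Worse, the version of (c) you state is not even a correct starting point: counting all three off-$\gamma$ edges of each bordering plaquette double-counts the $V$ edges you already used in (b). For a single isolated plaquette $p$ attached to one edge of $\gamma$ one has $k=1$, $C=0$, $V=2$, and both sides of the inequality equal $5$; the two edges of $\partial p$ perpendicular to $\gamma$ are simultaneously the two edges produced by (b) and two of the three off-$\gamma$ edges of (c), so adding both counts would give $7$ against an actual total of $5$. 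Some genuinely different bookkeeping is therefore required, and it is not supplied.

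What the paper actually does is restrict attention, for each $e\in E_0$ away from the corners, to the \emph{single} edge of $\partial p_e$ parallel to $e$ on the far side (the ``second layer'' $\tilde E_2$), proving $|\tilde E_2|\geq |E_0|-2|P_{\omega,\gamma,c}|$ by an explicit injection; this choice is what keeps the count disjoint from $E_0$ and $E_1$. Each second-layer edge then either survives in $\support\delta\omega$ or is cancelled by a second-layer plaquette, giving $|E_2|+|P_2|\geq|\tilde E_2|-|P_{\omega,\gamma,c,2}|$ --- and the correction term $|P_{\omega,\gamma,c,2}|$ is unavoidable, because near a corner a single plaquette sitting diagonally off the corner can absorb \emph{two} cancelling second-layer edges at once, so that $|E_2|+|P_2|$ genuinely falls short of $|\tilde E_2|$. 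Recovering that loss requires a third layer of edges and plaquettes ($E_3$, $P_3$) together with a separate geometric claim that each such doubly-absorbing plaquette forces a new third-layer edge at distance at most one from it. None of this structure --- the injection into $\tilde E_2$, the identification of the doubly-absorbing corner plaquettes, the third layer --- appears in your proposal, and without it the estimate does not close. This is a genuine gap, not a presentational one.
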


\begin{proof}
    Let \( \hat P_0, \dots, \hat P_{k-1} \) be the connected components of \( (\support \omega)^+ , \) and for each \( j \in [k] \) let \( \omega_j \coloneqq \omega|_{ \hat P_j}. \) Then, using Lemma~\ref{lemma: split into components}, we see that~\eqref{eq: general technical lemma forms} holds for \( \omega \) if we can show that it holds for \( \omega_0, \dots, \omega_{k-1}. \) Consequently, we can without loss of generality assume that \( (\support \omega)^+ \) is connected. Note that if \( (\support \omega)^+ \) is connected, then either \( \omega^\gamma = 0 \) or \( \omega = \omega^\gamma. \)
    If \( \omega^\gamma = 0, \) then the right-hand side of~\eqref{eq: general technical lemma forms} is equal to zero, and hence~\eqref{eq: general technical lemma forms} holds.
    Finally, assume that \( \omega = \omega^\gamma. \) Then \( \omega^\gamma = \omega^{\gamma_R}, \) implying in particular that \( V^{\gamma,\omega} = V^{\gamma_R,\omega} . \) On the other hand, since \( \support \gamma \subseteq \support \gamma_R, \) we have \( P_{\omega,\gamma,c} \subseteq P_{\omega,\gamma_R,c}, \) and if \( p \in P_{\omega,\gamma_R,c} \smallsetminus P_{\omega,\gamma,c}, \) then there must exist \( e \in \support \partial p \cap \support \delta \omega \cap \support \gamma_R \smallsetminus \support \gamma. \) Consequently, we have
    \begin{equation*}
        |\support \delta \omega \cap \support \gamma_R| - |P_{\omega,\gamma_R,c}|
        \geq 
        |\support \delta \omega \cap \support \gamma| - |P_{\omega,\gamma,c}|.
    \end{equation*}
    This shows that it is sufficient to show that~\eqref{eq: general technical lemma forms} holds when \( \gamma = \gamma_R. \)
    Motivated by the above argument, we assume without loss of generality that \( (\support \omega)^+ \) is connected, that \( \omega = \omega^\gamma \neq 0, \) and that \( \gamma = \gamma_R. \) 
    
    We now define a number of sets that will be useful in the remainder of the proof. Define 
    \begin{equation*}
         \begin{cases}
             E_0 \coloneqq \support \delta \omega \cap \support \gamma,\cr
              P_1 \coloneqq \{ p \in (\support \omega)^+ \colon \support \partial p \cap \support \gamma \neq \emptyset \},\text{ and }\cr
              E_1 \coloneqq \bigl\{ e \in (\support \delta \omega)^+ \smallsetminus E_0 \colon \exists e' \in \support \gamma \colon \support \hat \partial e \cap \support \hat \partial e' \neq \emptyset \bigr\}.
         \end{cases}
    \end{equation*} 
    Next, define
    \begin{equation*}
        \begin{cases}
            \tilde E_2 \coloneqq \bigl(\support \delta (\omega|_{ P_1 }) \bigr)^+ \smallsetminus (E_0 \cup E_1), \cr 
        E_2 \coloneqq \tilde E_2 \cap \support \delta \omega, \text{ and} \cr
        P_2 \coloneqq \bigl\{ p \in (\support \omega)^+ \smallsetminus P_1 \colon \support \partial p \cap (\tilde E_2 \smallsetminus E_2) \neq \emptyset \bigr\}.
        \end{cases}
    \end{equation*}
    Finally, define 
    \begin{equation*}
        \begin{cases}
            \tilde E_3 \coloneqq \bigl(\support \delta (\omega|_{P_1\cup P_2}) \bigr)^+ \smallsetminus (E_0 \cup E_1 \cup E_2), \cr 
            E_3 \coloneqq \tilde E_3 \cap \support \delta \omega, \text{ and} \cr
            P_3 \coloneqq \bigl\{ p \in (\support \omega)^+ \smallsetminus P_1 \cup P_2 \colon \support \partial p \cap (\tilde E_2 \smallsetminus E_2) \neq \emptyset \bigr\}.
        \end{cases}
    \end{equation*}
    (For illustrations of these sets, see Figure~\ref{figure: final proof examples}.)
    \begin{figure}[tp]
        \centering 
        \begin{subfigure}[t]{0.3\textwidth}\centering
            \begin{tikzpicture}[scale=0.6]
        
        \draw[line width=0.1mm,ForestGreen!60!black] (0,4) -- (0,0) -- (7,0) -- (7,4) -- cycle;

        \foreach \x in {0,0.5,...,2}
            \foreach \y in {0}
                \fill[BlueViolet] (\x+0.1,\y+0.1) rectangle ++(0.3,0.3);
                
        \foreach \x in {0}
            \foreach \y in {0.5,1}
                \fill[BlueViolet] (\x+0.1,\y+0.1) rectangle ++(0.3,0.3);
                
        \foreach \x in {0.5}
            \foreach \y in {0.5}
                \fill[BlueViolet] (\x+0.1,\y+0.1) rectangle ++(0.3,0.3);

         \draw[thick] (0,0) -- (2.5,0) -- (2.5,0.5) -- (1,0.5) -- (1,1) -- (0.5,1) -- (0.5,1.5) -- (0,1.5) -- cycle; 
    \end{tikzpicture}
    \caption{The set \( (\support \omega)^+ \) (blue squares) and the set \( (\support \delta \omega)^+ \) (solid line).}
    \label{subfig: final proof example Ai}
    \end{subfigure}
    \hfil
        \begin{subfigure}[t]{0.3\textwidth}\centering
            \begin{tikzpicture}[scale=0.6]
        
        \draw[line width=0.1mm,ForestGreen!60!black] (0,4) -- (0,0) -- (7,0) -- (7,4) -- cycle;

        \foreach \x in {0,0.5,...,2}
            \foreach \y in {0}
                \fill[BlueViolet] (\x+0.1,\y+0.1) rectangle ++(0.3,0.3);
                
        \foreach \x in {0}
            \foreach \y in {0.5,1}
                \fill[BlueViolet] (\x+0.1,\y+0.1) rectangle ++(0.3,0.3);  
          
         \draw[thick] (0,1.5) -- (0,0) -- (2.5,0);
         \draw[dashed] (0,1.5) -- (0.5,1.5);
         \draw[dashed] (2.5,0) -- (2.5,0.5);
         \draw[dotted] (2.5,0.5) -- (0.5,0.5) -- (0.5,1.5);
        \end{tikzpicture}
    \caption{The set \( E_0 \) (solid line), the set \( P_1 \) (blue squares), the set \( E_1 \) (dashed line), and the set \( \tilde E_2 \) (dotted lines). }
    \label{subfig: final proof example Aii}
    \end{subfigure}
    \hfil
    \begin{subfigure}[t]{0.3\textwidth}\centering
        \begin{tikzpicture}[scale=0.6]
        
        \draw[line width=0.1mm,ForestGreen!60!black] (0,4) -- (0,0) -- (7,0) -- (7,4) -- cycle;

        \foreach \x in {0.5}
            \foreach \y in {0.5}
                \fill[BlueViolet] (\x+0.1,\y+0.1) rectangle ++(0.3,0.3);
          
         \draw[thick] (1,0.5) -- (2.5,0.5);
         \draw[thick] (0.5,1) -- (0.5,1.5);
         \draw[dashed] (0.5,1) -- (1,1) -- (1,0.5);
    \end{tikzpicture}
    \caption{The set \( P_2 \) (blue square), the set \( E_2 \) (solid lines), and the set \( \tilde E_3 = E_3 \) (dashed line).}
    \label{subfig: final proof example Aiii}
    \end{subfigure}
        \caption{In the above figures, we illustrate the sets introduced in the proof of Lemma~\ref{lemma: general technical lemma forms} for a form \( \omega \in \Omega^2(B_N,\mathbb{Z}_2) \) and closed path \( \gamma \) (green line above). Note that in this case, we have \( P_3 = \emptyset. \)}
        \label{figure: final proof examples}
    \end{figure}
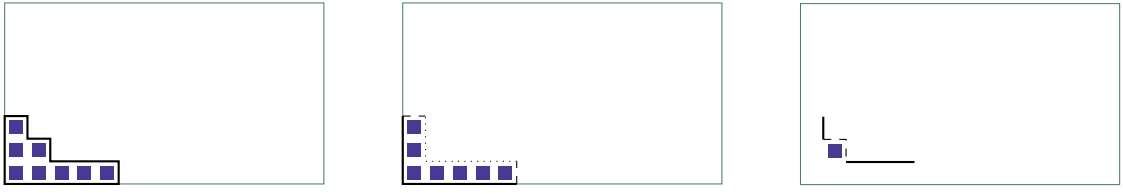
    Note that \( E_0 \sqcup E_1 \sqcup E_2 \sqcup E_3 \subseteq (\support \delta \omega)^+ \) and \( P_1 \sqcup P_2 \sqcup P_3 \subseteq (\support \omega)^+, \) and hence
    \begin{equation}\label{eq: part 0}
        \bigl| (\support \omega)^+ \bigr| + \bigl| (\support \delta \omega)^+ \bigr| \geq |E_0| + |E_1| + |E_2| + |E_3| + |P_1| +  |P_2| + |P_3|.
    \end{equation}
    
    We now give lower bounds on the right-hand side of~\eqref{eq: part 0},  which together with~\eqref{eq: part 0} will imply that~\eqref{eq: general technical lemma forms} holds.
    By definition, we have
    \begin{equation}\label{eq: part 1}
        |E_0| = |\support \delta \omega \cap \support \gamma|.
    \end{equation}
    Using the definition of \( |P_{\omega,\gamma,c}|, \) we have
    \begin{equation}\label{eq: part 2}
        |P_1| \geq |E_0| - |P_{\omega,\gamma,c}|,
    \end{equation}
    and since \( \delta \delta \omega = 0, \) we have 
    \begin{equation}\label{eq: part 3}
        |E_1| \geq |V^{\gamma,\omega}|.
    \end{equation}

    \begin{sublemma}
        We have 
        \begin{equation}\label{eq: part 4}
            |\tilde E_2| \geq 
            |E_0| - 2|P_{\omega,\gamma,c}|.
        \end{equation} 
    \end{sublemma}
    
    \begin{subproof}
        Define
        \begin{equation*}
            E_0' \coloneqq \{ e \in E_0 \colon \support \hat \partial e \cap P_{\omega, \gamma,c} = \emptyset \}.
        \end{equation*}
        Using the definition of \( P_{\omega,\gamma,c}, \) we see that
        \begin{equation*}
            |E_0'| = |E_0| - 2|P_{\omega,\gamma,c}|.
        \end{equation*} 
        Using this identity, we will prove that~\eqref{eq: part 4} holds by constructing an injective map \( \tau \) from \( E_0' \) to \( \tilde E_2. \)
        To this end, let \( e \in E_0' \) be arbitrary. Since \( e \in E_0' \subseteq E_0, \) the set \( \support \hat \partial e \cap P_1 \smallsetminus P_{\omega,\gamma,c} \) is non-empty, and hence either the set \( \support \hat \partial e \cap P_1 \smallsetminus \mathcal{P}_{\gamma,c} \) or the set \( \support \hat \partial e \cap P_1 \cap \mathcal{P}_{\gamma,c} \smallsetminus P_{\omega,\gamma,c} \) must be  non-empty.
        We deal with these two cases separately.
        \begin{enumerate}
            \item Assume that the set \(  \support \hat \partial e \cap  P_1 \smallsetminus \mathcal{P}_{\gamma,c}\) is non-empty. Fix any plaquette \( p_e \) in this set. Let \( e' \) be the unique edge in \( \support \partial p_e \smallsetminus \{ e \} \) that is parallel with \( e \) (see Figure~\ref{subfig: final proof example Bi}).
            
            \item Assume that the set \( \support \hat \partial e \cap  P_1 \cap \mathcal{P}_{\gamma,c} \smallsetminus P_{\omega,\gamma,c} \) is non-empty. Since \( \ell_1 \geq 2, ,\) this set will then contain exactly one plaquette, which we denote \( \hat p_e.\)   Since \( \hat p_e \in \mathcal{P}_{\gamma,c}, \) \( e \in \support \partial \hat p_e \cap \support \gamma, \) \( \gamma \) is a rectangular loop and \( \ell_1 \geq 2,\) the set \( \support \partial \hat p_e \cap \gamma \smallsetminus \{ e \} \) contains exactly one edge \( \hat e.\) Since \( \hat p_e \in P_1 \smallsetminus P_{\omega,\gamma,c} \) and \(  e \in \support \delta \omega, \) we must have \( \delta \omega (\hat e) = 0. \) Since \( \hat p_e \in \support \hat \partial \hat e \cap P_1, \) the set \( \support \hat \partial \hat e \cap \support \omega \smallsetminus \{ \hat p_e \} \) must be non-empty. Let \( p_e \) be any plaquette in this set, and note that by definition, we have \( p_e \in P_1\smallsetminus \mathcal{P}_{\gamma,c}. \) Let \( e' \) be the unique  edge in \( \support \partial p_e \smallsetminus \{ \hat e \} \) that is parallel to \( \hat e \) (see Figure~\ref{subfig: final proof example Bii}).
        \end{enumerate}
        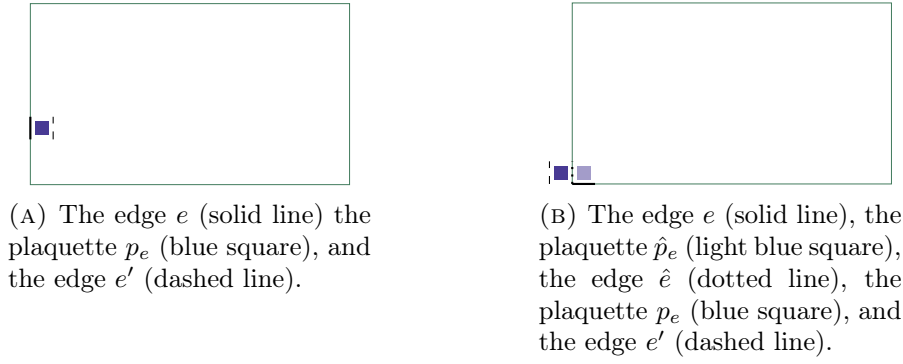
\begin{figure}[tp]
            \centering
            \begin{subfigure}[t]{0.3\textwidth}\centering
            \begin{tikzpicture}[scale=0.6]
        
        \draw[line width=0.1mm,ForestGreen!60!black] (0,4) -- (0,0) -- (7,0) -- (7,4) -- cycle;

        \foreach \x in {0}
            \foreach \y in {1}
                \fill[BlueViolet] (\x+0.1,\y+0.1) rectangle ++(0.3,0.3);

         \draw[thick] (0,1) -- (0,1.5);
         \draw[dashed] (0.5,1) -- (0.5,1.5);
    \end{tikzpicture}
    \caption{The edge \( e \) (solid line) the plaquette \( p_e \) (blue square), and the edge \( e' \) (dashed line).}
    \label{subfig: final proof example Bi}
    \end{subfigure}
    \hfil
        \begin{subfigure}[t]{0.3\textwidth}\centering
            \begin{tikzpicture}[scale=0.6]
        
        \draw[line width=0.1mm,ForestGreen!60!black] (0,4) -- (0,0) -- (7,0) -- (7,4) -- cycle;

        \foreach \x in {0}
            \foreach \y in {0}
                \fill[BlueViolet, opacity=.5] (\x+0.1,\y+0.1) rectangle ++(0.3,0.3);
                
        \foreach \x in {-0.5}
            \foreach \y in {0}
                \fill[BlueViolet] (\x+0.1,\y+0.1) rectangle ++(0.3,0.3);

         \draw[thick] (0,0) -- (0.5,0);
         \draw[thick, dotted] (0,0) -- (0,0.5);
         \draw[dashed] (-0.5,0) -- (-0.5,0.5);
        \end{tikzpicture}
    \caption{The edge \( e \) (solid line), the plaquette \( \hat p_e \) (light blue square), the edge \( \hat e \) (dotted line), the plaquette \( p_e \) (blue square), and the edge \( e' \) (dashed line).}
    \label{subfig: final proof example Bii}
    \end{subfigure}
            \caption{An illustration for the proof of~\eqref{eq: part 4}.}
            \label{figure: subproof example iii}
        \end{figure}
        Note that by construction, \( e' \) is parallel to an edge in \( \gamma \) and satisfies \( e' \in \support \partial p_e \) with \( p_e \in P_1. \) Since \( \ell_1 \geq 2, \) this implies that \( e' \notin E_0. \) 
        Next, since \( p_e \notin \mathcal{P}_{\gamma,c} \) and  \( e' \) is the edge in \( \support \partial p_e \) that is parallel to the unique edge in \( \support \partial p_e \cap \gamma ,\) we must have \( e' \notin E_1. \)
        Finally, note that since \( \ell_1 \geq 3 \) and \( p_e \in P_1, \) we have \( \support \hat \partial e' \cap P_1 = \{ p_e \}, \) and hence \( e' \in \support \delta (\omega|_{P_1}). \)
        Combining the above observations, we conclude that \( e' \in \tilde E_2. \)
        Define \( \tau(e) \coloneqq e'. \) 
        Since \( \tau \) is a mapping from \( E_0' \) to \( \tilde E_2, \) the desired conclusion will follow if we can show that \(\tau \) is injective.
        By construction, this follows immediately from observing that the mapping \( e \mapsto p_e \) is injective. This concludes the proof.
    \end{subproof} 
    Define
    \begin{equation*}
        \begin{split} 
            P_{\omega,\gamma,c,2} \coloneqq &\bigl\{ p \in P_2 \colon \exists p_1,p_2 \in  P_1, \, e_1,e_2 \in \tilde E_2\smallsetminus E_2 \colon 
            \\&\quad p_1 \neq p_2,\, e_1 \neq e_2,\, e_1 \in \support \partial p_1 \cap \support \partial p,\, e_2 \in \support \partial p_2 \cap \support \partial p ,\, 
            \\&\quad \omega(p) \partial p[e_1] + \omega(p_1) \partial p_1(e_1)
            =
            \omega(p) \partial p[e_2] + \omega(p_2) \partial p_2(e_2)
            =
            0
            \bigr\}.
         \end{split}
    \end{equation*} 
    \begin{sublemma}
        We have
        \begin{equation}\label{eq: part 5} 
            |E_2| + |P_2| \geq |\tilde E_2| - |P_{\omega,\gamma,c,2}|.
        \end{equation}
    \end{sublemma}
    
    \begin{subproof}
        Since \( \gamma \) is rectangular and \( \ell_1,\ell_2 \geq 4, \) it holds that
        \begin{equation}\label{eq: 1 or 2}
            |\support \partial p \cap \tilde E_2 \smallsetminus E_2 | \in \{1, 2\} \quad \text{ for all } p \in P_2 .
        \end{equation}
        Also, if $p \in P_{\omega,\gamma,c,2}$, then $|\support \partial p \cap \tilde E_2 \smallsetminus E_2| = 2$. 
        Hence
        \begin{equation}\label{eq: first eq in subproof}
            \begin{split}
                &|P_2| 
                = \pigl|\bigl\{ p \in P_2 \colon |\support \partial p \cap \tilde E_2 \smallsetminus E_2| =1 \bigr\} \pigr|
                +
                |P_2 \cap P_{\omega,\gamma,c,2}| 
                \\&\qquad\qquad +
                \pigl| \bigl\{ p \in P_2 \smallsetminus P_{\omega,\gamma,c,2} \colon  |\support \partial p \cap \tilde E_2 \smallsetminus E_2| = 2 \bigr\} \pigr|.
            \end{split}
        \end{equation}
        
        We now consider the first term on the right-hand side of~\eqref{eq: first eq in subproof}, and rewrite it as follows.
        \begin{equation}\label{eq: subproof part 00}
            \begin{split}
                &
                \pigl| \bigl\{ p \in P_2 \colon |\support \partial p \cap \tilde E_2 \smallsetminus E_2| =1
                \bigr\} \pigr|
                =
                \sum_{e \in \tilde E_2 \smallsetminus E_2}
                \pigl| \bigl\{ p \in P_2 \colon  \support \partial p \cap \tilde E_2 \smallsetminus E_2 = \{ e \} \bigr\} \pigr|
                \\&\qquad
                = 
                |\tilde E_2 \smallsetminus E_2|
                -
                \sum_{e \in \tilde E_2 \smallsetminus E_2}
                \Bigl( 1 - \pigl| \bigl\{ p \in P_2 \colon  \support \partial p \cap \tilde E_2 \smallsetminus E_2 = \{ e \} \bigr\} \pigr| \Bigr)
                \\&\qquad = 
                |\tilde E_2 \smallsetminus E_2|
                -
                \sum_{e \in \tilde E_2 \smallsetminus E_2}
                \Bigl( 1 - 
                \pigl| \bigl\{ p \in P_2 \cap \support \hat \partial e\colon  \support \partial p \cap \tilde E_2 \smallsetminus E_2 = \{ e \} \bigr\} \pigr|
                 \Bigr).
            \end{split}
        \end{equation}
        If \( e \in \tilde E_2 \smallsetminus E_2,\) then the set \( P_2 \cap \support \hat \partial e\) is non-empty, and if \( p \in P_2 \cap \support \hat \partial e, \) then, by definition, \( e \in \support \partial p \cap \tilde E_2 \smallsetminus E_2.\) Consequently, using~\eqref{eq: 1 or 2}, it follows that for any \(e \in \tilde E_2 \smallsetminus E_2 \) we have 
        \begin{equation}\label{eq: subproof part 01}
            \begin{split}
            &1 - 
            \pigl| \bigl\{ p \in P_2 \cap \support \hat \partial e\colon  \support \partial p \cap \tilde E_2 \smallsetminus E_2 = \{ e \} \bigr\} \pigr|
            \\&\qquad\leq 
            \mathbb{1}\bigl(\forall p \in \support \hat \partial e \cap P_2 \colon |\support \partial p \cap \tilde E_2 \smallsetminus E_2 | = 2\bigr).
            \end{split}
        \end{equation} 
        Combining~\eqref{eq: subproof part 00} and~\eqref{eq: subproof part 01}, we obtain
        \begin{equation}\label{eq: subproof part 0}
            \begin{split}
                &
                \pigl| \bigl\{ p \in P_2 \colon |\support \partial p \cap \tilde E_2 \smallsetminus E_2| =1
                \bigr\} \pigr|
                \\&\qquad\geq
                |\tilde E_2 \smallsetminus E_2| - \sum_{e \in \tilde E_2 \smallsetminus E_2}
                \mathbb{1}\bigl(\forall p \in \support \hat \partial e \cap P_2 \colon |\support \partial p \cap \tilde E_2 \smallsetminus E_2 | = 2\bigr) .
            \end{split}
        \end{equation}
         Now fix any \( e \in \tilde E_2 \smallsetminus E_2. \) Since \( \gamma \) is rectangular,  if   \( |\support \partial p \cap \tilde E_2 \smallsetminus E_2| = 2 \) for all \( p \in \support \hat \partial e \cap P_2, \)  then \( |\support \hat \partial e \cap P_2| = 1. \) Hence
        \begin{equation}\label{eq: subproof part 1}
            \begin{split}
                &\mathbb{1}\bigl(\forall p \in \support \hat \partial e \cap P_2 \colon |\support \partial p \cap \tilde E_2 \smallsetminus E_2 | = 2\bigr)
                \\&\qquad= 
                \mathbb{1}\bigl(\exists p \in P_2 \colon \support \hat \partial e \cap P_2 = \{ p \} \text{ and } |\support \partial p \cap \tilde E_2 \smallsetminus E_2 | = 2\bigr)
                \\&\qquad= 
                \mathbb{1}\bigl(\exists p \in P_{\omega,\gamma,c,2} \colon \support \hat \partial e \cap P_2 = \{ p \} \text{ and } |\support \partial p \cap \tilde E_2 \smallsetminus E_2 | = 2 \bigr)
                \\&\qquad\qquad +
                \mathbb{1}\bigl(\exists p \in P_2 \smallsetminus P_{\omega,\gamma,c,2} \colon \support \hat \partial e \cap P_2 = \{ p \} \text{ and } |\support \partial p \cap \tilde E_2 \smallsetminus E_2 | = 2\bigr).
            \end{split}
        \end{equation}
        If \( p \in P_{\omega,\gamma,c,2}, \) then, by definition, there are distinct \( e_1,e_2 \in \support \partial p \cap \tilde E_2 \smallsetminus E_2 \) such that \( {\support \hat \partial e_1 \cap P_2 = \support \hat \partial e_2 \cap P_2 = \{p \}.} \) Consequently, we have
        \begin{equation}\label{eq: subproof part 2}
        	\begin{split}
            	&\sum_{e \in \tilde E_2 \smallsetminus E_2}
            \mathbb{1}\bigl(\exists p \in P_{\omega,\gamma,c,2} \colon \support \hat \partial e \cap P_2 = \{ p \}\text{ and } |\support \partial p \cap \tilde E_2 \smallsetminus E_2 | = 2\bigr)
            \\&\qquad=
            2\sum_{p \in P_2} \mathbb{1}(p \in P_{\omega,\gamma,c,2}).
            \end{split}
        \end{equation}
        Next, note that
        \begin{equation}\label{eq: second eq in subproof}
            \begin{split}
                &\sum_{e \in \tilde E_2 \smallsetminus E_2} \mathbb{1}\bigl(\exists p \in P_2 \smallsetminus P_{\omega,\gamma,c,2} \colon \support \hat \partial e \cap P_2 = \{ p \} \text{ and } |\support \partial p \cap \tilde E_2 \smallsetminus E_2 | = 2\bigr)
                \\&\qquad=
                \sum_{p \in P_2} \mathbb{1}(p \notin P_{\omega,\gamma,c,2},\, |\support \partial p \cap \tilde E_2 \smallsetminus E_2 | = 2)
                \sum_{e \in \tilde E_2 \smallsetminus E_2} \mathbb{1}\bigl(  \support \hat \partial e \cap P_2 = \{ p \}  \bigr).
            \end{split}
        \end{equation}
        Now fix some \( p \in P_2 \smallsetminus P_{\omega,\gamma,c,2} \) with \( |\support \partial p \cap \tilde E_2 \smallsetminus E_2| = 2, \)
        and assume that \( e \in \tilde E_2 \smallsetminus E_2 \) is such that \( \support \hat \partial e \cap P_2 = \{ p \}. \) Let \( e' \) be defined by \( \{ e,e' \} = \support \partial p \cap \tilde E_2 \smallsetminus E_2 . \) Then there are distinct plaquettes \( \hat p\) and \( \hat p' \) such that \( \{ \hat p \} =  \support \hat \partial e \cap P_1 \) and   \( \{\hat p' \} =  \support \hat \partial e' \cap  P_1, \) and by definition, we have  \( e \in \support \partial \hat p \cap  \support \partial p \) and \( e' \in \support \partial \hat p' \cap  \support \partial p. \)  Since \( \support \hat \partial e \cap P_2 = \{ p \}, \) we must have, using also~\eqref{deltaomegae}, 
    \begin{equation*}
        \omega(p) \partial p[e] + \omega(\hat p) \partial \hat p[e] = \delta \omega(e) = 0,
    \end{equation*}
    and hence, since \( p \notin P_{\omega,\gamma,c,2}, \) we have
    \begin{equation*}
        \omega(p) \partial p[e'] + \omega(\hat p) \partial \hat p'[e'] \neq 0.
    \end{equation*}
    Since  \( e' \notin E_2, \) it follows that \( |\support \hat \partial e' \cap P_2| \geq 2, \) and hence
    \begin{equation*}
        \pigl| \bigl\{ e \in \tilde E_2 \smallsetminus E_2 \colon  \support \hat \partial e \cap P_2 = \{ p \}  \bigr\} \pigr| \leq 1.
    \end{equation*} 
    Together with~\eqref{eq: second eq in subproof}, this shows that
    \begin{equation}\label{eq: subproof part 3}
        \begin{split}
           &\sum_{e \in \tilde E_2 \smallsetminus E_2} \mathbb{1}\bigl(\exists p \in P_2 \smallsetminus P_{\omega,\gamma,c,2} \colon \support \hat \partial e \cap P_2 = \{ p \} \text{ and } |\support \partial p \cap \tilde E_2 \smallsetminus E_2 | = 2\bigr)
           \\&\qquad= 
           \bigl|\bigl\{ p \in P_2 \smallsetminus P_{\omega,\gamma,c,2} \colon  |\support \partial p \cap \tilde E_2 \smallsetminus E_2 | = 2 \bigr\} \pigr|.
        \end{split}
    \end{equation}
    Combining~\eqref{eq: first eq in subproof}, ~\eqref{eq: subproof part 0},~\eqref{eq: subproof part 1},~\eqref{eq: subproof part 2}, and~\eqref{eq: subproof part 3}, we finally obtain
    \begin{equation*}
            \begin{split}
                &|P_2| 
                \geq 
                |\tilde E_2 \smallsetminus E_2| 
                - 
                |P_2 \cap P_{\omega,\gamma,c,2}|.
            \end{split}
        \end{equation*}
        From this the desired conclusion immediately follows.
    \end{subproof}

  Combining~\eqref{eq: part 0},~\eqref{eq: part 1},~\eqref{eq: part 3},~\eqref{eq: part 4}, and~\eqref{eq: part 5}, we obtain   
    \begin{equation}
        \bigl| (\support \omega)^+ \bigr| + \bigl| (\support \delta \omega)^+ \bigr| \geq 
        3\bigl| \support \delta \omega  \cap \support \gamma\bigr| +  |V^{\gamma,\omega}| - 3|P_{\omega,\gamma,c}| + M,
    \end{equation}
    where
    $$M \coloneqq |E_3| + |P_3| + |P_1|  - |E_0| + |P_{\omega,\gamma,c}| - |P_{\omega,\gamma,c,2}|.$$
    The inequality~\eqref{eq: general technical lemma forms} will follow if we can show that $M \geq 0$. 
    If \( |P_{\omega,\gamma,c,2}| = 0, \) then~\eqref{eq: part 2} implies that $M \geq 0$. We can thus assume that \( |P_{\omega,\gamma,c,2}| \geq 1. \) Since \( \gamma \) is a rectangular loop, we always have  \( |P_{\omega,\gamma,c,2}| \leq 4. \) To see that $M \geq 0$ also in this case, we will show the following.
    \begin{itemize}
        \item[$(\star)$] For each \( p \in P_{\omega,\gamma,c,2}, \) there is an edge \( e = e_p \in C_1(B_N)^+ \) at distance at most $1$ from \( p \) with \( e \in \tilde E_3. \) \label{item: star 1}
    \end{itemize}
    To see that this implies that \( M \geq 0, \) we make the following observations.
    First, note that since \( \ell_1,\ell_2 \geq 8, \) if \( \dist(e,p) \leq 1 \) and \( \dist(e',p') \leq 1 \) for distinct \( p,p' \in P_{\omega,\gamma,c,2}, \) then \( e \neq e', \) and \( \support \hat \partial e \cap \support \hat \partial e' = \emptyset. \)
    Next, note that if \( e_p \in \tilde E_3, \) then either \( e_p \in E_3\) or the set \( \support \hat \partial e_3 \cap P_3\) is non-empty.
    Consequently, provided that \( (\star) \) is correct, we obtain
    \begin{equation}\label{eq: part 6}
        |P_{\omega,\gamma,c,2}| \leq | E_3| + |P_3| \leq | E_3| + |P_3| + \pigl( |P_1| - \bigl(|E_0| - |P_{\omega,\gamma,c}| \bigr) \pigr),
    \end{equation}
    which implies that $M \geq 0$ as desired. 
    It thus remains only to prove the following claim.
    \begin{sublemma}
        \( (\star) \) holds.
    \end{sublemma}
    
    \begin{subproof}
        Let \( p \in P_{\omega,\gamma,c,2} \) be arbitrary.
        Since \( \gamma \) is rectangular and \( \ell_1 \geq 4,\) we have \( |\support \partial p \smallsetminus \tilde E_2| = 2, \) and we can hence let \( e_1,e_2 \in C_1(B_N)^+\) be defined by \( \{ e_1,e_2 \} \coloneqq \support \partial p \cap \tilde E_2 \) (see Figure~\ref{subfig: final proof A}). Note that by definition, \( e_1\) and \( e_2\) are distinct.

        We now divide into the two cases \( \{ e_1,e_2 \} \cap  \tilde E_3 \neq \emptyset \) and \( \{ e_1,e_2 \} \cap  \tilde E_3 = \emptyset. \)
    
        If \( \{ e_1,e_2 \} \cap  \tilde E_3 \neq \emptyset, \) then~\( (\star) \) holds, and hence we are done. 
        
        Now assume that \( \{ e_1,e_2 \} \cap \tilde E_3 = \emptyset\). Then there must exist \( p_1,p_2 \in P_2 \smallsetminus \{ p \} \) with \( e_1 \in \support \partial p_1 \) and \( e_2 \in \support \partial p_2 \) (see Figure~\ref{subfig: final proof B}).
        Since \( p_1 \in P_2, \) there must exist \( e_0 \in \support \gamma \) and \( p_0 \in P_1 \cap \support \hat \partial e_0 \) such that \( \support \partial p_0 \cap \support \partial p_1 \neq \emptyset. \) Let \( e_0' \) be defined by  \( \{ e_0' \} = \support \partial p_0  \cap \support \partial p_1,\) and let \( e_0'' \) be the unique edge in \( \support \partial p_1  \smallsetminus \{ e_0' \} \) that is parallel with \( e_0' \) (see Figure~\ref{subfig: final proof C}). 
        If \( e_0'' \in \tilde E_3, \) then~\( (\star )\) holds.
        If \( e_0'' \notin \tilde E_3, \) then the set \(P_2 \cap \support \hat \partial e_0'' \smallsetminus \{ p_1 \}\) must be non-empty. 
        But this is geometrically impossible, and thus concludes the proof.
    \end{subproof}
\end{proof}

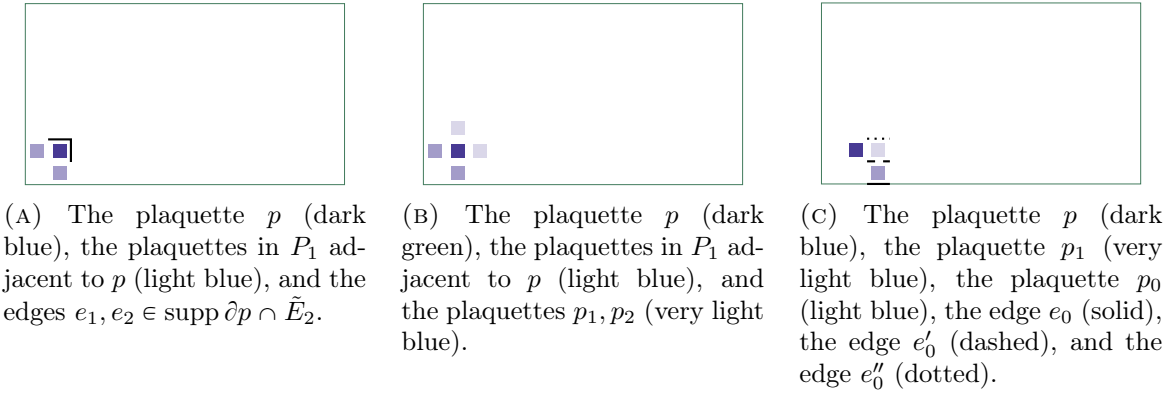
\begin{figure}[tp]
        \centering 
        \begin{subfigure}[t]{0.3\textwidth}\centering
    \begin{tikzpicture}[scale=0.6]
        
        \draw[line width=0.1mm,ForestGreen!60!black] (0,4) -- (0,0) -- (7,0) -- (7,4) -- cycle;

        \foreach \x in {0.5}
            \foreach \y in {0.5}
                \fill[BlueViolet] (\x+0.1,\y+0.1) rectangle ++(0.3,0.3);
        \foreach \x in {0}
            \foreach \y in {0.5}
                \fill[BlueViolet, opacity=.5] (\x+0.1,\y+0.1) rectangle ++(0.3,0.3);
        \foreach \x in {0.5}
            \foreach \y in {0}
                \fill[BlueViolet, opacity=.5] (\x+0.1,\y+0.1) rectangle ++(0.3,0.3);

         \draw[thick] (1,0.5) -- (1,1) -- (0.5,1);
    \end{tikzpicture}
    \caption{The plaquette \( p  \) (dark blue), the plaquettes in \( P_1 \) adjacent to \( p \) (light blue), and the edges \( e_1 ,e_2 \in \support \partial p \cap \tilde E_2. \)}
    \label{subfig: final proof A}
    \end{subfigure}
    \hfil
        \begin{subfigure}[t]{0.3\textwidth}\centering
    \begin{tikzpicture}[scale=0.6]
        
        \draw[line width=0.1mm,ForestGreen!60!black] (0,4) -- (0,0) -- (7,0) -- (7,4) -- cycle;

        \foreach \x in {0.5}
            \foreach \y in {0.5}
                \fill[BlueViolet] (\x+0.1,\y+0.1) rectangle ++(0.3,0.3);
        \foreach \x in {0}
            \foreach \y in {0.5}
                \fill[BlueViolet, opacity=0.5] (\x+0.1,\y+0.1) rectangle ++(0.3,0.3);
        \foreach \x in {0.5}
            \foreach \y in {0}
                \fill[BlueViolet, opacity=0.5] (\x+0.1,\y+0.1) rectangle ++(0.3,0.3);
                
        \foreach \x in {0.5}
            \foreach \y in {1}
                \fill[BlueViolet, opacity=0.2] (\x+0.1,\y+0.1) rectangle ++(0.3,0.3);
                
        \foreach \x in {1}
            \foreach \y in {0.5}
                \fill[BlueViolet, opacity=0.2] (\x+0.1,\y+0.1) rectangle ++(0.3,0.3);
                

    \end{tikzpicture}
    \caption{The plaquette \( p \) (dark green), the plaquettes in \( P_1 \) adjacent to \( p \) (light blue), and the plaquettes \( p_1,p_2 \) (very light blue).}
    \label{subfig: final proof B}
    \end{subfigure}
    \hfil
    \begin{subfigure}[t]{0.3\textwidth}\centering
    \begin{tikzpicture}[scale=0.6]
        
        \draw[line width=0.1mm,ForestGreen!60!black] (0,4) -- (0,0) -- (7,0) -- (7,4) -- cycle;

        \foreach \x in {0.5}
            \foreach \y in {0.5}
                \fill[BlueViolet] (\x+0.1,\y+0.1) rectangle ++(0.3,0.3);
                
                
        \foreach \x in {1}
            \foreach \y in {0.5}
                \fill[BlueViolet, opacity=0.2] (\x+0.1,\y+0.1) rectangle ++(0.3,0.3);
                
        \foreach \x in {1}
            \foreach \y in {00}
                {
                \fill[BlueViolet, opacity=0.5] (\x+0.1,\y+0.1) rectangle ++(0.3,0.3);
                }
          
         \draw[thick] (1,0) -- (1.5,0);
         \draw[thick, dashed] (1,0.5) -- (1.5,0.5);
         \draw[thick, dotted] (1,1) -- (1.5,1);
    \end{tikzpicture}
    \caption{The plaquette \( p   \) (dark blue), the plaquette \( p_1\) (very light blue), the plaquette \( p_0 \) (light blue), the edge \( e_0 \) (solid), the edge \( e_0' \) (dashed), and the edge \( e_0'' \) (dotted).}
    \label{subfig: final proof C}
    \end{subfigure}

    \caption{In the figure above, we illustrate the last step of the proof of Lemma~\ref{lemma: general technical lemma forms} for a given plaquette  \( p \in P_{\omega,\gamma,c,2} \subseteq P_2. \) } \label{figure: general technical lemma forms}
\end{figure}

The following lemma is a simple consequence of Lemma~\ref{lemma: general technical lemma forms}.
\begin{lemma}\label{lemma: basic inequality 6}
    
    Let \( \gamma \) be a path along the boundary of a rectangle with side lengths \( \ell_2 \geq \ell_1 \geq 8. \) Let \( \omega \in \Omega^2(B_N,\mathbb{Z}_n) \) 
    and assume that 
    \( \bigl| \support \delta \omega  \cap \support \gamma\bigr| <  |\support \gamma_R|.\)  
    Then
    \begin{equation*}
        \bigl|(\support \omega)^+\bigr| + \bigl| (\support \delta \omega)^+\bigr| + 3|P_{\omega,\gamma,c}| \geq 3\bigl|\support \delta \omega \cap \support \gamma\bigr|+2\|\omega^\gamma \|.
    \end{equation*}
\end{lemma}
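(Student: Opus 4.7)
The plan is to deduce Lemma~\ref{lemma: basic inequality 6} directly from Lemma~\ref{lemma: general technical lemma forms} by replacing $|V^{\gamma,\omega}|$ with the combinatorial lower bound $2\|\omega^\gamma\|$. Concretely, applying Lemma~\ref{lemma: general technical lemma forms} to $\omega$ and $\gamma$ immediately yields
\[
    \bigl|(\support \omega)^+\bigr| + \bigl|(\support \delta \omega)^+\bigr| + 3|P_{\omega,\gamma,c}| \geq 3\bigl|\support \delta \omega \cap \support \gamma\bigr| + |V^{\gamma,\omega}|,
\]
so the task reduces to proving the purely geometric inequality $|V^{\gamma,\omega}| \geq 2\|\omega^\gamma\|$ under the hypothesis $|\support \delta \omega \cap \support \gamma| < |\support \gamma_R|$.

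To prove this lower bound, I would decompose $\omega^\gamma = \omega_1 + \cdots + \omega_k$ into its $k = \|\omega^\gamma\|$ connected components. Since two plaquettes in distinct connected components of $(\support \omega)^+$ cannot share an edge, the supports $\support \delta \omega_1, \ldots, \support \delta \omega_k$ are pairwise disjoint subsets of $\support \delta \omega$, and by the definition of $\omega^\gamma$ each $\omega_i$ is non-zero and satisfies $\support \delta \omega_i \cap \support \gamma \neq \emptyset$. The per-component statement I need is $|V^{\gamma,\omega_i}| \geq 2$ for each $i$; this is essentially the observation recorded right before the lemma, applied with $\omega_i$ in place of $\omega$. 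Indeed, if $\omega_i \neq 0$ and $\support \gamma_R \not\subseteq \support \delta \omega_i$, then $\delta \omega_i|_{\support \gamma_R}$ is a non-trivial but not fully supported $\mathbb{Z}_n$-valued $1$-form on the cycle $\gamma_R$, so as we traverse $\gamma_R$ its support breaks into at least one arc whose two endpoints both contribute to $V^{\gamma,\omega_i}$. The hypothesis $|\support \delta \omega \cap \support \gamma| < |\support \gamma_R|$ combined with the pairwise disjointness of the $\support \delta \omega_i$ is then used to rule out the possibility that any single component satisfies $\support \gamma_R \subseteq \support \delta \omega_i$, so the per-component bound applies to every $i$.

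Finally, I would combine these per-component bounds into a lower bound on $|V^{\gamma,\omega}|$ itself. The key point is that the vertex contributions from different components cannot cancel in $\delta(\delta \omega^\gamma|_{\support \gamma_R})$: since distinct components of $(\support \omega)^+$ cannot contain plaquettes sharing an edge, any two arcs of $\support \delta \omega_i \cap \support \gamma_R$ and $\support \delta \omega_j \cap \support \gamma_R$ with $i \neq j$ must be separated along the cycle $\gamma_R$ by at least one edge on which $\delta \omega$ vanishes. This isolates each jump vertex coming from one $\omega_i$ from the contributions of the others, so the estimates sum to
\[
    |V^{\gamma,\omega}| \geq \sum_{i=1}^k |V^{\gamma,\omega_i}| \geq 2k = 2\|\omega^\gamma\|,
\]
which combined with the application of Lemma~\ref{lemma: general technical lemma forms} at the start of the proof completes the argument. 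The hard part in this plan is the per-component geometric input: one has to exploit the hypothesis together with the disjoint-$\delta$-support structure of distinct connected components to exclude any single component whose $\delta$-support fully wraps around $\gamma_R$, and to guarantee the zero-edge separation between distinct components on $\gamma_R$ used in the non-cancellation step.
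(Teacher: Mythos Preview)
Your reduction to showing \( |V^{\gamma,\omega}| \geq 2\|\omega^\gamma\| \) is natural, but this inequality is in fact false, and the non-cancellation step is where the argument breaks. The separation claim ``any two arcs of \(\support\delta\omega_i \cap \support\gamma_R\) and \(\support\delta\omega_j \cap \support\gamma_R\) must be separated along \(\gamma_R\) by at least one edge on which \(\delta\omega\) vanishes'' does not follow from the fact that distinct components share no edge. Take two adjacent edges \(e,e'\) on a side of \(\gamma_R\), and let \(p\) be the plaquette in the plane of the rectangle with \(e \in \support\partial p\) lying \emph{inside} the rectangle, and \(p'\) the plaquette with \(e' \in \support\partial p'\) lying \emph{outside}. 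Then \(p\) and \(p'\) share only a vertex, not an edge, so \(p \nsim p'\); with \((\support\omega)^+ = \{p,p'\}\) and \(G = \mathbb{Z}_2\) they lie in different components. Here \(\|\omega^\gamma\| = 2\), each \(|V^{\gamma,\omega_i}| = 2\), but \(\delta\omega|_{\support\gamma_R}\) equals \(1\) on both \(e\) and \(e'\), so the shared vertex is \emph{not} in \(V^{\gamma,\omega}\), and \(|V^{\gamma,\omega}| = 2 < 4 = 2\|\omega^\gamma\|\). Note that Lemma~\ref{lemma: split into components}\ref{item: split into components v} records precisely the opposite inequality \(|V^{\gamma,\omega}| \leq \sum_j |V^{\gamma,\omega_j}|\).

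The paper avoids this by applying Lemma~\ref{lemma: general technical lemma forms} not to \(\omega\) but to each connected component \(\omega_j\) of \(\omega^\gamma\) separately. For each \(j\), the hypothesis forces \(|\support\delta\omega_j \cap \support\gamma| < |\support\gamma_R|\), hence \(|V^{\gamma,\omega_j}| \geq 2\), and Lemma~\ref{lemma: general technical lemma forms} gives
\[
    \bigl|(\support\omega_j)^+\bigr| + \bigl|(\support\delta\omega_j)^+\bigr| + 3|P_{\omega_j,\gamma,c}| \geq 3|\support\delta\omega_j \cap \support\gamma| + 2.
\]
Summing over \(j\) via the \emph{equalities} in Lemma~\ref{lemma: split into components}\ref{item: split into components i}--\ref{item: split into components iv} (which go the right way, unlike~\ref{item: split into components v}) yields the lemma. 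The point is that \(V^{\gamma,\cdot}\) is the one quantity in Lemma~\ref{lemma: split into components} that is only subadditive, so it must be used before summing, not after.
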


\begin{proof}
    Let \( k \coloneqq \|\omega^\gamma \| , \)
    let \( P_1,P_2, \dots, P_k \) be the connected components of \( (\support \omega^\gamma)^+, \) and for \( j \in \{ 1,2, \dots, k \}, \) define \( \omega_j \coloneqq \omega|_{P_j}. \)
    Then, by definition, for each \( j \in \{ 1,2, \dots, k \}, \) we have
    \( |\support \delta \omega_j \cap \support \gamma| \geq 1. \) Since, by Lemma~\ref{lemma: split into components}~\ref{item: split into components iii}, 
    \begin{equation*}
         |\support \delta \omega \cap \support \gamma| = \sum_{j=1}^k |\support \delta \omega_j \cap \support \gamma|,
    \end{equation*}
    and, by assumption, we have \( \bigl| \support \delta \omega  \cap \support \gamma\bigr| < |\support \gamma_R| \) it follows that 
    \( \bigl|\support \delta  \omega_j \cap \support \gamma\bigr| < |\support \gamma_R|\) for all \( j \in \{ 1,2, \dots, k \}. \)
    Now fix any \( j \in \{ 1,2, \dots, k \}. \) 
    Since \[ \bigl|\support \delta  \omega_j \cap \support \gamma\bigr| < |\support \gamma_R|,\] we must have \( |V^{\gamma, \omega_j}| \geq 2.\)
    Applying Lemma~\ref{lemma: general technical lemma forms}, it follows that
    \begin{equation*}
        \bigl|(\support  \omega_j)^+\bigr| + \bigl|(\support \delta  \omega_j)^+ \bigr| + 3 |P_{\omega_j,\gamma,c}| \geq 3|\support \delta \omega_j \cap \support \gamma | + 2.
    \end{equation*} 
    Summing over all \( j \in \{ 1,2, \dots, k \}\) and using Lemma~\ref{lemma: split into components}, the desired conclusion follows.
\end{proof}

\begin{lemma}\label{lemma: Em1 motivation}
    Let \( \gamma \) be a path along the boundary of a rectangle with side lengths \( \ell_2 \geq \ell_1 \geq 8. \)
    Let \( \omega \in \Omega^2(B_N,\mathbb{Z}_n)  \) satisfy \( \omega^\gamma \neq 0, \) and assume that 
    \begin{equation*}
        \bigl|(\support \omega)^+\bigr| + \bigl| (\support \delta \omega)^+\bigr| + 2|P_{\omega,\gamma,c}| \leq 3|\support \delta \omega \cap \support \gamma |.
    \end{equation*}
    Then  \( \bigl|( \support \omega)^+ \bigr| \geq \ell_1 \) and \( |\support \delta \omega \cap \support \gamma_R| \geq \ell_1+2. \) 
\end{lemma}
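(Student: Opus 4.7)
The plan is to perform a case analysis on the structure of $\omega^\gamma$.

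Suppose first that some connected component $\omega_j$ of $\omega^\gamma$ satisfies $\support \gamma_R \subseteq \support \delta \omega_j$. Then $|\support \delta \omega \cap \support \gamma_R| \geq |\support \gamma_R| = 2(\ell_1 + \ell_2) \geq \ell_1 + 2$ is immediate. Moreover, since each positively oriented plaquette contributes at most $4$ positively oriented boundary edges to $\delta \omega_j$, we have $|(\support \delta \omega_j)^+| \leq 4 |(\support \omega_j)^+|$, hence $|(\support \omega)^+| \geq |(\support \omega_j)^+| \geq |\support \gamma_R|/4 = (\ell_1+\ell_2)/2 \geq \ell_1$ using $\ell_2 \geq \ell_1$.

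Otherwise, every component $\omega_j$ of $\omega^\gamma$ has $\support \gamma_R \nsubseteq \support \delta \omega_j$, so $|\support \delta \omega_j \cap \support \gamma| < |\support \gamma_R|$ and Lemma~\ref{lemma: basic inequality 6} applies to each $\omega_j$, giving $|\support \omega_j^+| + |\support \delta \omega_j^+| + 3|P_{\omega_j, \gamma, c}| \geq 3|\support \delta \omega_j \cap \support \gamma| + 2$. Summing over $j$ via Lemma~\ref{lemma: split into components} and comparing with the hypothesis of the present lemma, I derive $|P_{\omega,\gamma,c}| \geq 2\|\omega^\gamma\|$. By the pigeonhole principle, some component $\omega_{j_0}$ then contains at least two corner plaquettes of $\gamma$; these occupy two distinct corners of $R$ and hence lie at $L^\infty$-distance at least $\ell_1 - 1$ in $\mathbb{Z}^m$. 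Since each step in the plaquette adjacency graph alters position by at most $1$ in $L^\infty$, $\omega_{j_0}$ must contain at least $\ell_1$ plaquettes, whence $|(\support \omega)^+| \geq \ell_1$.

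For the second conclusion in this second case, I would establish a discrete isoperimetric bound: any connected set of positively oriented plaquettes with $L^\infty$-diameter at least $D$ has at least $2D + 4$ positively oriented exterior boundary edges. The argument is by projection onto the coordinate axis realizing the diameter — the projection of $(\support \omega_{j_0})^+$ is a contiguous interval of integer positions of length at least $\ell_1$, each of which is the position of some plaquette of $\omega_{j_0}$ whose two extremal edges transverse to this axis are uncancelled members of $\support \delta \omega_{j_0}^+$, and the two extreme columns of the projection contribute two further edges. Applied with $D = \ell_1 - 1$, this yields $|\support \delta \omega_{j_0}^+| \geq 2\ell_1 + 2$. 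Because distinct connected components of $(\support \omega)^+$ have disjoint $\delta$-supports, $|(\support \delta \omega)^+| \geq 2 \ell_1 + 2$. Substituting $|(\support \omega)^+| \geq \ell_1$, $|(\support \delta \omega)^+| \geq 2\ell_1 + 2$, and $|P_{\omega, \gamma, c}| \geq 2$ into the hypothesis gives
\[
3 |\support \delta \omega \cap \support \gamma_R| \geq 3 |\support \delta \omega \cap \support \gamma| \geq \ell_1 + (2 \ell_1 + 2) + 4 = 3\ell_1 + 6,
\]
so $|\support \delta \omega \cap \support \gamma_R| \geq \ell_1 + 2$.

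The hard part will be the discrete isoperimetric bound in dimension $m \geq 3$: the component $\omega_{j_0}$ may contain plaquettes lying in several different 2-planes, and with $\mathbb{Z}_n$-valued coefficients one must rule out that shared-edge cancellations destroy too many boundary contributions. The projection argument addresses this by exhibiting extremal boundary edges — those adjacent to plaquettes sitting at the extreme positions of the projection — that are necessarily uncancelled.
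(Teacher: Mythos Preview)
Your argument for the first conclusion is correct and gives a genuinely different route from the paper. The paper obtains $|P_{\omega,\gamma,c}|\ge 2$ via the inequality $|V^{\gamma,\omega}|\le |P_{\omega,\gamma,c}|$ (from Lemma~\ref{lemma: general technical lemma forms}), then finds a full side of $\gamma_R$ inside $\support\delta\omega$ and invokes Lemma~\ref{lemma: basic inequality 1 forms}. Your pigeonhole/diameter argument --- two corner plaquettes in a single connected component force $\ell_1$ plaquettes --- is cleaner for this part.

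The gap is in your proof of the second conclusion. The discrete isoperimetric bound you state, namely that a connected set of plaquettes with $L^\infty$-diameter $D$ has $|(\support\delta\omega)^+|\ge 2D+4$, is \emph{false} for $m\ge 3$. Take $G=\mathbb{Z}_2$ and let $\omega$ be the indicator of the four side faces of a $1\times 1\times L$ box (the ``tube''). This is connected, has diameter $L-1$, but $|(\support\delta\omega)^+|=8$ independently of $L$: the long edges along the tube are each shared by two faces and cancel in $\mathbb{Z}_2$, and the short transverse edges cancel between consecutive slices, leaving only the eight edges at the two ends. Your projection argument breaks down precisely here: at an interior position there is no plaquette whose ``extremal transverse edges'' survive in $\support\delta\omega$. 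Even restricting to configurations containing two corner plaquettes of $\gamma$ does not rescue the bound, since one can attach such plaquettes to the ends of a tube without creating $2\ell_1+2$ boundary edges.

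The paper avoids any global isoperimetric estimate and bounds $|\support\delta\omega\cap\support\gamma_R|$ directly. From Lemma~\ref{lemma: general technical lemma forms} and the hypothesis one gets $|V^{\gamma,\omega}|\le |P_{\omega,\gamma,c}|$. Removing the edges in $\bigcup_{p\in P_{\omega,\gamma,c}}\support\partial p$ from $\gamma_R$ leaves $|P_{\omega,\gamma,c}|$ disjoint sub-paths; any sub-path not entirely contained in $\support\delta\omega$ contributes at least two vertices to $V^{\gamma,\omega}$. Since $|V^{\gamma,\omega}|\le |P_{\omega,\gamma,c}|$, at least one sub-path (of length $\ge\ell_1-2$) lies fully in $\support\delta\omega$, and together with the $2|P_{\omega,\gamma,c}|\ge 4$ corner edges this yields $|\support\delta\omega\cap\support\gamma_R|\ge\ell_1+2$.
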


\begin{proof}
    By Lemma~\ref{lemma: general technical lemma forms}, we have 
    \begin{equation*} 
        \bigl|(\support \omega)^+\bigr| + \bigl| (\support \delta  \omega)^+\bigr|+ 3|P_{ \omega,\gamma,c}|
        \geq 
        3|\support \delta  \omega \cap \support \gamma | + |V^{\gamma, \omega}| .
    \end{equation*}
    Since, by assumption, we also have 
    \begin{equation*} 
        \bigl|(\support \omega)^+\bigr| + \bigl| (\support \delta \omega)^+\bigr| + 3|P_{\omega,\gamma,c}|
        \leq 
        3|\support \delta \omega \cap \support \gamma | + |P_{\omega,\gamma,c}|,
    \end{equation*}
    it follows that \( |V^{\gamma,\omega}| \leq |P_{\omega,\gamma,c}|. \)

    If \( |V^{\gamma,\omega}| = 0, \) then, since \( \omega^\gamma \neq 0\) we must have \( \support \gamma_R \subseteq  \support \delta \omega^\gamma, \) and hence \( \bigl|(\support \omega)^+\bigr| \geq \ell_1.\)
    
    If \( |V^{\gamma,\omega}| \neq 0,\) then \( |V^{\gamma,\omega}| \geq 2, \) and thus (using that \( |V^{\gamma,\omega}| \leq |P_{\omega,\gamma,c}| \)), we have \(|P_{\omega,\gamma,c}| \geq 2. \)
    Assume that this is the case.
    For each  \( p \in P_{\omega,\gamma,c}, \) we have \( \support \partial p \cap \support \gamma \subseteq \support \delta \omega.\) Moreover, the restriction of \( \gamma_R \) to the set
    \begin{equation*}
        \support \gamma_R \smallsetminus \bigcup_{p \in P_{\omega,\gamma,c}} \support \partial p 
    \end{equation*}
    is the sum of \( |P_{\omega,\gamma,c}| \) paths with disjoint support.
    If not all of the edges in the support of such a path \( \gamma' \) are in \( \support \delta \omega\), then \( \pigl| \bigl\{ v \in V^{\gamma,\omega} \colon \support \hat \partial v \cap \support \gamma' \neq \emptyset \bigr\} \pigr| \geq 2 \) (that is, the path contains at least two elements in \( V^{\omega,\gamma} \)). Since \( |V^{\gamma,\omega}| \leq |P_{\omega,\gamma,c}| , \) for at least one of these paths \( \gamma' \) we must have \( \support \gamma' \subseteq \support \delta \omega.\) Noting that \( |\support \gamma'| \geq \ell_1-2, \) we obtain
    \begin{equation*}
        |\support \delta \omega \cap \support \gamma_R| 
        \geq
        |\support \gamma'| + 2|P_{\omega,\gamma_R,c}|   . 
    \end{equation*}
    Using Lemma~\ref{lemma: basic inequality 1 forms}, it follows that
    \begin{equation*}
        \begin{split}
            &\bigl| (\support \omega)^+ \bigr| \geq |\support \delta \omega \cap \support \gamma_R| - |P_{\omega,\gamma_R,c}| 
            \geq 
            |\support \gamma'| + |P_{\omega,\gamma_R,c}|
            \\&\qquad\geq 
            |\support \gamma'| + |P_{\omega,\gamma,c}|
             \geq
            (\ell_1-2) + 2 = \ell_1.
        \end{split}
    \end{equation*}
    This concludes the proof.
\end{proof}

\section{The contribution of bad events}\label{section: bad events}

The main purpose of this section is to prove the following proposition which gives an upper bound on the contribution to the expectation of the Wilson line/loop observable from \( \omega \in \Omega^2(B_N,\mathbb{Z}_n) \smallsetminus \mathcal{E} \), where
\begin{equation}\label{eq: def E2 forms ii}
    \begin{split} 
        \mathcal{E} \coloneqq \bigl\{ \omega \in \Omega^2(B_N,\mathbb{Z}_n) \colon
        \bigl|(\support \omega^{\gamma,2})^+\bigr|  = \| \omega^\gamma\| \text{ and }  |P_{\omega,\gamma,c}| = 0  &\bigr\}.
    \end{split}
\end{equation}
(Recall that $\omega^\gamma$ and $\omega^{\gamma,2}$ were defined in Section \ref{connectedsubsec} and $P_{\omega,\gamma,c}$ was defined in \eqref{Pomegagammacdef}.)
In words, \( \mathcal{E}\) is the event that each connected component of \( (\support \omega^{\gamma,2})^+ \) consists of exactly one plaquette, and that no plaquette in  \( P_{\omega,\gamma,c}\) is in \( (\support \omega^{\gamma,2})^+. \)

\begin{proposition}\label{proposition: useful upper bound forms ii}
    Let \( \gamma \) be a path along the boundary of a rectangle with side lengths \( \ell_2 \geq \ell_1 \geq 7. \)
    Assume that~\eqref{assumption: 1} holds.
    Then
    \begin{equation*}
        \begin{split}
            &
            \mathbb{E}_\varphi    \pigl[\widehat{L_\gamma}(\omega) \cdot \mathbb{1}( \omega \notin \mathcal{E} ) \pigr] 
            \leq     C^{(1)}_{\gamma,\beta,\kappa,m}\, \xi_\kappa^{| \gamma|}    \zeta_\beta,
        \end{split}
    \end{equation*}  
    where
    \begin{equation}\label{eq: def C''''}
        C^{(1)}_{\gamma,\beta,\kappa,m} \coloneqq C^{(1')}_{\gamma,\beta,\kappa,m}
        +
        C^{(1'')}_{\gamma,\beta,\kappa,m}
        +
        C^{(1''')}_{\gamma,\beta,\kappa,m}
        +
        C^{(1'''')}_{\gamma,\beta,\kappa,m},
    \end{equation}
    and \( C^{(1')}_{\gamma,\beta,\kappa,m}, \) \( C^{(1'')}_{\gamma,\beta,\kappa,m} ,\) \(  C^{(1''')}_{\gamma,\beta,\kappa,m}, \) and  \( C^{(1'''')}_{\gamma,\beta,\kappa,m} \) are defined in~\eqref{eq: def C},~\eqref{eq: def C'},~\eqref{eq: def C''}, and~\eqref{eq: def C'''} respectively, and \( \xi_\kappa   \) and \( \zeta_\beta \) are defined by~\eqref{eq: zeta def}.
\end{proposition}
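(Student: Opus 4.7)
My plan is to decompose the event \( \{\omega \notin \mathcal{E}\} \) into four sub-events corresponding to the four ways \( \omega \) can fail to lie in \( \mathcal{E} \), and to bound the contribution of each sub-event to \( \mathbb{E}_\varphi[\widehat{L_\gamma}(\omega) \mathbb{1}(\cdot)] \) by invoking Lemma~\ref{lemma: flip a set forms iii}. A union bound then produces the four constants \( C^{(1')}, C^{(1'')}, C^{(1''')}, C^{(1'''')} \) whose sum is \( C^{(1)}_{\gamma,\beta,\kappa,m} \).

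Unpacking the definition~\eqref{eq: def E2 forms ii} of \( \mathcal{E} \), since \( \omega^\gamma \) is a sub-form of \( \omega^{\gamma,2} \) we always have \( |(\support \omega^{\gamma,2})^+| \geq |(\support \omega^\gamma)^+| \geq \|\omega^\gamma\| \), with equality throughout iff \( \omega^{\gamma,2} = \omega^\gamma \) and every connected component of \( \omega^\gamma \) consists of a single plaquette. Hence \( \{\omega \notin \mathcal{E}\} \) is covered by the four sub-events
\begin{enumerate}[label=\upshape(\roman*)]
\item \( \{|P_{\omega,\gamma,c}| \geq 1\} \): a corner plaquette of \( \gamma \) lies in \( (\support \omega)^+ \);
\item \( \{\omega^{\gamma,2} \neq \omega^\gamma\} \): some component of \( \omega \) adjacent to \( \gamma \) has no edge of its coboundary on \( \support \gamma \);
\item \( \{\omega^\gamma \neq 0,\ |(\support \omega^\gamma)^+| > \|\omega^\gamma\|,\ \support \delta \omega \cap \support \gamma \subsetneq \support \gamma_R\} \): some component of \( \omega^\gamma \) contains at least two plaquettes;
\item \( \{\support \delta \omega \cap \support \gamma = \support \gamma_R\} \): the path \( \gamma \) is fully covered by \( \support \delta \omega \).
\end{enumerate}
Case~(iv) is separated from (iii) because Lemma~\ref{lemma: basic inequality 6} requires \( |\support \delta \omega \cap \support \gamma| < |\support \gamma_R| \).

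For each sub-event I would condition on the restricted form \( \omega^{\gamma,2} \), parametrize by the triple \( (k,k',k'')=\tfrac12(|(\support\omega^{\gamma,2})^+|,\, |(\support\delta\omega^{\gamma,2})^+|,\, 2|\support\delta\omega^{\gamma,2}\cap\support\gamma|) \), and apply Lemma~\ref{lemma: flip a set forms iii} with \( E = \support\gamma \). The relevant geometric lemma (Lemma~\ref{lemma: basic inequality 1 forms} for (i), Lemma~\ref{lemma: basic inequality 6} for (ii) and (iii), Lemma~\ref{lemma: Em1 motivation} for (iv)) yields a lower bound on \( k+k'-2k'' \) which exceeds the ``good'' baseline attained on \( \mathcal{E} \) by at least one additional unit per geometric defect. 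Under hypothesis~\eqref{assumption: 1} (which yields \( (16m)^{2}\zeta_\beta \xi_\kappa^{-1} < 1 \)), the resulting sum
\begin{equation*}
\sum_{(k,k',k'')} (16m)^{2k}\,\zeta_\beta^{k}\,\xi_\kappa^{|\gamma|+k'-2k''}
\end{equation*}
becomes a convergent geometric series bounded by a constant multiple of \( \zeta_\beta \xi_\kappa^{|\gamma|} \), and the value of this constant explicitly defines the corresponding \( C^{(1')}, \dots, C^{(1'''')} \).

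The main obstacle will be the bookkeeping in the case analysis: for each sub-event one must certify the correct geometric improvement relative to \( \mathcal{E} \), so that the factor multiplying \( \zeta_\beta \xi_\kappa^{|\gamma|} \) is a convergent geometric series rather than merely summable up to an unharmful factor of \( \xi_\kappa^{-1} \). Case (iii) requires combining Lemma~\ref{lemma: general technical lemma forms} with Lemma~\ref{lemma: basic inequality 6} to extract the extra \( +2\|\omega^\gamma\| \) term coming from multi-plaquette components, while case (iv) uses Lemma~\ref{lemma: Em1 motivation} to force \( |(\support\omega)^+| \geq \ell_1 \); the exponentially small (in \( \ell_1 \)) contribution thus obtained is absorbed into the \( \gamma \)-dependent constant \( C^{(1'''')}_{\gamma,\beta,\kappa,m} \). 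A second subtlety is that sub-events (i)--(iv) are not disjoint, so one must either apply a union bound at the cost of a factor of at most \( 4 \) or partition \( \{\omega\notin\mathcal{E}\} \) in the order listed; I would follow the latter route since it keeps each \( C^{(1')}, \dots, C^{(1'''')} \) cleanly tied to a single geometric mechanism.
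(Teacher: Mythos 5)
Your overall strategy is the one the paper uses: split $\{\omega \notin \mathcal{E}\}$ into four geometric defect types, bound each piece by conditioning on the part of $\omega$ near $\gamma$ and invoking Lemma~\ref{lemma: flip a set forms iii} together with the inequalities of Section~\ref{section: properties}, and sum the resulting geometric series under~\eqref{assumption: 1}. The decomposition itself differs in detail, though. The paper's events are $\mathcal{E}'$ (some edge of $\gamma$ covered by more than one plaquette of $\omega^\gamma$, i.e.\ $k \geq j-i+1$), $\mathcal{E}''$ (two covered edges sharing a component), $\mathcal{E}'''$ (corners), and $\mathcal{E}''''$ ($\omega^{\gamma,2}\neq\omega^\gamma$), taken in that order of set differences; you merge $\mathcal{E}'$ and $\mathcal{E}''$ into a single ``multi-plaquette component'' case, add a separate ``$\gamma_R$ fully covered'' case (handled via Lemma~\ref{lemma: Em1 motivation}, which the paper does not in fact need here --- it works directly from Lemma~\ref{lemma: general technical lemma forms} via Lemma~\ref{lemma: letter inequalities}), and you place the $\omega^{\gamma,2}\neq\omega^\gamma$ case \emph{second} rather than last. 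Two consequences are worth flagging. First, merging $\mathcal{E}'$ and $\mathcal{E}''$ forfeits the refined count of Lemma~\ref{lemma: Aji' bound} (the constraint that two covered edges be adjacent, which cuts the number of admissible edge sets from $\binom{|\gamma|}{j}$ to roughly $(j-1)\binom{|\gamma|}{j-1}$); a crude $\binom{|\gamma|}{j}$ count still yields a bound of the form $C\,\xi_\kappa^{|\gamma|}\zeta_\beta$, but with a larger $\gamma$-dependent constant. Second, because your case (ii) is not taken after the multi-plaquette and corner cases, you cannot assume there that the components of $\omega^\gamma$ are isolated single plaquettes, so you only get $k'\geq k''$ rather than $k'\geq 4k''$ in Lemma~\ref{lemma: flip a set forms iii}; the extra non-contributing component still supplies the needed $\zeta_\beta^2$, but the $j$-sum then closes at the cost of a factor like $(1+(16m)^2\zeta_\beta\xi_\kappa^{-1})^{|\gamma|}$ instead of the near-unit factor $(1+(16m)^2\zeta_\beta\xi_\kappa^{2})^{|\gamma|}$ appearing in~\eqref{eq: def C'''}. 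Neither issue breaks the argument, but you should be explicit that your route produces explicit constants of the same qualitative form rather than the specific $C^{(1')},\dots,C^{(1'''')}$ of~\eqref{eq: def C}--\eqref{eq: def C'''}; reordering your cases so that $\omega^{\gamma,2}\neq\omega^\gamma$ comes last would recover the paper's cleaner bookkeeping. (Minor: in Lemma~\ref{lemma: flip a set forms iii} the convention is $|\support\omega|=2k$, i.e.\ $k=|(\support\omega)^+|$, so your $\tfrac12|(\support\omega^{\gamma,2})^+|$ is off by a factor of two.)
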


Before we give a proof of Proposition~\ref{proposition: useful upper bound forms ii}, we define auxiliary events \( \mathcal{E}' ,\) \( \mathcal{E}'', \) \( \mathcal{E}''', \) and \( \mathcal{E}'''', \) and state and prove a few helpful lemmas.
We define
\begin{align*}
 &   \mathcal{E}' \coloneqq \bigl\{ \omega \in \Omega^2(B_N,\mathbb{Z}_n) \colon \bigl|(\support \omega^\gamma)^+ \bigr| \geq |\support \delta \omega \cap \support \gamma|-|P_{\omega,\gamma,c}|+1\bigr\},
    \\
&    \mathcal{E}'' \coloneqq \bigl\{ \omega \in \Omega^2(B_N,\mathbb{Z}_n) \colon |\support \delta \omega \cap \support \gamma| - |P_{\omega,\gamma,c}| \geq \| \omega^\gamma\| + 1 \bigr\},
    \\
&    \mathcal{E}''' \coloneqq \bigl\{ \omega \in \Omega^2(B_N,\mathbb{Z}_n) \colon |P_{\omega,\gamma,c}| \neq 0 \bigr\},
    \\
&    \mathcal{E}'''' \coloneqq \bigl\{ \omega \in \Omega^2(B_N,\mathbb{Z}_n) \colon \omega^{\gamma,2} \neq \omega^\gamma \bigr\}.
\end{align*}  
The main motivation for introducing these events is the following equality, which is established in the proof of Proposition~\ref{proposition: useful upper bound forms ii}.
\begin{equation*}
    \Omega^2(B_N,\mathbb{Z}_n) \smallsetminus \mathcal{E} = \mathcal{E}' \cup \mathcal{E}'' \cup \mathcal{E}''' \cup \mathcal{E}''''.
\end{equation*}
%

\begin{lemma}\label{lemma: upper bound on expectation of two events}
    Let \( \gamma \) be a path along the boundary of a rectangle with side lengths \( \ell_2 \geq \ell_1 \geq 7. \) Assume that~\eqref{assumption: 3} holds.
    Then the following four inequalities hold.
    \begin{equation}\label{eq: first inequality}
        \begin{split}
            &\mathbb{E}_\varphi  \pigl[\widehat{L_\gamma}(\omega) \cdot \mathbb{1}(\omega \in \mathcal{E}')\pigr] 
            \leq
            C^{(1')}_{\gamma,\beta,\kappa,m}\,\xi_\kappa^{| \gamma|}
            \zeta_\beta,
        \end{split}
    \end{equation}
    \begin{equation}\label{eq: second inequality}
        \begin{split}
            &\mathbb{E}_\varphi  \pigl[\widehat{L_\gamma}(\omega) \cdot \mathbb{1}(\omega \in \mathcal{E}''\smallsetminus \mathcal{E}')\pigr] 
            \leq
            C^{(1'')}_{\gamma,\beta,\kappa,m}\,\xi_\kappa^{| \gamma|} \zeta_\beta,
        \end{split}
    \end{equation}
    \begin{equation}\label{eq: third inequality}
        \begin{split}
            &\mathbb{E}_\varphi  \pigl[\widehat{L_\gamma}(\omega) \cdot \mathbb{1}\bigl(\omega \in \mathcal{E}'''\smallsetminus (\mathcal{E}' \cup \mathcal{E}'')\bigr)\pigr] 
            \leq
            C^{(1''')}_{\gamma,\beta,\kappa,m}\,\xi_\kappa^{| \gamma|} \zeta_\beta,
        \end{split}
    \end{equation}
    \begin{equation}\label{eq: fourth inequality}
        \begin{split}
            &\mathbb{E}_\varphi  \pigl[\widehat{L_\gamma}(\omega) \cdot \mathbb{1}\bigl(\omega \in \mathcal{E}''''\smallsetminus (\mathcal{E}' \cup \mathcal{E}'' \cup \mathcal{E}''')\bigr)\pigr] 
            \leq
            C^{(1'''')}_{\gamma,\beta,\kappa,m}\, \xi_\kappa^{| \gamma|} \zeta_\beta,
        \end{split}
    \end{equation}
    where 
    \begin{equation}\label{eq: def C}
        \begin{split}
        C^{(1')}_{\gamma,\beta,\kappa,m}\, \coloneqq
        & \;
        (16m)^2 \cdot \frac{\bigl( 1 + (16m)^2 \zeta_\beta\bigr)^{| \gamma|}\bigl(1+ (16m)^2 \zeta_\beta\xi_\kappa^{-2 } \bigr)^{|\mathcal{P}_{\gamma,c}|}  -1 }{\xi_\kappa(1-\xi_\kappa) \bigl( 1-(16m)^2 \zeta_\beta\xi_\kappa^{-1}\bigr) }
            \\&\qquad+ 
            (16m)^2 \cdot \frac{\bigl( 1 + (16m)^4 \zeta_\beta^2 \xi_\kappa^{-1}\bigr)^{| \gamma|} \pigl( 1 + (16m)^{2} \zeta_\beta \xi_\kappa^{-2}\pigr)^{|\mathcal{P}_{\gamma,c}|}  -1}{(1-\xi_\kappa)(1 - (16m)^2 \zeta_\beta ) }
            ,
         \end{split}
    \end{equation} 
    \begin{equation}\label{eq: def C'}\begin{split}
        C^{(1'')}_{\gamma,\beta,\kappa,m}\, \coloneqq  
        & \;
        \frac{ (16m)^4 }{1-\xi_\kappa}\biggl(  
        | \gamma|  \zeta_\beta  
        +
        2
        |\mathcal{P}_{\gamma,c}|  \zeta_\beta \xi_\kappa^{-2} 
        \biggr) 
            \\&\qquad\quad\times  \bigl(1 + (16m)^2\zeta_\beta \xi_\kappa^{-2} \bigr)^{|\mathcal{P}_{\gamma,c}| } \bigl( 1 + (16m)^2 \zeta_\beta\bigr)^{| \gamma|}
        ,
    \end{split}\end{equation} 
    \begin{equation}\label{eq: def C''}
    	\begin{split}
        	&C^{(1''')}_{\gamma,\beta,\kappa,m}\, \coloneqq 
        	\frac{
            \bigl( 1 + (16m)^2 \zeta_\beta\bigr)^{|\mathcal{P}_{\gamma,c}|}-1 }{\zeta_\beta} 
            \bigl(1 + (16m)^2 \zeta_\beta \xi_\kappa^2\bigr)^{| \gamma|} 
           \end{split}
    \end{equation} 
    and 
    \begin{equation}\label{eq: def C'''} 
    \begin{split}
        C^{(1'''')}_{\gamma,\beta,\kappa,m} &\coloneqq  
        \frac{(16m)^{4}\zeta_\beta  | \gamma|  \bigl( 1 + (16m)^{2} \zeta_\beta  \xi_\kappa^{2}  \bigr)^{| \gamma|}}{1-\xi_\kappa}
        \\&\qquad\quad\times\biggl(
        \frac{\xi_\kappa^{4}}{(1-(16m)^{2} \zeta_\beta \xi_\kappa^{-1})}
        +
        \frac{(16m)^{8} \zeta_\beta^{4} }{(1-(16m)^{2} \zeta_\beta )} 
        \biggr) .
            \end{split}
    \end{equation}
\end{lemma}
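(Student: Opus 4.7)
The plan is to prove each of the four inequalities using a common blueprint: decompose $\omega^\gamma$ into its connected components via Lemma~\ref{lemma: split into components}, translate the defining geometric condition of each event into separate conditions on each component, apply Lemma~\ref{lemma: flip a set forms iii} to each component with a choice of edge set $E \subseteq \support \gamma$ and parameters $(k,k',k'')$ dictated by its shape, and then sum over all placements of the components along $\support \gamma$. Convergence of the resulting geometric series is guaranteed by~\eqref{assumption: 1}. The exponential factors $\bigl(1+(16m)^2\zeta_\beta\bigr)^{| \gamma|}$ and $\bigl(1 + (16m)^2 \zeta_\beta \xi_\kappa^{-2}\bigr)^{|\mathcal{P}_{\gamma,c}|}$ that appear throughout~\eqref{eq: def C}--\eqref{eq: def C'''} arise respectively from summing over ``ordinary'' components attached to each edge of $\gamma$ and over components straddling a corner plaquette; the $\xi_\kappa^{-2}$ in the latter compensates for two edges of $\support\delta\omega \cap \support \gamma$ being absent compared to the edge case.

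For~\eqref{eq: first inequality}, the condition defining $\mathcal{E}'$ combined with Lemma~\ref{lemma: basic inequality 1 forms} forces at least one component $\omega_j$ to contain an ``excess'' plaquette beyond the minimum required. I would split on whether this excess component attaches at an ordinary edge of $\gamma$ (giving the first summand of~\eqref{eq: def C} via a geometric sum over $k\geq 2$ producing the factor $1/(1-(16m)^2\zeta_\beta\xi_\kappa^{-1})$) or at a corner plaquette (giving the second summand), while the remaining ``ordinary'' components contribute the two exponential factors mentioned above. For~\eqref{eq: second inequality}, the condition $|\support\delta\omega \cap \support \gamma|-|P_{\omega,\gamma,c}| \geq \|\omega^\gamma\|+1$ combined with $\omega \notin \mathcal{E}'$ forces exactly one ``special'' component to contribute an extra edge to $\support \delta\omega \cap \support \gamma$; applying Lemma~\ref{lemma: flip a set forms iii} to this component with $k''\geq 2$ produces the extra $| \gamma|\zeta_\beta$ or $|\mathcal{P}_{\gamma,c}|\zeta_\beta\xi_\kappa^{-2}$ prefactors in~\eqref{eq: def C'}.

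For~\eqref{eq: third inequality}, outside $\mathcal{E}'\cup\mathcal{E}''$ each component of $\omega^\gamma$ is a minimally placed single plaquette; $\mathcal{E}'''$ then demands at least one active corner plaquette, giving the factor $\bigl((1+(16m)^2\zeta_\beta)^{|\mathcal{P}_{\gamma,c}|}-1\bigr)/\zeta_\beta$ after extracting one mandatory $\zeta_\beta$, while ordinary components on $\gamma$ contribute $(1+(16m)^2\zeta_\beta\xi_\kappa^2)^{| \gamma|}$ (here $\xi_\kappa^2$ replaces $\xi_\kappa^{-2}$ because outside $\mathcal{E}''$ each single-plaquette component reduces $|\support\delta\omega \cap \support\gamma|$ by exactly $2$ edges in the baseline count, rather than leaving it unchanged). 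For~\eqref{eq: fourth inequality}, $\omega^{\gamma,2}\neq\omega^\gamma$ means some component is adjacent to $\support\gamma$ via a plaquette but contributes nothing to $\support\delta\omega \cap \support \gamma$; any such component must contain at least four plaquettes, and Lemma~\ref{lemma: flip a set forms iii} applied with $k\geq 4$ and $k''=0$ on this component, summed as a geometric series, yields the extra $\xi_\kappa^4$ and $(16m)^8\zeta_\beta^4$ terms appearing in~\eqref{eq: def C'''}.

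The main obstacle will be organizing the combinatorial sums so that the factors from different component types (edge-attached, corner-straddling, isolated) multiply into the precise closed-form products of~\eqref{eq: def C}--\eqref{eq: def C'''}, and tracking the set-theoretic exclusions (particularly $\mathcal{E}''\setminus\mathcal{E}'$ and $\mathcal{E}''''\setminus(\mathcal{E}'\cup\mathcal{E}''\cup\mathcal{E}''')$) so that no 2-form is double-counted. A secondary subtlety is the careful handling of the corner plaquettes, since near a corner a single active plaquette can reduce $|\support\delta\omega \cap \support\gamma|$ by $2$ rather than $0$, and the relevant contribution to $\widehat{L_\gamma}$ therefore scales with a different power of $\xi_\kappa$ than for components attached to a single side of $\gamma$; keeping the per-edge and per-corner accounting consistent throughout all four bounds is where the bulk of the technical work will lie.
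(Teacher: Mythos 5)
Your overall blueprint is close in spirit to the paper's, but the paper does \emph{not} apply Lemma~\ref{lemma: flip a set forms iii} component by component. It conditions on $j=|\support\delta\omega\cap\support\gamma|$ and $i=|P_{\omega,\gamma,c}|$, uses Lemma~\ref{lemma: letter inequalities} to convert membership in each event into lower bounds on $k=|(\support\omega^\gamma)^+|$ and $k'=|(\support\delta\omega^\gamma)^+|$, and then applies Lemma~\ref{lemma: flip a set forms iii} \emph{once} to the whole of $\omega^E$ with $E=\support\delta\omega\cap\support\gamma$ (the augmented graph $\mathcal{G}$ inside that lemma's proof is precisely what makes the union of all components touching $E$ connected, so a single spanning-walk count suffices). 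The number of admissible edge sets is then bounded separately by $|A_{j,i}|\leq\binom{|\gamma|}{j-2i}\binom{|\mathcal{P}_{\gamma,c}|}{i}$ and its variants, and the binomial identities in the Appendix produce the closed forms $(1+(16m)^2\zeta_\beta)^{|\gamma|}(1+(16m)^2\zeta_\beta\xi_\kappa^{-2})^{|\mathcal{P}_{\gamma,c}|}$, etc. Your per-component version is not automatically available from Lemma~\ref{lemma: flip a set forms iii}: $\mathbb{E}_\varphi$ of a product over components does not factor into a product of expectations, so you would need to pass through Lemma~\ref{lemma: action factorization forms} and a single application of Lemma~\ref{lemma: action upper bound forms iii} to the \emph{sum} of the prescribed components; as written, this step is missing, and it is not clear the repaired argument would reproduce the exact constants $C^{(1')},\dots,C^{(1'''')}$ that the lemma asserts.

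The concrete error is in your treatment of~\eqref{eq: fourth inequality}. A connected component that borders $\support\gamma$ through a plaquette but contributes nothing to $\support\delta\omega\cap\support\gamma$ need only contain \emph{two} plaquettes, not four: two plaquettes $p_1,p_2\in\support\hat\partial e$ with $\omega(p_1)\partial p_1[e]+\omega(p_2)\partial p_2[e]=0$ cancel on $e$, and none of their remaining boundary edges need lie on $\gamma$. This is exactly what Lemma~\ref{lemma: letter inequalities}\ref{item: letter lemma iv} records: $|(\support\omega^E)^+|\geq j+2$ with $E=(\support\delta\omega\cap\support\gamma)\sqcup\{e\}$, i.e.\ the extra component costs $\zeta_\beta^2$, and the $\xi_\kappa^4$ in $C^{(1'''')}$ comes from the four extra coboundary edges of this minimal two-plaquette configuration ($\hat k'\geq 4j+4$ when $\hat k=j+2$), while the $(16m)^8\zeta_\beta^4$ term is the tail of the geometric sum over larger $\hat k$, not a reflection of a four-plaquette minimum. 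A bound predicated on $k\geq 4$ for this component would simply omit the dominant two- and three-plaquette configurations, so the resulting inequality would not be a valid upper bound. Note also that the paper applies Lemma~\ref{lemma: flip a set forms iii} here with $k''=j$ to the augmented set $E$, not with $k''=0$ to the isolated component.
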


Before we give a proof of Lemma~\ref{lemma: upper bound on expectation of two events}, we state and prove a few useful lemmas, and introduce some useful notation. For \( i,j \geq 0, \) we define
\begin{equation}\label{eq: Aij}
        A_{j,i} \coloneqq \bigl\{ E \subseteq \support \gamma \colon \exists \omega \in \Omega^2(B_N,\mathbb{Z}_n) \colon \support \delta \omega \cap \support \gamma = E ,\, |E | = j ,\, |P_{\omega,\gamma,c}|=i \bigr\},
    \end{equation}
    \begin{equation}\label{eq: Aji'}
        A_{j,i}' \coloneqq \bigl\{ E \subseteq \support \gamma \colon \exists \omega \in \mathcal{E}''\smallsetminus\mathcal{E}' \colon \support \delta \omega \cap \support \gamma = E ,\, |E| = j ,\, |P_{\omega,\gamma,c}|=i \bigr\},
    \end{equation}
    and
    \begin{equation}\label{eq: Aj0''}  
        \begin{split}
            A_{j,0}'' \coloneqq &\bigl\{ \hat E \subseteq \support \gamma \colon \exists \omega \in \mathcal{E}''''\smallsetminus(\mathcal{E}' \cup \mathcal{E}'' \cup \mathcal{E}''') \colon 
            \\&\qquad \exists e \in \support \gamma \colon \hat E = \support \delta \omega \cap \support \gamma \sqcup \{ e \},\, |\hat E| = j+1 ,\, \\&\qquad|P_{\omega,\gamma,c}|=0,\, |\support \omega \cap \support \hat \partial e| \geq 2 \bigr\}.
        \end{split}
    \end{equation}

\begin{lemma}\label{lemma: Aji bound}
    Let \( j \in \bigl\{ 0,1, \dots, | \gamma| \bigr\} \) and \( i \in \bigl\{ 0,1, \dots, |\mathcal{P}_{\gamma,c}| \bigr\}. \) Then 
    \begin{equation*}
        \begin{split}
            &|A_{j,i}|  \leq \binom{| \gamma|}{j-2i}\binom{|\mathcal{P}_{\gamma,c}|}{i}.
        \end{split}
    \end{equation*}
\end{lemma}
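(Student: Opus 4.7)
The plan is to encode each $E \in A_{j,i}$ by its intersections with the corner plaquettes and then its remaining edges, and to count the number of possibilities at each step.

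First, I would extract the ``corner part'' of $E$. Fix any $E \in A_{j,i}$ and let $\omega \in \Omega^2(B_N,\mathbb{Z}_n)$ be a witness, so that $\support \delta \omega \cap \support \gamma = E$ and $|P_{\omega,\gamma,c}| = i$. By the definition~\eqref{Pomegagammacdef}, for every $p \in P_{\omega,\gamma,c}$ we have $|\support \partial p \cap \support \gamma \cap \support \delta \omega| = 2$, so each such plaquette contributes exactly two edges to $E$. Since $\gamma$ is a rectangular path, distinct corner plaquettes in $\mathcal{P}_{\gamma,c}$ have disjoint sets of edges in $\support \gamma$ (each corner plaquette is associated with its own corner of the rectangle), so the total number of edges of $E$ coming from $P_{\omega,\gamma,c}$ is exactly $2i$. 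In particular this forces $j \geq 2i$, otherwise $A_{j,i}$ is empty and the bound is trivial.

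Next, I would count the number of possibilities for the pair $(P_{\omega,\gamma,c}, E \setminus E_{\text{corner}})$, where $E_{\text{corner}} \coloneqq \bigcup_{p \in P_{\omega,\gamma,c}} (\support \partial p \cap \support \gamma)$ is the set of $2i$ edges determined by the corner plaquettes. The subset $P_{\omega,\gamma,c} \subseteq \mathcal{P}_{\gamma,c}$ of size $i$ can be chosen in at most $\binom{|\mathcal{P}_{\gamma,c}|}{i}$ ways. Once $P_{\omega,\gamma,c}$ is fixed, $E_{\text{corner}}$ is determined, and the remaining $j - 2i$ edges of $E$ lie in $\support \gamma \setminus E_{\text{corner}}$, giving at most $\binom{|\gamma| - 2i}{j - 2i} \leq \binom{|\gamma|}{j-2i}$ choices.

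Finally, the map sending $E \in A_{j,i}$ to the pair $(P_{\omega,\gamma,c}, E \setminus E_{\text{corner}})$ is injective: the second coordinate together with the first coordinate (which determines $E_{\text{corner}}$) reconstructs $E$ uniquely. Multiplying the counts yields
\begin{equation*}
|A_{j,i}| \;\leq\; \binom{|\mathcal{P}_{\gamma,c}|}{i}\binom{|\gamma|}{j-2i},
\end{equation*}
which is the desired bound.

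The only real subtlety is the claim that the two edges contributed by different corner plaquettes in $P_{\omega,\gamma,c}$ are disjoint as subsets of $\support \gamma$; this relies on the geometric fact that $\gamma$ is a rectangular path with side lengths large enough that distinct corner plaquettes do not share an edge of $\gamma$, which holds under the standing hypothesis $\ell_1, \ell_2 \geq 7$ (indeed already for $\ell_1, \ell_2 \geq 2$). Everything else is routine binomial counting.
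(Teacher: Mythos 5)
Your proof is correct and follows essentially the same route as the paper's: choose the $i$ corner plaquettes in at most $\binom{|\mathcal{P}_{\gamma,c}|}{i}$ ways, note that they determine $2i$ edges of $E$, and bound the choices for the remaining $j-2i$ edges by $\binom{|\gamma|}{j-2i}$. Your version is in fact slightly more careful than the paper's, making explicit the disjointness of the corner edge sets and the injectivity of the encoding.
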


\begin{proof}
    Let \( E \in A_{j,i}. \) Then \( |E| = j, \) and \( E \subseteq \support \gamma. \)
    Let \( \omega \in \Omega^2(B_N,\mathbb{Z}_n) \) be such that \( \support \delta \omega \cap \support \gamma = E \) and \( |P_{\omega,\gamma,c}|=i. \) Then there must exist \( i \) plaquettes \(p \in  \mathcal{P}_{\gamma,c} \) such that \( |\support \partial p \cap E| = |\support \partial p \cap \support \delta \omega \cap \support \gamma| = 2. \) Note that there are \( \binom{|\mathcal{P}_{\gamma,c}|}{i} \) ways to choose \( i \) plaquettes in \( \mathcal{P}_{\gamma,c}. \) At the same time, given \( P_{\omega,\gamma,c},\) the number of ways to choose the \( j-2i \) edges in \( E  \) that are not in the boundary of any of the plaquettes in \( P_{\omega,\gamma,c}\) is at most \( \binom{| \gamma|}{j-2i} . \) Combining these observations, we obtain the desired conclusion.
\end{proof}

\begin{lemma}\label{lemma: Aji' bound}
    Let \( j \in \bigl\{ 0,1, \dots, |\support \gamma| \bigr\} \) and \( i \in \bigl\{ 0,1, \dots, |\mathcal{P}_{\gamma,c}| \bigr\}. \) Then 
    \begin{equation*}
        \begin{split}
            &\pigl|A_{j,i}'\pigr| 
            \leq 
            \binom{|\mathcal{P}_{\gamma,c}|}{i}
            \binom{| \gamma|}{j-2i-1}
            (j-1).
        \end{split}
    \end{equation*}
\end{lemma}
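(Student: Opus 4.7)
My plan is to encode each $E \in A_{j,i}'$ as one of a family of triples of the claimed cardinality by identifying one ``forced'' edge of $E$ using the extra structural information carried by the condition $\omega \in \mathcal{E}'' \smallsetminus \mathcal{E}'$. Fix such an $E$ together with a witness $\omega$.

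\textbf{Saturation and pigeonhole.} Since $\omega \notin \mathcal{E}'$, I have $|(\support \omega^\gamma)^+| \leq j - i$, whereas Lemma~\ref{lemma: basic inequality 1 forms} applied to $\omega^\gamma$ gives the reverse inequality $|(\support \omega^\gamma)^+| \geq j - i$; hence equality holds. Tracing the proof of Lemma~\ref{lemma: basic inequality 1 forms}, this saturation forces $\omega^\gamma$ to consist of exactly the $i$ corner plaquettes of $P_{\omega,\gamma,c}$ (each contributing two edges to $E$) together with $j - 2i$ further plaquettes of $\mathcal{P}_\gamma$, each attached via $\hat \partial$ to a unique edge of $E$ not induced by a corner plaquette of $P_{\omega,\gamma,c}$; in particular $\omega^\gamma \subseteq \mathcal{P}_\gamma$. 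Since $\omega \in \mathcal{E}''$ yields $\|\omega^\gamma\| \leq j - i - 1$, pigeonhole produces a connected component of $\omega^\gamma$ containing at least two plaquettes, and hence a $\sim$-adjacent pair $p^*, p^{**}$ with associated edges $e^*, e^{**} \in E$. The side length bound $\ell_1, \ell_2 \geq 7$ ensures that two corner plaquettes can never be directly $\sim$-adjacent, so after possibly swapping $p^*$ with $p^{**}$ I may assume $p^{**} \in \mathcal{P}_\gamma \smallsetminus \mathcal{P}_{\gamma,c}$, whence $e^{**}$ is the unique edge of $E$ attached to $p^{**}$ and $e^* \neq e^{**}$.

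\textbf{Injection.} I then define $\phi(E) \coloneqq \bigl( P_{\omega,\gamma,c},\, E \smallsetminus \{e^{**}\},\, e^* \bigr)$, making all choices (of $\omega$, of the pair $(p^*, p^{**})$, and of the associated edges) canonical via a fixed lexicographic ordering. The image of $\phi$ is contained in the set of triples consisting of (i) an $i$-element subset of $\mathcal{P}_{\gamma,c}$, (ii) a $(j-1)$-element subset of $\support \gamma$ comprising the $2i$ corner-induced edges from (i) together with $j - 2i - 1$ further edges of $\gamma$, and (iii) a distinguished element of that subset; this set has cardinality at most $\binom{|\mathcal{P}_{\gamma,c}|}{i}\binom{|\gamma|}{j-2i-1}(j-1)$. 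The principal obstacle is to verify that $\phi$ is injective, i.e.\ that $e^{**}$ can be recovered from $\phi(E)$: here the key input is that in the saturated $\omega^\gamma$, the $\sim$-adjacency of the plaquettes attached to $e^*$ and $e^{**}$ forces these two edges to lie consecutively along $\gamma$ inside a common $2$-plane, narrowing the candidates for $e^{**}$ to a short list from which the canonical rule pins down a unique element.
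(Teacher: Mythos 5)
Your structural analysis is correct and in fact reconstructs what the paper isolates as part~\ref{item: letter lemma ii} of Lemma~\ref{lemma: letter inequalities}: the failure of \( \mathcal{E}' \) combined with Lemma~\ref{lemma: basic inequality 1 forms} forces \( |(\support \omega^\gamma)^+| = j-i \), so that each edge of \( E \) sees exactly one plaquette of \( \support \omega \), and membership in \( \mathcal{E}'' \) then produces, by pigeonhole, a \( \sim \)-adjacent pair of these plaquettes and hence a pair of consecutive edges \( e^*, e^{**} \in E \). From there your encoding of \( E \) by (corner plaquettes, a \( (j-1) \)-subset, a pointer worth \( j-1 \) choices) is the same counting scheme the paper uses, so the overall route is the same.

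The gap is exactly where you flag it: injectivity of \( \phi \). The decoder receives only the triple \( (P_{\omega,\gamma,c}, F, e^*) \) with \( F = E \smallsetminus \{e^{**}\} \) and must reconstruct \( e^{**} \); but your ``canonical rule'' is applied to \( E \) (to select \( \omega \), the adjacent pair, and which member of the pair is \( e^{**} \)), so it cannot be invoked by someone who does not yet know \( E \) --- the argument is circular as stated. Concretely, \( e^{**} \) is one of the two edges of \( \gamma \) sharing an endpoint with \( e^* \), and nothing in your construction excludes two distinct sets \( E_1 = F \cup \{c_1\} \) and \( E_2 = F \cup \{c_2\} \) (with \( c_1, c_2 \) the two neighbours of \( e^* \) along \( \gamma \)) both lying in \( A_{j,i}' \) and both mapping to the same triple. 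As written, \( \phi \) is only guaranteed to be \( 2 \)-to-\( 1 \), which yields \( 2\binom{|\mathcal{P}_{\gamma,c}|}{i}\binom{| \gamma|}{j-2i-1}(j-1) \) rather than the stated bound. The paper sidesteps this by discarding the adjacent edge itself and counting it as being consecutive to one of the \( j-1 \) surviving edges, with each surviving edge proposing (for a fixed orientation of \( \gamma_R \)) at most one candidate, namely its successor; to repair your version you would need an analogous convention that is decodable from the triple alone, and you would have to check it is compatible with your requirement \( p^{**} \notin \mathcal{P}_{\gamma,c} \) (which currently dictates which member of the pair gets deleted). A minor further point: the hypothesis \( \ell_1, \ell_2 \geq 7 \) you invoke to separate corner plaquettes is not among the hypotheses of this lemma, although it does hold wherever the lemma is applied.
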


\begin{proof}
    Let \( E \in A_{j,i}'. \) Then \( |E| = j, \) and \( E \subseteq \support \gamma. \)
    Let \( \omega \in \mathcal{E}'' \smallsetminus \mathcal{E}' \) be such that \( \support \delta \omega \cap \support \gamma = E \) and \( |P_{\omega,\gamma,c}| = i. \)
    Then there must exist \( i \) plaquettes \(p \in  \mathcal{P}_{\gamma,c} \) such that \( |\support \partial p \cap E| = |\support \partial p \cap \support \delta \omega \cap \support \gamma| = 2, \) and hence \( 2i \) edges in \( E \) that are in the boundary of such a plaquette. Let \( E' \) be the set of these edges.
    There are \( \binom{|\mathcal{P}_{\gamma,c}|}{i} \) ways to choose \( i \) plaquettes in \( \mathcal{P}_{\gamma,c}. \) Equivalently, there are \( \binom{|\mathcal{P}_{\gamma,c}|}{i} \) ways to choose the set \( E'.\)
    Since \( \omega \in \mathcal{E}''\smallsetminus \mathcal{E}',\) at least one edge \( e \in E \smallsetminus E' \) is adjacent to some other edge in \( E. \) Since the number of ways to choose \( E \smallsetminus (E' \cup \{ e \})\) is at most \(\binom{| \gamma|}{j-2i-1}\) and the number of ways to choose \( e\) given \( E \smallsetminus( E' \cup \{ e \})\) is at most \( (j-1),\) it follows that the number of ways to choose the edges in \( E \smallsetminus E' \) is at most \( \binom{| \gamma|}{j-2i-1} \cdot (j-1).\)
    Combining the two upper bounds given above, we obtain the desired conclusion. 
\end{proof}

\begin{lemma}\label{lemma: Aji'' bound}
    Let \( j \in \{ 0,1, \dots, | \gamma| \} \) and \( i \in \bigl\{ 0,1, \dots, |\mathcal{P}_{\gamma,c}| \bigr\}. \) Then
    \(
            {|A_{j,0}''| 
            \leq \binom{| \gamma|}{j+1} .}
        \)   
\end{lemma}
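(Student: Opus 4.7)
The plan is to observe that the lemma follows from a one-line counting argument by simply unpacking the definition of $A''_{j,0}$ in~\eqref{eq: Aj0''}. By that definition, every element $\hat E \in A''_{j,0}$ is a subset of $\support \gamma$ with $|\hat E| = j+1$. Since $|\support\gamma| = |\gamma|$, it follows that
\[
    |A''_{j,0}| \leq \bigl|\bigl\{ \hat E \subseteq \support\gamma \colon |\hat E| = j+1 \bigr\}\bigr| = \binom{|\gamma|}{j+1},
\]
which is the claimed bound.

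In particular, in contrast with Lemmas~\ref{lemma: Aji bound} and~\ref{lemma: Aji' bound}, no refined combinatorial reduction involving the corner plaquettes is needed here because the constraint $|P_{\omega,\gamma,c}|=0$ built into~\eqref{eq: Aj0''} removes any possibility of splitting off a factor $\binom{|\mathcal{P}_{\gamma,c}|}{i}$, and the auxiliary edge $e$ is already accounted for in the cardinality $j+1$ of $\hat E$. Consequently, the crude bound by the total number of $(j+1)$-subsets of $\support \gamma$ is sharp enough for the application in Lemma~\ref{lemma: upper bound on expectation of two events}, where this bound will be combined with activity estimates of the type produced by Lemma~\ref{lemma: flip a set forms iii}. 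I do not anticipate any obstacle — this lemma is really a bookkeeping statement that packages the combinatorial input in a form convenient for use later in the proof of Proposition~\ref{proposition: useful upper bound forms ii}.
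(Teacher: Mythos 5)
Your proof is correct and is essentially identical to the paper's own argument: both simply note that every element of $A_{j,0}''$ is a $(j+1)$-element subset of $\support\gamma$ and bound $|A_{j,0}''|$ by the number of such subsets, namely $\binom{|\gamma|}{j+1}$.
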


\begin{proof}
    Let \( \hat E \in A_{j,0}''. \) Then \( |\hat E| = j+1 \) and \( \hat E \subseteq \support \gamma. \) Since there are exactly \( \binom{| \gamma|}{j+1} \) such sets, we obtain the desired conclusion. 
\end{proof}

\begin{lemma}\label{lemma: letter inequalities}
    Let \( \omega \in \Omega^2(B_N,\mathbb{Z}_n), \) and define
    \begin{equation}\label{eq: letter definitions}
        \begin{cases}
            j \coloneqq |\support \delta \omega \cap \support \gamma | \cr 
            i \coloneqq |P_{\omega,\gamma,c}| \cr 
            k \coloneqq \bigl| (\support \omega^\gamma)^+ \bigr| \cr 
            k' \coloneqq \bigl| (\support \delta \omega^\gamma)^+ \bigr|. 
        \end{cases}
    \end{equation}  
    Then the following statements hold.
    \begin{enumerate}[label=\upshape(\roman*)]
        \item If \( \omega \in \mathcal{E}', \) then \( k' \geq j, \) \( k \geq j-i+1, \)  \( k+ k' \geq 3j-3i, \) and \( \support \delta\omega \cap \support \gamma \in A_{j,i}. \)
        \label{item: letter lemma i}
        
        \item If \( \omega \in \mathcal{E}''\smallsetminus \mathcal{E}', \) then \( k' \geq j, \) \( k = j-i,\) \(k' \geq 2k, \) \( j \geq \max(2i+1,2), \) and \( \support \delta \omega \cap \support \gamma  \in A_{j,i}'. \)
        \label{item: letter lemma ii}
        
        \item If \( \omega \in \mathcal{E}'''\smallsetminus (\mathcal{E}' \cup \mathcal{E}''), \) then \( k' \geq j, \) \( k = j-i,\) \(k' = 4k, \) \( i \geq 1, \) and \( \support \delta \omega \cap \support \gamma \in A_{j,i}. \)
        \label{item: letter lemma iii}
        
        \item If \( \omega \in \mathcal{E}''''\smallsetminus (\mathcal{E}' \cup \mathcal{E}'' \cup \mathcal{E}'''), \) then \( k' \geq j, \) \( k = j,\) \(k' = 4k, \) and \( i = 0,\) and \( \support \delta \omega \cap \support \gamma  \in A_{j,0}. \)
        Moreover, there is \( e \in \support \gamma \smallsetminus \support \delta \omega \) such that, with \( E \coloneqq (\support \delta \omega \cap \support \gamma) \sqcup \{ e \} \in A_{j,0}'', \) we have
        \( \bigl| (\support \omega^E)^+\bigr|\geq j+2, \)
        \( \bigl| (\support \delta \omega^E)^+\bigr|\geq 4j, \) and 
        \( \bigl| (\support \omega^E)^+\bigr| + \bigl| (\support \delta \omega^E)^+\bigr|\geq 5j +2. \)
        \label{item: letter lemma iv}
    \end{enumerate}  
\end{lemma}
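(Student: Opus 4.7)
My plan is to prove each part by a short case analysis, after first recording general observations valid for every $\omega\in\Omega^2(B_N,\mathbb{Z}_n)$. Because every connected component of $\omega$ that is not a component of $\omega^\gamma$ has $\delta$ disjoint from $\support\gamma$, we have $\support\delta\omega\cap\support\gamma=\support\delta\omega^\gamma\cap\support\gamma$, which gives $k'\geq j$. The same identity together with $(\support\omega^\gamma)^+\subseteq(\support\omega)^+$ yields $P_{\omega^\gamma,\gamma,c}\subseteq P_{\omega,\gamma,c}$, so $|P_{\omega^\gamma,\gamma,c}|\leq i$. Applying Lemma~\ref{lemma: basic inequality 1 forms} to $\omega^\gamma$ then gives $k+i\geq j$, and applying Lemma~\ref{lemma: general technical lemma forms} to $\omega^\gamma$ gives $k+k'\geq 3j+|V^{\gamma,\omega^\gamma}|-3|P_{\omega^\gamma,\gamma,c}|\geq 3(j-i)$. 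Finally $\|\omega^\gamma\|\leq k$, and in each case the membership of $\support\delta\omega\cap\support\gamma$ in $A_{j,i}$, $A_{j,i}'$, or $A_{j,0}''$ follows directly from the definitions.

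For \ref{item: letter lemma i} the three displayed bounds are immediate from the setup. For \ref{item: letter lemma ii}, combining $k+i\geq j$ with $k\leq j-i$ (the negation of $\mathcal{E}'$) gives $k=j-i$, so $k'=(k+k')-k\geq 2k+|V^{\gamma,\omega^\gamma}|\geq 2k$. Since $\omega\in\mathcal{E}''$ forces $\omega^\gamma\neq 0$ we have $j\geq 1$. If $j=2i$, then Lemma~\ref{lemma: basic inequality 1 forms} forces $|P_{\omega^\gamma,\gamma,c}|=i$, so every plaquette of $\omega^\gamma$ is a corner plaquette located at a distinct corner of the rectangle; since $\ell_1,\ell_2\geq 7$ these corner plaquettes are pairwise non-adjacent, so each is its own connected component, giving $\|\omega^\gamma\|\geq i=j-i$ and contradicting the $\mathcal{E}''$ condition $\|\omega^\gamma\|\leq j-i-1$. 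Thus $j\geq 2i+1$; for $j\geq 2$ one then uses $j\geq 2i+1\geq 3$ when $i\geq 1$, and the $\mathcal{E}''$ condition $j\geq\|\omega^\gamma\|+1\geq 2$ when $i=0$. For \ref{item: letter lemma iii}, $k=j-i$ as above; the negation of $\mathcal{E}''$ combined with $\|\omega^\gamma\|\leq k=j-i$ forces $\|\omega^\gamma\|=k$, so every connected component of $(\support\omega^\gamma)^+$ is a single positively oriented plaquette, and since each such plaquette contributes exactly four positive edges to $\support\delta\omega^\gamma$, $k'=4k$.

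Part \ref{item: letter lemma iv} is the most delicate. The equalities $i=0$, $k=j$ and $k'=4j$ follow exactly as in \ref{item: letter lemma iii}, and every component of $\omega^\gamma$ is a single plaquette. Crucially, these plaquettes are pairwise non-adjacent (otherwise two of them would merge into one component of $\omega$), and consequently every edge of $\support\gamma$ bordered by a plaquette of $\omega^\gamma$ must lie in $\support\delta\omega$. Since $\omega\in\mathcal{E}''''$, pick a component $C$ of $\omega$ with $C\subseteq\omega^{\gamma,2}$ but $C\not\subseteq\omega^\gamma$, and let $E_C\subseteq\support\gamma$ be the set of edges bordered by a plaquette of $C$; by definition of $\omega^{\gamma,2}$, $E_C\neq\emptyset$. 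For any $e\in E_C$: since $\delta C\cap\support\gamma=\emptyset$, cancellation within $C$ at $e$ forces $|\support C\cap\support\hat\partial e|\geq 2$; and no plaquette of $\omega^\gamma$ can border $e$, for otherwise it would be adjacent to a plaquette of $C$ and lie in the same component of $\omega$, contradicting $C\not\subseteq\omega^\gamma$. Therefore $e\in\support\gamma\smallsetminus\support\delta\omega$ and $|\support\omega\cap\support\hat\partial e|\geq 2$, so $E:=(\support\delta\omega\cap\support\gamma)\sqcup\{e\}\in A_{j,0}''$. Since $\omega^E$ contains every component of $\omega^\gamma$ and also $C$, and $|(\support C)^+|\geq 2$, we obtain $|(\support\omega^E)^+|\geq j+2$. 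By Lemma~\ref{lemma: connected components and lhd}, $\omega^\gamma\lhd\omega^E$, so $\support\delta\omega^\gamma$ and $\support\delta(\omega^E-\omega^\gamma)$ are disjoint, yielding $|(\support\delta\omega^E)^+|\geq|(\support\delta\omega^\gamma)^+|=4j$; the sum bound $5j+2$ follows by addition.

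The principal obstacle is the construction of the edge $e$ in part \ref{item: letter lemma iv}: it is not a priori clear that one can simultaneously find an edge of $\support\gamma$ outside $\support\delta\omega$ and bordered by at least two plaquettes of $\support\omega$. The resolution hinges on the rigid structural consequence of being outside $\mathcal{E}''\cup\mathcal{E}'''$ — that all components of $\omega^\gamma$ are isolated single plaquettes. This induces a sharp geometric dichotomy between edges of $\support\gamma$ bordered by $\omega^\gamma$ (which must lie in $\support\delta\omega$) and edges bordered by a component of $\omega^{\gamma,2}\smallsetminus\omega^\gamma$ (which must lie outside $\support\delta\omega$), making any edge in $E_C$ suitable.
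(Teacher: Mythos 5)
Your proof is correct and follows essentially the same overall architecture as the paper's: the same chain of deductions (failure of $\mathcal{E}'$ together with Lemma~\ref{lemma: basic inequality 1 forms} forces $k=j-i$; Lemma~\ref{lemma: general technical lemma forms} then gives $k'\geq 2k$; failure of $\mathcal{E}''$ forces all components of $\omega^\gamma$ to be singletons, whence $k'=4k$; etc.). Two local steps are handled differently, both correctly. For the bound $j\geq 2i+1$ in part~\ref{item: letter lemma ii}, you rule out $j=2i$ by noting it would force every plaquette of $\omega^\gamma$ to be an isolated corner plaquette and hence $\|\omega^\gamma\|=j-i$, contradicting $\mathcal{E}''$; the paper instead exhibits two distinct adjacent plaquettes bordering two distinct edges of $\gamma$ not sharing a corner. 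Your version is arguably the cleaner of the two. In part~\ref{item: letter lemma iv}, the paper asserts the existence of the edge $e$ with $e\notin\support\delta\omega$ and $|\support\hat\partial e\cap\support\omega|\geq 2$ ``by definition'' of $\mathcal{E}''''$; your explicit construction via a component $C$ of $\omega^{\gamma,2}$ not contained in $\omega^\gamma$, together with the observation that cancellation within $C$ at $e\in E_C$ forces at least two plaquettes of $C$ in $\hat\partial e$ while no plaquette of $\omega^\gamma$ can border $e$, is exactly the unpacking needed to justify that assertion, and is a worthwhile addition.
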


\begin{proof}
    Since, by definition, we have \( \support \delta \omega \cap \support \gamma \subseteq (\support  \delta\omega^\gamma)^+ , \) it follows that  \( k' \geq j. \) Moreover, by definition, we have \( \support \delta \omega \cap \support \gamma \in A_{j,i}. \) 
    
    If \( \omega \in \mathcal{E'}, \) then  \( \bigl|(\support \omega^\gamma)^+ \bigr| \geq |\support \delta \omega \cap \support \gamma| -  |P_{\omega,\gamma,c}| +1, \) or equivalently, \( k \geq j-i+1\). Also, by Lemma~\ref{lemma: general technical lemma forms}, we have \( k + k' \geq 3j-3i. \)
    This completes the proof of~\ref{item: letter lemma i}.

    \begin{sublemma}\label{sublemma: 1}
        If \( \omega \notin \mathcal{E}', \) then \( k = j-i\) and \(k' \geq 2k . \)
    \end{sublemma}
    \begin{subproof}
        Let \( \omega \notin \mathcal{E}'. \) Then, by definition, we have \( \bigl|(\support \omega^\gamma)^+ \bigr| \leq |\support \delta \omega \cap \support \gamma| -  |P_{\omega,\gamma,c}| . \) Recalling Lemma~\ref{lemma: basic inequality 1 forms}, it follows that \( \bigl|(\support \omega^\gamma)^+ \bigr| = |\support \delta \omega \cap \support \gamma| -  |P_{\omega,\gamma,c}| , \) or equivalently, \( k = j-i\). Using Lemma~\ref{lemma: general technical lemma forms}, it follows that \( k + k' \geq 3j-3i = 3k, \) or equivalently, \(k' \geq 2k . \) This concludes the proof.
    \end{subproof}
    
    If \( \omega \in \mathcal{E}'', \) then \( |\support \delta \omega \cap \support \gamma| - |P_{\omega,\gamma,c}| \geq \| \omega^\gamma\|+1.\)
    Consequently, if 
    \( \omega \in \mathcal{E}''\smallsetminus \mathcal{E}', \) then \( \bigl|(\support \omega^\gamma)^+ \bigr| \geq \| \omega^\gamma\|+1. \) In other words, some connected component of \( (\support \omega^\gamma)^+ \) must contain at least two plaquettes.
    Since \( \bigl| (\support \omega^\gamma)^+\bigr| = |\support \delta \omega \cap \support \gamma| - |P_{\omega,\gamma,c}|, \) there must exist \( \{ e,e' \} \subseteq \support \gamma \) such that if \( \{ p,p'\} \subseteq (\support \omega)^+ \) are defined by \( \{ p \} = \support \hat \partial e \cap \support \omega \) and \( \{ p' \} = \support \hat \partial e' \cap \support \omega, \) then \( p \sim p'. \) Since \( p \neq p', \) \( e \) and \( e' \) cannot both be adjacent to some common corner of \( \gamma. \) This implies in particular that \( j \geq 2i+1.\) 
    Finally, note that if 
    \( \omega \in  \mathcal{E}'' \smallsetminus \mathcal{E}', \) then \( \support \delta \omega \cap \support \gamma \in A_{j,i}'. \) This concludes the proof of~\ref{item: letter lemma ii}.

    \begin{sublemma}\label{sublemma: 2}
        If \( \omega \notin \mathcal{E}'' \cup \mathcal{E}', \) then \( k' = 4k . \) 
    \end{sublemma}

    \begin{subproof}
        Let \( \omega \notin \mathcal{E}'' \cup \mathcal{E}'. \) Then \( (\support \omega^\gamma)^+ \) consists of connected components, each of size one, and hence \( \bigl|(\support \delta \omega^\gamma)^+\bigr| = 4\bigl|(\support  \omega^\gamma)^+\bigr|, \) or equivalently \( k' = 4k . \) 
    \end{subproof}

    If \( \omega \in \mathcal{E}''', \) then \( |P_{\omega,\gamma,c}| \geq 1,  \) or equivalently \( i \geq 1. \) This concludes the proof of~\ref{item: letter lemma iii}.

    \begin{sublemma}\label{sublemma: 3}
        If \( \omega \notin \mathcal{E}''', \) then \( i = 0. \) 
    \end{sublemma}
    
    \begin{subproof}
        Let \( \omega \notin \mathcal{E}'''.\) Then \( |P_{\omega,\gamma,c}| =0,  \) and hence \( i = 0. \) 
    \end{subproof}

    Now assume that \( \omega \in \mathcal{E}'''' \smallsetminus (\mathcal{E}'  \cup \mathcal{E}''  \cup \mathcal{E}''' ). \)
    By combining Claim~\ref{sublemma: 1} and Claim~\ref{sublemma: 3}, it follows that \( k = \bigl| (\support \omega^\gamma)^+ \bigr|  = j, \) and by Claim~\ref{sublemma: 2}, we have \( k' = \bigl| (\support \delta\omega^\gamma)^+ \bigr| = 4j. \)
    Since \( \omega \in \mathcal{E}'''' \smallsetminus (\mathcal{E}'  \cup \mathcal{E}''  \cup \mathcal{E}''' ),\) by definition, there is \( e \in \support \gamma \smallsetminus \support \delta \omega \) such that \( |\support \hat \partial e \cap \support \omega | \geq 2, \) and \(  E \coloneqq \support \delta \omega \cap \support \gamma \sqcup \{ e \}\in A_{j,0}''. \) 
    Consequently, we must have 
    \begin{equation*}
        \bigl| (\support \omega^E)^+ \bigr| = \bigl| (\support \omega^\gamma)^+\bigr| + 2 = j+2
    \end{equation*}
    and 
    \begin{equation*}
        \bigl| (\support \delta\omega^E)^+ \bigr| \geq \bigl| (\support \delta \omega^\gamma)^+\bigr| =4j, 
    \end{equation*}
    and hence 
    \begin{equation*}
        \bigl| (\support \omega^E)^+ \bigr|+\bigl| (\support \delta\omega^E)^+ \bigr| \geq 
        (j+2)+4j
        = 5j+2.
    \end{equation*}
    This concludes the proof of~\ref{item: letter lemma iv}.
\end{proof}

\begin{proof}[Proof of Lemma~\ref{lemma: upper bound on expectation of two events}]
  For any event \( \mathcal{E}_0 \subseteq \Omega^2(B_N,\mathbb{Z}_n) ,\) we have 
    \begin{equation}\label{eq: upper bound on expectation of two events 1}
        \begin{split}
            &\mathbb{E}_\varphi  \biggl[\widehat{L_\gamma}(\omega) \cdot \mathbb{1}(\omega \in \mathcal{E}_0)\biggr] 
            \\&\qquad
            \leq 
            \sum_{i=0}^{|\mathcal{P}_{\gamma,c}|} \sum_{j = \max(1,2i)}^{| \gamma|}
            \mathbb{E}_\varphi  \biggl[\widehat{L_\gamma}(\omega)
            \cdot \mathbb{1}(\omega \in \mathcal{E}_0,\, |\support \delta \omega \cap \support \gamma | = j,\, |P_{\omega,\gamma,c}|=i)\biggr].
        \end{split}
    \end{equation} 
    Let us show that~\eqref{eq: first inequality} holds. Fix any \( j \in \{1,2, \dots, | \gamma| \} \) and \( i \in \pigl\{ 0, \dots, \min\bigl(j, |\mathcal{P}_{\gamma,c}|\bigr) \pigr\}. \)
    If \( \omega \in \mathcal{E}' \) is such that \( |\support \delta \omega \cap \support \gamma | = j \) and \( |P_{\omega,\gamma,c}|=i, \) then, by~\ref{item: letter lemma i} of Lemma~\ref{lemma: letter inequalities}, there is \( E \in A_{j,i} \) such that \( \support \delta \omega^\gamma \cap \support \gamma = E, \) and we have \( |(\support \omega^\gamma)^+| \geq j-i+1 \) and \( | (\support \delta \omega^\gamma)^+| \geq \max\pigl(j, 3j-3i-\bigl|(\support \omega^\gamma)^+\bigr|\pigr) . \) 
    Applying Lemma~\ref{lemma: flip a set forms iii} to $\omega^\gamma$ with \( E, \) \( k \geq j-i+1 \geq 1,\) \( k' \geq \max\pigl(j, 3j-3i-k\pigr), \) and \( k'' = j = |E|, \) we thus obtain
    \begin{align}\nonumber
            &\mathbb{E}_\varphi  \pigl[ \widehat{L_\gamma}(\omega)
            \cdot \mathbb{1}(\omega \in \mathcal{E}',\, |\support \delta \omega \cap \support \gamma | = j,\, |P_{\omega,\gamma,c}|=i)\pigr]
            \\[1ex] \nonumber
            &\qquad\leq 
            \smash{\sum_{E \in A_{j,i}} 
            \sum_{k = j-i+1}^\infty
            \sum_{k' = \max(j,3j-3i-k)}^\infty} \mathbb{E}_\varphi  \pigl[ \widehat{L_\gamma}(\omega)
            \cdot \mathbb{1}( \support \delta \omega \cap \support \gamma = E,\, |\support \omega^E| = 2k 
            \\ \nonumber
            &\hspace{19em} |\support \delta \omega^E|= 2k',\, |\support \delta \omega \cap \support \gamma| = j)\pigr]
            \\ \nonumber
            &\qquad\leq 
            |A_{j,i}| 
            \sum_{k = j-i+1}^\infty
            \sum_{k' = \max(j,3j-3i-k)}^\infty
            (16m)^{2k} \zeta_\beta^k \xi_\kappa^{|\gamma | + k' - 2j}
            \\\label{eq: upper bound on expectation of two events 2} &\qquad\leq
            \binom{|\gamma |}{j-2i}\binom{|\mathcal{P}_{\gamma,c}|}{i} 
            \sum_{k = j-i+1}^\infty
            \sum_{k' = \max(j,3j-3i-k)}^\infty
            (16m)^{2k} \zeta_\beta^k \xi_\kappa^{|\gamma | + k' - 2j},
    \end{align} 
    where the last inequality follows from applying Lemma~\ref{lemma: Aji bound}.
    Summing over all \( i \in \bigl\{ 0,1, \dots, |\mathcal{P}_{\gamma,c}| \bigr\}, \) and \( j \in \bigl\{ \max(1,2i), \dots,|\gamma | \bigr\} \) (see Claim~\ref{sublemma: upper bound on expectation of two events 1} in the Appendix), we see that
    \begin{align}\nonumber
            &
            B_1 \coloneqq \sum_{i = 0}^{|\mathcal{P}_{\gamma,c}|} \sum_{j=\max(1,2i)}^{|\gamma |} \binom{|\gamma |}{j-2i}\binom{|\mathcal{P}_{\gamma,c}|}{i} 
            \sum_{k = j-i+1}^\infty
            \sum_{k' = \max(j,3j-3i-k)}^\infty
            \!\!\!\!\!\!\!\! \!\!(16m)^{2k} \zeta_\beta^k \xi_\kappa^{|\gamma | + k' - 2j}
            \\ \nonumber &\qquad\leq
            \frac{(16m)^2\zeta_\beta\xi_\kappa^{-1}\xi_\kappa^{|\gamma |}}{(1 - \xi_\kappa)(1-(16m)^{2} \zeta_\beta\xi_\kappa^{-1})}
            \Bigl( \bigl( 1+(16m)^{2} \zeta_\beta\xi_\kappa^{-2} \bigr)^{|\mathcal{P}_{\gamma,c}|}
            \pigl( 1 + (16m)^{2}\zeta_\beta \pigr)^{|\gamma |} -1 \Bigr)
            \\ \label{eq: upper bound on expectation of two events 3} &\qquad\quad +
            \frac{(16m)^{2}\zeta_\beta\xi_\kappa^{|\gamma |}}{(1 - \xi_\kappa)(1 - (16m)^{2} \zeta_\beta)} 
            \Bigl( \bigl( 1 + (16m)^{2}\zeta_\beta  \xi_\kappa^{-2} \bigr)^{|\mathcal{P}_{\gamma,c}|} 
             \pigl( 1 + (16m)^{4} \zeta_\beta^{2} \xi_\kappa^{- 1}  \pigr)^{|\gamma |} -1 \Bigr). 
    \end{align}
    Combining~\eqref{eq: upper bound on expectation of two events 1},~\eqref{eq: upper bound on expectation of two events 2}~and~\eqref{eq: upper bound on expectation of two events 3}, we obtain~\eqref{eq: first inequality} as desired.

    We now show that~\eqref{eq: second inequality} holds. 
    To this end, fix any  \( i \in \bigl\{ 0, \dots, |\mathcal{P}_{\gamma,c}|  \bigr\} \) and  \( j \in \{\max(2i+1,2), \dots, |\gamma | \}. \)
    If \( \omega \in \mathcal{E}'' \smallsetminus \mathcal{E}'\) is such that \( |\support \delta \omega \cap \support \gamma | = j \) and \( |P_{\omega,\gamma,c}|=i, \) then, by~\ref{item: letter lemma ii} of Lemma~\ref{lemma: letter inequalities}, there is \( E \in A_{j,i}' \) such that \( \delta \omega^\gamma \cap \support \gamma = E, \)
    and we have \( |(\support \omega^\gamma)^+| = j-i, \) and \( | (\support \delta \omega^\gamma)^+| \geq \max\bigl(j,2(j-i)\bigr) . \) 
    Applying Lemma~\ref{lemma: flip a set forms iii} with \( E, \) \( k = j-i \geq 1, \)  \( k' \geq 2j-2i, \) and \( k''=j,\) we thus obtain
    \begin{equation*}
        \begin{split}
            &\mathbb{E}_\varphi  \pigl[ \widehat{L_\gamma}(\omega)
            \cdot \mathbb{1}(\omega \in \mathcal{E}'' \smallsetminus \mathcal{E}',\, |\support \delta \omega \cap \support \gamma | = j,\, |P_{\omega,\gamma,c}|=i)\pigr]
            \\[1ex]&\qquad\leq 
            \smash{\sum_{E \in A_{j,i}'} \sum_{k'=\max(j,2j-2i)}^\infty}
            \mathbb{E}_\varphi  \pigl[ \widehat{L_\gamma}(\omega)
            \cdot \mathbb{1}( \support \delta \omega \cap \support \gamma = E,\, |\support \omega^E| = 2(j-i) \\ &\hspace{19em} |\support \delta \omega^E|= 2k',\, |\support \delta \omega \cap \support \gamma| = j)\pigr]
            \\&\qquad\leq 
            |A_{j,i}'| \sum_{k'=\max(j,2j-2i)}^\infty
            (16m)^{2(j-i)} \zeta_\beta^{j-i} \xi_\kappa^{|\gamma | + k' - 2j}
            \\&\qquad\leq 
            (j-1) \binom{|\gamma |}{j-2i-1}\binom{|\mathcal{P}_{\gamma,c}|}{i} \sum_{k'=\max(j,2j-2i)}^\infty
            (16m)^{2(j-i)} \zeta_\beta^{j-i} \xi_\kappa^{|\gamma | + k' - 2j}
            ,
        \end{split}
    \end{equation*}   
    where the last inequality follows from applying Lemma~\ref{lemma: Aji' bound}.
    
    Summing over all \( i \in \bigl\{ 0,1, \dots, |\mathcal{P}_{\gamma,c}| \bigr\}, \) \( j \in \bigl\{ \max(2i+1,2), \dots,|\gamma | \bigr\}, \) and using that \( \sum_{j=0}^n \binom{n}{j} \cdot j p^j = np(1+p)^{n-1}\) (see Claim~\ref{sublemma: upper bound on expectation of two events 2} in the Appendix), we obtain
    \begin{align}
        &B_2 \coloneqq \sum_{i=0}^{|\mathcal{P}_{\gamma,c}|} \sum_{j = \max(2i+1,2)}^{|\gamma |} \!\!\!\!\!\!(j-1) \binom{|\gamma |}{j-2i-1}\binom{|\mathcal{P}_{\gamma,c}|}{i} 
        \sum_{k'=\max(j,2j-2i)}^\infty \!\!\!\!\!\!\!\!
        (16m)^{2(j-i)} \zeta_\beta^{j-i} \xi_\kappa^{|\gamma | + k' - 2j}\label{eq: def B2}
        \\\nonumber
        &\qquad\leq
        \frac{\xi_\kappa^{|\gamma |} \cdot (16m)^4 \zeta_\beta}{1-\xi_\kappa}\biggl(  
        |\gamma |  \zeta_\beta  
        +
        2
        |\mathcal{P}_{\gamma,c}|  \zeta_\beta \xi_\kappa^{-2} 
        \biggr)  \bigl(1 + (16m)^2\zeta_\beta \xi_\kappa^{-2} \bigr)^{|\mathcal{P}_{\gamma,c}| } 
         %
 \bigl( 1 + (16m)^2 \zeta_\beta\bigr)^{|\gamma |}, 
    \end{align} 
    which shows that~\eqref{eq: second inequality} holds as desired.

     We now show that~\eqref{eq: third inequality} holds. To this end, fix any integers \( i \in \bigl\{ 1, \dots, |\mathcal{P}_{\gamma,c}|  \bigr\} \) and  \( j \in \{2i, \dots, |\gamma | \} .\) If \( \omega \in \mathcal{E}''' \smallsetminus (\mathcal{E}' \cup \mathcal{E}'') \) is such that \( |\support \delta \omega \cap \support \gamma | = j \) and \( |P_{\omega,\gamma,c}|=i, \) then, by~\ref{item: letter lemma iii} of Lemma~\ref{lemma: letter inequalities},  there is \( E \in A_{j,i} \) such that \( \support \delta \omega \cap \support \gamma = E, \) and we have \( \bigl|(\support \delta \omega)^+\bigr| = 4\bigl| (\support \omega)^+ \bigr|  = 4(j-i). \) 
     Applying Lemma~\ref{lemma: flip a set forms iii} with \( E, \) \( k = j-i \geq 1, \) \( k' = 4(j-i), \) and \( k''=j,\) we thus obtain
    \begin{equation*}
        \begin{split}
            &\mathbb{E}_\varphi  \pigl[\widehat{L_\gamma}(\omega)
            \cdot \mathbb{1}(\omega \in \mathcal{E}''' \smallsetminus (\mathcal{E}' \cup \mathcal{E}''),\, |\support \delta \omega \cap \support \gamma | = j,\, |P_{\omega,\gamma,c}|=k)\pigr]
            \\&\qquad\leq 
            \smash{\sum_{E \in A_{j,i}}}
            \mathbb{E}_\varphi  \pigl[ \widehat{L_\gamma}(\omega)
            \cdot \mathbb{1}( \support \delta \omega \cap \support \gamma = E,\, |\support \omega^E| = 2(j-i) \\ &\hspace{13em} |\support \delta \omega^E|= 2\cdot 4(j-i),\, |\support \delta \omega \cap \support \gamma| = j)\pigr] 
            \\&\qquad\leq 
            \sum_{E \in A_{j,i}}
            (16m)^{2(j-i)} \zeta_\beta^{j-i} \xi_\kappa^{|\gamma | + 4(j-i)-2j}
            \leq 
            |A_{j,i}|
            (16m)^{2(j-i)} \zeta_\beta^{j-i} \xi_\kappa^{|\gamma | + 4(j-i)-2j}
            \\&\qquad\leq 
            \binom{|\gamma |}{j-2i} \binom{|\mathcal{P}_{\gamma,c}|}{i} 
            (16m)^{2(j-i)} \zeta_\beta^{j-i} \xi_\kappa^{|\gamma | + 4(j-i)-2j},
        \end{split}
    \end{equation*}  
    where the last inequality follows from applying Lemma~\ref{lemma: Aji bound}.
    Summing over all \( i \in \bigl\{ 1, \dots, |\mathcal{P}_{\gamma,c}| \bigr\}, \) and \( j \in \bigl\{ 2i, \dots,|\gamma | \bigr\}, \) we obtain~\eqref{eq: third inequality} as desired.

    We now show that~\eqref{eq: fourth inequality} holds.
    To this end, fix any \( j \in \{ 0,1, \dots, |\gamma | \}. \)
    If \( \omega \in \mathcal{E}'''' \smallsetminus (\mathcal{E}' \cup \mathcal{E}'' \cup \mathcal{E}''') \) is such that \( |\support \delta \omega \cap \support \gamma | = j ,\) then, by~\ref{item: letter lemma iv} of Lemma~\ref{lemma: letter inequalities}, there is  \( E \in A_{j,0}'' \) and \( e \in E \) such that \(\support \delta \omega \cap \support \gamma = E\smallsetminus \{ e \}, \) and we have \( \hat k \coloneqq \bigl|(\support  \omega^E)^+\bigr| \geq j+2, \) \( \hat k' \coloneqq \bigl|(\support \delta \omega^E)^+\bigr| \geq 4j, \) and \( k'' \coloneqq  |\support \delta \omega^E \cap \support \gamma| = j. \)
    Applying Lemma~\ref{lemma: flip a set forms iii} with \( E, \) \( \hat k \) \( \hat k', \) and \( \hat k'',\) we thus obtain
    \begin{equation*}
        \begin{split}
            &\mathbb{E}_\varphi  \pigl[\widehat{L_\gamma}(\omega)
            \cdot \mathbb{1}\bigl(\omega \in \mathcal{E}'''' \smallsetminus (\mathcal{E}' \cup \mathcal{E}'' \cup \mathcal{E}'''),\, |\support \delta \omega \cap \support \gamma | = j \bigr)\pigr]
            \\&\qquad\leq 
            \smash{\sum_{E \in A_{j,0}''} \sum_{e \in E} \sum_{\hat k = j+2}^\infty \sum_{\hat k' = 4j}^\infty}
            \mathbb{E}_\varphi  \pigl[ \widehat{L_\gamma}(\omega)
            \cdot \mathbb{1}( \support \delta \omega \cap \support \gamma = E\smallsetminus \{ e \},\, |\support \omega^E| = 2\hat k \\ &\hspace{13em} |\support \delta \omega^E|= 2 \hat k',\, |\support \delta \omega \cap \support \gamma| = j)\pigr]  
            \\&\qquad\leq 
            \sum_{E \in A_{j,0}''} \sum_{e \in E} \sum_{\hat k = j+2}^\infty \sum_{\hat k' = 4j}^\infty
            (16m)^{2\hat k} \zeta_\beta^{\hat k} \xi_\kappa^{|\gamma | + \hat k'-2j} 
            \leq 
            |A_{j,0}''|\cdot |E| \sum_{\hat k = j+2}^\infty \sum_{\hat k' = 4j}^\infty
            (16m)^{2\hat k} \zeta_\beta^{\hat k} \xi_\kappa^{|\gamma | + \hat k'-2j}
            \\&\qquad\leq 
            \binom{|\gamma |}{j+1}  (j+1) \sum_{\hat k = j+2}^\infty \sum_{\hat k' = 4j + \max(0,j+6-\hat k)}^\infty
            (16m)^{2\hat k} \zeta_\beta^{\hat k} \xi_\kappa^{|\gamma | + \hat k'-2j},
        \end{split}
    \end{equation*}   
    where the last inequality follows from applying Lemma~\ref{lemma: Aji'' bound}.
    Summing over all \( j \in \bigl\{ 0, \dots,|\gamma | \bigr\} \) (see Claim~\ref{sublemma: upper bound on expectation of two events 3} in the Appendix), we obtain
    \begin{align}
        &B_3 \coloneqq \sum_{j = 0}^{|\gamma |}
        \binom{|\gamma |}{j+1}  (j+1) \sum_{\hat k = j+2}^\infty \sum_{\hat k' = 4j + \max(0,j+6-\hat k)}^\infty \!\!\!\! (16m)^{2\hat k} \zeta_\beta^{\hat k} \xi_\kappa^{|\gamma | + \hat k'-2j}\label{eq: B3}
        \\&\qquad\leq\nonumber
        \frac{\xi_\kappa^{|\gamma |}(16m)^{4}\zeta_\beta^2  |\gamma |  \bigl( 1 + (16m)^{2} \zeta_\beta  \xi_\kappa^{2}  \bigr)^{|\gamma |}}{1-\xi_\kappa}
            \biggl(
        \frac{\xi_\kappa^{4}}{(1-(16m)^{2} \zeta_\beta \xi_\kappa^{-1})}
        +
        \frac{(16m)^{8} \zeta_\beta^{4} }{(1-(16m)^{2} \zeta_\beta )} 
        \biggr) 
    \end{align}
    and hence~\eqref{eq: fourth inequality} holds as desired. This concludes the proof.
\end{proof}

\begin{proof}[Proof of Proposition~\ref{proposition: useful upper bound forms ii}]
    If \( \omega \in \Omega^2(B_N,\mathbb{Z}_n)\) is such that \( \bigl| (\support \omega^{\gamma,2})^+ \bigr| = \| \omega^{\gamma} \|, \) then we must necessarily have \( \omega^{\gamma,2} = \omega^\gamma.\) Consequently, if  \( |P_{\omega,\gamma,c}| = 0,\) then \( |\support \delta \omega \cap \support \gamma| = \| \omega^\gamma \|.\) Recalling the definition of  \( \mathcal{E}\) from~\eqref{eq: def E2 forms ii}, we can thus write 
    \begin{equation*}\label{eq: def E2 forms ii0}
        \begin{split} 
            \mathcal{E} = \bigl\{ \omega \in \Omega^2(B_N,\mathbb{Z}_n) \colon
            \bigl|(\support \omega^\gamma)^+\bigr| = |\support \delta \omega \cap \support \gamma| - |P_{\omega,\gamma,c}|  = \| \omega^\gamma\|,\quad& 
            \\ |P_{\omega,\gamma,c}| = 0,\, \omega^{\gamma,2} = \omega^\gamma  &\bigr\}.
        \end{split}
    \end{equation*}

    For all \( \omega \in \Omega^2(B_N,\mathbb{Z}_n), \) we have 
    \begin{equation*}
        |\support \delta \omega \cap \support \gamma| - |P_{\omega,\gamma,c}|\geq \|\omega^\gamma\|.
    \end{equation*}
    Furthermore, it follows from Lemma~\ref{lemma: basic inequality 1 forms} that 
    \begin{equation*}
       \bigl|(\support \omega^\gamma)^+ \bigr|  \geq  \bigl|\support \delta \omega\cap \support \gamma \bigr| - |P_{\omega,\gamma,c}|.
    \end{equation*}
    %
    If \( \omega \in \Omega^2(B_N,\mathbb{Z}_n) \smallsetminus \mathcal{E}, \) then either \(  |P_{\omega,\gamma,c}| > 0, \) 
    or \(  |P_{\omega,\gamma,c}| = 0 \) and either \( \bigl| (\support \omega^\gamma)^+ \bigr| > |\support \delta \omega \cap \support \gamma|,  \)  \( |\support \delta \omega  \cap \support \gamma| > \| \omega^\gamma\| , \) or \( \omega^{\gamma,2} \neq \omega^\gamma. \) 
    In other words, if \( \omega \in \Omega^2(B_N,\mathbb{Z}_n) \smallsetminus \mathcal{E}, \) then at least one of the following hold. 
    \begin{enumerate}[label=(\roman*)]
        \item \( \bigl|(\support \omega^\gamma)^+ \bigr| \geq |\support \delta \omega \cap \support \gamma|  - |P_{\omega,\gamma,c}| +1 \) (implying that \( \omega \in \mathcal{E}'\)),
        
        \item \( |\support \delta \omega \cap \support \gamma| - |P_{\omega,\gamma,c}|  \geq \| \omega^\gamma\| +1 \) (implying that \( \omega \in \mathcal{E}''\)),
        
        \item \(|P_{\omega,\gamma,c}| \geq 1 \) (implying that \( \omega \in \mathcal{E}'''\)), or
        
        \item \(\omega^{\gamma,2} \neq \omega^\gamma \) (implying that \( \omega \in \mathcal{E}''''\)).
    \end{enumerate}
    Hence \( \Omega^2(B_N,\mathbb{Z}_n) \smallsetminus \mathcal{E} = \mathcal{E}' \cup \mathcal{E}'' \cup \mathcal{E}''' \cup \mathcal{E}''''. \) Using Lemma~\ref{lemma: upper bound on expectation of two events} and a union bound, the desired conclusion follows.
\end{proof}

Before we end this section, we prove the following lemma, which will be used later.

\begin{lemma}\label{lemma: upper bound for E2 forms}
    Let \( \beta,\kappa \geq 0, \) and let \( \gamma \) be a rectangular path. Assume that~\eqref{assumption: 1} holds. Then
    \begin{equation*} 
        \mathbb{P}_\varphi (\omega \notin \mathcal{E})  
        \leq  
        C^{(2ii)}_{\gamma,\beta,\kappa,m}  \zeta_\beta,
    \end{equation*}
    where \( \mathcal{P}_\gamma \) is defined in~\eqref{eq: Pgamma} and \( \mathcal{P}_{\gamma,c} \) is defined in~\eqref{eq: Pgammac}, and
    \begin{equation}\label{eq: def C2ii ii}
        C^{(2ii)}_{\gamma,\beta,\kappa,m} \coloneqq 
        \frac{|\mathcal{P}_{\gamma,c}| (8m)^{2}  \xi_\kappa^{4} }{1-(8m)^{2} \zeta_\beta \xi_\kappa^{-1} }
        +
        \frac{|\mathcal{P}_\gamma|(8m)^{4} \zeta_\beta \xi_\kappa^{4} }{1-(8m)^{2} \zeta_\beta \xi_\kappa^{-1} }.
    \end{equation}
\end{lemma}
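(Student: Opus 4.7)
The plan is to decompose $\{\omega \notin \mathcal{E}\}$ into two simpler events and bound each via Lemma~\ref{lemma: flip a set forms} with a union bound. First, I would unpack the definition~\eqref{eq: def E2 forms ii} of $\mathcal{E}$. By Lemma~\ref{lemma: action factorization forms iib} we have $\omega^\gamma \lhd \omega^{\gamma,2}$, so in general $\|\omega^\gamma\|\leq \|\omega^{\gamma,2}\|\leq |(\support \omega^{\gamma,2})^+|$. Moreover, any connected component $\{p\}\subseteq (\support \omega^{\gamma,2})^+$ consisting of a single plaquette automatically lies in $\omega^\gamma$, since $\delta(\omega|_{\{p\}}) = \omega(p)\partial p$ meets $\support\gamma$ whenever $\partial p$ does. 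Hence $|(\support \omega^{\gamma,2})^+|=\|\omega^\gamma\|$ holds if and only if every connected component of $(\support\omega^{\gamma,2})^+$ is a single plaquette, and therefore $\omega\notin\mathcal{E}$ forces at least one of:
\[
\text{(A)}\quad \exists\,p\in\mathcal{P}_{\gamma,c}\text{ with }p\in(\support\omega)^+,\qquad\text{(B)}\quad \exists\,p\in\mathcal{P}_\gamma\text{ whose }\omega\text{-component has }\geq 2\text{ plaquettes}.
\]

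For Case (A), I would fix $p\in\mathcal{P}_{\gamma,c}$ and let $\omega'$ be the connected component of $\omega$ containing $p$; by Lemma~\ref{lemma: connected components and lhd}, $\omega'\lhd\omega$. In the extremal case $\omega'=\omega|_{\{p\}}$ one has $|\support\omega'|=2$ and $|\support\delta\omega'|=8$, so $\omega'$ belongs to the set $H$ of Lemma~\ref{lemma: flip a set forms} with $P_0=\{p\}$, $k=1$, and $k'=5$. That lemma and a union bound over $p\in\mathcal{P}_{\gamma,c}$ give
\[
\mathbb{P}_\varphi\bigl(\text{Case (A) holds}\bigr)\;\leq\; \frac{|\mathcal{P}_{\gamma,c}|\,(8m)^2\,\xi_\kappa^4}{1-(8m)^2\zeta_\beta\xi_\kappa^{-1}}\,\zeta_\beta,
\]
which is exactly the first term of $C^{(2ii)}_{\gamma,\beta,\kappa,m}\zeta_\beta$.

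For Case (B), the same strategy works with the parameters $k=2$, $k'=6$: for each $p\in\mathcal{P}_\gamma$, if the $\omega$-component $\omega'$ containing $p$ has $\geq 2$ plaquettes then $|\support\omega'|\geq 4$ and, as discussed below, $|\support\omega'|+|\support\delta\omega'|\geq 12$, so $\omega'$ belongs to the corresponding set $H$. Lemma~\ref{lemma: flip a set forms} and a union bound over $\mathcal{P}_\gamma$ then contribute exactly the second term of $C^{(2ii)}_{\gamma,\beta,\kappa,m}\zeta_\beta$. The main technical obstacle is verifying the bound $|\support\omega'|+|\support\delta\omega'|\geq 12$: for two adjacent plaquettes this is direct, since even if cancellation occurs at the shared edge the six remaining non-shared positively oriented boundary edges force $|\support\delta\omega'|\geq 12$, giving a sum of at least $16$; and the bound is sharp for closed $2$-cycles such as the boundary of a unit $3$-cube, where $|\support\omega'|=12$ and $|\support\delta\omega'|=0$, with intermediate cases handled by analogous boundary-counting arguments.
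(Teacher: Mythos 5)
Your proposal is correct and follows essentially the same route as the paper: the same decomposition of $\{\omega\notin\mathcal{E}\}$ into a corner-plaquette event and a ``component of size $\geq 2$ meeting $\mathcal{P}_\gamma$'' event, followed by Lemma~\ref{lemma: flip a set forms} with $(k,k')=(1,5)$ and $(2,6)$ respectively and a union bound. The only difference is that you explicitly verify the hypothesis $|\support\omega'|+|\support\delta\omega'|\geq 12$ needed for the choice $k'=6$, which the paper leaves implicit.
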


\begin{proof}
We saw in the proof of Proposition~\ref{proposition: useful upper bound forms ii} that if \( \omega \notin \mathcal{E}, \) then either \(  |P_{\omega,\gamma,c}| > 0, \) or \(  |P_{\omega,\gamma,c}| = 0 \) and either \( \bigl| (\support \omega^\gamma)^+ \bigr| > |\support \delta \omega \cap \support \gamma|,  \)  \( |\support \delta \omega  \cap \support \gamma| > \| \omega^\gamma\| , \) or \( \omega^{\gamma,2} \neq \omega^\gamma. \) 
    Consequently, if \( \omega \notin \mathcal{E}, \) then either \( |P_{\omega,\gamma,c}| > 0, \) or some plaquette in \( \mathcal{P}_\gamma^+ \) is in the support of some \( \omega' \lhd \omega \) with connected support that satisfies \( \bigl|(\support \omega')^+\bigr| \geq 2. \)  
    Applying Lemma~\ref{lemma: flip a set forms} twice, first, for each \( p \in \mathcal{P}_{\gamma,c}, \) with \( P_0 = \{ p \},\) \( k = 1, \) and \( k' = 5, \) and then for each \( p \in \mathcal{P}_\gamma^+, \) with \( P_0 = \{ p \},\) \( k = 2 \) and \( k' = 6, \) and using a union bound, we obtain
    \begin{equation*}
        \begin{split}
             &\mathbb{P}_\varphi (\omega \notin \mathcal{E})
             \leq
             \sum_{p \in \mathcal{P}_{\gamma,c}} \mathbb{P}_\varphi(p \in \support \omega) + \sum_{p \in \mathcal{P}_\gamma}\pigl(\bigl|( \support \omega^{\support \partial p})^+\bigr| \geq 2 \pigr)
             \\&\qquad\leq 
             \frac{|\mathcal{P}_{\gamma,c}| (8m)^{2} \zeta_\beta \xi_\kappa^{4} }{1-(8m)^{2} \zeta_\beta \xi_\kappa^{-1} }
             +
              \frac{|\mathcal{P}_\gamma|(8m)^{4} \zeta_\beta^2 \xi_\kappa^{4} }{1-(8m)^{2} \zeta_\beta \xi_\kappa^{-1} }.
        \end{split} 
    \end{equation*}
    This concludes the proof.
\end{proof}

\section{Poisson approximation} 
\label{section: poisson}

With Proposition~\ref{proposition: useful upper bound forms ii} at hand, we know that the contribution to the Wilson line expectation from 2-forms \( \omega \in \Omega^2(B_N,\mathbb{Z}_n) \) which do not satisfy the conditions \( |\support \delta \omega \cap \support \gamma | = \bigl|(\support \omega^\gamma)^+ \bigr| = \|\omega^\gamma \|,\)  \(  |P_{\omega,\gamma,c}| =0, \) and \( \omega^{\gamma,2} = \omega^\gamma\) is very small. For this reason, in this section, we concentrate on calculating the contribution from configurations that do satisfy the above conditions.
The main result in this section is the following proposition. 

\begin{proposition}\label{proposition: last resampling lemma forms ii}
    Let \( \beta,\kappa \geq 0, \) and let \( \gamma \) be a rectangular path. Assume that~\eqref{assumption: 1} and~\eqref{assumption: 3} hold.
    Then
    \begin{equation*}
    \begin{split}
        &\Bigl| \mathbb{E}_\varphi    \pigl[\widehat{L_\gamma}(\omega)\cdot \mathbb{1}(  \mathcal{E} ) \pigr] - \xi_\kappa^{|\gamma |} \alpha(\beta,\kappa)^{|\mathcal{P}_\gamma|}\Bigr|
            \leq
            C^{(2)}_{\gamma,\beta,\kappa,m} \, \varphi_\kappa(1)^{|\gamma |}
            \alpha(\beta,\kappa)^{|\mathcal{P}_\gamma |} \zeta_\beta,
    \end{split}
\end{equation*}
where  \( \mathcal{P}_\gamma \) is defined in~\eqref{eq: Pgamma}, 
\begin{equation}
    \begin{split}
        &C^{(2)}_{\gamma,\beta,\kappa,m} \coloneqq 
            C^{(2i)}_{\gamma,\beta,\kappa,m}
            +  
            C^{(2ii)}_{\gamma,\beta,\kappa, m}
            + 
            C^{(2iii)}_{\gamma,\beta,\kappa, m},
        \end{split}
    \end{equation}
    \( C^{(2i)}_{\gamma,\beta,\kappa,m} \) is defined in~\eqref{eq: def C2i}, \( C^{(2ii)}_{\gamma,\beta,\kappa,m} \) is defined in~\eqref{eq: def C2ii ii}, and \( C^{(2iii)}_{\gamma,\beta,\kappa,m} \) is defined in~\eqref{eq: def C2iii ii}.
\end{proposition}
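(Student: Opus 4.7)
The plan is to exploit the geometric rigidity of the event $\mathcal{E}$ to reduce the expectation to a Poisson-type product over plaquettes in $\mathcal{P}_\gamma$. On $\mathcal{E}$ we have $\omega^\gamma = \omega^{\gamma,2}$, with each connected component of $(\support\omega^\gamma)^+$ a single plaquette drawn from $\mathcal{P}_\gamma \smallsetminus \mathcal{P}_{\gamma,c}$, pairwise non-adjacent. Setting $\omega^c \coloneqq \omega - \omega^\gamma$, Lemma~\ref{lemma: action factorization forms iib} gives $\omega^\gamma \lhd \omega$, so Lemma~\ref{lemma: action factorization forms} yields $\varphi(\omega) = \varphi(\omega^\gamma)\varphi(\omega^c)$ and Lemma~\ref{lemma: action factorization forms iic} gives $\widehat{L_\gamma}(\omega) = \widehat{L_\gamma}(\omega^\gamma)$. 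Denoting by $\mathcal{I}$ the set of admissible $\omega^\gamma$, I thus obtain
\[
\mathbb{E}_\varphi\bigl[\widehat{L_\gamma}(\omega)\,\mathbb{1}(\mathcal{E})\bigr] = \sum_{\omega^\gamma \in \mathcal{I}} \widehat{L_\gamma}(\omega^\gamma)\,\varphi(\omega^\gamma)\cdot \frac{\tilde Z(\omega^\gamma)}{Z},
\]
where $\tilde Z(\omega^\gamma) \coloneqq \sum_{\omega^c}\varphi(\omega^c)\mathbb{1}(\omega^\gamma+\omega^c \in \mathcal{E})$.

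Writing $\omega^\gamma = \sum_{p \in S} j_p\cdot p$ with $S \in \mathcal{I}$ and $j_p \in [n]\smallsetminus\{0\}$, each $p \in S$ has a unique edge $e_p \in \support\gamma$. Using Lemma~\ref{lemma: symmetry} to absorb signs, a direct computation gives
\[
\widehat{L_\gamma}(\omega^\gamma)\,\varphi(\omega^\gamma) = \varphi_\kappa(1)^{|\gamma|-|S|}\prod_{p \in S}\varphi_\beta(j_p)\varphi_\kappa(j_p)^3\varphi_\kappa(j_p+1).
\]
Including the ``off'' value $j_p = 0$ (which reproduces $\varphi_\kappa(1)$), dropping the isolation constraint on $S$, and extending the plaquette set from $\mathcal{P}_\gamma \smallsetminus \mathcal{P}_{\gamma,c}$ to $\mathcal{P}_\gamma$, the formal factorized sum becomes $\varphi_\kappa(1)^{|\gamma|}\prod_{p \in \mathcal{P}_\gamma}(1 + A/\varphi_\kappa(1))$, where $A \coloneqq \sum_{j=1}^{n-1}\varphi_\beta(j)\varphi_\kappa(j)^3\varphi_\kappa(j+1)$. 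Setting $Z_{\mathrm{loc}} \coloneqq \sum_j \varphi_\beta(j)\varphi_\kappa(j)^4$, the definition of $\alpha(\beta,\kappa)$ yields the key identity $1+A/\varphi_\kappa(1) = \alpha(\beta,\kappa)\,Z_{\mathrm{loc}}$.

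The third step is to show $\tilde Z(\omega^\gamma)/Z = Z_{\mathrm{loc}}^{-|\mathcal{P}_\gamma|}\bigl(1+O(\zeta_\beta)\bigr)$ uniformly in $\omega^\gamma \in \mathcal{I}$. Indeed, the constraint $\omega^\gamma + \omega^c \in \mathcal{E}$ forces $\omega^c$ to avoid both $\mathcal{P}_\gamma$ and the edge-neighborhood of $(\support\omega^\gamma)^+$, and a Poisson-type comparison in the spirit of Lemma~\ref{lemma: flip a set forms} shows that pinning each plaquette in $\mathcal{P}_\gamma$ to $0$ in $\omega^c$ yields a factor $1/Z_{\mathrm{loc}}$ at leading order. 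The $Z_{\mathrm{loc}}^{|\mathcal{P}_\gamma|}$ factors then cancel against those produced above, leaving the main term $\xi_\kappa^{|\gamma|}\alpha(\beta,\kappa)^{|\mathcal{P}_\gamma|}$ (using $\xi_\kappa = \varphi_\kappa(1)$ from Lemmas~\ref{lemma: order} and~\ref{lemma: symmetry}).

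The remaining task is to package the errors as $C^{(2)} = C^{(2i)} + C^{(2ii)} + C^{(2iii)}$: (i) dropping the isolation constraint requires subtracting adjacent-pair configurations, bounded by Lemma~\ref{lemma: flip a set forms} with small $k,k'$, producing $C^{(2i)}$; (ii) the deviation of $\tilde Z(\omega^\gamma)/Z$ from its Poisson-ideal value is controlled by Lemma~\ref{lemma: upper bound for E2 forms}, giving exactly $C^{(2ii)}$ from~\eqref{eq: def C2ii ii}; and (iii) extending from $\mathcal{P}_\gamma \smallsetminus \mathcal{P}_{\gamma,c}$ to $\mathcal{P}_\gamma$ introduces $\alpha(\beta,\kappa)^{|\mathcal{P}_{\gamma,c}|}$, whose distance to $1$ is bounded via Lemma~\ref{lemma: Zn properties of alpha}. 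The principal obstacle is that each of these three errors must carry the relative prefactor $\xi_\kappa^{|\gamma|}\alpha(\beta,\kappa)^{|\mathcal{P}_\gamma|}$ demanded by the statement, so a careful \emph{relative} rather than absolute error analysis is required, together with fine-grained bookkeeping of the constants that arise when the local partition functions $Z_{\mathrm{loc}}$ are expanded to leading order in $\zeta_\beta$.
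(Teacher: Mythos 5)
Your algebraic skeleton is sound: the computation of $\widehat{L_\gamma}(\omega^\gamma)\varphi(\omega^\gamma)$ for isolated plaquettes, and the identity $1+A/\varphi_\kappa(1)=\alpha(\beta,\kappa)\,Z_{\mathrm{loc}}$ with $Z_{\mathrm{loc}}=\sum_j\varphi_\beta(j)\varphi_\kappa(j)^4=1/\lambda_0$, are exactly the mechanism by which $\alpha(\beta,\kappa)^{|\mathcal{P}_\gamma|}$ emerges in the paper as well (compare~\eqref{alphasumf} and Lemma~\ref{lemma: step 1 of end forms ii}). The difference in route is that the paper never estimates a ratio of partition functions: Lemma~\ref{lemma: P tilde equation forms ii}, via the bijection of Lemma~\ref{lemma: last lemma}, shows that conditionally on $P_\omega$ and on $\mathcal{E}$ the restriction of $\omega$ to $P_\omega$ is \emph{exactly} an i.i.d.\ product measure with weights $\lambda_j$ conditioned on $\mathcal{E}_1^\omega$, so the only approximations left are removing that conditioning (Lemma~\ref{lemma: I forgot the use of ii}, giving $C^{(2i)}$) and comparing $|P_\omega|$ with $|\mathcal{P}_\gamma|$ (Lemmas~\ref{lemma: upper bound for tilde P forms ii} and~\ref{lemma: upper bound for E2 forms}, giving $C^{(2ii)}$ and $C^{(2iii)}$).

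The genuine gap in your version is the third step, which is where all the difficulty lives. The claim that $\tilde Z(\omega^\gamma)/Z=Z_{\mathrm{loc}}^{-|\mathcal{P}_\gamma|}\bigl(1+O(\zeta_\beta)\bigr)$ \emph{uniformly} is not proved and is in fact false as stated: since $Z_{\mathrm{loc}}=1+O(\zeta_\beta)$ per plaquette, decoupling $|\mathcal{P}_\gamma|$ plaquettes from the bulk incurs per-plaquette relative corrections, so the best one can hope for is a relative error of order $|\mathcal{P}_\gamma|\zeta_\beta^2+|\mathcal{P}_{\gamma,c}|\zeta_\beta$ (which is what the constants $C^{(2i)},C^{(2ii)},C^{(2iii)}$ actually encode), not an absolute $O(\zeta_\beta)$ independent of $|\gamma|$. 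Moreover this ratio depends on $\omega^\gamma$ through the coupled exclusion constraints between $S$ and $\omega^c$ (a plaquette of $\mathcal{P}_\gamma$ adjacent to a bulk component of $\omega^c$ is forbidden for $S$ -- this is precisely the $\mathcal{P}_\gamma\smallsetminus P_\omega$ effect the paper controls with Lemma~\ref{lemma: upper bound for tilde P forms ii}), so establishing the factorized asymptotics of $\tilde Z(\omega^\gamma)/Z$ would require a telescoping or cluster-expansion argument that Lemma~\ref{lemma: flip a set forms} does not supply. Relatedly, your assertion that the three error sources land ``exactly'' on $C^{(2i)}$, $C^{(2ii)}$, $C^{(2iii)}$ does not hold: in the paper $C^{(2ii)}$ arises from $\mathbb{P}_\varphi(\omega\notin\mathcal{E})$ and $C^{(2iii)}$ from $\mathbb{E}_\varphi[|\mathcal{P}_\gamma\smallsetminus(P_\omega\cup\pm\mathcal{P}_{\gamma,c})|]$ together with the corner correction, which do not match your attribution. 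To repair the proof you would either need to carry out the partition-function comparison in full, or adopt the paper's conditional resampling identity, which sidesteps it entirely.
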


The next few lemmas will be useful in the proof of Proposition~\ref{proposition: last resampling lemma forms ii}. 
For \( j\in \mathbb{Z}_n, \) define
\begin{equation*}
    r_\kappa(j) \coloneqq \frac{\varphi_\kappa(  j +1)}{ \varphi_\kappa(j)\varphi_\kappa(1)}.
\end{equation*}
The reason that the function \( r_\kappa  \) is relevant for us is the following lemma.

\begin{lemma}\label{lemma: change of complicated term forms ii}
    Let \( \beta,\kappa \geq 0, \) and let \( \gamma \) be a rectangular path. Then
    \begin{equation*}
        \begin{split}
            &\mathbb{E}_\varphi    \pigl[\widehat{L_\gamma}(\omega) \cdot \mathbb{1}(  \mathcal{E} ) \pigr] 
            =
            \varphi_\kappa(1)^{|\gamma |}
            \mathbb{E}_\varphi    \Bigl[\prod_{p  \in \mathcal{P}_\gamma} r_\kappa \bigl( \omega(p) \bigr)\cdot \mathbb{1}(  \mathcal{E} ) \Bigr] .
        \end{split}
    \end{equation*}
\end{lemma}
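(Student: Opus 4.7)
The plan is to establish the identity pointwise on $\mathcal{E}$, i.e., to show that $\widehat{L_\gamma}(\omega) = \varphi_\kappa(1)^{|\gamma|} \prod_{p \in \mathcal{P}_\gamma} r_\kappa(\omega(p))$ whenever $\omega \in \mathcal{E}$, after which the lemma follows by taking expectations. The first step is to apply Lemma~\ref{lemma: action factorization forms iic} to reduce the calculation to $\widehat{L_\gamma}(\omega^\gamma)$, and then to extract the structural information that the event $\mathcal{E}$ carries about $\omega^\gamma$: namely (i) every connected component of $(\support \omega^\gamma)^+$ is a single positively oriented plaquette (this follows from $|(\support \omega^{\gamma,2})^+| = \|\omega^\gamma\|$ combined with $\omega^\gamma \subseteq \omega^{\gamma,2}$), and (ii) each such plaquette $p \in (\support \omega^\gamma)^+$ has exactly one edge in $\support \gamma$ (one such edge exists since $p$ contributes to $\omega^\gamma$ and $p$ is isolated so $\delta\omega$ is non-zero on all four sides of $p$; at most one such edge exists because the condition $|P_{\omega,\gamma,c}| = 0$ rules out two boundary edges in $\support \delta\omega \cap \support \gamma$).

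Next I would split the product defining $\widehat{L_\gamma}(\omega)$ according to whether $e \in \support\gamma$ lies in $\support \delta\omega$ or not. For $e \in \support\gamma \setminus \support\delta\omega$ one has $\delta\omega(e) = 0$, so using $\varphi_\kappa(0) = 1$ and $\varphi_\kappa(\gamma[e]) = \varphi_\kappa(1)$ (by Lemma~\ref{lemma: symmetry}, since $\gamma[e] = \pm 1$), the contribution is exactly $\varphi_\kappa(1)$. The number of such edges is $|\gamma| - |\support\delta\omega \cap \support\gamma|$, and on $\mathcal{E}$ this equals $|\gamma| - |(\support \omega^\gamma)^+|$ by (i)--(ii). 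So we peel off a factor $\varphi_\kappa(1)^{|\gamma| - |(\support \omega^\gamma)^+|}$ from the total product.

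For the remaining edges $e \in \support\delta\omega \cap \support\gamma$, I would set up a bijection $e \mapsto \tilde{p}_e \in \mathcal{P}_\gamma$ as follows: by (ii) there is a unique plaquette $p_e \in (\support\omega^\gamma)^+$ containing $e$ in its boundary; take $\tilde{p}_e$ to be the oriented plaquette $\pm p_e$ satisfying $\partial\tilde{p}_e[e] = \gamma[e]$, i.e., the orientation that puts $\tilde{p}_e$ into $\mathcal{P}_\gamma$ with respect to the edge $e$. A short computation using $\delta\omega(e) = \omega(p_e) \partial p_e[e]$ (since $p_e$ is the only plaquette in $\support\omega$ adjacent to $e$) shows that $\omega(\tilde{p}_e) \cdot \gamma[e] = \delta\omega(e)$. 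Applying Lemma~\ref{lemma: symmetry} (to absorb the sign $\gamma[e] = \pm 1$), the ratio $\varphi_\kappa(\delta\omega(e) + \gamma[e])/\varphi_\kappa(\delta\omega(e))$ simplifies to $\varphi_\kappa(\omega(\tilde{p}_e)+1)/\varphi_\kappa(\omega(\tilde{p}_e)) = \varphi_\kappa(1) \cdot r_\kappa(\omega(\tilde{p}_e))$.

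Finally, to rewrite the resulting product over $\{\tilde{p}_e\}$ as a product over all $p \in \mathcal{P}_\gamma$, I would observe that $r_\kappa(0) = 1$ (since $\varphi_\kappa(0) = 1$), so plaquettes in $\mathcal{P}_\gamma$ with $\omega(p) = 0$ contribute trivially. Conversely, any $p \in \mathcal{P}_\gamma$ with $\omega(p) \neq 0$ lies in $\support\omega^{\gamma,2} = \support\omega^\gamma$ (by definition of $\omega^{\gamma,2}$ and the defining property of $\mathcal{E}$), and by (ii) corresponds to exactly one edge $e \in \support\delta\omega\cap\support\gamma$ with $\tilde{p}_e = p$; this establishes that the map $e \mapsto \tilde{p}_e$ is a bijection onto $\{p \in \mathcal{P}_\gamma : \omega(p) \neq 0\}$. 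Combining everything yields $\widehat{L_\gamma}(\omega) \mathbb{1}(\mathcal{E}) = \varphi_\kappa(1)^{|\gamma|}\prod_{p \in \mathcal{P}_\gamma} r_\kappa(\omega(p)) \mathbb{1}(\mathcal{E})$, and the lemma follows by linearity. The main technical obstacle is the orientation bookkeeping in the bijection step, i.e., checking carefully that the choice of $\tilde{p}_e$ matches the orientation convention built into $\mathcal{P}_\gamma$ and that the identity $\omega(\tilde{p}_e)\gamma[e] = \delta\omega(e)$ holds in both orientation cases; the Lemma~\ref{lemma: symmetry} symmetry $\varphi_\kappa(j) = \varphi_\kappa(-j)$ is what makes the signs drop out cleanly.
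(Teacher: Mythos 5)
Your proposal is correct and follows essentially the same route as the paper: both arguments rewrite each factor of $\widehat{L_\gamma}(\omega)$ as $\varphi_\kappa(1)\, r_\kappa(\delta\omega(e))$ using the evenness of $\varphi_\kappa$, then use the structure of $\mathcal{E}$ (isolated plaquettes, no corner plaquettes, $\omega^{\gamma,2}=\omega^\gamma$) together with $r_\kappa(0)=1$ to convert the product over edges into a product over $\mathcal{P}_\gamma$ via $\delta\omega(e)=\omega(p)$ for a unique $p\in\hat\partial e$. The only difference is organizational: the paper factors out $\varphi_\kappa(1)^{|\gamma|}$ uniformly for all $\omega$ before restricting to $\mathcal{E}$, whereas you split the edges into $\support\delta\omega\cap\support\gamma$ and its complement and treat the two classes separately, which amounts to the same computation.
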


\begin{proof}
    For any \( \omega \in \Omega^2(B_N,\mathbb{Z}_n), \) we have
    \begin{align*}
        & \widehat{L_\gamma}(\omega) = 
        \prod_{e \in \support \gamma} 
            \varphi_\kappa\bigl( \delta\omega(e) + \gamma[e] \bigr) \varphi_\kappa\bigl( \delta\omega  (e) \bigr)^{-1}
            =
            \prod_{e \in  \gamma} \varphi_\kappa\bigl( \delta \omega(e) + \gamma[e]\bigr) \varphi_\kappa\bigl( \delta \omega(e)\bigr)^{-1}
        \\&\qquad=
        \varphi_\kappa(1)^{|\gamma |}\prod_{e \in   \gamma} \frac{\varphi_\kappa\bigl(\delta \omega (e)+ \gamma[e]\bigr)}{ \varphi_\kappa\bigl( \delta \omega(e)\bigr)\varphi_\kappa(1)}
        =
        \varphi_\kappa(1)^{|\gamma |}\prod_{e \in   \gamma} r_\kappa \bigl( \delta \omega(e) \bigr).
    \end{align*}
    Here the second equality uses that, by Lemma~\ref{lemma: symmetry}, the function \( \varphi_\kappa \) is even.

    Fix \( \omega \in \Omega^2(B_N,\mathbb{Z}_n). \) If \(\delta \omega(e) = 0,\) then \( r_\kappa \bigl( \delta\omega(e) \bigr) = r_\kappa(0) = 1. \)  
    Using this observation, it follows that 
    \begin{equation*}
        \prod_{e \in   \gamma} r_\kappa \bigl( \delta \omega(e) \bigr)
        =
        \prod_{e \in \gamma \colon \delta \omega(e) \neq 0} r_\kappa \bigl( \delta \omega(e) \bigr).
    \end{equation*}
    Now note that if \( \omega \in \mathcal{E}, \)
    then \( |\support \delta \omega \cap \support \gamma| = \bigl| (\support \omega^\gamma)^+ \bigr| = \| \omega^\gamma \|,\) and \( |P_{\omega,\gamma,c}| = 0. \) Consequently, for any \( e \in \gamma \) such that \( e \in \support \delta \omega  \), we have \( \bigl|\support \hat \partial e \cap \support \omega\bigr| = 1,\) and hence there is a unique \( p \in   \hat \partial e \subseteq \mathcal{P}_\gamma \) such that  \( \delta \omega(e) = \omega(p) \) and \(  \omega(p') = 0 \) for all \( p' \in  \hat \partial e \smallsetminus \{ p \}. \) Moreover, if $\omega \in \mathcal{E}$, then \( \omega^{\gamma,2} = \omega^\gamma\), and hence $\support \hat{\partial} e \cap \support \omega = \emptyset$ whenever $e \in \gamma$ and $\delta \omega(e) = 0$.
    Since \( r_\kappa(0) = 1,\) we obtain, for all \( \omega \in \mathcal{E}, \)
    \begin{equation*}
        \begin{split}
            &\prod_{e \in \gamma \colon  \delta \omega (e) \neq 0}  r_\kappa \bigl( \delta \omega(e) \bigr) \bigr)
            =
            \prod_{p  \in \mathcal{P}_\gamma}  r_\kappa \bigl(  \omega(p) \bigr).
        \end{split}
    \end{equation*}
    From this the desired conclusion immediately follows.
\end{proof}

Recall the definitions of \( \mathcal{P}_\gamma \) and \( \mathcal{P}_{\gamma,c} \) from~\eqref{eq: Pgamma} and~\eqref{eq: Pgammac} respectively. Further, recall the definition of \( \nsim \) from Section~\ref{connectedsubsec}. Given \( \omega \in \Omega^2(B_N,\mathbb{Z}_n), \) we let
\begin{equation*}
    P_\omega \coloneqq     \bigl\{ p \in \mathcal{P}_\gamma \smallsetminus \pm \mathcal{P}_{\gamma,c} \colon p^+ \nsim (\support \omega)^+\smallsetminus (\mathcal{P}_\gamma^+
    \smallsetminus \pm \mathcal{P}_{\gamma,c} ) \bigr\} .
\end{equation*}
For any \( \omega \in \Omega^2(B_N,\mathbb{Z}_n), \) we can identify the set \( P_\omega \) by looking only at \( (\support \omega)^+\smallsetminus (\mathcal{P}_\gamma^+ \smallsetminus \pm \mathcal{P}_{\gamma,c}). \)
For \( j \in \mathbb{Z}_n, \) define 
\begin{equation*}
    \hat \lambda_j \coloneqq  \varphi_\beta(j) \varphi_\kappa(j)^4 \quad\text{and}\quad \lambda_j \coloneqq \hat\lambda_j/\sum_{j \in \mathbb{Z}_n} \hat \lambda_j. 
\end{equation*}
With this notation, we have
\begin{equation}\label{alphasumf} 
    \alpha(\beta,\kappa) = \sum_{j \in \mathbb{Z}_n} \lambda_j r_\kappa(j)
    \quad \text{and} \quad 
    \lambda_0 = \frac{1}{\sum_{j \in \mathbb{Z}_n} \varphi_\beta(j) \varphi_\kappa(j)^4}.
\end{equation}

\begin{lemma}\label{lemma: lambda inequality}
We have \( 0 \leq 1 - \lambda_0 \leq \zeta_\beta \xi_\kappa^4. \)
\end{lemma}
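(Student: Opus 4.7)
The plan is to unpack the definitions of $\lambda_0$, $\zeta_\beta$, and $\xi_\kappa$, and then reduce the inequality to a two-line comparison that uses only the fact that $\varphi_a(0) = 1$ and $\varphi_a(j) \leq 1$ for all $j \in \mathbb{Z}_n$ (Lemma~\ref{lemma: 0 first}).

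First I would write
\[
    1 - \lambda_0
    =
    \frac{\sum_{j \in \mathbb{Z}_n} \varphi_\beta(j)\varphi_\kappa(j)^4 - 1}{\sum_{j \in \mathbb{Z}_n} \varphi_\beta(j)\varphi_\kappa(j)^4}
    =
    \frac{\sum_{j \in \mathbb{Z}_n \smallsetminus \{0\}} \varphi_\beta(j)\varphi_\kappa(j)^4}{\sum_{j \in \mathbb{Z}_n} \varphi_\beta(j)\varphi_\kappa(j)^4},
\]
using $\varphi_\beta(0) = \varphi_\kappa(0) = 1$ to peel off the $j=0$ term in the numerator.

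For the lower bound, note that each summand in the numerator is non-negative (by Lemma~\ref{lemma: 0 first}, $\varphi_a(j) \geq 0$), so $1 - \lambda_0 \geq 0$. For the upper bound, the denominator satisfies
\[
    \sum_{j \in \mathbb{Z}_n} \varphi_\beta(j)\varphi_\kappa(j)^4 \geq \varphi_\beta(0)\varphi_\kappa(0)^4 = 1,
\]
while the numerator is bounded above using $\varphi_\kappa(j) \leq \xi_\kappa$ for $j \in \mathbb{Z}_n \smallsetminus \{0\}$:
\[
    \sum_{j \in \mathbb{Z}_n \smallsetminus \{0\}} \varphi_\beta(j)\varphi_\kappa(j)^4
    \leq
    \xi_\kappa^4 \sum_{j \in \mathbb{Z}_n \smallsetminus \{0\}} \varphi_\beta(j)
    =
    \xi_\kappa^4 \zeta_\beta.
\]
Combining the two bounds gives $1 - \lambda_0 \leq \zeta_\beta \xi_\kappa^4$, as desired.

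There is no real obstacle here; the lemma is essentially a direct unpacking of the definitions combined with the elementary bounds $\varphi_a(0) = 1$ and $\varphi_a(j) \leq \xi_a$ for $j \neq 0$. Its main role is presumably as a book-keeping step in the Poisson approximation of Proposition~\ref{proposition: last resampling lemma forms ii}, where one needs to compare the normalized weights $\lambda_j$ to the weight at $j=0$.
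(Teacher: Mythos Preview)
Your proof is correct and essentially identical to the paper's: both write $1-\lambda_0$ as the ratio $\bigl(\sum_{j\neq 0}\varphi_\beta(j)\varphi_\kappa(j)^4\bigr)\big/\bigl(\sum_{j}\varphi_\beta(j)\varphi_\kappa(j)^4\bigr)$, bound the denominator below by the $j=0$ term (which equals $1$), and bound the numerator above by $\zeta_\beta\xi_\kappa^4$. The only cosmetic difference is that the paper writes the denominator as $1+\sum_{j\neq 0}(\cdots)$ before dropping the sum; also note that the non-negativity $\varphi_a(j)\geq 0$ comes directly from the definition of $\hat\varphi_a$ rather than from Lemma~\ref{lemma: 0 first}.
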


\begin{proof}
    Since \( \varphi_\beta(0) = \varphi_\kappa(0) = 1,\) we have
    \begin{equation*}
        1 - \lambda_0 = 1 - \frac{1}{\sum_{j \in \mathbb{Z}_n} \varphi_\beta(j) \varphi_\kappa(j)^4} 
        = \frac{\sum_{j \in \mathbb{Z}_n \smallsetminus \{ 0 \}} \varphi_\beta(j) \varphi_\kappa(j)^4}{1 + \sum_{j \in \mathbb{Z}_n \smallsetminus \{ 0 \}} \varphi_\beta(j) \varphi_\kappa(j)^4}.
    \end{equation*}
    Since \( \varphi_\beta(j)\varphi_\kappa(j) \geq 0\) for all \( j \in \mathbb{Z}_n,\) we immediately obtain \( 1 -\lambda_0 \geq 0.\) To obtain an upper bound, note also that, by definition, we have \( \sum_{j \in \mathbb{Z}_n \smallsetminus \{ 0 \}} \varphi_\beta(j) \varphi_\kappa(j)^4 \leq \zeta_\beta \xi_\kappa^4.\) Hence
    \begin{equation*}
        \frac{\sum_{j \in \mathbb{Z}_n \smallsetminus \{ 0 \}} \varphi_\beta(j) \varphi_\kappa(j)^4}{1 + \sum_{j \in \mathbb{Z}_n \smallsetminus \{ 0 \}} \varphi_\beta(j) \varphi_\kappa(j)^4}
        \leq 
        \sum_{j \in \mathbb{Z}_n \smallsetminus \{ 0 \}} \varphi_\beta(j) \varphi_\kappa(j)^4 \leq \zeta_\beta \xi_\kappa^4,
    \end{equation*}
    from which the desired conclusion  follows.
\end{proof}

Let \( \mu_{\omega, \lambda} \) be the probability measure on \( \Omega^2(B_N,\mathbb{Z}_n)\) defined by
\begin{equation*}
    \mu_{\omega,\lambda}(\omega')
    =
    \begin{cases}
        0 &\text{if } (\support \omega')^+ \nsubseteq P_\omega^+   \cr 
        \prod_{p \in P_\omega} \lambda_{\omega'(p)} &\text{else.}
    \end{cases} 
\end{equation*}
Let \( \mathbb{E}_{\omega,\lambda}\) be the corresponding expectation.  
For \( \omega \in \Omega^2(B_n,\mathbb{Z}_n),\) let 
\begin{equation*}
    \begin{split}
        &\mathcal{E}_1^\omega \coloneqq \bigl\{ \omega' \in \Omega^2(B_N,\mathbb{Z}_n) \colon (\support \omega')^+ \subseteq P_\omega  \text{ and } \omega' \in \mathcal{E} \bigr\}
        \\&\qquad=
        \bigl\{ \omega' \in \Omega^2(B_N,\mathbb{Z}_n) \colon (\support \omega')^+ \subseteq P_\omega  \text{ and } |(\support \omega')^+| = \| \omega' \| \bigr\}.
    \end{split}
\end{equation*}

\begin{lemma}\label{lemma: last lemma}
    Let \( \omega  \in \mathcal{E}.\)
    Then the following hold.
    \begin{enumerate}[label=\upshape (\arabic*)]
        \item For all \( p_1 \in \support \omega|_{P_\omega}\) and \( p_2 \in \support \omega|_{C_2(B_N)^+ \smallsetminus P_\omega^+}\) we have \( \support \partial p_1 \cap \support \partial p_2 = \emptyset.\) \label{item: last lemma 1}
        \item \label{item: last lemma 2} There is a bijection between the set of all \( \omega' \in \mathcal{E}\) such that \( P_{\omega'} = P_\omega\) and the set of pairs \( (\omega'',\omega''') \in \mathcal{E} \times \mathcal{E}\) 
        with
        \begin{enumerate}[label=\upshape (\roman*)]
            \item \( (\support \omega'')^+ \subseteq P_\omega^+, \) \label{item: last lemma 2i}
            \item \( P_{\omega'''} = P_{\omega}, \) and \label{item: last lemma 2ii}
            \item \( (\support \omega''')^+ \cap P_\omega^+ = \emptyset.\) \label{item: last lemma 2iii}
        \end{enumerate}
    \end{enumerate}
\end{lemma}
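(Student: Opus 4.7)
The plan is to deduce both parts directly from the following characterization of \(\mathcal{E}\): the inequalities
\(\|\omega^\gamma\|\le\|\omega^{\gamma,2}\|\le\bigl|(\support\omega^{\gamma,2})^+\bigr|\)
together with the equality built into the definition of \(\mathcal{E}\) force both of these to be equalities, and this is equivalent to saying that every plaquette of \(\support\omega\) bordering \(\gamma\) (i.e.\ lying in \(\pm\mathcal{P}_\gamma\)) is an isolated singleton connected component of \((\support\omega)^+\). This single observation drives everything.

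For part (1), any \(p_1\in\support\omega|_{P_\omega}\) has \(p_1^+\in P_\omega\subseteq\mathcal{P}_\gamma\smallsetminus\pm\mathcal{P}_{\gamma,c}\), so \(p_1\) borders \(\gamma\) and hence is a singleton in \((\support\omega)^+\) by the observation above. Any \(p_2\in\support\omega|_{C_2(B_N)^+\smallsetminus P_\omega^+}\) satisfies \(p_2^+\neq p_1^+\) (since \(p_1^+\in P_\omega^+\) while \(p_2^+\notin P_\omega^+\)), so \(p_1^+\nsim p_2^+\), i.e.\ \(\support\partial p_1\cap\support\partial p_2=\emptyset\).

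For part (2), I would exhibit the bijection \(\omega'\longleftrightarrow(\omega'',\omega''')\coloneqq\bigl(\omega'|_{\pm P_\omega},\,\omega'-\omega'|_{\pm P_\omega}\bigr)\) with inverse given by summation. The forward direction is essentially bookkeeping: applying part (1) to \(\omega'\) (which is legitimate since \(\omega'\in\mathcal{E}\) and \(P_{\omega'}=P_\omega\)) shows that \(\support\omega''\) and \(\support\omega'''\) have disjoint edge-supports, so connected components and the coderivative both decompose cleanly across the decomposition. Then \(\omega''\in\mathcal{E}\) and \(\omega'''\in\mathcal{E}\) follow because every bordering component of either restriction is also a bordering component of \(\omega'\), hence a singleton, and any corner contribution would lift to \(P_{\omega',\gamma,c}=\emptyset\). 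Moreover \(P_{\omega'''}=P_{\omega'}=P_\omega\) is immediate from \((\support\omega'')^+\subseteq P_\omega^+\subseteq\mathcal{P}_\gamma^+\smallsetminus\pm\mathcal{P}_{\gamma,c}\), since subtracting \(\mathcal{P}_\gamma^+\smallsetminus\pm\mathcal{P}_{\gamma,c}\) from \((\support\omega')^+\) already removes all of \((\support\omega'')^+\).

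The main obstacle is the reverse direction, where I need to show that any pair \((\omega'',\omega''')\) satisfying the three listed conditions automatically has edge-disjoint supports; once this is in hand, the verification that \(\omega''+\omega'''\in\mathcal{E}\) and \(P_{\omega''+\omega'''}=P_\omega\) proceeds exactly as in the forward direction. The key contradiction argument is as follows: suppose \(p_1^+\in(\support\omega'')^+\subseteq P_\omega^+\) and \(q^+\in(\support\omega''')^+\) with \(p_1^+\sim q^+\). Since \(p_1^+\in P_\omega=P_{\omega'''}\), the defining condition of \(P_{\omega'''}\) forces \(q^+\in\mathcal{P}_\gamma^+\smallsetminus\pm\mathcal{P}_{\gamma,c}\); but then \(q^+\) borders \(\gamma\), so by \(\omega'''\in\mathcal{E}\) it is a singleton component of \(\support\omega'''\) with no neighbours in \((\support\omega''')^+\); on the other hand \(q^+\in\mathcal{P}_\gamma^+\smallsetminus\pm\mathcal{P}_{\gamma,c}\) combined with \(q^+\notin P_\omega^+=P_{\omega'''}^+\) forces, again by the definition of \(P_{\omega'''}\), the existence of a neighbour of \(q^+\) inside \((\support\omega''')^+\smallsetminus(\mathcal{P}_\gamma^+\smallsetminus\pm\mathcal{P}_{\gamma,c})\), a contradiction. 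This step crucially uses both \(P_{\omega'''}=P_\omega\) and the singleton property coming from \(\omega'''\in\mathcal{E}\); once settled, the two maps are transparently inverse to one another and the bijection is complete.
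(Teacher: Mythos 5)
Your proof is correct and follows essentially the same route as the paper: the same splitting \( \omega' \mapsto \bigl(\omega'|_{P_\omega},\, \omega'-\omega'|_{P_\omega}\bigr) \), with part (1) and the singleton-component consequence of \( \omega \in \mathcal{E} \) doing all the work. The one place you go beyond the paper is the reverse direction of the bijection, where the paper merely asserts (in a bullet point) that \( (\support \omega'')^+ \) and \( (\support \omega''')^+ \) are unions of connected components of \( (\support \omega')^+ \); your contradiction argument, combining \( P_{\omega'''}=P_\omega \), the disjointness condition \( (\support\omega''')^+\cap P_\omega^+=\emptyset \), and the singleton property of bordering components of \( \omega''' \), is a valid justification of that assertion.
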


\begin{proof}
   ~\ref{item: last lemma 1} Fix any \( p_1 \in \support \omega|_{P_\omega}\) and \( p_2 \in \support \omega|_{C_2(B_N)^+ \smallsetminus P_\omega}. \)
    We can without loss of generality assume that \( p_1 \in P_\omega.\)
    Since \( p_2 \notin \pm P_\omega \) and \( p_2 \in \support \omega,\) we must have either \( p_2^+ \in (\support \omega)^+ \smallsetminus (\mathcal{P}_\gamma^+\smallsetminus \pm \mathcal{P}_{\gamma,c}) \) or \( p_2^+ \sim (\support \omega)^+ \smallsetminus (\mathcal{P}_\gamma^+\smallsetminus \pm \mathcal{P}_{\gamma,c}).\) In any of these cases, since \( p_1^+ \nsim (\support \omega)^+ \smallsetminus (\mathcal{P}_\gamma^+\smallsetminus \pm \mathcal{P}_{\gamma,c}),\) we cannot have \( p_1^+ \sim p_2^+,\) and hence we must have \( \support \partial p_1 \cap \support \partial p_2 = \emptyset.\) 

  ~\ref{item: last lemma 2} 
    %
    We first construct a map $F:\omega' \mapsto (\omega'', \omega''')$ from $X := \{\omega' \in \mathcal{E} : P_{\omega'} = P_\omega\}$ to the set $Y$ of pairs $(\omega'', \omega''') \in \mathcal{E} \times \mathcal{E}$ that satisfy~\ref{item: last lemma 2i}--\ref{item: last lemma 2iii}.
    Fix any \( \omega' \in \mathcal{E}\) with \(P_{\omega'} =P_\omega.\) Define \( \omega'' \coloneqq \omega'|_{P_\omega^+}\) and \( \omega ''' \coloneqq \omega' - \omega''.\) Then, by construction, 
   ~\ref{item: last lemma 2i}~and~\ref{item: last lemma 2iii} hold.
    Moreover, we have
    \begin{align*}
        &P_{\omega'''} = \bigl\{ p \in \mathcal{P}_\gamma \smallsetminus \pm \mathcal{P}_{\gamma,c} \colon p^+ \nsim (\support \omega''')^+\smallsetminus (\mathcal{P}_\gamma^+ \smallsetminus \pm \mathcal{P}_{\gamma,c} ) \bigr\} 
        \\&\qquad= 
        \bigl\{ p \in \mathcal{P}_\gamma \smallsetminus \pm \mathcal{P}_{\gamma,c} \colon p^+ \nsim (\support \omega'|_{C_2(B_N)^+ \smallsetminus P_\omega^+})^+ \smallsetminus (\mathcal{P}_\gamma^+ \smallsetminus \pm \mathcal{P}_{\gamma,c} ) \bigr\} 
        \\&\qquad= 
        \bigl\{ p \in \mathcal{P}_\gamma \smallsetminus \pm \mathcal{P}_{\gamma,c} \colon p^+ \nsim (\support \omega')^+\smallsetminus (\mathcal{P}_\gamma^+ \smallsetminus \pm \mathcal{P}_{\gamma,c} ) \bigr\} 
        = 
        P_{\omega'} = P_\omega.
    \end{align*}
    Hence~\ref{item: last lemma 2ii} holds.
    We now show that \( \omega'',\omega''' \in \mathcal{E}. \) To this end, note first that, by definition, the sets \( (\support \omega'')^+\) and \( (\support \omega''')^+\) are disjoint, and each is a union of connected components of \( (\support \omega')^+.\) In other words, we can write \( (\support \omega'')^+ = P_1 \sqcup \dots \sqcup P_j \) and \( (\support \omega''')^+ = P_{j+1} \sqcup \dots \sqcup P_m,\) where \( P_1, \dots, P_m\) are the connected components of \( (\support \omega')^+.\)
    Since \( \omega' \in \mathcal{E}\) by assumption, we have \( |P_{\omega',\gamma,c}| = 0. \) Using Lemma~\ref{lemma: split into components} it thus follows that \( |P_{\omega'|_{P_i},\gamma,c}| = 0 \) for all \( i \in \{ 1,2, \dots, m \}, \) and hence, again using Lemma~\ref{lemma: split into components}, we obtain 
    \begin{equation*}
        |P_{\omega'',\gamma,c}| = \sum_{i=1}^j |P_{\omega'|_{P_i},\gamma,c}| = 0 .
    \end{equation*}
    Next we note that since \( \omega' \in \mathcal{E},\) 
    all connected components of \( (\support ( \omega')^{\gamma,2})^+\) have size one. Since \( \omega''\) is a restriction of \( \omega' \) to a union of connected components of \( (\support \omega')^+,\) it follows that all connected components of \( (\support ( \omega'')^{\gamma,2})^+\) have size one. From this, it immediately follows that \( |(\support  (\omega'')^\gamma)^+\| = |\support \delta \omega''  \cap \support \gamma |,\) \( ( \omega'')^{\gamma,2} = (\omega'')^\gamma, \) and \( \| (\omega'')^\gamma\|= |(\support (\omega'')^\gamma)^+|.\)  Since \( |P_{\omega'',\gamma,c}| =0,\) this implies that \( \omega'' \in \mathcal{E}.\) Completely analogously, we also obtain \( \omega''' \in \mathcal{E}.\)
    This shows that $F$ is a well-defined map from $X$ into $Y$. 
    
    We will show that $F:X \to Y$ is a bijection by constructing its inverse $F^{-1}:Y \to X$ explicitly. 
Fix any \( \omega'', \omega''' \in \mathcal{E}\) which satisfy~\ref{item: last lemma 2i}--\ref{item: last lemma 2iii}. 
    Define \( \omega' \coloneqq \omega'' + \omega'''.\) Since \( (\support \omega'')^+ \subseteq P_\omega^+ \subseteq \mathcal{P}_\gamma^+ \smallsetminus \pm \mathcal{P}_{\gamma,c},\) we have
    \begin{align*}
        &P_{\omega'} = 
        \bigl\{ p \in \mathcal{P}_\gamma \smallsetminus \pm \mathcal{P}_{\gamma,c} \colon p^+ \nsim (\support \omega')^+\smallsetminus (\mathcal{P}_\gamma^+ \smallsetminus \pm \mathcal{P}_{\gamma,c} ) \bigr\} 
        \\&\qquad=
        \bigl\{ p \in \mathcal{P}_\gamma \smallsetminus \pm \mathcal{P}_{\gamma,c} \colon p^+ \nsim (\support (\omega'-\omega''))^+\smallsetminus (\mathcal{P}_\gamma^+ \smallsetminus \pm \mathcal{P}_{\gamma,c} ) \bigr\}  
        \\&\qquad=
        \bigl\{ p \in \mathcal{P}_\gamma \smallsetminus \pm \mathcal{P}_{\gamma,c} \colon p^+ \nsim \support (\omega''')^+\smallsetminus (\mathcal{P}_\gamma^+ \smallsetminus \pm \mathcal{P}_{\gamma,c} ) \bigr\} 
        =
        P_{\omega'''} = P_\omega.
    \end{align*}
    Thus, it only remains to show that \( \omega' \in \mathcal{E}.\) To this end, note that 
    \begin{itemize}
        \item \( (\support \omega'')^+\) and \( (\support \omega''')^+\) are disjoint,
        \item \( (\support \omega'')^+\) and \( (\support \omega''')^+\) are both unions of connected components of \( (\support \omega')^+,\) and
        \item \( \support \omega'' \cup  \support \omega''' = \support \omega'.\) 
    \end{itemize}
    Since \( (\omega')^\gamma\) and \( (\omega')^{\gamma,2}\) are also unions of connected components of \( \omega',\) using Lemma~\ref{lemma: split into components} and the assumption that \( \omega'',\omega''' \in \mathcal{E},\)  it follows that  \( \omega' \in \mathcal{E}.\)
    This concludes the proof.
\end{proof}

\begin{lemma}\label{lemma: P tilde equation forms ii}
    Let \( \beta,\kappa \geq 0 \) and let \( \gamma \) be a rectangular path. Then
    \begin{equation}\label{eq: P tilde equation forms ii}
        \begin{split}
            &
            \mathbb{E}_\varphi  \Bigl[ \, \prod_{p  \in \mathcal{P}_\gamma} r_\kappa \bigl(  \omega(p) \bigr) \cdot \mathbb{1}(\omega \in \mathcal{E})\Bigr] 
            =
            \mathbb{E}_\varphi  \Bigl[ \mathbb{E}_{\omega, \lambda} \pigl[\prod_{p  \in P_\omega} r_\kappa \bigl(  \omega'(p) \bigr)  \mid \omega' \in \mathcal{E}_1^\omega \pigr]\cdot \mathbb{1}(\omega \in \mathcal{E})\Bigr].
        \end{split}
    \end{equation}
\end{lemma}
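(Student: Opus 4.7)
The plan is, for each admissible set $P \subseteq \mathcal{P}_\gamma \setminus \pm \mathcal{P}_{\gamma,c}$, to invoke the bijection in Lemma~\ref{lemma: last lemma}~\ref{item: last lemma 2} to decompose every $\omega \in \mathcal{E}$ with $P_\omega = P$ as $\omega = \omega'' + \omega'''$, where $\omega'' \coloneqq \omega|_{P^+}$ lies in $\mathcal{E}_1^\omega$ (isolated plaquettes in $P$) and $\omega''' \coloneqq \omega - \omega''$ satisfies $P_{\omega'''} = P$ and $(\support \omega''')^+ \cap P = \emptyset$. Part~\ref{item: last lemma 1} of the same lemma says that the two pieces have well-separated supports, so $\omega'' \lhd \omega$, and Lemma~\ref{lemma: action factorization forms} then yields the clean factorization $\varphi(\omega) = \varphi(\omega'')\varphi(\omega''')$.

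The first and most delicate step is to show that for $\omega \in \mathcal{E}$ one has $\omega(p) = 0$ for every $p \in \mathcal{P}_\gamma \setminus P_\omega$, so that the product on the left-hand side collapses to $\prod_{p \in \mathcal{P}_\gamma} r_\kappa(\omega(p)) = \prod_{p \in P_\omega} r_\kappa(\omega''(p))$. Indeed, if a corner plaquette $p \in \mathcal{P}_{\gamma,c}$ had $\omega(p) \neq 0$, then the identity $|(\support \omega^{\gamma,2})^+| = \|\omega^\gamma\|$ would force its component in $(\support \omega)^+$ to be the singleton $\{p\}$, and both edges of $\partial p$ in $\support \gamma$ would lie in $\support \delta \omega$, violating $|P_{\omega,\gamma,c}| = 0$. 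If instead $p \in (\mathcal{P}_\gamma \setminus \pm\mathcal{P}_{\gamma,c}) \setminus P_\omega$ had $\omega(p) \neq 0$, the definition of $P_\omega$ would place at least one further plaquette outside $\mathcal{P}_\gamma \setminus \pm\mathcal{P}_{\gamma,c}$ in the same component of $(\support \omega)^+$, again contradicting that every component of $\omega^{\gamma,2}$ is a singleton.

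Granting this, the left-hand side becomes, up to the normalizing constant $Z$,
\begin{equation*}
\sum_{P} \sum_{\omega''' \in O_P} \varphi(\omega''') \sum_{\omega'' \in \mathcal{E}_1^{\omega'''}} \varphi(\omega'') \prod_{p \in P} r_\kappa(\omega''(p)),
\end{equation*}
where $O_P$ denotes the set of admissible outer configurations for $P$. For $\omega'' \in \mathcal{E}_1^{\omega'''}$ the support $(\support \omega'')^+$ is a disjoint union of isolated plaquettes in $P$ whose four boundary edges are pairwise distinct; using Lemma~\ref{lemma: symmetry} to handle signs, this gives $\varphi(\omega'') = \prod_{p \in P} \hat\lambda_{\omega''(p)} = S^{|P|} \prod_{p \in P} \lambda_{\omega''(p)}$ with $S \coloneqq \sum_{j \in \mathbb{Z}_n} \hat\lambda_j$. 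Unpacking the conditional expectation on the right-hand side in the same way, the factor $\sum_{\omega'' \in \mathcal{E}_1^{\omega'''}} \varphi(\omega'') = S^{|P|} \sum_{\omega'' \in \mathcal{E}_1^{\omega'''}} \prod_p \lambda_{\omega''(p)}$ coming from the factorization of $\varphi(\omega)$ exactly cancels the denominator of the conditional expectation, and the two sides become identical.

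The hard part is the first step: verifying that $\omega$ vanishes on $\mathcal{P}_\gamma \setminus P_\omega$ for all $\omega \in \mathcal{E}$. This requires carefully combining both clauses in the definition of $\mathcal{E}$ with the somewhat intricate definition of $P_\omega$, and separating the argument according to whether the offending plaquette is a corner or not. Everything after that is routine bookkeeping exploiting the factored form of $\hat\lambda$.
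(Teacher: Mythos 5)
Your proposal is correct and follows essentially the same route as the paper's proof: both rest on the observation that $\omega$ vanishes on $\mathcal{P}_\gamma \smallsetminus P_\omega$ for $\omega \in \mathcal{E}$, the split/recombine bijection and support separation of Lemma~\ref{lemma: last lemma}, the factorization of $\varphi$ from Lemma~\ref{lemma: action factorization forms}, and the identity $\varphi(\omega'') = \prod_p \hat\lambda_{\omega''(p)}$ for isolated-plaquette configurations, after which the outer sum cancels against the conditional-expectation denominator. The only cosmetic difference is that you expand both sides as explicit double sums over $P$ and the inner/outer pieces, whereas the paper phrases the same cancellation via the tower property conditioned on $P_\omega$.
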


\begin{proof} 
    If \( p \in \mathcal{P}_\gamma \smallsetminus P_\omega\) and \( \omega \in \mathcal{E},\) then \( \omega(p) = 0,\) and hence
    \begin{equation}\label{eq: P tilde equation forms ii 1}
        \begin{split}
            &\mathbb{E}_\varphi  \Bigl[ \prod_{p  \in \mathcal{P}_\gamma} r_\kappa \bigl(  \omega(p) \bigr) \cdot \mathbb{1}(\omega \in \mathcal{E})\Bigr] 
            =
            \mathbb{E}_\varphi  \Bigl[ \prod_{p  \in P_\omega} r_\kappa \bigl(  \omega(p) \bigr) \cdot \mathbb{1}(\omega \in \mathcal{E})\Bigr]
            \\&\qquad =
            \mathbb{E}_\varphi  \Bigl[ \mathbb{E}_\varphi  \pigl[ \prod_{p  \in P_\omega} r_\kappa \bigl(  \omega(p) \bigr) \mid P_\omega,\, \omega \in \mathcal{E} \pigr]\cdot \mathbb{1}(\omega \in \mathcal{E})\Bigr].
        \end{split}
    \end{equation} 
    Now note that given \( \omega \in \mathcal{E} \), by definition, we have
    \begin{equation*}
        \begin{split}
            &\mathbb{E}_\varphi  \Bigl[ \prod_{p  \in P_{\omega'}} r_\kappa \bigl(  \omega'(p) \bigr)\mid P_{\omega'} = P_\omega,\, \omega' \in \mathcal{E} \Bigr]
            =
            \frac{\sum_{\omega'\in \mathcal{E} \colon  P_{\omega'} = P_\omega} \varphi(\omega') \prod_{p  \in P_{\omega'}} r_\kappa \bigl(\omega'(p)\bigr)  }{\sum_{\omega'\in \mathcal{E} \colon  P_{\omega'} = P_\omega} \varphi(\omega') }.
        \end{split}
    \end{equation*}
    By definition, if \( \omega' \in \mathcal{E}\) is such that \( P_{\omega'} = P_\omega,\) then there are no plaquettes \( p \in \support \omega'|_{P_\omega} \) and \( p' \in \support \omega'|_{C_2(B_N)^+ \smallsetminus P_\omega^+} \) such that \( p \sim p',\)  and hence

    \begin{equation*}
        \varphi(\omega') = \varphi(\omega'|_{P_\omega}) \varphi(\omega'|_{C_2(B_N)^+ \smallsetminus P_\omega^+}).
    \end{equation*}
    In particular, using Lemma~\ref{lemma: last lemma}, it follows that 
    \begin{equation*}
        \begin{split}
            &\frac{\sum_{\omega'\in \mathcal{E} \colon  P_{\omega'} = P_\omega} \varphi(\omega') \prod_{p  \in P_{\omega'}} r_\kappa \bigl(\omega'(p)\bigr)  }{\sum_{\omega'\in \mathcal{E} \colon  P_{\omega'} = P_\omega} \varphi(\omega') }
            \\&\qquad =
            \frac{\sum_{\omega'\in \mathcal{E} \colon  P_{\omega'} = P_\omega} \varphi(\omega'|_{P_\omega})\varphi(\omega'|_{C_2(B_N)^+ \smallsetminus P_\omega^+})
            \prod_{p  \in P_{\omega'}} r_\kappa \bigl(\omega'(p)\bigr)  }{\sum_{\omega'\in \mathcal{E} \colon  P_{\omega'} = P_\omega} \varphi(\omega'|_{P_\omega})\varphi(\omega'|_{C_2(B_N)^+ \smallsetminus P_\omega^+}) }
            \\&\qquad =
            \frac{\Big(\sum_{\omega'\in \mathcal{E} \colon  (\support \omega')^+ \subseteq  P_\omega^+} \varphi(\omega' )
            \prod_{p  \in P_{\omega'}} r_\kappa \bigl(\omega'(p)\bigr)\Big) \Big( 
            \sum_{\omega'' \in \mathcal{E} \colon P_{\omega''} = P_\omega,\, \omega''|_{P_\omega}=0} \varphi(\omega'')\Big)
            }{\Big(\sum_{\omega'\in \mathcal{E} \colon  (\support \omega)^+ \subseteq  P_\omega^+} \varphi(\omega' ) \Big)\Big(
            \sum_{\omega'' \in \mathcal{E} \colon P_{\omega''} = P_\omega,\, \omega''|_{P_\omega}=0} \varphi(\omega'')\Big)
            }
            \\&\qquad =
            \frac{\sum_{\omega'\in \mathcal{E} \colon  (\support \omega')^+ \subseteq  P_\omega^+} \varphi(\omega' ) \prod_{p  \in P_{\omega'}}r_\kappa  \bigl(\omega'(p)\bigr) 
            }{\sum_{\omega'\in \mathcal{E} \colon  (\support \omega)^+ \subseteq  P_\omega^+} \varphi(\omega' ) 
            }.
        \end{split}
    \end{equation*}
    If \( \omega' \in \mathcal{E} \) and \( (\support \omega')^+ \subseteq P_\omega^+,\) the connected components of \( (\support \omega')^+ \) must all consist of exactly one plaquette, 
    and hence \( \varphi(\omega') = \prod_{p \in C_2(B_N)^+} \hat \lambda_{\omega'(p)}.\) This implies in particular that the previous expression is equal to
    \begin{equation*}
        \begin{split}
            &\frac{\sum_{\omega'\in \mathcal{E} \colon  (\support \omega')^+ \subseteq  P_\omega^+} \Big(\prod_{p  \in P_{\omega'}} r_\kappa \bigl(\omega'(p)\bigr) \Big) \prod_{p \in C_2(B_N)^+} \hat \lambda_{\omega'(p)}}{\sum_{\omega'\in \mathcal{E} \colon  (\support \omega)^+ \subseteq  P_\omega^+} \prod_{p \in C_2(B_N)^+} \hat \lambda_{\omega'(p)}  }
            \\&\qquad =
            \frac{\sum_{\omega'\in \mathcal{E} \colon  (\support \omega')^+ \subseteq  P_\omega^+} \Big(\prod_{p  \in P_{\omega'}} r_\kappa \bigl(\omega'(p)\bigr) \Big) \prod_{p \in P_\omega} \lambda_{\omega'(p)} }{\sum_{\omega'\in \mathcal{E} \colon  (\support \omega)^+ \subseteq  P_\omega^+} \prod_{p \in P_\omega} \lambda_{\omega'(p)}  }
            \\&\qquad =
            \frac{\sum_{\omega'\in \mathcal{E} \colon  (\support \omega')^+ \subseteq  P_\omega^+} \Big(\prod_{p  \in P_{\omega'}} r_\kappa \bigl(\omega'(p)\bigr) \Big) \mu_{\omega,\lambda}(\omega') }{\sum_{\omega'\in \mathcal{E} \colon  (\support \omega)^+ \subseteq  P_\omega^+} \mu_{\omega,\lambda}(\omega')  }
            \\&\qquad =
            \frac{\sum_{\omega'\in \mathcal{E}_1^\omega } \Big(\prod_{p  \in P_{\omega'}} r_\kappa \bigl(\omega'(p)\bigr) \Big) \mu_{\omega,\lambda}(\omega') }{\sum_{\omega'\in \mathcal{E}_1^\omega} \mu_{\omega,\lambda}(\omega')  }
            = \mathbb{E}_{\omega,\lambda}\bigl[\prod_{p  \in P_{\omega'}}r_\kappa \bigl(\omega'(p)\bigr)\mid \omega' \in \mathcal{E}_1^\omega \bigr].
        \end{split}
    \end{equation*}
    We conclude that
    \begin{equation}\label{eq: P tilde equation forms ii 2}
        \mathbb{E}_\varphi  \Bigl[ \prod_{p  \in P_{\omega'}} r_\kappa \bigl(  \omega'(p) \bigr)\mid P_{\omega'} = P_\omega,\, \omega' \in \mathcal{E} \Bigr] = \mathbb{E}_{\omega,\lambda}\bigl[\prod_{p  \in P_{\omega'}}r_\kappa \bigl(\omega'(p)\bigr)\mid \omega' \in \mathcal{E}_1^\omega \bigr].
    \end{equation}
    Combining~\eqref{eq: P tilde equation forms ii 1} and~\eqref{eq: P tilde equation forms ii 2}, we obtain~\eqref{eq: P tilde equation forms ii} as desired.
\end{proof}

\begin{lemma}\label{lemma: step 1 of end forms ii}
    Let \( \beta,\kappa \geq 0 , \) let \( \gamma \) be a rectangular path, and let \( \omega \in \Omega^2(B_N,\mathbb{Z}_n). \) Then 
    \begin{equation*}
        \begin{split}
            &\mathbb{E}_{\omega, \lambda}\Bigl[ \prod_{p  \in P_\omega} r_\kappa \bigl( \omega'(p) \bigr)  \Bigr] 
            =
            \alpha(\beta,\kappa)^{|P_\omega|} .
        \end{split}
    \end{equation*}
\end{lemma}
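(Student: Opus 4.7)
The plan is to observe that $\mu_{\omega,\lambda}$ is, by construction, a product measure on configurations supported in $P_\omega^+$. Indeed, the set $\{\omega' \in \Omega^2(B_N,\mathbb{Z}_n) : (\support \omega')^+ \subseteq P_\omega^+\}$ is naturally parametrized by functions $P_\omega \to \mathbb{Z}_n$ (via $p \mapsto \omega'(p)$), and the weight $\mu_{\omega,\lambda}(\omega') = \prod_{p \in P_\omega} \lambda_{\omega'(p)}$ factorizes over $p$. Since $\sum_{j \in \mathbb{Z}_n} \lambda_j = 1$ by the definition of $\lambda_j$, this is a genuine product probability measure, under which the random variables $\{\omega'(p)\}_{p \in P_\omega}$ are i.i.d.\ with common distribution $\lambda$ on $\mathbb{Z}_n$.

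With this observation in hand, the integrand $\prod_{p \in P_\omega} r_\kappa(\omega'(p))$ also factorizes over $p \in P_\omega$, and Fubini (or equivalently, the trivial factorization of a finite sum of products) gives
\begin{equation*}
\mathbb{E}_{\omega,\lambda}\Bigl[\prod_{p \in P_\omega} r_\kappa(\omega'(p))\Bigr]
= \sum_{\omega' : (\support \omega')^+ \subseteq P_\omega^+} \prod_{p \in P_\omega} \lambda_{\omega'(p)} r_\kappa(\omega'(p))
= \prod_{p \in P_\omega} \Bigl( \sum_{j \in \mathbb{Z}_n} \lambda_j r_\kappa(j) \Bigr).
\end{equation*}
Finally, by the identity $\alpha(\beta,\kappa) = \sum_{j \in \mathbb{Z}_n} \lambda_j r_\kappa(j)$ recorded in~\eqref{alphasumf}, each factor equals $\alpha(\beta,\kappa)$, yielding $\alpha(\beta,\kappa)^{|P_\omega|}$ as claimed.

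There is no real obstacle here; the statement is essentially a bookkeeping identity once one recognizes $\mu_{\omega,\lambda}$ as a product measure. The only point requiring any care is checking that the sum over $\omega'$ with $(\support \omega')^+ \subseteq P_\omega^+$ is in bijection with functions $P_\omega \to \mathbb{Z}_n$ (the value $j = 0$ corresponding to $p \notin \support \omega'$), and that the normalization $\sum_j \lambda_j = 1$ makes $\mu_{\omega,\lambda}$ a probability measure — both of which are immediate from the definitions.
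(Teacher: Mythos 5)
Your proof is correct and follows exactly the same route as the paper's: factorize the expectation over the product measure $\mu_{\omega,\lambda}$ to get $\bigl(\sum_{j\in\mathbb{Z}_n}\lambda_j r_\kappa(j)\bigr)^{|P_\omega|}$ and then invoke~\eqref{alphasumf}. You simply spell out the bookkeeping (the bijection with functions $P_\omega\to\mathbb{Z}_n$ and the normalization $\sum_j\lambda_j=1$) that the paper leaves implicit.
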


\begin{proof}
     Using the definition of \( \mu_{\omega,\lambda}, \) we obtain
    \begin{equation*}
        \begin{split}
            &\mathbb{E}_{\omega, \lambda}\Bigl[ \prod_{p  \in P_\omega} r_\kappa \bigl( \omega'(p) \bigr)  \Bigr] 
            = \Bigl( \sum_{j \in \mathbb{Z}_n} \lambda_j r_\kappa(j) \Bigr)^{|P_\omega|} .
        \end{split}
    \end{equation*}
    Recalling~\eqref{alphasumf}, the desired conclusion immediately follows.
\end{proof}

\begin{lemma}\label{lemma: I forgot the use of ii}
    Let \( \beta,\kappa \geq 0 , \) let \( \gamma \) be a rectangular path, and let \( \omega \in \Omega^2(B_N,\mathbb{Z}_n). \)
    Then
    \begin{equation*}
        \begin{split}
            &\biggl| \mathbb{E}_{\omega, \lambda}\Bigl[\, \prod_{p  \in P_\omega} r_\kappa \bigl( \omega'(p) \bigr) \Bigr] - \mathbb{E}_{\omega, \lambda}\Bigl[\,  \prod_{p  \in P_\omega} r_\kappa \bigl( \omega'(p) \bigr)  \mid \omega' \in \mathcal{E}_1^\omega \Bigr]  \biggr| 
            \leq    C^{(2i)}_{\gamma,\beta,\kappa,m}\, \alpha(\beta,\kappa)^{|P_\omega|} \zeta_\beta,
        \end{split}
    \end{equation*}  
    where
    \begin{equation}\label{eq: def C2i}
        C^{(2i)}_{\gamma,\beta,\kappa,m} \coloneqq (2m-1)|\mathcal{P}_\gamma | \zeta_\beta \xi_\kappa^4
            \frac{ 
            \bigl( \frac{1}{1 - \zeta_\beta\xi_\kappa^2} + \xi_\kappa^2   \bigr)^2   + \xi_\kappa^4 }{1 - (2m-1)|\mathcal{P}_\gamma|\zeta_\beta^2 \xi_\kappa^8}.
    \end{equation}
\end{lemma}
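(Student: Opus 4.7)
My plan is to exploit the fact that $\mu_{\omega,\lambda}$ is a product measure, so that conditioning on $\mathcal{E}_1^\omega$ only mildly perturbs the (tractable) unconditional expectation. Writing $X := \prod_{p \in P_\omega} r_\kappa(\omega'(p))$ and $A := \mathcal{E}_1^\omega$, I would start from the identity
$$\mathbb{E}_{\omega,\lambda}[X] - \mathbb{E}_{\omega,\lambda}[X \mid A] = \frac{\mathbb{E}_{\omega,\lambda}[X]\,\mathbb{P}_{\omega,\lambda}(A^c) - \mathbb{E}_{\omega,\lambda}[X\,\mathbb{1}_{A^c}]}{\mathbb{P}_{\omega,\lambda}(A)},$$
so the task reduces to estimating the two numerator terms and bounding the denominator from below. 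Lemma~\ref{lemma: step 1 of end forms ii} already tells me that $\mathbb{E}_{\omega,\lambda}[X] = \alpha(\beta,\kappa)^{|P_\omega|}$.

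Next, I would decompose $A^c$ through a union bound: since $A^c$ is exactly the event that $\support \omega'$ contains two adjacent plaquettes in $P_\omega$,
$$A^c = \bigcup_{\{p,p'\} \subseteq P_\omega,\; p \sim p'} \bigl\{\omega'(p) \neq 0,\;\omega'(p') \neq 0 \bigr\}.$$
The independence of $\{\omega'(p)\}_{p \in P_\omega}$ under $\mu_{\omega,\lambda}$ allows me to compute, for any distinct $p,p' \in P_\omega$,
$$\mathbb{E}_{\omega,\lambda}\pigl[X \, \mathbb{1}(\omega'(p)\neq 0,\,\omega'(p')\neq 0)\pigr] = \alpha(\beta,\kappa)^{|P_\omega|-2}\biggl(\sum_{j \in \mathbb{Z}_n \setminus \{0\}} \lambda_j r_\kappa(j) \biggr)^{\!2},$$
and similarly $\mathbb{P}_{\omega,\lambda}(\omega'(p) \neq 0,\,\omega'(p') \neq 0) = (1-\lambda_0)^2$. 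Since $r_\kappa(0) = 1$, the inner sum equals $\alpha(\beta,\kappa) - \lambda_0$. I would then invoke Lemma~\ref{lemma: Zn properties of alpha} to bound $\alpha(\beta,\kappa) - 1 \leq \zeta_\beta \xi_\kappa^2/(1-\zeta_\beta \xi_\kappa^2)$ and Lemma~\ref{lemma: lambda inequality} to bound $1 - \lambda_0 \leq \zeta_\beta \xi_\kappa^4$, yielding
$$\alpha(\beta,\kappa) - \lambda_0 \leq \zeta_\beta \xi_\kappa^2 \left( \frac{1}{1 - \zeta_\beta \xi_\kappa^2} + \xi_\kappa^2 \right).$$

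The remaining ingredient is a purely combinatorial one: bounding the number $N$ of unordered adjacent pairs $\{p,p'\}$ in $P_\omega \subseteq \mathcal{P}_\gamma \setminus \pm\mathcal{P}_{\gamma,c}$. Since each plaquette has only boundedly many neighbors in $\mathcal{P}_\gamma$ (controlled by the lattice dimension), a short geometric count gives $N \leq (2m-1)|\mathcal{P}_\gamma|$. Combining the four estimates (union bound for $\mathbb{P}(A^c)$, union bound for $\mathbb{E}[X\mathbb{1}_{A^c}]$, $\alpha \geq 1$ so $\alpha^{|P_\omega|-2} \leq \alpha^{|P_\omega|}$, and $\mathbb{P}(A) \geq 1 - N (1-\lambda_0)^2$) produces
$$\bigl|\mathbb{E}_{\omega,\lambda}[X] - \mathbb{E}_{\omega,\lambda}[X \mid A]\bigr| \leq \alpha(\beta,\kappa)^{|P_\omega|} \cdot \frac{N \zeta_\beta^2 \xi_\kappa^4 \bigl[\bigl(\tfrac{1}{1 - \zeta_\beta \xi_\kappa^2} + \xi_\kappa^2\bigr)^2 + \xi_\kappa^4\bigr]}{1 - N\zeta_\beta^2 \xi_\kappa^8},$$
which, after factoring out one $\zeta_\beta$, matches the stated constant $C^{(2i)}_{\gamma,\beta,\kappa,m}$ exactly.

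The main obstacle will be the careful geometric counting of adjacent pairs inside $\mathcal{P}_\gamma \setminus \pm\mathcal{P}_{\gamma,c}$, since one must verify that the constant $(2m-1)|\mathcal{P}_\gamma|$ is indeed an upper bound given the restrictions on orientation and on corner plaquettes; the rest of the argument is a routine Poisson-approximation-type calculation riding on the independence structure of $\mu_{\omega,\lambda}$.
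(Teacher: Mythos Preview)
Your proposal is correct and follows essentially the same approach as the paper's proof: the paper uses the identical conditioning identity, the same union bound over adjacent pairs in $P_\omega$, exploits the product structure of $\mu_{\omega,\lambda}$ in the same way, and invokes Lemma~\ref{lemma: step 1 of end forms ii}, Lemma~\ref{lemma: Zn properties of alpha}, and Lemma~\ref{lemma: lambda inequality} exactly as you outline. The geometric count you flag as the main obstacle is dispatched in the paper in one line---each $p\in P_\omega$ has at most $(2(m-1)-1)+2=2m-1$ neighbours in $P_\omega$---yielding the same constant.
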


\begin{proof} 
    If \( p \in P_\omega, \) then the number of plaquettes \( p' \in P_\omega \smallsetminus \{ p \} \) with \( p' \sim p \) is at most \( (2(m-1)-1)+2 = 2m-1.\)
    Using the definition of \( \mu_{\omega,\lambda} \) and the fact that $\sum_{j \in \mathbb{Z}_n \setminus \{0\}} \lambda_j = 1 - \lambda_0$, it follows that  
    \begin{equation*}
        \begin{split}
            &\mu_{\omega, \lambda}\bigl( \omega' \notin \mathcal{E}_1 ^\omega\bigr) 
            \leq 
            \sum_{k = 2}^{|P_\omega |} \binom{|P_\omega |}{1} \binom{2m-1}{1} \binom{|P_\omega|-2}{k-2} (1-\lambda_0)^{k}\lambda_0^{|P_\omega|-k} 
            =
            (2m-1)|P_\omega |  (1-\lambda_0)^{2} .
        \end{split}
    \end{equation*} 
    Similarly, we have
    \begin{equation*}
        \begin{split}
            &\mathbb{E}_{\omega, \lambda}\Bigl[ \,  \prod_{p  \in P_\omega} r_\kappa \bigl( \omega'(p) \bigr)  \cdot \mathbb{1}(\omega' \notin \mathcal{E}_1^\omega )\Bigr]
            \leq 
            \sum_{k = 2}^{|P_\omega |} \binom{|P_\omega |}{1} \binom{2m-1}{1} \binom{|P_\omega|-2}{k-2} \Bigl( \sum_{j \in \mathbb{Z}_n\smallsetminus \{ 0 \}}\lambda_j r_\kappa(j) \Bigr)^{k}  \lambda_0^{|P_\omega|-k} 
            \\&\qquad= 
            (2m-1)|P_\omega |
            \Bigl( \sum_{j \in \mathbb{Z}_n }\lambda_j r_\kappa (j)   \Bigr)^{|P_\omega|-2}
            \Bigl( \sum_{j \in \mathbb{Z}_n \smallsetminus \{ 0 \}}\lambda_j r_\kappa(j) \Bigr)^2.
        \end{split}
    \end{equation*}   
    Combining the above equations with~Lemma~\ref{lemma: step 1 of end forms ii} and~\eqref{alphasumf}, we obtain
    \begin{equation*}
        \begin{split}
            &\biggl| \mathbb{E}_{\omega, \lambda}\Bigl[\,  \prod_{p  \in P_\omega} r_\kappa  \bigl( \omega'(p) \bigr)  \Bigr] - \mathbb{E}_{\omega, \lambda}\Bigl[ \, \prod_{p  \in P_\omega} r_\kappa  \bigl( \omega'(p) \bigr)   \mid \omega' \in \mathcal{E}_1^\omega \Bigr]  \biggr|
            \\&\qquad=
            \biggl| \frac{\mathbb{E}_{\omega, \lambda}\pigl[ \prod_{p  \in P_\omega} r_\kappa  \bigl( \omega'(p) \bigr) \cdot \mathbb{1}(\omega' \notin \mathcal{E}_1^\omega) \pigr] -\mathbb{E}_{\omega, \lambda}\pigl[  \prod_{p  \in P_\omega} r_\kappa  \bigl( \omega'(p) \bigr)  \pigr] \mu_{\omega,\lambda}( \omega' \notin \mathcal{E}_1) }{1 - \mu_{\omega,\lambda}(\omega' \notin\mathcal{E}_1)} \biggr|
            \\&\qquad\leq 
            (2m-1)|P_\omega | \Bigl( \sum_{j \in \mathbb{Z}_n }\lambda_j r_\kappa(j)   \Bigr)^{|P_\omega|} 
            \frac{ 
            \Bigl( \sum_{j \in \mathbb{Z}_n \smallsetminus \{ 0 \}}\lambda_j r_\kappa(j) \Bigr)^2 /  (\sum_{j \in \mathbb{Z}_n} \lambda_j r_\kappa(j))^{2} + (1-\lambda_0)^2 }{1 - (2m-1)|P_\omega|(1-\lambda_0)^2}.
        \end{split}
    \end{equation*} In view of the expression in~\eqref{alphasumf} for \(\alpha(\beta,\kappa), \) we get
    \begin{equation*}
        \begin{split}
            &\biggl| \mathbb{E}_{\omega, \lambda}\Bigl[\, \prod_{p  \in P_\omega} r_\kappa\bigl( \omega'(p) \bigr) \Bigr] - \mathbb{E}_{\omega, \lambda}\Bigl[\,  \prod_{p  \in P_\omega} r_\kappa\bigl( \omega'(p) \bigr)  \mid \omega' \in \mathcal{E}_1^\omega \Bigr]  \biggr| 
            \\&\qquad\leq    
            (2m-1)|P_\omega | \zeta_\beta^{-1}
            \frac{ 
            \bigl( \alpha(\beta,\kappa) - \lambda_0   \bigr)^2 /  \alpha(\beta,\kappa)^{2} + (1-\lambda_0)^2 }{1 - (2m-1)|P_\omega|(1-\lambda_0)^2}
            \cdot 
            \alpha(\beta,\kappa)^{|P_\omega|} \zeta_\beta.
        \end{split}
    \end{equation*}   
    Recalling that \( |P_\omega| \leq |\mathcal{P}_\gamma|, \) and noting that, by Lemma~\ref{lemma: Zn properties of alpha} and Lemma~\ref{lemma: lambda inequality}, 
    \begin{equation*}
        0 \leq  \frac{\alpha(\beta,\kappa)-\lambda_0}{\alpha(\beta,\kappa)} 
        =
        \frac{\alpha(\beta,\kappa)-1+1-\lambda_0}{\alpha(\beta,\kappa)} 
        \leq 
        (\alpha(\beta,\kappa)-1)+(1-\lambda_0)
        \leq 
        \frac{\zeta_\beta\xi_\kappa^2}{1 - \zeta_\beta\xi_\kappa^2} + \zeta_\beta\xi_\kappa^4,
    \end{equation*}
    we obtain the desired conclusion.
\end{proof}

\begin{lemma}\label{lemma: upper bound for tilde P forms ii}
    Let \( \beta,\kappa \geq 0 \) and let \( \gamma \) be a rectangular path. Assume that~\eqref{assumption: 1} holds.
    Then
    \begin{equation}\label{eq: upper bound for tilde P forms ii}
        \mathbb{E}_\varphi \bigl[ |\mathcal{P}_\gamma \smallsetminus (P_\omega \cup \pm \mathcal{P}_{\gamma,c}) | \bigr]   
        \leq  |\mathcal{P}_\gamma|\zeta_\beta \cdot \frac{3(2m-3)(8m)^{2}  \xi_\kappa^{4} }{1-(8m)^{2} \zeta_\beta \xi_\kappa^{-1} }.
    \end{equation} 
\end{lemma}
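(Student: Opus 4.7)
The plan is to express $|\mathcal{P}_\gamma \smallsetminus (P_\omega \cup \pm \mathcal{P}_{\gamma,c})|$ as a sum of indicators, translate the event $\{p \notin P_\omega\}$ into the existence of a ``bad'' plaquette in $\support \omega$, and then bound the resulting probabilities uniformly via Lemma~\ref{lemma: flip a set forms}.

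First I would write
\[
|\mathcal{P}_\gamma \smallsetminus (P_\omega \cup \pm \mathcal{P}_{\gamma,c})| = \sum_{p \in \mathcal{P}_\gamma \smallsetminus \pm \mathcal{P}_{\gamma,c}} \mathbb{1}(p \notin P_\omega),
\]
and note that from the definition of $P_\omega$, for $p \in \mathcal{P}_\gamma \smallsetminus \pm \mathcal{P}_{\gamma,c}$ the condition $p \notin P_\omega$ is exactly the existence of a positively oriented $p' \in (\support \omega)^+$ with $p' \sim p^+$ and $p' \notin \mathcal{P}_\gamma^+ \smallsetminus \pm \mathcal{P}_{\gamma,c}$. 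Taking expectations and applying a union bound gives
\[
\mathbb{E}_\varphi \bigl[|\mathcal{P}_\gamma \smallsetminus (P_\omega \cup \pm \mathcal{P}_{\gamma,c})|\bigr] \leq \sum_{p \in \mathcal{P}_\gamma \smallsetminus \pm \mathcal{P}_{\gamma,c}} \sum_{p' \in Q_p} \mathbb{P}_\varphi(p' \in \support \omega),
\]
where $Q_p$ denotes the set of such $p'$. Since $p$ is a non-corner plaquette bordering $\gamma$, it has exactly one edge in $\support \gamma$; the $2m-3$ neighbors of $p^+$ through this edge lie in $\mathcal{P}_\gamma^+$ and thus do not contribute to $Q_p$, while each of the remaining three edges of $p$ contributes at most $2m-3$ positively oriented neighbors. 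This yields the crude count $|Q_p| \leq 3(2m-3)$.

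For the uniform bound on $\mathbb{P}_\varphi(p' \in \support \omega)$ I plan to invoke Lemma~\ref{lemma: flip a set forms} with $P_0 = \{p'\}$, $k = 1$ and $k' = 5$. Given $\omega$ with $p' \in \support \omega$, the natural choice is to take $\omega'$ equal to the restriction of $\omega$ to the connected component of $(\support \omega)^+$ containing $p'$; Lemma~\ref{lemma: connected components and lhd} then provides $\omega' \lhd \omega$ together with $\{p'\} \subseteq (\support \omega')^+ = ((\support \omega')^+)^{\{p'\}}$. To place $\omega'$ in the set $H$ of Lemma~\ref{lemma: flip a set forms}, the required geometric inequality is
\[
|\support \omega'| + |\support \delta \omega'| \geq 10
\]
for every non-zero connected $\omega' \in \Omega^2(B_N,\mathbb{Z}_n)$. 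This is immediate when $|(\support \omega')^+| = 1$ (giving $2+8$) or $|(\support \omega')^+| \geq 5$ (giving $|\support \omega'| \geq 10$). The intermediate cases $|(\support \omega')^+| \in \{2,3,4\}$ follow from counting the non-shared edges of the plaquette cluster, together with the fact that the smallest non-trivial $2$-cycle in $\mathbb{Z}^m$ is the boundary of a $3$-cube (six plaquettes), so $\delta \omega'$ cannot vanish on clusters of size at most four. Lemma~\ref{lemma: flip a set forms} then delivers
\[
\mathbb{P}_\varphi(p' \in \support \omega) \leq \frac{(8m)^2 \zeta_\beta \xi_\kappa^4}{1 - (8m)^2 \zeta_\beta \xi_\kappa^{-1}},
\]
the denominator being positive by~\eqref{assumption: 1}.

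Combining this bound with $|Q_p| \leq 3(2m-3)$ and $|\mathcal{P}_\gamma \smallsetminus \pm \mathcal{P}_{\gamma,c}| \leq |\mathcal{P}_\gamma|$ produces~\eqref{eq: upper bound for tilde P forms ii}. The main obstacle I anticipate is the small-cluster geometric inequality displayed above; the rest of the argument is a routine combination of a union bound with the already-established Lemma~\ref{lemma: flip a set forms}.
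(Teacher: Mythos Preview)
Your proposal is correct and follows essentially the same route as the paper: write the expectation as a sum of indicators over $p \in \mathcal{P}_\gamma \smallsetminus \pm \mathcal{P}_{\gamma,c}$, reinterpret $p \notin P_\omega$ as adjacency to a bad plaquette $p'$, bound the number of such $p'$ by $3(2m-3)$, and apply Lemma~\ref{lemma: flip a set forms} with $P_0=\{p'\}$, $k=1$, $k'=5$. Your added justification of the inequality $|\support \omega'|+|\support\delta\omega'|\geq 10$ for connected nonzero $\omega'$ is a detail the paper leaves implicit.
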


\begin{proof}
    First, note that
    \begin{equation}\label{EvarphisumPvarphi}
    \mathbb{E}_\varphi \bigl[ |\mathcal{P}_\gamma \smallsetminus (P_\omega \cup \pm \mathcal{P}_{\gamma,c}) | \bigr] 
        = 
        \sum_{p \in \mathcal{P}_\gamma \smallsetminus \pm \mathcal{P}_{\gamma,c}} \mathbb{P}_\varphi ( p \in \mathcal{P}_\gamma \smallsetminus P_\omega).
    \end{equation} 
    Fix any \( p \in \mathcal{P}_\gamma\smallsetminus \pm \mathcal{P}_{\gamma,c}.\)
    If \( \omega \in \Omega^2(B_N,\mathbb{Z}_n)\) and \( p \notin P_\omega,\) then \( p \sim (\support \omega)^+ \smallsetminus \mathcal{P}_\gamma.\) 
    Note that 
    \begin{equation}\label{eq: upper bound for tilde P forms ii 1}
        \bigl| \{ p' \in C_2(B_N)^+ \smallsetminus \mathcal{P}_\gamma^+ \colon p^+ \sim p' \} \bigr| \leq \bigl( |\support \partial p|-1\bigr) (2m-3) = 3(2m-3).
    \end{equation}
    Now fix any \( p' \in C_2(B_N)^+ \smallsetminus \mathcal{P}_\gamma^+ \) with \(  p' \sim p. \)
    If \( p' \in \support \omega,\) then  \( |(\support \omega^{\support \partial p'})^+| \geq 1\) and \( |(\support \omega^{\support \partial p'})^+| + |(\support \delta \omega^{\support \partial p'})^+| \geq 5.\) Applying Lemma~\ref{lemma: flip a set forms} with \( P_0 = \{ p' \} ,\) \( k = 1, \) and \( k' = 5 ,\) we see that
    \begin{equation}\label{eq: upper bound for tilde P forms ii 2}
        \mathbb{P}_\varphi \bigl( p' \in (\support \omega)^+ \bigr)
        \leq \frac{(8m)^{2} \zeta_\beta \xi_\kappa^{4} }{1-(8m)^{2} \zeta_\beta \xi_\kappa^{-1} }.
    \end{equation} 
    Combining~\eqref{eq: upper bound for tilde P forms ii 1}~and~\eqref{eq: upper bound for tilde P forms ii 2}, we get
    \begin{equation*}
       \mathbb{P}_\varphi(p \in \mathcal{P}_\gamma \smallsetminus P_\omega) \leq \sum_{\substack{p' \in C_2(B_N)^+ \smallsetminus \mathcal{P}_\gamma \colon\\ p^+ \sim p'}} \mathbb{P}_\varphi \bigl( p' \in (\support \omega)^+ \bigr) \leq  \frac{3(2m-3)(8m)^{2} \zeta_\beta \xi_\kappa^{4} }{1-(8m)^{2} \zeta_\beta \xi_\kappa^{-1} }.
    \end{equation*}
    Summing over all \( p \in \mathcal{P}_\gamma \smallsetminus (P_\omega \cup \pm \mathcal{P}_{\gamma,c})\) and using~\eqref{EvarphisumPvarphi}, we obtain~\eqref{eq: upper bound for tilde P forms ii} as desired.
\end{proof}

We are now ready to give a proof of Proposition~\ref{proposition: last resampling lemma forms ii}.
\begin{proof}[Proof of Proposition~\ref{proposition: last resampling lemma forms ii}]
     Note first that by combining Lemma~\ref{lemma: symmetry} and Lemma~\ref{lemma: order}, we see that \( \xi_\kappa = \varphi_\kappa(1).\) Using Lemma~\ref{lemma: change of complicated term forms ii}, we thus have
    \begin{equation}\label{step1}
        \begin{split}
            &\Bigl| \mathbb{E}_\varphi    \pigl[\widehat{L_\gamma}(\omega) \cdot \mathbb{1}(  \mathcal{E} ) \pigr] - \xi_\kappa^{|\gamma |} \alpha(\beta,\kappa)^{|\mathcal{P}_\gamma|}\Bigr|
            \\&\qquad 
            =
            \varphi_\kappa(1)^{|\gamma |}\biggl| 
            \mathbb{E}_\varphi    \Bigl[\prod_{p  \in \mathcal{P}_\gamma} r_\kappa \bigl( \omega(p) \bigr)\cdot \mathbb{1}(  \mathcal{E} ) \Bigr]  -  \alpha(\beta,\kappa)^{|\mathcal{P}_\gamma|}\biggr|
            .
        \end{split}
    \end{equation}
    Next, note that by combining Lemma~\ref{lemma: P tilde equation forms ii} with Lemma~\ref{lemma: I forgot the use of ii}, we obtain
    \begin{align}\nonumber
            &
            \biggl|\mathbb{E}_\varphi  \Bigl[ \, \prod_{p  \in \mathcal{P}_\gamma} r_\kappa \bigl(  \omega(p) \bigr) \cdot \mathbb{1}(\omega \in \mathcal{E})\Bigr] 
            -
            \mathbb{E}_\varphi  \Bigl[ \mathbb{E}_{ \omega, \lambda} \pigl[ \prod_{p  \in P_\omega} r_\kappa \bigl(  \omega'(p) \bigr)  \pigr]\cdot \mathbb{1}(\omega \in \mathcal{E})\Bigr] \biggr|
            \\\nonumber
            &\qquad=
            \biggl|\mathbb{E}_\varphi  \Bigl[ \mathbb{E}_{ \omega, \lambda} \pigl[ \prod_{p  \in P_\omega} r_\kappa \bigl(  \omega'(p) \bigr) \mid \omega' \in \mathcal{E}_1^\omega \pigr]\cdot \mathbb{1}(\omega \in \mathcal{E})\Bigr]
            - 
            \mathbb{E}_\varphi  \Bigl[ \mathbb{E}_{ \omega, \lambda} \pigl[ \prod_{p  \in P_\omega} r_\kappa \bigl(  \omega'(p) \bigr)  \pigr]\cdot \mathbb{1}(\omega \in \mathcal{E})\Bigr]
            \biggr|
            \\\label{step2}
            &\qquad\leq  
          C^{(2i)}_{\gamma,\beta,\kappa,m}\, \alpha(\beta,\kappa)^{|P_\omega|} \zeta_\beta.
    \end{align}  
    Now fix some \( \omega \in \Omega^2(B_N,\mathbb{Z}_n) .\) 
    By Lemma~\ref{lemma: step 1 of end forms ii}, we have 
    \begin{equation*}
        \begin{split}
            &\mathbb{E}_{\omega, \lambda}\Bigl[ \prod_{p  \in P_\omega} r_\kappa \bigl( \omega'(p) \bigr)  \Bigr] 
            =
            \alpha(\beta,\kappa)^{|P_\omega|}.
        \end{split}
    \end{equation*}
    Let $\Phi := \mathcal{P}_\gamma \setminus (\pm \mathcal{P}_{\gamma,c})$.
    Since \( \alpha(\beta,\kappa) \geq 1 \) by Lemma~\ref{lemma: Zn properties of alpha} and $P_\omega \subseteq \Phi$, it immediately follows that
    \begin{equation*}
        \begin{split}
            &\mathbb{E}_{\omega, \lambda}\Bigl[ \prod_{p  \in P_\omega} r_\kappa \bigl( \omega'(p) \bigr)  \Bigr] 
            \leq
            \alpha(\beta,\kappa)^{|\Phi|}. 
        \end{split}
    \end{equation*}
    At the same time, using again that \( P_\omega \subseteq \Phi, \) we have 
    \begin{equation*}
        \begin{split}
            & \alpha(\beta,\kappa)^{|P_\omega|}
            =  \alpha(\beta,\kappa)^{|\Phi|} \alpha(\beta,\kappa)^{-|\Phi \smallsetminus P_\omega|}
            =  \alpha(\beta,\kappa)^{|\mathcal{P}_\gamma|} \pigl(1-\bigl(1-\alpha(\beta,\kappa)^{-1}\bigr)\pigr)^{|\Phi \smallsetminus P_\omega|}
            \\&\qquad\geq \alpha(\beta,\kappa)^{|\Phi|} \pigl(1-|\Phi \smallsetminus P_\omega|\bigl(1-\alpha(\beta,\kappa)^{-1}\bigr) \pigr)
        \end{split}
    \end{equation*}
    and hence 
    \begin{equation*}
        0 \geq \mathbb{E}_{\omega, \lambda}\pigl[ \prod_{p  \in P_\omega} r_\kappa \bigl( \omega'(p) \bigr)  \pigr] - \alpha(\beta,\kappa)^{|\Phi|} \geq -\alpha(\beta,\kappa)^{|\Phi|} |\Phi \smallsetminus P_\omega| (1 - \alpha(\beta,\kappa)^{-1}). 
    \end{equation*}
    Since
    \begin{equation*}
        \begin{split}
            &0 \geq \mathbb{E}_\varphi \Bigl[\mathbb{E}_{\omega,\lambda}\pigl[\prod_{p  \in P_\omega} r_\kappa \bigl(  \omega'(p) \bigr) \pigr] \Bigr] -  \alpha(\beta,\kappa)^{|\Phi |}
            \geq
            \mathbb{E}_\varphi \Bigl[\mathbb{E}_{\omega,\lambda}\pig[\prod_{p  \in P_\omega} r_\kappa \bigl(  \omega'(p) \bigr) \pigr] \cdot \mathbb{1}(\omega \in \mathcal{E})\Bigr] -  \alpha(\beta,\kappa)^{|\Phi|}
            \\&\qquad=
            -\mathbb{E}_\varphi \Bigl[\mathbb{E}_{\omega,\lambda}\pigl[\prod_{p  \in P_\omega} r_\kappa \bigl(  \omega'(p) \bigr) \pigr] \cdot \mathbb{1}(\omega \notin \mathcal{E})\Bigr]
            + 
            \mathbb{E}_\varphi \Bigl[\mathbb{E}_{\omega,\lambda}\pigl[\prod_{p  \in P_\omega} r_\kappa \bigl(  \omega'(p) \bigr) \pigr] \Bigr]-  \alpha(\beta,\kappa)^{|\Phi|},
        \end{split}
    \end{equation*}
    it follows that
    \begin{equation*}
        \begin{split}
            &\biggl| \mathbb{E}_\varphi \Bigl[\mathbb{E}_{\omega,\lambda}\pigl[\prod_{p  \in P_\omega} r_\kappa \bigl(  \omega'(p) \bigr) \pigr] \cdot \mathbb{1}(\omega \in \mathcal{E})\Bigr] -  \alpha(\beta,\kappa)^{|\Phi|} \biggr|
            \\&\qquad \leq  \alpha(\beta,\kappa)^{|\Phi|} 
            \cdot 
            \mathbb{P}_\varphi(\omega \notin\mathcal{E}) 
            +\alpha(\beta,\kappa)^{|\Phi|} 
            \cdot 
            \bigl(1-\alpha(\beta,\kappa)^{-1}\bigr)
            \cdot 
            \mathbb{E}_\varphi \bigl[ |\Phi \smallsetminus P_\omega| \bigr].
        \end{split}
    \end{equation*} 
    Finally, using Lemma~\ref{lemma: Zn properties of alpha}, we note that
    \begin{equation*}
        \begin{split}
            &\bigl|\alpha(\beta,\kappa)^{|\Phi|} - \alpha(\beta,\kappa)^{|\mathcal{P}_\gamma|}\bigr| 
            = \alpha(\beta,\kappa)^{|\mathcal{P}_\gamma|} \zeta_\beta \cdot
            \pigl|1 - \alpha(\beta,\kappa)^{-|\mathcal{P}_{\gamma,c}|} \pigr| \zeta_\beta^{-1}
            \\&\qquad\leq
            \alpha(\beta,\kappa)^{|\mathcal{P}_\gamma|} \zeta_\beta \cdot
            \pigl|1 - (1-\zeta_\beta \xi_\kappa)^{|\mathcal{P}_{\gamma,c}|} \pigr| \zeta_\beta^{-1}
            \leq
            \alpha(\beta,\kappa)^{|\mathcal{P}_\gamma|} \zeta_\beta \cdot
            |\mathcal{P}_{\gamma,c}| \zeta_\beta \xi_\kappa \cdot \zeta_\beta^{-1}
            \\&\qquad
            \leq
            \alpha(\beta,\kappa)^{|\mathcal{P}_\gamma|} \zeta_\beta \cdot
             |\mathcal{P}_{\gamma,c}| \xi_\kappa .
        \end{split}
    \end{equation*}
    Using Lemma~\ref{lemma: upper bound for tilde P forms ii}, Lemma~\ref{lemma: upper bound for E2 forms}, and Lemma~\ref{lemma: Zn properties of alpha}, we thus obtain
    \begin{equation}\label{step3}
        \begin{split}
            &\biggl| \mathbb{E}_\varphi \Bigl[\mathbb{E}_{\omega,\lambda}\pigl[\prod_{p  \in P_\omega} r_\kappa \bigl(  \omega'(p) \bigr) \pigr] \cdot \mathbb{1}(\omega \in \mathcal{E})\Bigr] -  \alpha(\beta,\kappa)^{|\Phi|} \biggr|
            \\&\qquad \leq 
            \alpha(\beta,\kappa)^{|\mathcal{P}_\gamma |}  \zeta_\beta
            \cdot \biggl( 
            C^{(2ii)}_{\gamma,\beta,\kappa, m}
            + 
            \zeta_\beta \xi_\kappa^2 
            \cdot 
            |\mathcal{P}_\gamma|\cdot \frac{3(2m-3)(8m)^{2}  \xi_\kappa^{4} }{1-(8m)^{2} \zeta_\beta \xi_\kappa^{-1} } + |\mathcal{P}_{\gamma,c}| \xi_\kappa \biggr).
        \end{split}
    \end{equation}  
    Combining~\eqref{step1},~\eqref{step2},~and~\eqref{step3}, using the triangle inequality, and defining
    \begin{equation}\label{eq: def C2iii ii}
        C^{(2iii)}_{\gamma,\beta,\kappa,m} \coloneqq 
        \zeta_\beta \xi_\kappa^2 
            \cdot 
            |\mathcal{P}_\gamma|\cdot \frac{3(2m-3)(8m)^{2}  \xi_\kappa^{4} }{1-(8m)^{2} \zeta_\beta \xi_\kappa^{-1} } + |\mathcal{P}_{\gamma,c}| \xi_\kappa,
    \end{equation}
    we obtain the desired conclusion. 
\end{proof}

\section{Main result, and proof of Theorem~1.3}\label{sec: main result proof}

 Theorem~\ref{theorem: first theorem Z2} is an immediate consequence of the following more general theorem.
\begin{theorem}[Small $\beta$ behavior of the Wilson line expectation value]\label{theorem: first theorem}
     Let \( m \geq 2, \) let \( G = \mathbb{Z}_n, \) let \( \beta, \kappa \geq 0, \) and let  \( \gamma \) be a path along the boundary of a rectangle with side lengths \( \ell_1,\ell_2 \geq 7. \) Suppose that \eqref{assumption: 1} and \eqref{assumption: 3} hold.
    Then
    \begin{equation}\label{eq: first theorem}
        \begin{split}
            &\biggr| \langle L_\gamma \rangle_{\beta,\kappa,\infty} - \xi_\kappa^{|\gamma |} \alpha(\beta,\kappa)^{|\mathcal{P}_\gamma|} \biggr| 
            \leq   
            C^{(0)}_{\gamma,\beta,\kappa,m}\, \xi_\kappa^{|\gamma |} \alpha(\beta,\kappa)^{{|\mathcal{P}_\gamma|}}  \zeta_\beta,
        \end{split}
    \end{equation} 
    where $\langle L_\gamma \rangle_{\beta,\kappa, \infty}$ is the  limit defined by~\eqref{infinitevolumelimit}, \( \alpha(\beta,\kappa) \) is  defined in~\eqref{eq: def alpha}, and \( C^{(0)}_{\gamma,\beta,\kappa,m} \) is defined in~\eqref{eq: def last constant}.
\end{theorem}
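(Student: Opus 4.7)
The plan is to assemble Theorem~\ref{theorem: first theorem} from the four ingredients already established, using the triangle inequality, and then pass to the infinite-volume limit. First I would apply Corollary~\ref{corollary: unitary gauge} to rewrite $\mathbb{E}_{N,\beta,\kappa,\infty}\bigl[L_\gamma(\sigma,\phi)\bigr]$ as $\mathbb{E}_{N,\beta,\kappa}\bigl[L_\gamma(\sigma,0)\bigr]$, and then invoke Proposition~\ref{proposition: high-temperature expansion ALHM 3} to express this as $\mathbb{E}_\varphi\bigl[\widehat{L_\gamma}(\omega)\bigr]$, thereby transporting the problem to the high-temperature 2-form representation under the measure $\mathbb{P}_\varphi$ defined in~\eqref{Pvarphidef}.

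The next step is to split this expectation according to whether the sampled 2-form lies in the event $\mathcal{E}$ defined in~\eqref{eq: def E2 forms ii}, writing
\begin{equation*}
\mathbb{E}_\varphi\bigl[\widehat{L_\gamma}(\omega)\bigr]
=\mathbb{E}_\varphi\bigl[\widehat{L_\gamma}(\omega)\cdot\mathbb{1}(\omega\in\mathcal{E})\bigr]
+\mathbb{E}_\varphi\bigl[\widehat{L_\gamma}(\omega)\cdot\mathbb{1}(\omega\notin\mathcal{E})\bigr].
\end{equation*}
Proposition~\ref{proposition: useful upper bound forms ii} (whose hypotheses reduce to~\eqref{assumption: 1} and the geometric requirement $\ell_1,\ell_2\ge 7$) bounds the second term above by $C^{(1)}_{\gamma,\beta,\kappa,m}\,\xi_\kappa^{|\gamma|}\,\zeta_\beta$, while Proposition~\ref{proposition: last resampling lemma forms ii} (whose hypotheses require both~\eqref{assumption: 1} and~\eqref{assumption: 3}) gives
\begin{equation*}
\bigl|\mathbb{E}_\varphi\bigl[\widehat{L_\gamma}(\omega)\cdot\mathbb{1}(\omega\in\mathcal{E})\bigr]-\xi_\kappa^{|\gamma|}\alpha(\beta,\kappa)^{|\mathcal{P}_\gamma|}\bigr|
\le C^{(2)}_{\gamma,\beta,\kappa,m}\,\xi_\kappa^{|\gamma|}\alpha(\beta,\kappa)^{|\mathcal{P}_\gamma|}\,\zeta_\beta.
\end{equation*}
The triangle inequality, together with the fact that $\alpha(\beta,\kappa)\ge 1$ by Lemma~\ref{lemma: Zn properties of alpha}, combines these two estimates into the finite-volume bound
\begin{equation*}
\bigl|\mathbb{E}_{N,\beta,\kappa,\infty}\bigl[L_\gamma(\sigma,\phi)\bigr]-\xi_\kappa^{|\gamma|}\alpha(\beta,\kappa)^{|\mathcal{P}_\gamma|}\bigr|
\le C^{(0)}_{\gamma,\beta,\kappa,m}\,\xi_\kappa^{|\gamma|}\alpha(\beta,\kappa)^{|\mathcal{P}_\gamma|}\,\zeta_\beta,
\end{equation*}
where one naturally sets
\begin{equation*}\label{eq: def last constant}
C^{(0)}_{\gamma,\beta,\kappa,m}\coloneqq C^{(1)}_{\gamma,\beta,\kappa,m}\,\alpha(\beta,\kappa)^{-|\mathcal{P}_\gamma|}+C^{(2)}_{\gamma,\beta,\kappa,m}.
\end{equation*}

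Finally, to obtain~\eqref{eq: first theorem} I would take $N\to\infty$. Since $L_\gamma$ depends on $\sigma$ only through its values on the finitely many edges in $\support\gamma$, and since the right-hand side of the previous display is independent of $N$, Proposition~\ref{proposition: limit exists} applies and yields the existence of the limit $\langle L_\gamma\rangle_{\beta,\kappa,\infty}$ together with the same inequality in the limit. There is really no new obstacle at this final stage: the substantive difficulties — controlling the contribution of the bad event $\mathcal{E}^c$ via the geometric inequalities of Section~\ref{section: properties}, and carrying out the Poisson approximation on $\mathcal{E}$ in Section~\ref{section: poisson} — have already been resolved, and what remains is only careful bookkeeping of constants and an appeal to the triangle inequality together with the monotone behaviour of $\alpha(\beta,\kappa)\ge 1$.
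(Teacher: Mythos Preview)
Your proposal is correct and follows essentially the same approach as the paper's own proof: reduce to the high-temperature representation via Corollary~\ref{corollary: unitary gauge} and Proposition~\ref{proposition: high-temperature expansion ALHM 3}, split according to the event $\mathcal{E}$, bound the two pieces by Propositions~\ref{proposition: useful upper bound forms ii} and~\ref{proposition: last resampling lemma forms ii}, combine by the triangle inequality with the same definition of $C^{(0)}_{\gamma,\beta,\kappa,m}$, and pass to the limit using Proposition~\ref{proposition: limit exists}. The only cosmetic difference is that you invoke $\alpha(\beta,\kappa)\ge 1$ explicitly, whereas the paper simply factors $\alpha(\beta,\kappa)^{|\mathcal{P}_\gamma|}$ out algebraically; the resulting constant is identical either way.
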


\begin{remark}
    One can show that, if \( \kappa\) is fixed, then \( \smash{C_{\gamma,\beta,\kappa,m}^{(0)} = O(1) \bigl( 1 + O( \zeta_\beta |\gamma |) \bigr)^2 .}\) In other words, for the right-hand side of~\eqref{eq: first theorem} to be smaller than \( \xi_\kappa^{|\gamma |}  \alpha(\beta,\kappa)^{|\mathcal{P}_\gamma|},\) \( \zeta_\beta^3 |\gamma |^2 \) needs to be small. In particular, this holds either when \( \gamma\) is fixed and \( \beta \to 0,\) or when \( \zeta_\beta^{1+\varepsilon} |\gamma | \asymp 1\) for some \( \varepsilon \in (0,1/2),\) in which case the term \( \alpha(\beta,\kappa)^{|\mathcal{P}_\gamma|}\) is non-trivial.
\end{remark}

\begin{proof}[Proof of Theorem~\ref{theorem: first theorem}]
    By Proposition~\ref{proposition: high-temperature expansion ALHM 3}, we have 
    \begin{equation*} 
        \begin{split}
            &\mathbb{E}_{N,\beta,\kappa}\bigl[L_\gamma(\sigma)\bigr] 
            =
            \mathbb{E}_\varphi    \bigl[\widehat{L_\gamma}(\omega) \bigr] ,
        \end{split}
    \end{equation*} 
    and by Proposition~\ref{proposition: useful upper bound forms ii}, we have
    \begin{equation*}
        \begin{split}
            &
            \mathbb{E}_\varphi    \pigl[ \widehat{L_\gamma}(\omega) \cdot \mathbb{1}( \omega \notin \mathcal{E} ) \pigr] 
            \leq     C^{(1)}_{\gamma,\beta,\kappa,m}\, \xi_\kappa^{|\gamma |}   \zeta_\beta.
        \end{split}
    \end{equation*}  
    Combining these equations, we obtain
    \begin{equation}\label{eq: thm proof eq 1}
        \begin{split}
            \Bigl|\mathbb{E}_{N,\beta,\kappa}\bigl[L_\gamma(\sigma))\bigr] 
            -
            \mathbb{E}_\varphi    \pigl[ \widehat{L_\gamma}(\omega)\cdot \mathbb{1}(  \mathcal{E} ) \pigr] \Bigr|
            \leq 
            C^{(1)}_{\gamma,\beta,\kappa,m}\, \xi_\kappa^{|\gamma |}   \zeta_\beta.
        \end{split}
    \end{equation}
    On the other hand, by Proposition~\ref{proposition: last resampling lemma forms ii}, we have
    \begin{equation}\label{eq: thm proof eq 2}
        \begin{split}
            &\Bigl| \mathbb{E}_\varphi    \pigl[\widehat{L_\gamma}(\omega)\cdot \mathbb{1}(  \mathcal{E} ) \pigr] - \xi_\kappa^{|\gamma |} \alpha(\beta,\kappa)^{|\mathcal{P}_\gamma|}\Bigr|
            \leq
            C^{(2)}_{\gamma,\beta,\kappa,m} \xi_\kappa^{|\gamma |}
            \alpha(\beta,\kappa)^{|\mathcal{P}_\gamma |} \zeta_\beta.
        \end{split}
    \end{equation}
    Combining~\eqref{eq: thm proof eq 1} and~\eqref{eq: thm proof eq 2}, we see that
    \begin{equation*}
        \begin{split}
            \biggl|\mathbb{E}_{N,\beta,\kappa}\bigl[ L_\gamma(\sigma))\bigr] 
            -
            \xi_\kappa^{|\gamma |} \alpha(\beta,\kappa)^{|\mathcal{P}_\gamma|} \biggr|
            \leq 
            C^{(1)}_{\gamma,\beta,\kappa,m}\, \xi_\kappa^{|\gamma |}  \zeta_\beta
            +
            C^{(2)}_{\gamma,\beta,\kappa,m} \xi_\kappa^{|\gamma |}
            \alpha(\beta,\kappa)^{|\mathcal{P}_\gamma |} \zeta_\beta.
        \end{split}
    \end{equation*}
    Letting
    \begin{equation}\label{eq: def last constant}
        C^{(0)}_{\gamma,\beta,\kappa,m} \coloneqq C^{(1)}_{\gamma,\beta,\kappa,m} \alpha(\beta,\kappa)^{-|\mathcal{P}_\gamma |}
        +
        C^{(2)}_{\gamma,\beta,\kappa,m} ,
    \end{equation}
    and applying Corollary~\ref{corollary: unitary gauge} and Proposition~\ref{proposition: limit exists}, the desired conclusion follows.   
\end{proof}

\appendix

\section{The calculations in Lemma~\ref{lemma: upper bound on expectation of two events}}

In this section, for completeness, we collect the calculations that were omitted from the proof of Lemma~\ref{lemma: upper bound on expectation of two events}. The notation in this section uses the notation of this proof.

\begin{sublemma}\label{sublemma: upper bound on expectation of two events 1}
    Let \( B_1\) be defined by~\eqref{eq: upper bound on expectation of two events 3}. Then
    \begin{align*}
        &
        B_1
        \leq
        \frac{ (16m)^2\zeta_\beta\xi_\kappa^{-1}\xi_\kappa^{|\gamma |}}{(1 - \xi_\kappa)(1-(16m)^{2} \zeta_\beta\xi_\kappa^{-1})}
        \Bigl( \bigl( 1+(16m)^{2} \zeta_\beta\xi_\kappa^{-2} \bigr)^{|\mathcal{P}_{\gamma,c}|}
        \pigl( 1 + (16m)^{2}\zeta_\beta \pigr)^{|\gamma |} -1 \Bigr)
        \\&\qquad\qquad+
        \frac{(16m)^{2}\zeta_\beta\xi_\kappa^{|\gamma |}}{(1 - \xi_\kappa)(1 - (16m)^{2} \zeta_\beta)} \Bigl( \bigl( 1 + (16m)^{2}\zeta_\beta  \xi_\kappa^{-2} \bigr)^{|\mathcal{P}_{\gamma,c}|} 
        \pigl( 1 + (16m)^{4} \zeta_\beta^{2} \xi_\kappa^{- 1}  \pigr)^{|\gamma |} -1 \Bigr).
    \end{align*}
\end{sublemma}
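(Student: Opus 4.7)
The plan is to evaluate the inner double sum over $(k,k')$ in closed form and then reduce the remaining double sum over $(i,j)$ to binomial sums. I would split the $k'$-sum according to which branch of $\max(j, 3j-3i-k)$ is active, writing $B_1 = B_{1,a} + B_{1,b}$, where $B_{1,a}$ collects the contributions from $k' \geq j$ (active whenever $k \geq 2j-3i$) and $B_{1,b}$ collects the contributions from $k' \geq 3j-3i-k$ (active when $k < 2j-3i$). In each piece the $k'$-series is a geometric series in $\xi_\kappa$ producing the universal prefactor $1/(1-\xi_\kappa)$.

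Next, in $B_{1,a}$ the $k$-sum is an infinite geometric series in $(16m)^2 \zeta_\beta$ starting at $\max(j-i+1, 2j-3i)$, producing the denominator $(1-(16m)^2\zeta_\beta)$. In $B_{1,b}$ the $k$-range is finite, namely $k \in [j-i+1, 2j-3i-1]$, and I would bound it by the infinite geometric series with ratio $(16m)^2\zeta_\beta \xi_\kappa^{-1}$ starting at $j-i+1$, which converges by assumption~\eqref{assumption: 1}; this produces the denominator $(1-(16m)^2\zeta_\beta \xi_\kappa^{-1})$. The resulting factors $(\cdot)^{j-i+1}$ and $(\cdot)^{2j-3i}$ supply the outer prefactors $(16m)^2\zeta_\beta\xi_\kappa^{-1}$ and $(16m)^2\zeta_\beta$ that appear in the two terms of the target.

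After these reductions I would substitute $j' = j - 2i$. Each summand then factors as $\xi_\kappa^{|\gamma|}$ times $\binom{|\gamma|}{j'}\binom{|\mathcal{P}_{\gamma,c}|}{i}$ times $a^{j'} b^i$, with $(a,b) = ((16m)^2\zeta_\beta,\, (16m)^2\zeta_\beta\xi_\kappa^{-2})$ in $B_{1,b}$ and $(a,b) = ((16m)^4\zeta_\beta^2\xi_\kappa^{-1},\, (16m)^2\zeta_\beta\xi_\kappa^{-2})$ in the dominant sub-range of $B_{1,a}$. Extending the $(i,j')$-summation to all pairs with $(i,j') \neq (0,0)$---an overestimate valid since every summand is non-negative---and applying the binomial theorem twice yields $(1+b)^{|\mathcal{P}_{\gamma,c}|}(1+a)^{|\gamma|} - 1$ in each case, where the $-1$ accounts for the excluded $j = 0$ contribution.

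The main obstacle is the bookkeeping near the boundary $j-i+1 = 2j-3i$ (equivalently $j = 2i+1$), where the $k$-sum in $B_{1,a}$ switches its starting point and $B_{1,b}$ becomes empty. Small-$j$ values ($j' \in \{0,1\}$) produce boundary terms that must be absorbed into the two product terms of the target, and one must verify that the slack gained by enlarging the $(i,j')$-range in the binomial step is sufficient to dominate these contributions. Since this verification is a direct calculation---monotonicity of geometric series and the pointwise inequality $(16m)^2\zeta_\beta\xi_\kappa^{-1}/(1-(16m)^2\zeta_\beta\xi_\kappa^{-1}) \geq (16m)^2\zeta_\beta/(1-(16m)^2\zeta_\beta)$, both guaranteed by~\eqref{assumption: 1}---the full claim follows by combining the bounds for $B_{1,a}$ and $B_{1,b}$ via the triangle inequality.
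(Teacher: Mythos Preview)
Your approach is essentially the paper's: evaluate the $k'$-sum as a geometric series, split the $k$-sum according to which branch of the $\max$ is active, bound each piece by an infinite geometric series (with ratios $(16m)^2\zeta_\beta\xi_\kappa^{-1}$ and $(16m)^2\zeta_\beta$ respectively), substitute $j'=j-2i$, and apply the binomial theorem. One small correction: with your split the tail piece $B_{1,a}$ starts at $k=2j-3i$, which yields $((16m)^2\zeta_\beta)^{2j-3i}=a^{j'}b^{i}$ with no leftover factor, so the prefactor $(16m)^2\zeta_\beta$ in the second target term does not emerge; placing the boundary value $k=2j-3i$ in $B_{1,b}$ instead (so $B_{1,a}$ starts at $2j-3i+1$, as the paper does) produces exactly the stated bound and removes the need for any ``slack'' or boundary-absorption argument.
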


\begin{subproof}
    Using the formula for a geometric sum, we obtain
    \begin{align*}
        &
        B_1
        =
        \frac{\xi_\kappa^{|\gamma |}}{1 - \xi_\kappa}\sum_{i = 0}^{|\mathcal{P}_{\gamma,c}|} \sum_{j=\max(1,2i)}^{|\gamma |} \binom{|\gamma |}{j-2i}\binom{|\mathcal{P}_{\gamma,c}|}{i} 
        \xi_\kappa^{- j} \sum_{k = j-i+1}^\infty (16m)^{2k} \zeta_\beta^k \xi_\kappa^{\max(0,2j-3i-k)}.
    \end{align*}
    Equivalently, \( B_1 = B_1' + B_1'',\) where
    \begin{align*}
        &B_1' \coloneqq \frac{\xi_\kappa^{|\gamma |}}{1 - \xi_\kappa}\sum_{i = 0}^{|\mathcal{P}_{\gamma,c}|} \sum_{j=\max(1,2i)}^{|\gamma |} \binom{|\gamma |}{j-2i}\binom{|\mathcal{P}_{\gamma,c}|}{i} 
        \xi_\kappa^{- j} \sum_{k = j-i+1}^{2j-3i}
        (16m)^{2k} \zeta_\beta^k \xi_\kappa^{2j-3i-k} 
        \\&\qquad=
        \frac{\xi_\kappa^{|\gamma |}}{1 - \xi_\kappa}\sum_{i = 0}^{|\mathcal{P}_{\gamma,c}|} \sum_{j=\max(1,2i)}^{|\gamma |} \binom{|\gamma |}{j-2i}\binom{|\mathcal{P}_{\gamma,c}|}{i} 
        \xi_\kappa^{ - j+2j-3i} 
        \sum_{k = j-i+1}^{2j-3i}
        (16m)^{2k} \zeta_\beta^k \xi_\kappa^{-k} 
    \end{align*}
    and 
    \begin{align*}
        &B_1'' \coloneqq\frac{\xi_\kappa^{|\gamma |}}{1 - \xi_\kappa}\sum_{i = 0}^{|\mathcal{P}_{\gamma,c}|} \sum_{j=\max(1,2i)}^{|\gamma |} \binom{|\gamma |}{j-2i}\binom{|\mathcal{P}_{\gamma,c}|}{i} 
        \xi_\kappa^{- j} 
        \sum_{k = \max(j-i+1, 2j-3i+1)}^\infty (16m)^{2k} \zeta_\beta^k \xi_\kappa^{0} 
        \\&\qquad
       = \frac{\xi_\kappa^{|\gamma |}}{1 - \xi_\kappa}\sum_{i = 0}^{|\mathcal{P}_{\gamma,c}|} \sum_{j=\max(1,2i)}^{|\gamma |} \binom{|\gamma |}{j-2i}\binom{|\mathcal{P}_{\gamma,c}|}{i} \xi_\kappa^{- j} 
        \sum_{k = 2j-3i+1}^\infty
        (16m)^{2k} \zeta_\beta^k .
    \end{align*}
    We now give upper bounds for \( B_1' \) and \( B_1''.\)
    Using the formula for a geometric sum, we find
    \begin{equation*}
        \begin{split}
            &B_1' =
            \frac{\xi_\kappa^{|\gamma |}}{(1 - \xi_\kappa)(1-(16m)^{2} \zeta_\beta\xi_\kappa^{-1})}
            \\&\qquad\qquad=
            \sum_{i = 0}^{|\mathcal{P}_{\gamma,c}|} \sum_{j=\max(1,2i)}^{|\gamma |} \binom{|\gamma |}{j-2i} \binom{|\mathcal{P}_{\gamma,c}|}{i}
            \xi_\kappa^{- j+2j-3i} 
            (16m)^{2(j-i+1)} \zeta_\beta^{j-i+1} \xi_\kappa^{-(j-i+1)},
        \end{split}
    \end{equation*}
    and, analogously, 
    \begin{equation*}
        \begin{split}
            &B_1'' =
            \frac{\xi_\kappa^{|\gamma |}}{(1 - \xi_\kappa)(1 - (16m)^{2} \zeta_\beta)}\sum_{i = 0}^{|\mathcal{P}_{\gamma,c}|} \sum_{j=\max(1,2i)}^{|\gamma |} \binom{|\gamma |}{j-2i}\binom{|\mathcal{P}_{\gamma,c}|}{i} 
            (16m)^{2(2j-3i+1)} \zeta_\beta^{2j-3i+1} \xi_\kappa^{- j}.
        \end{split}
    \end{equation*}
    Simplifying these expressions, we see that
    \begin{align*}
        &
        B_1' = \frac{(16m)^2 \zeta_\beta\xi_\kappa^{-1}\xi_\kappa^{|\gamma |}}{(1 - \xi_\kappa)(1-(16m)^{2} \zeta_\beta\xi_\kappa^{-1})}\sum_{i = 0}^{|\mathcal{P}_{\gamma,c}|} \sum_{j=\max(1,2i)}^{|\gamma |} \binom{|\gamma |}{j-2i}\binom{|\mathcal{P}_{\gamma,c}|}{i} 
        \xi_\kappa^{j-3i} 
        (16m)^{2(j-i)} \zeta_\beta^{j-i} \xi_\kappa^{-(j-i)}  
        \\&\qquad=
        \frac{(16m)^2 \zeta_\beta\xi_\kappa^{-1}\xi_\kappa^{|\gamma |}}{(1 - \xi_\kappa)(1-(16m)^{2} \zeta_\beta\xi_\kappa^{-1})}
        \\&\qquad\qquad\times
        \sum_{i = 0}^{|\mathcal{P}_{\gamma,c}|} \binom{|\mathcal{P}_{\gamma,c}|}{i} 
        (16m)^{2i} \zeta_\beta^{i}\xi_\kappa^{-2i}
        \sum_{j=\max(1,2i)}^{|\gamma |} \binom{|\gamma |}{j-2i}
        (16m)^{2(j-2i)} \zeta_\beta^{j-2i}
    \end{align*}
    and
    \begin{align*}
        &
        B_1'' 
        =
        \frac{(16m)^{2} \zeta_\beta\xi_\kappa^{|\gamma |}}{(1 - \xi_\kappa)(1 - (16m)^{2} \zeta_\beta)}\sum_{i = 0}^{|\mathcal{P}_{\gamma,c}|} \sum_{j=\max(1,2i)}^{|\gamma |} \binom{|\gamma |}{j-2i}\binom{|\mathcal{P}_{\gamma,c}|}{i} 
        (16m)^{2(2j-3i)} \zeta_\beta^{2j-3i} \xi_\kappa^{- j}
        \\&\qquad=
        \frac{(16m)^{2} \zeta_\beta\xi_\kappa^{|\gamma |}}{(1 - \xi_\kappa)(1 - (16m)^{2} \zeta_\beta)}\sum_{i = 0}^{|\mathcal{P}_{\gamma,c}|} \binom{|\mathcal{P}_{\gamma,c}|}{i} (16m)^{2i} \zeta_\beta^{i}  \xi_\kappa^{-2i} 
        \\&\qquad\qquad\quad \times \sum_{j=\max(1,2i)}^{|\gamma |} \binom{|\gamma |}{j-2i} (16m)^{4(j-2i)} \zeta_\beta^{2(j-2i)} \xi_\kappa^{- (j-2i)}.
    \end{align*}

    Using the identities \( \sum_{j=0}^\infty \binom{n}{j} p^j = (1+p)^n\) and \( \sum_{j=1}^\infty \binom{n}{j} p^j = (1+p)^n-1,\) we obtain the upper bounds
    \begin{align*}
        &B_1' \leq
        \frac{(16m)^2 \zeta_\beta\xi_\kappa^{-1}\xi_\kappa^{|\gamma |}}{(1 - \xi_\kappa)(1-(16m)^{2} \zeta_\beta\xi_\kappa^{-1})}\sum_{i = 0}^{|\mathcal{P}_{\gamma,c}|} \binom{|\mathcal{P}_{\gamma,c}|}{i} 
        (16m)^{2i} \zeta_\beta^{i} \xi_\kappa^{-2i}
        \Bigl(\pigl( 1 + (16m)^{2}\zeta_\beta \pigr)^{|\gamma |} - \mathbb{1}(i=0) \Bigr)
    \end{align*}
    and
    \begin{align*}
        &B_1'' 
        \leq \frac{(16m)^{2}\zeta_\beta\xi_\kappa^{|\gamma |}}{(1 - \xi_\kappa)(1 - (16m)^{2} \zeta_\beta)}\sum_{i = 0}^{|\mathcal{P}_{\gamma,c}|} \binom{|\mathcal{P}_{\gamma,c}|}{i} (16m)^{2i}\zeta_\beta^{i}  \xi_\kappa^{-2i} 
        \Bigl( \pigl( 1 + (16m)^{4} \zeta_\beta^{2}\xi_\kappa^{- 1}  \pigr)^{|\gamma |} - \mathbb{1}(i = 0)\Bigr).
    \end{align*}
    Finally, using again the identity \( \sum_{j=0}^\infty \binom{n}{j} p^j = (1+p)^n,\) we see that
    \begin{align*}
        &B_1' \leq \frac{(16m)^2\zeta_\beta\xi_\kappa^{-1}\xi_\kappa^{|\gamma |}}{(1 - \xi_\kappa)(1-(16m)^{2} \zeta_\beta\xi_\kappa^{-1})}
        \Bigl( \bigl( 1+(16m)^{2} \zeta_\beta\xi_\kappa^{-2} \bigr)^{|\mathcal{P}_{\gamma,c}|}
        \pigl( 1 + (16m)^{2}\zeta_\beta \pigr)^{|\gamma |} -1 \Bigr)
    \end{align*}
    and
    \begin{align*}
        &B_1'' \leq \frac{(16m)^{2}\zeta_\beta\xi_\kappa^{|\gamma |}}{(1 - \xi_\kappa)(1 - (16m)^{2} \zeta_\beta)} \Bigl( \bigl( 1 + (16m)^{2}\zeta_\beta  \xi_\kappa^{-2} \bigr)^{|\mathcal{P}_{\gamma,c}|} 
        \pigl( 1 + (16m)^{4} \zeta_\beta^{2} \xi_\kappa^{- 1}  \pigr)^{|\gamma |} -1 \Bigr) .
    \end{align*}
    Since \( B_1 = B_1' + B_1'',\) this concludes the proof. 
\end{subproof}

\begin{sublemma}\label{sublemma: upper bound on expectation of two events 2}
    Let \( B_2\) be defined by~\eqref{eq: def B2}. Then
    \begin{align*}
        &B_2
        \leq
        \frac{\xi_\kappa^{|\gamma |} (16m)^4 \zeta_\beta}{1-\xi_\kappa}\biggl(  
        |\gamma |  \zeta_\beta  
        +
        2
        |\mathcal{P}_{\gamma,c}|  \zeta_\beta \xi_\kappa^{-2} 
        \biggr) \bigl(1 + (16m)^2\zeta_\beta \xi_\kappa^{-2} \bigr)^{|\mathcal{P}_{\gamma,c}| } \bigl( 1 + (16m)^2 \zeta_\beta\bigr)^{|\gamma |}.
    \end{align*}
\end{sublemma}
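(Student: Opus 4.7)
The plan is a direct calculation that uses only the two standard binomial identities $\sum_{l=0}^n \binom{n}{l}p^l = (1+p)^n$ and $\sum_{l=0}^n l\binom{n}{l}p^l = np(1+p)^{n-1}$. I would carry out the three summations in order: first the inner sum over $k'$, then the sum over $j$ via a substitution, and finally the outer sum over $i$.

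The first observation is that the constraint $j \geq \max(2i+1,2)$ implies $j \geq 2i+1$, hence $2j-2i \geq j+1 > j$, so the geometric sum over $k'$ starts at $2j-2i$ and evaluates to
\[
\sum_{k' = 2j-2i}^{\infty} \xi_\kappa^{|\gamma|+k'-2j} = \frac{\xi_\kappa^{|\gamma|-2i}}{1-\xi_\kappa}.
\]
Substituting this into the definition of $B_2$ reduces the problem to bounding
\[
\frac{\xi_\kappa^{|\gamma|}}{1-\xi_\kappa}\sum_{i=0}^{|\mathcal{P}_{\gamma,c}|}\binom{|\mathcal{P}_{\gamma,c}|}{i}\xi_\kappa^{-2i}\sum_{j=\max(2i+1,2)}^{|\gamma|}(j-1)\binom{|\gamma|}{j-2i-1}((16m)^2\zeta_\beta)^{j-i}.
\]

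Next I would substitute $l = j-2i-1$ and split $j-1 = l+2i$. Extracting the prefactor $(16m)^{2(i+1)}\zeta_\beta^{i+1}$ leaves a sum $\sum_l (l+2i)\binom{|\gamma|}{l}((16m)^2\zeta_\beta)^l$ whose lower limit is $l=0$ (or $l=1$ when $i=0$, in which case I can harmlessly extend back to $l=0$ since I only want an upper bound). Applying the two binomial identities term by term produces the bound
\[
|\gamma|(16m)^2\zeta_\beta(1+(16m)^2\zeta_\beta)^{|\gamma|-1} + 2i(1+(16m)^2\zeta_\beta)^{|\gamma|}
\]
for the inner sum; the crude estimate $(1+(16m)^2\zeta_\beta)^{|\gamma|-1} \leq (1+(16m)^2\zeta_\beta)^{|\gamma|}$ then makes the factor $(1+(16m)^2\zeta_\beta)^{|\gamma|}$ common to both terms.

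Finally, the $i$-summation factors cleanly. The $i$-independent contribution is handled by $\sum_i \binom{|\mathcal{P}_{\gamma,c}|}{i}q^i = (1+q)^{|\mathcal{P}_{\gamma,c}|}$ with $q = (16m)^2\zeta_\beta\xi_\kappa^{-2}$, while the $2i$ contribution is handled by $\sum_i i\binom{|\mathcal{P}_{\gamma,c}|}{i}q^i = |\mathcal{P}_{\gamma,c}|q(1+q)^{|\mathcal{P}_{\gamma,c}|-1}$ followed by $(1+q)^{|\mathcal{P}_{\gamma,c}|-1} \leq (1+q)^{|\mathcal{P}_{\gamma,c}|}$. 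Collecting the two summands produces exactly the parenthesized expression $|\gamma|\zeta_\beta + 2|\mathcal{P}_{\gamma,c}|\zeta_\beta\xi_\kappa^{-2}$ in the claim, multiplied by the prefactor $\xi_\kappa^{|\gamma|}(16m)^4\zeta_\beta/(1-\xi_\kappa)$ and the two binomial powers. The only real difficulty is careful bookkeeping of index ranges—verifying $\max(j,2j-2i)=2j-2i$ under the constraint $j \geq 2i+1$ and treating the $i=0$ edge case—but there is no conceptual obstacle beyond the two binomial identities.
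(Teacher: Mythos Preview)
Your proposal is correct and follows essentially the same route as the paper's proof: evaluate the geometric $k'$-sum using $\max(j,2j-2i)=2j-2i$, split $j-1=(j-2i-1)+2i$, and then apply the two binomial identities first in $j$ (after the shift $l=j-2i-1$) and then in $i$. The paper organizes the split as two named pieces $B_2'$ and $B_2''$ and keeps the factors $(1+p)^{|\gamma|-1}$ and $(1+q)^{|\mathcal{P}_{\gamma,c}|-1}$ until the very end before bounding them, but this is purely cosmetic.
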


\begin{subproof}
    Note first that if \( j \geq 2i+1,\) then \( 2j-2i \geq j.\) Using this observation, we see that
    \begin{align*}
        &B_2
        =
        \sum_{i=0}^{|\mathcal{P}_{\gamma,c}|} \sum_{j = \max(2i+1,2)}^{|\gamma |} (j-1) \binom{|\gamma |}{j-2i-1}\binom{|\mathcal{P}_{\gamma,c}|}{i} \sum_{k'=2j-2i}^\infty
        (16m)^{2(j-i)} \zeta_\beta^{j-i} \xi_\kappa^{|\gamma | + k' - 2j}.
    \end{align*}
    Using the formula for geometric sums, it follows that \(B_2 = B_2' + B_2'',\) where
    \begin{equation*}
        \begin{split}
            &B_2' \coloneqq \frac{\xi_\kappa^{|\gamma |}}{1-\xi_\kappa}\sum_{i=0}^{|\mathcal{P}_{\gamma,c}|} \binom{|\mathcal{P}_{\gamma,c}|}{i} 
            (16m)^{2i+2}\zeta_\beta^{i+1} \xi_\kappa^{-2i}
            \\&\qquad\qquad\times \sum_{j = \max(2i+1,2)}^{|\gamma |} (j-2i-1) \binom{|\gamma |}{j-2i-1}
            (16m)^{2(j-2i-1)} \zeta_\beta^{j-2i-1} 
        \end{split}
    \end{equation*}
    and
    \begin{equation*}
        \begin{split}
            &B_2'' \coloneqq \frac{\xi_\kappa^{|\gamma |}}{1-\xi_\kappa}\sum_{i=0}^{|\mathcal{P}_{\gamma,c}|} 2i \binom{|\mathcal{P}_{\gamma,c}|}{i} 
            (16m)^{2i+2}\zeta_\beta^{i+1} \xi_\kappa^{-2i}
            \\&\qquad\qquad\times\sum_{j = \max(2i+1,2)}^{|\gamma |}  \binom{|\gamma |}{j-2i-1}
            (16m)^{2(j-2i-1)} \zeta_\beta^{j-2i-1}.
        \end{split}
    \end{equation*}
    We now rewrite these expressions. 
    Using the identity \( \sum_{j=0}^n \binom{n}{j} j p^j = np(1+p)^{n-1},\) valid for \( p > 0 \) and \( n \geq 0,\) we obtain
    \begin{equation}\label{eq: B2' final}
        B_2' = \frac{\xi_\kappa^{|\gamma |}(16m)^2  \zeta_\beta^{1}}{1-\xi_\kappa} 
        \bigl( 1 + (16m)^{2}\zeta_\beta\xi_\kappa^{-2} \bigr)^{|\mathcal{P}_{\gamma,c}|}
        \pigl( |\gamma | (16m)^{2} \zeta_\beta  \bigl( 1 + (16m)^{2} \zeta_\beta \bigr)^{|\gamma |-1}\pigr) 
    \end{equation}
    and 
    \begin{equation}\label{eq: B2'' final}
        B_2'' = \frac{2\xi_\kappa^{|\gamma |} (16m)^{2}\zeta_\beta^{1} }{1-\xi_\kappa}
        |\mathcal{P}_{\gamma,c}| (16m)^2\zeta_\beta \xi_\kappa^{-2} \bigl(1 + (16m)^2\zeta_\beta \xi_\kappa^{-2} \bigr)^{|\mathcal{P}_{\gamma,c}|-1 }
         \pigl(\bigl( 1 + (16m)^2 \zeta_\beta\bigr)^{|\gamma |}  \pigr) .
    \end{equation}
    Combining~\eqref{eq: B2' final}~and~\eqref{eq: B2'' final} and recalling that \( B_2 = B_2' + B_2'',\) the claim follows.
\end{subproof}

\begin{sublemma}\label{sublemma: upper bound on expectation of two events 3}
    Let \( B_3\) be defined by~\eqref{eq: B3}. Then
    \begin{align*}
        &B_3
        \leq 
        \frac{\xi_\kappa^{|\gamma |}(16m)^{4}\zeta_\beta^2  |\gamma |  \bigl( 1 + (16m)^{2} \zeta_\beta  \xi_\kappa^{2}  \bigr)^{|\gamma |}}{1-\xi_\kappa}
        \biggl(
        \frac{\xi_\kappa^{4}}{(1-(16m)^{2} \zeta_\beta \xi_\kappa^{-1})}
        +
        \frac{(16m)^{8} \zeta_\beta^{4} }{(1-(16m)^{2} \zeta_\beta )} 
        \biggr) .
    \end{align*}   
\end{sublemma}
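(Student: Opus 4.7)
The plan is to evaluate $B_3$ by performing the three sums in the order $\hat k'$, $\hat k$, and then $j$, in each case recognizing a geometric-type series. The only real complication is the $\max(0,j+6-\hat k)$ appearing in the lower bound for $\hat k'$, which forces me to split the $\hat k$-sum at the threshold $\hat k = j+6$. Write
\begin{equation*}
B_3 = \sum_{j=0}^{|\gamma|}\binom{|\gamma|}{j+1}(j+1)\bigl(\Sigma_1(j)+\Sigma_2(j)\bigr),
\end{equation*}
where $\Sigma_1(j)$ is the contribution from $\hat k\in\{j+2,\dots,j+5\}$ (so $\hat k' \geq 5j+6-\hat k$) and $\Sigma_2(j)$ is the contribution from $\hat k \geq j+6$ (so $\hat k' \geq 4j$).

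For $\Sigma_2(j)$, the inner sum gives $\sum_{\hat k'\geq 4j}\xi_\kappa^{\hat k'} = \xi_\kappa^{4j}/(1-\xi_\kappa)$, and then $\sum_{\hat k\geq j+6}((16m)^2\zeta_\beta)^{\hat k} = ((16m)^2\zeta_\beta)^{j+6}/(1-(16m)^2\zeta_\beta)$, which, after collecting the factor $\xi_\kappa^{|\gamma|-2j}\cdot \xi_\kappa^{4j} = \xi_\kappa^{|\gamma|}\xi_\kappa^{2j}$, produces a term of the form $(16m)^{12}\zeta_\beta^6\cdot ((16m)^2\zeta_\beta\xi_\kappa^2)^{j}$ (up to the prefactor $\xi_\kappa^{|\gamma|}/[(1-\xi_\kappa)(1-(16m)^2\zeta_\beta)]$); note $(16m)^{12}\zeta_\beta^6 = (16m)^4\zeta_\beta^2\cdot (16m)^8\zeta_\beta^4$, matching the second summand in the big parenthesis of the claim. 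For $\Sigma_1(j)$, substitute $\ell=\hat k-(j+2)\in\{0,1,2,3\}$. Each $\hat k'$-sum contributes $\xi_\kappa^{4j+4-\ell}/(1-\xi_\kappa)$, and after extracting common powers, the sum in $\ell$ becomes a finite geometric sum in $(16m)^2\zeta_\beta/\xi_\kappa$, which is bounded above by $1/(1-(16m)^2\zeta_\beta\xi_\kappa^{-1})$ (using assumption~\eqref{assumption: 1}). This produces the first summand $(16m)^4\zeta_\beta^2\xi_\kappa^4/(1-(16m)^2\zeta_\beta\xi_\kappa^{-1})$, again with a factor $((16m)^2\zeta_\beta\xi_\kappa^2)^j$.

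With the common factor $((16m)^2\zeta_\beta\xi_\kappa^2)^j$ now isolated, the outer $j$-sum reduces to
\begin{equation*}
\sum_{j=0}^{|\gamma|}\binom{|\gamma|}{j+1}(j+1)\bigl((16m)^2\zeta_\beta\xi_\kappa^2\bigr)^j.
\end{equation*}
Setting $k=j+1$ and using the standard identity $\sum_{k=0}^{n}k\binom{n}{k}p^k = np(1+p)^{n-1}$, this equals $|\gamma|\bigl(1+(16m)^2\zeta_\beta\xi_\kappa^2\bigr)^{|\gamma|-1}\leq |\gamma|\bigl(1+(16m)^2\zeta_\beta\xi_\kappa^2\bigr)^{|\gamma|}$, which produces exactly the $|\gamma|\bigl(1+(16m)^2\zeta_\beta\xi_\kappa^2\bigr)^{|\gamma|}$ factor in the target bound. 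Combining the two contributions to $\Sigma_1+\Sigma_2$ with the common prefactor $\xi_\kappa^{|\gamma|}/(1-\xi_\kappa)$ yields the stated inequality. No step is conceptually hard; the only place I expect to slip is the bookkeeping of exponents of $\xi_\kappa$, $(16m)^2$, and $\zeta_\beta$ in the chain $\xi_\kappa^{|\gamma|+\hat k'-2j}\to \xi_\kappa^{|\gamma|}\xi_\kappa^{2j}$ in $\Sigma_2$ and $\xi_\kappa^{|\gamma|}\xi_\kappa^{2j+4-\ell}$ in $\Sigma_1$, so I will verify those carefully before combining.
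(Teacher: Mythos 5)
Your proposal is correct and follows essentially the same route as the paper's own proof: sum the geometric series in $\hat k'$, split the $\hat k$-sum at $\hat k = j+6$, bound each piece by an infinite geometric series (using $(16m)^2\zeta_\beta < \xi_\kappa$ for the first piece), and finish with the identity $\sum_k k\binom{n}{k}p^k = np(1+p)^{n-1}$. The exponent bookkeeping you flagged as the risky part checks out in both pieces.
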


\begin{subproof}
    We first use the formula for geometric sums to write
    \begin{align*}
        &
        B_3 =
        \frac{\xi_\kappa^{|\gamma |}}{1-\xi_\kappa}
        \sum_{j = 0}^{|\gamma |}
        \binom{|\gamma |}{j+1}  (j+1) \sum_{\hat k = j+2}^\infty 
        (16m)^{2\hat k} \zeta_\beta^{\hat k} \xi_\kappa^{4j + \max(0,j+6-\hat k)-2j}.
    \end{align*}
    We have \( B_3 = B_3' + B_3'',\) where
    \begin{equation*}
        B_3' \coloneqq \frac{\xi_\kappa^{|\gamma |}}{1-\xi_\kappa}
        \sum_{j = 0}^{|\gamma |}
        \binom{|\gamma |}{j+1}  (j+1) \xi_\kappa^{3j+6}
        \sum_{\hat k = j+2}^{j+5}
        (16m)^{2\hat k} \zeta_\beta^{\hat k} \xi_\kappa^{-\hat k}
    \end{equation*}
    and 
    \begin{equation*}
        B_3'' \coloneqq \frac{\xi_\kappa^{|\gamma |}}{1-\xi_\kappa}
        \sum_{j = 0}^{|\gamma |}
        \binom{|\gamma |}{j+1}  (j+1) \xi_\kappa^{2j}\sum_{\hat k = j+6}^\infty 
        (16m)^{2\hat k} \zeta_\beta^{\hat k}.
    \end{equation*}
    We now give upper bounds for \( B_3'\) and \(B_3'',\) which, when combined, imply the desired conclusion.
    We first give an upper bound for \( B_3'. \) Using the formula for geometric sums, we infer that
    \begin{equation*}
        \begin{split}
            &B_3' \leq \frac{\xi_\kappa^{|\gamma |}}{1-\xi_\kappa}
            \sum_{j = 0}^{|\gamma |}
            \binom{|\gamma |}{j+1}  (j+1) \xi_\kappa^{3j+6}
            \sum_{\hat k = j+2}^{\infty}
            (16m)^{2\hat k} \zeta_\beta^{\hat k} \xi_\kappa^{-\hat k}
            \\&\qquad=
            \frac{\xi_\kappa^{|\gamma |}(16m)^{2}\zeta_\beta \xi_\kappa^2 }{(1-\xi_\kappa)(1-(16m)^{2} \zeta_\beta \xi_\kappa^{-1})}
            \sum_{j = 0}^{|\gamma |}
            \binom{|\gamma |}{j+1}  (j+1) 
            (16m)^{2(j+1)} \zeta_\beta^{j+1}  \xi_\kappa^{2(j+1)}.
        \end{split}
    \end{equation*}
    Using the identity \( \sum_{j=0}^n \binom{n}{j} j p^j = np(1-p)^{n-1},\) valid for \( p \in (0,1)\) and \( n \geq 0,\) we obtain 
    \begin{equation}\label{eq: B3' final bound}
        \begin{split}
            &B_3' \leq \frac{\xi_\kappa^{|\gamma |}(16m)^{2}\zeta_\beta \xi_\kappa^2}{(1-\xi_\kappa)(1-(16m)^{2} \zeta_\beta \xi_\kappa^{-1})}
            \cdot |\gamma | (16m)^{2} \zeta_\beta  \xi_\kappa^{2} \bigl( 1 + (16m)^{2} \zeta_\beta  \xi_\kappa^{2} \bigr)^{|\gamma |}
            \\&\qquad=
            \frac{\xi_\kappa^{|\gamma |}(16m)^{4}\zeta_\beta^2  |\gamma |  \bigl( 1 + (16m)^{2} \zeta_\beta  \xi_\kappa^{2}  \bigr)^{|\gamma |}}{1-\xi_\kappa}
            \cdot 
            \frac{\xi_\kappa^{4}}{(1-(16m)^{2} \zeta_\beta \xi_\kappa^{-1})}.
        \end{split}
    \end{equation}
    We now derive an upper bound for \( B_3''.\) Using the formula for geometric sums, we see that
    \begin{equation*}
        \begin{split}
            &B_3'' = 
            \frac{\xi_\kappa^{|\gamma |}}{(1-\xi_\kappa)(1-(16m)^{2} \zeta_\beta )}
            \sum_{j = 0}^{|\gamma |}
            \binom{|\gamma |}{j+1}  (j+1) \xi_\kappa^{2j} (16m)^{2(j+6)} \zeta_\beta^{j+6}  
            \\&\qquad= 
            \frac{\xi_\kappa^{|\gamma |} (16m)^{10} \zeta_\beta^{5} \xi_\kappa^{-2}}{(1-\xi_\kappa)(1-(16m)^{2} \zeta_\beta )}
            \sum_{j = 0}^{|\gamma |}
            \binom{|\gamma |}{j+1}  (j+1) 
            (16m)^{2(j+1)} \zeta_\beta^{j+1} \xi_\kappa^{2(j+1)} .
        \end{split}
    \end{equation*}
    Using the identity \( \sum_{j=0}^n \binom{n}{j} j p^j = np(1-p)^{n-1},\) valid for \( p \in (0,1)\) and \( n \geq 0,\) we find that
    \begin{equation}\label{eq: B3'' final bound}
        \begin{split}
            &B_3'' \leq \frac{\xi_\kappa^{|\gamma |} (16m)^{10} \zeta_\beta^{5} \xi_\kappa^{-2}}{(1-\xi_\kappa)(1-(16m)^{2} \zeta_\beta )}
            \cdot |\gamma | (16m)^{2} \zeta_\beta  \xi_\kappa^{2} \bigl( 1 + (16m)^{2} \zeta_\beta  \xi_\kappa^{2}  \bigr)^{|\gamma |}
            \\&\qquad=
            \frac{\xi_\kappa^{|\gamma |}(16m)^{4}\zeta_\beta^2  |\gamma |  \bigl( 1 + (16m)^{2} \zeta_\beta  \xi_\kappa^{2}  \bigr)^{|\gamma |}}{1-\xi_\kappa}
            \cdot
            \frac{(16m)^{8} \zeta_\beta^{4} }{(1-(16m)^{2} \zeta_\beta )} .
        \end{split}
    \end{equation}
    Combining~\eqref{eq: B3' final bound} and~\eqref{eq: B3'' final bound}, and recalling that \( B_3 = B_3' + B_3'',\) we find the desired conclusion.  
\end{subproof}

\end{document}